\newcommand{\Mod}[1]{\ (\mathrm{mod}\ #1)}
\newcommand{\Z}{\mathbb{Z}}
\newcommand{\C}{\mathbb{C}}
\newcommand{\F}{\mathbb{F}}
\newcommand{\K}{\mathbb{K}}
\newcommand{\N}{\mathbb{N}}
\DeclareMathOperator{\im}{im}
\DeclareMathOperator{\GL}{GL}
\DeclareMathOperator{\SL}{SL}
\DeclareMathOperator{\PSL}{PSL}
\DeclareMathOperator{\ltrn}{Tr_n}
\DeclareMathOperator{\strn}{tr_n}
\DeclareMathOperator{\qn}{q_n}
\DeclareMathOperator{\tr}{tr}
\DeclareMathOperator{\Hom}{Hom}
\DeclareMathOperator{\SLC}{\SL_2(\C)}
\DeclareMathOperator{\SLF}{\SL_2(\F)}
\DeclareMathOperator{\SLFt}{\SL_2(\F_2)}
\DeclareMathOperator{\SLnC}{\SL_n(\C)}
\DeclareMathOperator{\PSLF}{\PSL_2(\F)}
\DeclareMathOperator{\Xred}{X^{red}}
\DeclareMathOperator{\Xirr}{X^{irr}}
\DeclareMathOperator{\R}{R}
\DeclareMathOperator{\X}{X}
\DeclareMathOperator{\D}{D_{\infty}}
\def\Dn{\text{D}_{\infty}^n}
\DeclareMathOperator{\newt}{Newt}
\DeclareMathOperator{\sphD}{Sph}
\DeclareMathOperator{\multi}{multideg}
\DeclareMathOperator{\LM}{LM}
\DeclareMathOperator{\LT}{LT}
\DeclareMathOperator{\LC}{LC}
\def\ii{i_0}
\def\M{\mathcal{M}}
\def\L{\mathcal{L}}
\def\P{\mathcal{P}}
\def\O{\mathcal{O}}
\def\V{\mathcal{V}}
\def\E{\mathcal{E}}
\def\idMat{E}
\def\tree{\mathbf{T}}
\def\trFunc{I}
\def\Dih{\text{Dih}(\Z^n)}
\def\address#1{{\let\newline\par\xdef\previousaddresses{\theaddress}}
 \ifx\theaddress\relax\def\theaddress{#1}\else
 \def\theaddress{\previousaddresses\par\vskip 2pt\par#1}\fi}
\def\secondaddress#1{{\let\newline\par\xdef\previousaddresses{\theaddress}}
 \ifx\theaddress\relax\def\theaddress{#1}\else
 \def\theaddress{\previousaddresses\par{\rm and}\par#1}\fi}   
 \def\Addresses{\bigskip
{\small \parskip 0pt \leftskip 0pt \rightskip 0pt plus 1fil  \def\\{\par}
\sl\theaddress\par\medskip \rm} }
\let\theaddress\relax
\newcommand{\bound}{\partial}
\newcommand{\pie}{\pi_1}
\theoremstyle{plain}
\newtheorem{theorem}{Theorem}
\newtheorem*{theorem*}{Theorem}
\newtheorem{lemma}[theorem]{Lemma}
\newtheorem{proposition}[theorem]{Proposition}
\newtheorem{corollary}[theorem]{Corollary}
\theoremstyle{definition}
\newtheorem{definition}[theorem]{Definition}
\newtheorem*{definition*}{Definition}
\newtheorem{example}[theorem]{Example}
\newtheorem{question}[theorem]{Question}
\theoremstyle{remark}
\newtheorem{remark}[theorem]{Remark}
\numberwithin{equation}{section}
\newlist{enumRoman}{enumerate}{1}
\setlist[enumRoman]{label=(\Roman*)}
\begin{document}

\title{An invitation to Culler-Shalen theory in arbitrary characteristic}
\author{Grace S. Garden and Stephan Tillmann}

\begin{abstract} 
In seminal work of Culler and Shalen from 1983, essential surfaces in 3--manifolds are associated to ideal points of their $\SLC$--character varieties, and connections between the algebraic geometry of the character variety and the topology of the 3--manifold are established via group actions on trees. Here, we lay a general foundation for this theory in arbitrary characteristic by using the same approach instead over an arbitrary algebraically closed field. Examples include a change in the $A$--polynomial in characteristic 2, a closed Haken hyperbolic 3--manifold with no detected essential surface in any characteristic, and a closed Haken hyperbolic 3--manifold with an essential surface only detected in characteristic 2.
\end{abstract}

\primaryclass{14D20, 20C15, 57K31, 57K32}

% 14D20   	Algebraic moduli problems, moduli of vector bundles
% 20C15   	Ordinary representations and characters
% 57K31: "Invariants of 3--manifolds (also skein modules; character varieties)" (MSC2020)
% 57K32: "Hyperbolic 3--manifolds" (MSC2020)

\keywords{finitely generated group, representation variety, character variety, variety of characters, positive characteristic, 3--manifold, essential surface, Culler-Shalen theory, detected surface}
\makeshorttitle

%%%%%%%%%%%%%%%%%%%%%%%%%%%
%%%%%%%%%%%%%%%%%%%%%%%%%%%

\section{Introduction}
\label{sec:intro}

The ground-breaking work of Culler and Shalen \cite{Culler-Shalen-varieties-1983} associates essential surfaces in a 3--manifold to ideal points of curves in its $\SLC$--character variety.
The method underscores connections between the theory of incompressible surfaces in 3--manifolds, the geometry of character varieties, and group actions on trees. 
Character variety techniques have been incredibly influential in knot theory and the study of 3--manifolds. They led to the definitions of the $A$--polynomial \cite{Cooper-Plane-1994} and the Culler-Shalen norm \cite{Culler-Dehn-1987}, and were used in the proofs of numerous well-known theorems, including the weak Neuwirth conjecture \cite{Culler-bounded-1984}, the Smith conjecture \cite{Shalen-ThreeManifold-1999}, the cyclic surgery theorem \cite{Culler-Dehn-1987} and the finite surgery theorem~\cite{Boyer-finite-2001}.

In this paper, we lay the foundation for the theory over any algebraically closed field $\F$ of arbitrary characteristic and hope to convince the reader through results and examples that this is a rich theory worth investigating further. Paoluzzi and Porti ~\cite{Paoluzzi-nonstandard-2013, Paoluzzi-invariant-2018, Paoluzzi-examples-2020} have similarly studied the character varieties of knot complements in odd characteristic and found discrepancies across characteristics (for example, in the number of components, dimension, intersection points). It remains a tantalising problem to link such ramification phenomena to geometric, algebraic or topological properties of knots or 3--manifolds.

The contents of this paper is as follows:

\Cref{sec:varieties} \textbf{(The representation variety and the variety of characters)}
This section establishes some results on $\SLF$--representation varieties of finitely generated groups in arbitrary characteristic. Let $\Gamma$ be a finitely generated group. The \textbf{$\SLF$--representation variety} of $\Gamma$ is the space of homeomorphisms $\rho\co \Gamma \to \SLF$,
\begin{equation*}
	\R(\Gamma,\F) = \Hom(\Gamma,\SLF)
\end{equation*}
Each element $\rho$ is called a representation. 

We imbue $\R(\Gamma,\F)$ with the structure of an algebraic set.
For the constructions in Culler-Shalen theory, we require interesting curves in $\R(\Gamma,\F)$. More specifically, we require 1--dimensional irreducible subvarieties that meet each conjugacy class of representations in at most finitely many points. One approach is to work with the \emph{character variety} arising from geometric invariant theory, another is to work with the \emph{variety of characters}. This paper follows the latter route by first introducing a set-up and definition that is not standard, but motivated by our intention to formulate and apply Culler-Shalen theory.
 
The group $\SLF$ acts on $\R(\Gamma,\F)$ by conjugation and the action is algebraic. We say that $\rho, \sigma \in \R(\Gamma,\F)$ are \textbf{closure-equivalent} if the Zariski closures of their orbits intersect. The quotient space of this equivalence relation $\sim_c$ 
\[
	\Hom(\Gamma,\SLF) / \sim_c
\]
is a topological space, which we imbue with the extra structure of an affine variety by showing that the equivalence relation $\sim_c$ is precisely the equivalence relation induced by the trace functions. We thus turn the quotient space into the  \textbf{variety of $\SLF$--characters} of $\Gamma,$ written $\X(\Gamma,\F).$ Hence the set of points of $\X(\Gamma,\F)$ is 
$\Hom(\Gamma,\SLF) / \sim_c$ and its coordinate ring is naturally generated by the trace functions.

\begin{remark}
\label{rem:vars}
There is a related construction giving the affine general invariant theory quotient of the representation variety. 
Here, all conjugation invariant functions on $\R(\Gamma,\F)$ are considered. Following \cite{Martin-reductive-2003} denote the GIT quotient by 
\[
C(\Gamma, \F) = \Hom(\Gamma,\SLF) \sslash \SLF
\]
and call this the \textbf{$\SLF$--character variety}.
The character variety $C(\Gamma, \F)$ has coordinate ring the ring of all $\SL_2$--invariant regular functions on $\Hom(\Gamma,\SLF)$. 
The variety of characters  $\X(\Gamma,\F)$ has coordinate ring the ring of trace functions. 
The inclusion of the ring of trace functions in the full ring of invariants gives a map $C(\Gamma, \F) \to \X(\Gamma, \F)$.
%the algebra of continuous $\F$--valued functions on the topological space).
%In this paper, we will see the structure of $\X(\Gamma,\F)$ is defined by \emph{trace functions}.$

It is classical that for the complex numbers, these two constructions define the same object in terms of points and structure~\cite{Procesi-invariant-1976}. That is, there is a natural identification $C(\Gamma, \C) = \X(\Gamma, \C)$ as affine varieties. 
We thank Benjamin Martin for pointing out to us that $C(\Gamma, \F) = \X(\Gamma, \F)$ also holds when $\F$ has characteristic zero or when $\Gamma$ is the free group; see \cite{Martin-reductive-2003}. 
However, in positive characteristic and $\Gamma$ an arbitrary finitely generated group, it is an interesting question to determine whether the map $C(\Gamma, \F) \to \X(\Gamma, \F)$ is an isomorphism of varieties. See also \cite{Lawton-varieties-2017} for related discussion and examples.
\end{remark}

Many of the results in \Cref{sec:varieties} may be well-known or are direct generalisations from the theory over the complex numbers. Of particular interest is characteristic $2$, as some of the standard trace identities that hold over characteristic $0$ change. In particular, the trace ring is generated by the trace functions of single and ordered double and triple products of generators unless the characteristic is 2 (\Cref{cor:ordered_up_to_three}). We show that these trace functions nevertheless give a natural coordinate system for the variety of characters in any characteristic (\Cref{thm:character_variety_is_characters}). We also determine a birational inverse from this parameterisation of the variety of characters to the trace ring in characteristic 2. 
An overview of notation and main results of this section is given in \Cref{sec:def_varieties}.
Another technical result that may be new is a finite set of equations defining the reducible characters of the free group (\Cref{cor:redChar}).

\Cref{sec:Splittings} \textbf{(Splittings of groups detected by ideal points)} The approach used by Culler and Shalen \cite{Culler-Shalen-varieties-1983} to define ideal points of curves in the variety of characters  applies verbatim for fields of positive characteristic, and so does the application of Bass-Serre theory~\cite{Bass-covering-1993, Serre-trees-2003} of group actions on trees to produce splittings of groups. The relevant definitions and results are summarised in this section.

\Cref{sec:CullerShalen} \textbf{(Essential surfaces in 3--manifolds detected by ideal points)} The approach used by Culler and Shalen \cite{Culler-Shalen-varieties-1983} to associate essential surfaces to ideal points of curves in the $\SLC$--character variety also applies almost verbatim to the variety of $\SLF$--characters in our set-up. In particular, the key results from \cite{Boyer-finite-2001, Cooper-Plane-1994, Culler-Dehn-1987, Culler-Shalen-varieties-1983, Culler-bounded-1984} linking valuations of trace functions to topological properties of associated essential surfaces are at our disposal. We describe the basic ingredients, and also include a well-known bound on the dimension of the variety of characters of a 3--manifold that goes back to Thurston~\cite{Thurston-notes}. 

\Cref{sec:APoly} \textbf{($A$--polynomials and eigenvalue varieties)} Assume $M$ is a compact 3--manifold with a torus boundary component. We show the definition of the $A$--polynomial still holds and prove a version of the \emph{boundary slopes} theorem in characteristic $p$, following the same approach as in \cite{Cooper-Plane-1994}. The standard $A$--polynomial over $\C$ and the $A$--polynomial in characteristic $p$ can be related. Comparing across characteristics, we prove that for all but finitely many primes $p$, a slope on the boundary of a knot manifold is strongly detected in characteristic $p$ if and only if it is strongly detected over $\C$. The excluded primes are those that divide leading terms in the algorithm for computing a Gr\"obner basis that is used to calculate the $A$--polynomial. We also describe the natural generalisation from \cite{Tillus-boundary-2005} to eigenvalue varieties of manifolds with more than one torus boundary component.

\Cref{examples} \textbf{(A trip to the zoo)} The last section gives some applications and examples. We first discuss free products of groups and detecting prime decomposition (\Cref{subsec:direct_and_prime}). We then include three extended examples with different behaviour. The first is a change of the $A$--polynomial (\Cref{sec:m137}) in characteristic 2. The second and third examples are closed, Haken, hyperbolic 3--manifolds. One has a decrease of the dimension of the variety of characters in characteristic 2 and no essential surface is detected in any characteristic (\Cref{sec:m019}). The other has an increase of the dimension of the variety of characters in characteristic 2 and in particular essential surfaces are only detected by ideal points in characteristic 2 (\Cref{sec:m188}).

For the remainder of this paper, let $\F$ be an algebraically closed field of arbitrary characteristic. We write $\Z_p = \Z/p\Z$ and sometimes use $\F_p$ for an arbitrary algebraically closed field of characteristic $p$. 
In general, we may denote terms calculated in a specific characteristic with a subscript $p$. 
We use $\idMat$ to refer to the $2\times2$ identity matrix, and $A^\intercal$ to denote the transpose of $A.$

\textbf{Acknowledgements.} 
Research of the first author is supported by an Australian Government Research Training Program scholarship.
Research of the second author is supported in part under the Australian Research Council's ARC Future Fellowship FT170100316. The second author thanks Benjamin Martin for stimulating conversations in Brisbane in 2011. The authors also thank Eric Chesebro, Benjamin Martin, and Daniel Mathews for helpful comments on an earlier draft of the manuscript, and Yue Ren for helpful mathematical discussions. 

%%%%%%%%%%%%%%%%%%%%%%%%%%%
%\newpage
%%%%%%%%%%%%%%%%%%%%%%%%%%%

\section{The representation variety and the variety of characters}
\label{sec:varieties}

We start by discussing the $\SLF$--representation variety and the variety of  $\SLF$--characters for $\F$ an algebraically closed field. We recall their definitions, provide some results for traces, and use this to describe coordinates for the variety of characters. This includes both classic results for traces and new results for traces in characteristic $2$. 
We end with a useful result on normal forms and conjugacy classes.

%%%%%%%%%%%%%%%%%%%%%%%%%%%

\subsection{Definitions}
\label{sec:def_varieties}

Let $\F$ be an algebraically closed field and $\Gamma$ be a finitely generated group. 
The \textbf{$\SLF$--representation variety} of $\Gamma$ is the space of representations $\rho\co \Gamma \to \SLF$,
\begin{equation}
	\R(\Gamma,\F) = \Hom(\Gamma,\SLF)
\end{equation}
Given a finite, ordered generating set $( \gamma_1, \ldots, \gamma_n)$ of $\Gamma,$ the Hilbert basis theorem implies that we can imbue $\R(\Gamma,\F)$ with the structure of an affine algebraic subset of $\SLF^n \subset \F^{4n}$ via the natural embedding
\begin{align*}
	\varphi\co \R(\Gamma,\F) &\hookrightarrow \SLF^n \subset \F^{4n}\\
	\rho &\mapsto (\rho(\gamma_1), \ldots, \rho(\gamma_n))
\end{align*}
The Zariski topology on $\F^{4n}$ induces a topology on $\R(\Gamma,\F).$ Choosing a different ordered generating set $(\gamma'_1, \ldots, \gamma'_m)$ results in another embedding 
\[
	\varphi'\co \R(\Gamma,\F) \hookrightarrow \SLF^m \subset \F^{4m}
\]
There is a natural polynomial isomorphism between the images of $\varphi$ and $\varphi'$ obtained by expressing the generators of one generating set in terms of the other and vice versa. In particular, the topology on $\R(\Gamma,\F)$ induced by the Zariski topology is independent of the choice of generating set. 

The group $\SLF$ acts by conjugation on $\SLF^n.$ The action preserves the natural embedding of $\R(\Gamma,\F)$ and is algebraic. We say that $\rho, \sigma \in \R(\Gamma,\F)$ are \textbf{closure-equivalent} if the Zariski closures of their orbits intersect. We denote the quotient space of this equivalence relation $\sim_c$ by
\[
	\X(\Gamma,\F) = \Hom(\Gamma,\SLF) / \sim_c
\]
We denote the natural quotient map $q\co \R(\Gamma,\F) \to \X(\Gamma,\F).$ We show in \Cref{thm:character_variety_is_characters} and \Cref{cor:trace_function_characterisation} that $\X(\Gamma,\F)$ can be imbued with the structure of an affine algebraic set with coordinate ring the ring of trace functions. This justifies calling $\X(\Gamma,\F)$  the
 \textbf{variety of $\SLF$--characters} of $\Gamma,$ which we start using from now onwards.
 
A key observation in \Cref{sec:coordinates_for_X} is that the variety of $\SLF$--characters of $\Gamma$ has a natural set of coordinates, given by taking traces of ordered products of generators. 
Thus, the embedding $\varphi\co\R(\Gamma,\F) \hookrightarrow \SLF^n\subset \F^{4n}$ induces a natural embedding ${\varphi}_n\co\X(\Gamma,\F) \hookrightarrow \F^{2^n-1}.$ We call this the \textbf{long trace coordinates} of the variety of characters,
\[
	\ltrn\co \varphi(\R(\Gamma,\F)) \to {\varphi}_n(\X(\Gamma,\F))
\]
In particular, the Zariski topology on $\X(\Gamma,\F)$ from the embedding in $\F^{2^n-1}$ coincides with the quotient topology inherited from $\R(\Gamma,\F)$ and we have the following commutative diagram:
\[
	\xymatrix{
	\R(\Gamma,\F) \ar[d]^{}_{\varphi} \ar[r]^{q}_{} 	& \X(\Gamma,\F) \ar[d]^{}_{{\varphi}_n}	\\
	\SLF^{n} \ar[r]^{\ltrn}_{} 		& \F^{2^n-1}}		
\]
The \textbf{character of a representation $\rho$} is the trace function
\begin{equation}
\begin{split}
	\tau_\rho\co \Gamma &\to \F \\
	\gamma&\mapsto\tr(\rho(\gamma))
\end{split}	
\end{equation}
Conjugate representations have the same trace function, and since the trace functions are algebraic, it follows that closure-equivalent representations have the same trace function. In \Cref{cor:trace_function_characterisation}, we show that 
\begin{align*}
					& \tau_\rho = \tau_\sigma \\
	\Leftrightarrow\quad  &  \text{$\tau_\rho$ and $\tau_\sigma$ agree on all ordered products of generators}\\
	\Leftrightarrow\quad  &  \text{$\rho$ and $\sigma$ are closure-equivalent}.
\end{align*}
Hence the variety of characters is naturally identified with the space of characters of representations.
Classical work of Fricke and Vogt (\Cref{lem:trace_generators}) implies that, given two different embeddings $\X(\Gamma,\F) \hookrightarrow \F^{2^n-1}$ and $\X(\Gamma,\F) \hookrightarrow \F^{2^m-1}$ arising from different generating sets of $\Gamma$ as above, there is a polynomial isomorphism with integral coefficients taking the image of one embedding one to the other. (Throughout, we call the additive group generated by $1\in \F$ the integers in $\F$.) Hence any two long trace coordinates are naturally isomorphic.

In fact, we prove the following stronger statements in \Cref{sec:coordinates_for_X}. Compose $\X(\Gamma,\F) \hookrightarrow \F^{2^n-1}$ with the projection onto the $d = n+\binom{n}{2}+\binom{n}{3}$ coordinates that correspond to the traces of the images of all generators, and of all ordered double and triple products of generators. We call this the \textbf{short trace coordinates} of the variety of characters.
We show that this composition gives an embedding ${\varphi}_d\co\X(\Gamma,\F) \hookrightarrow \F^d$
and hence our commutative diagram extends to:
\[
	\xymatrix{
	\R(\Gamma,\F) \ar[d]^{}_{\varphi} \ar[r]^{q}_{} 	& \X(\Gamma,\F)  \ar[d]^{}_{{\varphi}_n} \ar[rd]^{\varphi_d}_{} & 	\\
	\SLF^{n} \ar[r]^{\ltrn}_{} 		& \F^{2^n-1} \ar[r]^{r}_{}  & \F^{d}}		
\]
We write
\[
	\strn = r\circ \ltrn \co \varphi(\R(\Gamma,\F)) \to {\varphi}_d(\X(\Gamma,\F))
\]

\begin{remark}
\label{rem:inverseCharP}
	In characteristic $p\neq2$, the restriction of the projection map $\varphi_n(\X(\Gamma,\F))$ $\to $ $\varphi_d(\X(\Gamma,\F))$ has a polynomial inverse. This arises from the trace identities in \Cref{sec:traceRing}. In particular, there is a polynomial automorphism between the short trace coordinates of the variety of characters arising from different ordered generating sets.
\end{remark}

\begin{remark}
\label{rem:inverseChar2}
	In characteristic 2, for each Zariski component $Z \subseteq \X(\Gamma,\F),$ the projection map has a birational inverse taking ${\varphi}_d(Z)$ to ${\varphi}_n(Z).$ See \Cref{sec:traces_char_2}.
\end{remark}

For a manifold $M$ with finitely generated fundamental group, we take $\Gamma=\pi_1(M)$. The representation varieties are written $\R(M,\F)=\R(\pi_1(M),\F)$ and the varieties of characters are written $\X(M,\F)=\X(\pi_1(M),\F)$. 

%%%%%%%%%%%%%%%%%%%%%%%%%%%

\subsection{Classical results on traces}
\label{sec:traceRing}

We focus on trace identities that will be used to later describe the variety of characters in \Cref{sec:coordinates_for_X}. 
Much of this section outlines classical work that goes back to Vogt~\cite{Vogt-invariants-1889} and Fricke~\cite{Fricke1896}. 
The main purpose is to highlight the aspects that apply in arbitrary characteristic and use this to determine the short trace coordinates in arbitrary characteristic. This will give a polynomial inverse to ${\varphi}_n(\X(\Gamma,\F)) \to {\varphi}_d(\X(\Gamma,\F))$ in characteristic $p\neq2$ as discussed in \Cref{rem:inverseCharP}.

We follow the preliminary discussion in \cite[\S 4]{Gonzalez-character-1993}; see also \cite{Horowitz-characters-1972} and \cite{Magnus-rings-1980}. We have not seen the identity in \Cref{eq:trace_four_char2} elsewhere in the literature.

For $A,B,C\in\SLF$ with $\F$ an algebraically closed field of characteristic $p$ we have the following trace identities
\begin{align}
\label{eq:tr_single_inverse}	\tr\left(A^{-1}\right)		&=\tr\left(A\right)\\
\label{eq:tr_conjugate}		\tr\left(BAB^{-1}\right)	&=\tr\left(A\right)\\
\label{eq:tr_invert}			\tr\left(A)\tr(B\right)		&=\tr\left(AB\right)+\tr\left(AB^{-1}\right)
\end{align}
We often apply \Cref{eq:tr_conjugate} in the equivalent form $\tr(AB)=\tr(BA).$
We also have
\begin{equation}\label{eq:tr_permute_three}
\begin{split}
	\tr(ACB)=	&\tr(A)\tr(BC)+\tr(B)\tr(AC)+\tr(C)\tr(AB) \\ 
			&-\tr(A)\tr(B)\tr(C)-\tr(ABC)
\end{split}
\end{equation}
These can all be derived from the Cayley-Hamilton theorem.

Applying \Cref{eq:tr_permute_three} with the convention $[\;A,\;B\;] = ABA^{-1}B^{-1}$, we have for all $A, B \in \SLF,$
\begin{equation}
\label{eq:commutator_trace}
	\tr[\;A,\;B\;] = (\tr (A))^2 + (\tr (B))^2 + (\tr (AB))^2 - \tr(A)\tr(B)\tr (AB) -2
\end{equation}
Then for all $A, B, C \in \SLF,$ we have
\begin{equation}
\label{eq:commutator_trace_three}
\begin{split}
	\tr[\;A,\;BC\;] = 	& (\tr (A))^2 + (\tr (BC))^2 + (\tr (ABC))^2 \\
				&- \tr(A)\tr (BC)\tr (ABC) -2
\end{split}
\end{equation}
Suppose we are given a word $W$ of length $L$ in the matrices $A_1,\ldots, A_n\in \SLF$ and their inverses. We regard the identity element as a word of length zero. The trace identities can be used to write $\tr(W)$ as a polynomial with integral coefficients in the traces of words of length at most $L$ that have the property that each word contains each letter $A_i$ at most once and with the $A_i$ respecting the natural cyclic order of the index set. This is recorded in the below result.

\begin{lemma}[Vogt, Fricke]
\label{lem:trace_generators}
Let $\F$ be an algebraically closed field. The trace of any word in the matrices $A_1,\ldots, A_n\in\SLF$ is a polynomial with integral coefficients in the $2^n-1$ traces
\[
	\tr (A_{j_1}A_{j_2} \ldots A_{j_m}), \quad 1\le j_1<j_2 < \ldots < j_m \le n, \quad m \le n
\]
Moreover, the constant term of the polynomial is an integral multiple of the trace of the identity matrix.
\end{lemma}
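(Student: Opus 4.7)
The plan is to prove the lemma by strong induction on the word length $L$, with a secondary induction on the number of inverse letters appearing in the word. The base cases are immediate: for $L = 0$, the empty word represents $\idMat$ with trace $\tr(\idMat)$, whose polynomial is the constant $1 \cdot \tr(\idMat)$; for $L = 1$, identity (2.1) gives $\tr(A_i^{\pm 1}) = \tr(A_i)$, already of the required form. Both base cases have constant term an integer multiple of $\tr(\idMat)$.

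For the inductive step with $L \ge 2$ I would proceed in three reduction phases. Phase one removes inverses: if $W$ contains some $A_i^{-1}$, cyclic invariance (2.2) lets me rotate $W$ into the form $V \cdot A_i^{-1}$ with $V$ of length $L-1$, and identity (2.3) rewritten as $\tr(V A_i^{-1}) = \tr(V)\tr(A_i) - \tr(V A_i)$ expresses $\tr(W)$ via a shorter-word trace (handled by the outer induction) together with a length-$L$ word $V A_i$ having strictly fewer inverses (handled by the secondary induction). Phase two sorts the indices of the now inverse-free word $W = A_{i_1}\cdots A_{i_L}$: for each adjacent inversion, rotate the pair to the end by (2.2) and apply (2.5) in the form
\[
\tr(UCB) + \tr(UBC) = \tr(U)\tr(BC) + \tr(B)\tr(UC) + \tr(C)\tr(UB) - \tr(U)\tr(B)\tr(C),
\]
whose right-hand side is an integer polynomial in traces of words of length strictly less than $L$. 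Solving for the out-of-order trace gives it as minus the sorted trace plus shorter-length corrections, so bubble-sort strictly decreases the number of inversions and terminates with $i_1 \le \cdots \le i_L$. Phase three eliminates repeats: any remaining equal index forces an adjacent pair $A_i A_i$ (up to cyclic rotation), and applying (2.3) with $A \leftarrow U A_i$, $B \leftarrow A_i$ gives $\tr(U A_i^2) = \tr(U A_i)\tr(A_i) - \tr(U)$, strictly decreasing the length.

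For the constant-term claim I would track that the only raw scalars entering the recursion are $\tr(\idMat)$ from the length-zero base case and from the $-\tr(U)$ contributions of phase three when $U$ eventually becomes empty; all other operations in phases one and two are $\Z$-linear combinations of traces supplied recursively, so by induction the constant term of the final polynomial is an integer multiple of $\tr(\idMat)$. The main obstacle is ensuring the nested induction is well-founded, since phase two does not decrease $L$. The cleanest way to handle this is to induct lexicographically on the triple (word length, number of inversions in the index sequence, number of equal adjacent pairs) after phase one; phase two strictly decreases the second coordinate modulo length-less-than-$L$ corrections, and the Cayley--Hamilton-type identity in phase three strictly decreases $L$, closing the recursion.
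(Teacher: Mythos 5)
Your proof is correct and follows essentially the same approach as the paper's sketch. Both proofs reduce an arbitrary word to a $\Z$-polynomial in traces of ordered square-free products by iterating the three trace identities $\tr(AB)=\tr(BA)$, $\tr(AB^{-1}) = \tr(A)\tr(B)-\tr(AB)$, and the $\tr(ACB)$ expansion; the only difference is the ordering of the reduction phases. The paper first sorts and collects into the form $A_1^{m_1}\cdots A_n^{m_n}$ with $m_i \in \Z$, then clears negative exponents, then reduces exponents to at most one; you clear inverses first, then sort, then eliminate repeats. This is a cosmetic reordering, and your tracking of the constant term (the only source of $\tr(\idMat)$ being the empty-word base case and the $-\tr(U)$ term when $U$ degenerates to the identity) matches the paper's remark. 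One small point worth tightening: because $\tr$ is cyclically invariant, the number of \emph{linear} inversions depends on the choice of rotation, so ``rotate the pair to the end, apply (2.5), count inversions'' needs a fixed canonical linear representative to make the lexicographic measure well-founded. The cleaner observation is that (2.5), applied after a temporary cyclic shift, swaps \emph{any} adjacent pair in place, i.e.\ $\tr(UCBV) = (\text{shorter}) - \tr(UBCV)$, so one may bubble-sort the fixed linear word directly and the inversion count genuinely decreases; with that adjustment the nested induction closes exactly as you describe.
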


An argument in the spirit of~\cite{Culler-Shalen-varieties-1983} that implies finite generation of the trace ring without an explicit generating set is~\cite[Theorem 1.4]{Martin-reductive-2003}. Here, we give an elementary argument to prove \Cref{lem:trace_generators} using the trace identities.

\begin{proof}[Sketch of proof]
A method to determine the polynomial claimed in the statement proceeds as follows.
Using \Cref{eq:tr_conjugate} in the equivalent form $\tr(BA)=\tr(AB)$ and induction, we see that we may cyclically permute the argument in the trace function.

Using \Cref{eq:tr_permute_three}, the previous observation regarding cyclic permutation, and induction, we may write the trace of $W$ as a polynomial with integral coefficients in traces of words of length at most $L$ that have the property that they are of the form $A_1^{m_1}\cdots A_n^{m_n}$ with $m_i \in \Z.$ Note that $\sum |m_i| \le L.$

Using \Cref{eq:tr_invert} in the equivalent form $\tr\left(AB^{-1}\right) = \tr(A)\tr(B) - \tr(AB)$ and cyclic permutation, we may now arrange for all exponents to be positive, at the expense of increasing the number of terms in the polynomial expression for $\tr(W)$ but maintaining the property that each term has the required cyclic order.

Now \Cref{eq:tr_invert} also implies 
\begin{equation}
\label{eq:tracePowers}
	\tr(A^{m+1}C) = \tr(A)\tr(A^mC) - \tr(A^{m-1}C), \quad m\ge 1
\end{equation}
and hence we may inductively transform the words in the terms of the polynomial to contain each letter at most once, again maintaining the order property.
This gives the desired result.
\end{proof}

Applying \Cref{eq:tr_permute_three} three times to different partitions of the product $ABCD$ as in \cite[Lemma 4.1.1]{Gonzalez-character-1993} implies the following:
\begin{align}\label{eq:tr_four_term}
	 \notag 2 \tr (ABCD) = & \tr(AC) \tr B \tr D - \tr A \tr B \tr (CD) - \tr B \tr C \tr (DA)  \\
					  & - \tr(AC) \tr (BD) + \tr(AB) \tr (CD) + \tr(BC) \tr (DA) \\
			        \notag & - \tr(ACB) \tr D + \tr A \tr BCD + \tr B \tr CDA  + \tr C \tr DAB
\end{align}
\Cref{eq:tr_four_term} involves the term $\tr(ACB)$, which does not respect the natural cyclic order but can be substituted using \Cref{eq:tr_permute_three}.
Hence if $p\neq 2$, one may use induction on \Cref{eq:tr_four_term} and the above observations to write the trace of any word $W$ of length $L\ge 4$ in the matrices $A_1,\ldots, A_n$ and their inverses as a polynomial in the traces of the $A_i$ and their cyclically ordered double and triple products. 
Moreover, the constant term of the polynomial is an integral multiple of 2. 
\begin{corollary}
\label{cor:ordered_up_to_three}
Let $\F$ be an algebraically closed field of characteristic $p\neq2$. 
The trace of any word in the matrices $A_1,\ldots, A_n\in\SLF$ and their inverses is a polynomial in the $d=n+\binom{n}{2}+\binom{n}{3}$ traces
\[
	\tr (A_{j_1}\ldots A_{j_m}), \quad 1\le j_1<j_2 < \ldots < j_m\le n, \quad m \le 3
\]
\end{corollary}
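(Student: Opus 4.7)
The plan is to prove this by induction on the length $m$ of the cyclically ordered product. By \Cref{lem:trace_generators}, every trace of a word in $A_1, \ldots, A_n$ and their inverses is already a polynomial (with integer coefficients) in the $2^n - 1$ cyclically ordered traces $\tr(A_{j_1} A_{j_2} \ldots A_{j_m})$ with $1 \le j_1 < j_2 < \ldots < j_m \le n$. So it suffices to show that any such ordered trace of length $m \ge 4$ can be rewritten as a polynomial (with coefficients in $\Z[1/2]$) in the ordered traces of length at most $3$.

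For the inductive step, I would fix an ordered product $A_{j_1} A_{j_2} \ldots A_{j_m}$ with $m \ge 4$, group it as $ABCD$ with $A = A_{j_1}$, $B = A_{j_2}$, $C = A_{j_3}$, $D = A_{j_4} \ldots A_{j_m}$, and apply \Cref{eq:tr_four_term}. Inspecting the terms on the right-hand side, every trace appearing is either:
\begin{itemize}
\item a trace of a single generator, or of an ordered double or triple product of generators (e.g.\ $\tr(A), \tr(AB), \tr(AC), \tr(BC)$), which is already a coordinate;
\item a trace of an ordered product of generators of length strictly less than $m$ (e.g.\ $\tr(D), \tr(CD), \tr(BD), \tr(BCD), \tr(DAB)$, the last two after cyclic permutation using \Cref{eq:tr_conjugate}, which preserves the cyclic order once the index set is reread cyclically), to which the induction hypothesis applies; or
\item the trace $\tr(ACB)$, which breaks the cyclic order but can be replaced, using \Cref{eq:tr_permute_three}, by a polynomial in $\tr(A), \tr(B), \tr(C), \tr(AB), \tr(AC), \tr(BC), \tr(ABC)$, all of which are already in the allowed form.
\end{itemize}
Substituting these expressions and using that $2 \in \F^\times$ since $p \ne 2$, I can solve for $\tr(A_{j_1} \ldots A_{j_m})$ as a polynomial in the desired $d$ coordinates, completing the induction.

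I do not expect a serious obstacle; the only subtlety is bookkeeping to check that each term produced by \Cref{eq:tr_four_term} really does reduce to an ordered product of length $< m$ (after cyclic rearrangement) in the natural index order, or is the single ``out-of-order'' term $\tr(ACB)$ that is absorbed by \Cref{eq:tr_permute_three}. The hypothesis $p \neq 2$ is used exactly once, to invert the leading $2$ in \Cref{eq:tr_four_term}; this is precisely why the analogous reduction fails in characteristic $2$, motivating the alternative description pursued in \Cref{sec:traces_char_2}.
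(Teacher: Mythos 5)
Your proof is correct and follows essentially the same route as the paper: reduce to cyclically ordered products via \Cref{lem:trace_generators}, then induct on length using \Cref{eq:tr_four_term}, resolving the out-of-order term $\tr(ACB)$ with \Cref{eq:tr_permute_three} and inverting the leading factor of $2$ (which is exactly where $p \neq 2$ enters). Your explicit choice of decomposition $A = A_{j_1}$, $B = A_{j_2}$, $C = A_{j_3}$, $D = A_{j_4}\cdots A_{j_m}$ just makes precise a bookkeeping step the paper leaves implicit.
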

The corollary constructs the polynomial inverse for $\varphi_n(\X(\Gamma,\F))\to \varphi_d(\X(\Gamma,\F))$ mentioned in \Cref{rem:inverseCharP}.

In contrast, in characteristic 2 the left hand side in \Cref{eq:tr_four_term} vanishes and we obtain a special equation satisfied by the traces involving four matrices. We collect this and other useful identities that hold in this characteristic in \Cref{sec:traces_char_2}.

%%%%%%%%%%%%%%%%%%%%%%%%%%%

\subsection{Reducible and irreducible representations}
\label{sec:coordinates_for_X}

From now on, identify $\R(\Gamma,\F)$ with its image in $\SLF^n.$
Since there is a natural epimorphism from the free group $F_n$ in $n$ letters to $\Gamma$ and $\R(\Gamma,\F)\subset \R(F_n,\F) = \SLF^n$ is an algebraic set consisting of conjugacy classes of representations of the free group in $n$ letters, it suffices to analyse the varieties over the free group. 

We say that a subgroup of $\SLF$ is \textbf{reducible} if its action on $\F^2$ has an invariant 1--dimensional subspace. Otherwise it is \textbf{irreducible}. 
We call a \emph{representation} (or equivalently an $n$--tuple of matrices) \textbf{irreducible} (resp. \textbf{reducible}) if it generates an {irreducible} (resp. {reducible}) subgroup of $\SLF$. This property is invariant under conjugation, and hence inherited by conjugacy classes.

In particular, all abelian subgroups of $\SLF$ are reducible. We call a representation (or equivalently an $n$--tuple of matrices) \textbf{abelian} if it generates an {abelian} subgroup of $\SLF.$

We know the representation variety of $\Gamma$ forms an algebraic set. For the free group $F_n$, view $\SLF^n \subset \F^{4n}$ and use the \emph{Zariski topology} on $\F^{4n}.$ 
We embed $\SLF^n \subset \F^{4n}$ by setting
\[
	A_k = \begin{pmatrix}  w_k & x_k \\ y_k & z_k \end{pmatrix}.
\]
Then using coordinates $(w_1, x_1, y_1, z_1, \ldots, w_n, x_n, y_n, z_n)$ for $\F^{4n},$ $\SLF^n$ is a Zariski closed subset of $\F^{4n}$ with coordinates defined by the equations 
$w_k z_k-x_ky_k=1$ for each $1\le k \le n.$

We collect some useful results here about reducible and irreducible representations and later characters. 
The results are classical or {are due to \cite{Culler-Shalen-varieties-1983}}, and hold in any characteristic.

\begin{remark}
Since we are interested in representations up to conjugacy, we can often conjugate pairs of matrices to be in their simplest form. 

If $A, B \in\SLF$ form a reducible pair, then we may conjugate so that 
\begin{equation}
\label{eqn:redForm}
	A=\begin{pmatrix} s & \ast \\ 0 & s^{-1} \end{pmatrix} \quad\text{and}\quad
	B=\begin{pmatrix}  t & \ast \\ 0 & t^{-1} \end{pmatrix}, \quad s,t \in\F\setminus\{0\}.
\end{equation}

If $A, B \in\SLF$ form an irreducible pair, then we may conjugate so that 
\begin{equation}
\label{eqn:irredForm}
	A=\begin{pmatrix} s & 1 \\ 0 & s^{-1} \end{pmatrix} \quad\text{and}\quad
	B=\begin{pmatrix}  t & 0 \\ u & t^{-1} \end{pmatrix}, \quad s,t,u \in\F\setminus\{0\}
\end{equation}
where $u\neq-\left(s-s^{-1}\right)\left(t-t^{-1}\right)$.
This is useful for calculations.

Note the requirement $u \neq -\left(s-s^{-1}\right)\left(t-t^{-1}\right)$ in \Cref{eqn:irredForm} is because the invariant subspaces of $A$ are spanned by $(1,0)^\intercal$ and $\left(1, s^{-1}-s\right)^\intercal$ and those of $B$ by $(0,1)^\intercal$ and $\left(t-t^{-1},u\right)^\intercal.$ 
Then $u = -\left(s-s^{-1}\right)\left(t-t^{-1}\right)$ gives
$$\left(t-t^{-1}\right)\;\left(1, s^{-1}-s\right)^\intercal = \left(t-t^{-1},u\right)^\intercal$$
which is a contradiction to irreducibility.
\end{remark}

\begin{lemma}
\label{lem:reducible_commutator2}
$A, B \in\SLF$ have a common invariant 1--dimensional subspace if and only if 
\begin{equation}
\label{eq:commutator_of_reducible}
	\tr\left(ABA^{-1}B^{-1}\right) = 2.
\end{equation}
\end{lemma}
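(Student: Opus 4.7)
The plan is to establish each direction using the appropriate normal form from the preceding remark, together with the trace identity \Cref{eq:commutator_trace}.

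For the forward direction, suppose $A,B$ share a common invariant $1$-dimensional subspace. Since $\tr[\,A,\,B\,]$ is invariant under simultaneous conjugation, I may assume $A$ and $B$ are both in the upper triangular normal form of \Cref{eqn:redForm}. The matrices $A^{-1}$ and $B^{-1}$ are then also upper triangular with diagonal entries $(s^{-1},s)$ and $(t^{-1},t)$ respectively, and so the commutator $[\,A,\,B\,] = ABA^{-1}B^{-1}$ is upper triangular with diagonal $(st s^{-1}t^{-1},\;s^{-1}t^{-1}st)=(1,1)$. This gives $\tr[\,A,\,B\,]=2$ in any characteristic.

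For the reverse direction, I will prove the contrapositive: if $(A,B)$ is an irreducible pair, then $\tr[\,A,\,B\,]\neq 2$. Again by conjugation invariance, I may assume $A,B$ are in the normal form of \Cref{eqn:irredForm}, so that $s,t,u\in\F\setminus\{0\}$ and $u\neq-(s-s^{-1})(t-t^{-1})$. Reading off the entries of $AB$, I compute
\[
    x=\tr A=s+s^{-1},\qquad y=\tr B=t+t^{-1},\qquad z=\tr(AB)=st+s^{-1}t^{-1}+u.
\]
Substituting into the identity $\tr[\,A,\,B\,]-2=x^2+y^2+z^2-xyz-4$ from \Cref{eq:commutator_trace} and expanding in the variables $s^{\pm 1},t^{\pm 1},u$, the $u$-free terms will cancel (since by the forward direction already verified on the $u=0$ specialisation they must) and what remains is
\[
    \tr[\,A,\,B\,]-2 \;=\; u\bigl(u+(s-s^{-1})(t-t^{-1})\bigr).
\]
The irreducibility constraints on the normal form say exactly that neither factor vanishes, so $\tr[\,A,\,B\,]\neq 2$.

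The main obstacle is the algebraic simplification in the second step; however, this is routine polynomial manipulation, and importantly no step uses division by $2$ or any other integer, so the argument goes through uniformly in every characteristic (including $p=2$, where $s-s^{-1}=s+s^{-1}$ and the constraint reshapes accordingly but does not degenerate). A minor sanity check worth noting is that one could alternatively avoid \Cref{eq:commutator_trace} entirely and compute $ABA^{-1}B^{-1}$ directly from the normal form; the two approaches yield the same factorisation and serve as mutual verification.
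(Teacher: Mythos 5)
Your proof is correct and takes essentially the same route as the paper's: conjugate to the normal forms of the preceding remark, observe the commutator trace is $2$ in the reducible case, and show in the irreducible normal form that $\tr[\,A,\,B\,]-2 = u\bigl(u+(s-s^{-1})(t-t^{-1})\bigr)$, whose two factors are nonvanishing precisely by the constraints defining \Cref{eqn:irredForm}. The only cosmetic difference is that you reach the factorisation via the trace identity \Cref{eq:commutator_trace} together with a specialisation-at-$u=0$ shortcut, while the paper simply states the direct computation of $\tr(ABA^{-1}B^{-1})$; both give the same polynomial.
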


\begin{proof}
If $A, B \in\SLF$ have a common invariant 1--dimensional subspace, then we may conjugate them to the form given in \Cref{eqn:redForm}.
Then $\tr\left(ABA^{-1}B^{-1}\right) = 2.$ 

Now suppose $A, B \in\SLF$ have no common 1--dimensional invariant subspace. We may conjugate so they are in the form given in \Cref{eqn:irredForm}.
Then 
\[
	\tr\left(ABA^{-1}B^{-1}\right) = 2 + u^2 + u\left(s-s^{-1}\right)\left(t-t^{-1}\right).
\]
If $\tr\left(ABA^{-1}B^{-1}\right)=2,$ then $u\neq 0$ implies $0 \neq u = -\left(s-s^{-1}\right)\left(t-t^{-1}\right)$. This contradicts \Cref{eqn:irredForm}. Hence $\tr\left(ABA^{-1}B^{-1}\right)\neq2.$
\end{proof}

We can extend this idea to a triple of matrices.
\begin{lemma}
\label{lem:3_pairwise_reducible}
Suppose $(A, B, C) \in\SLF^3$ is irreducible, but any two of the matrices have a common invariant 1--dimensional subspace. Then $\tr[\;A,\; BC\;] \neq 2.$
\end{lemma}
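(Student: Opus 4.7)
My plan is to reduce the claim to \Cref{lem:reducible_commutator2} applied to the pair $(A, BC)$. By that lemma, $\tr[\;A,\; BC\;] = 2$ is equivalent to $A$ and $BC$ sharing a common $1$-dimensional invariant subspace, so it suffices to rule out this scenario under the given hypotheses.

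Let $L_{AB}, L_{AC}, L_{BC}$ denote the shared invariant lines for the three pairs. The first step is to observe that $L_{AB} \neq L_{AC}$: otherwise this single line would be simultaneously $A$-, $B$-, and $C$-invariant, contradicting the irreducibility of the triple. Hence $A$ admits at least two distinct invariant lines. The second step is to rule out $A$ having any further invariant lines. If $A$ had three distinct $1$-dimensional invariant subspaces, then choosing a basis from two of them shows $A$ is diagonal, and the third eigenvector forces both diagonal entries to coincide; combined with $\det A = 1$ this gives $A = \pm \idMat$. But then the common invariant line $L_{BC}$ would automatically be $A$-invariant, once again contradicting irreducibility of the triple. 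Hence the invariant lines of $A$ are exactly $L_{AB}$ and $L_{AC}$.

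Now suppose for contradiction that $A$ and $BC$ share an invariant line $L$. By the previous paragraph, $L \in \{L_{AB}, L_{AC}\}$. In the first case, $L = L_{AB}$ is already $B$-invariant, so applying $B^{-1}$ to $BC(L) = L$ yields $C(L) = B^{-1}(L) = L$, making $L$ invariant under all of $A, B, C$ --- a contradiction. In the second case, $L = L_{AC}$ is already $C$-invariant, so $B(L) = B(C(L)) = BC(L) = L$, again giving a common invariant line for the triple. Either way we reach a contradiction, so no such $L$ exists, and \Cref{lem:reducible_commutator2} gives $\tr[\;A,\;BC\;] \neq 2$.

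The main subtlety I anticipate is the elimination of the scalar case $A = \pm \idMat$ (or $A = \idMat$ in characteristic $2$), since one cannot blindly impose the normal form \Cref{eqn:irredForm} when no pair of the triple is itself irreducible. Once this case is disposed of via the $L_{BC}$ argument, the remainder is a clean chase with invariant subspaces that works uniformly in all characteristics and requires no matrix computation.
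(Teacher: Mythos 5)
Your proposal is correct and takes a genuinely different route from the paper. The paper conjugates $A$, $B$, $C$ into explicit normal forms --- $A$ diagonal, $B$ upper triangular, $C$ lower triangular --- imposes the pairwise reducibility conditions to determine the off-diagonal entry of $C$, and then computes $\tr[\;A,\; BC\;] - 2 = (v-v^{-1})^2(1 - s^{-2})(1 - t^{-2})$ directly, arguing each factor is nonzero. Your argument instead stays coordinate-free: you use \Cref{lem:reducible_commutator2} to reduce the claim to the nonexistence of a common invariant line for $A$ and $BC$, then chase invariant subspaces. The key observations --- that $L_{AB} \neq L_{AC}$, that $A$ is not scalar (else $L_{BC}$ would be $A$-invariant), hence that the invariant lines of $A$ are precisely $L_{AB}$ and $L_{AC}$, and that either candidate line $L$ for the pair $(A, BC)$ is forced to be a common invariant line for the whole triple --- are all sound. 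Your version makes the role of the irreducibility hypothesis more transparent and avoids all matrix computation; the paper's computation, by contrast, produces the explicit matrix forms and trace expression that are reused verbatim in the proof of \Cref{prop:reducible_closed}, so the extra work there is not wasted. Both proofs apply uniformly in all characteristics. One small remark: the step ``a third distinct eigenline forces $A = \pm \idMat$'' is exactly the standard fact that a non-scalar $2 \times 2$ matrix over an algebraically closed field has at most two eigenlines, and your verification via diagonalization is correct.
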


\begin{proof}
Let $e_1$ be a common eigenvector of $A$ and $B,$ and $e_2$ be a common eigenvector of $A$ and $C.$ Since $(A, B, C) \in\SLF^3$ is irreducible, $(e_1, e_2)$ is a basis of $\F^2$, and hence up to conjugation we may assume
\[
	A = \begin{pmatrix} v & 0 \\ 0 & v^{-1} \end{pmatrix}, \quad
	B=\begin{pmatrix} s & 1 \\ 0 & s^{-1} \end{pmatrix}, \quad
	C=\begin{pmatrix}  t & 0 \\ u & t^{-1} \end{pmatrix}, \quad s, t, u, v \in\F\setminus\{0\}.
\]
Now $\tr [\;B,\; C\;] =2$ and $u \neq 0$ implies $0 \neq u = -\left(s-s^{-1}\right)\left(t-t^{-1}\right)$. 
Then we get $\tr[\;A,\; BC\;] = 2$ if and only if $\left(v^2-1\right)\left(s^2-1\right)\left(t^2-1\right)=0.$ 
Since $(A, B, C) \in\SLF^3$ is irreducible, we have $v^2\neq1$ and thus $$\left(v^2-1\right)\left(s^2-1\right)\left(t^2-1\right) \neq 0$$ 
This gives $\tr[\;A,\; BC\;] \neq 2$ as required.
\end{proof}
Another way of formulating the statement in \Cref{lem:3_pairwise_reducible} is: if $(A, B, C) \in\SLF^3$ is irreducible, but any two of the matrices have a common invariant 1--dimensional subspace, then $A$ and $BC$ do not share a common invariant 1--dimensional subspace.

Let $(A_1,\ldots, A_n) \in\SLF^n$ and define the $d = n+\binom{n}{2}+\binom{n}{3}$ traces
\begin{equation}
\label{eq:traces_of_three}
	t_{j_1j_2\ldots j_m} = \tr (A_{j_1}\ldots A_{j_m}), \quad 1\le j_1<j_2 < \ldots < j_m\le n, \quad m \le 3
\end{equation}
Define 
\begin{equation}\label{eq:defn_of_trn}
\strn \co \SLF^n \to \F^d
\end{equation}
 to be the map

\[
	\SLF^n \ni (A_1,\ldots, A_n) \mapsto 
	(t_{j_1j_2\ldots j_m})_{1\le j_1<j_2 < \ldots < j_m\le n,\; m \le 3} \in  \F^d
\]
as in \Cref{sec:def_varieties} for $\Gamma = F_n$. 
The indices are ordered first by length and then lexicographically. 
We identify the coordinate ring of $\F^d$ with
\[
	\F[t_1, t_2, \ldots, t_d]
\]
and note that this gives natural identifications $t_{n+1} = t_{12},$ \ldots , $t_d = t_{n-2\; n-1\; n}.$
 
The group $\SLF$ acts by conjugation on $n$--tuples. We write $A = (A_1,\ldots, A_n) \in \SLF^n$ and the action of $Q \in \SLF$ is 
\[
	Q \cdot A = QAQ^{-1} = (QA_1Q^{-1},\ldots, QA_nQ^{-1})
\]
Note that $\strn(Q \cdot A) = \strn(A)$ since traces are invariant under conjugation.

\begin{proposition}[Reducible is Zariski closed]
\label{prop:reducible_closed}
$(A_1,\ldots, A_n) \in\SLF^n$ is reducible if and only if 
\begin{enumerate}
	\item $\tr [\;A_i,\; A_j\;] = 2$ for all $1 \le i < j \le n$, and
	\item $\tr [\;A_i,\; A_jA_k\;] = 2$ for all $1 \le i < j <k \le n.$
\end{enumerate}
In particular, the set $R^{red}_n$ of all reducible $(A_1,\ldots, A_n) \in\SLF^n \subset \F^{4n}$ is a Zariski closed set.
\end{proposition}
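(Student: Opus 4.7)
The plan is to establish $(\Rightarrow)$ by reducing to upper triangular form and $(\Leftarrow)$ by induction on $n$, using \Cref{lem:reducible_commutator2} and \Cref{lem:3_pairwise_reducible} to pass from information on pairs and triples to a global common invariant line. For the forward direction, if $(A_1, \ldots, A_n)$ is reducible one simultaneously conjugates so that every $A_i$ has the form \Cref{eqn:redForm}; then every product of these matrices is upper triangular, and every commutator of upper triangular elements of $\SLF$ is unipotent upper triangular, hence has trace $2$. Both conditions (1) and (2) follow.

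For the converse, I would induct on $n$, with base case $n = 2$ given by \Cref{lem:reducible_commutator2}. For $n \ge 3$, observe that the sub-tuples $(A_1, \ldots, A_{n-1})$ and $(A_1, \ldots, A_{n-2}, A_n)$ both inherit conditions (1) and (2), so by induction they share common invariant lines $V$ and $V'$ respectively. If $V = V'$, the whole tuple preserves $V$ and we are done. Otherwise, every $A_k$ with $k \le n-2$ preserves both $V$ and $V'$, which rules out $A_k$ being a non-trivial parabolic; so each such $A_k$ is either $\pm\idMat$ or diagonalizable with fixed-line set exactly $\{V, V'\}$. If every such $A_k$ equals $\pm\idMat$, then condition (1) applied to the pair $(A_{n-1}, A_n)$ produces a line invariant under both, which is automatically preserved by the whole tuple. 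Otherwise, I would choose $A_k$ with $k \le n-2$ and $A_k \neq \pm\idMat$. The triple $(A_k, A_{n-1}, A_n)$ is pairwise reducible by (1), and by condition (2) satisfies $\tr[A_k, A_{n-1}A_n] = 2$ (using the ordering $k < n-1 < n$); the contrapositive of \Cref{lem:3_pairwise_reducible}, applied with roles $A = A_k$, $B = A_{n-1}$, $C = A_n$ and common eigenvectors lying in $V$ and $V'$, then forces the triple to share a common invariant line $W$. Since $W$ is invariant for $A_k$, it must be $V$ or $V'$, and in either case the whole tuple preserves $W$.

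The technical obstacle is precisely the case $V \neq V'$ in the inductive step: condition (2) is the essential ingredient beyond pairwise reducibility that is needed to reconcile the two candidate invariant lines arising from the two sub-tuples. The index convention $k < n-1 < n$ matches exactly the role assignment in \Cref{lem:3_pairwise_reducible}, while the selection of a non-central $A_k$ is what pins down $W \in \{V, V'\}$. Finally, $R^{red}_n$ is Zariski closed because conditions (1) and (2) are polynomial equations in the matrix entries (traces of words in the $A_i^{\pm 1}$ are polynomials in the entries), so $R^{red}_n$ is the common vanishing locus of the $\binom{n}{2} + \binom{n}{3}$ regular functions $\tr[A_i, A_j] - 2$ and $\tr[A_i, A_jA_k] - 2$ on $\SLF^n \subset \F^{4n}$.
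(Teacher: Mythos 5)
Your proof is correct, and it takes a genuinely different route to the converse than the paper does. The paper argues by contraposition on the proposition itself: assuming $(A_1,\ldots,A_n)$ is irreducible but pairwise reducible, it first reduces to the case $A_1 \neq \pm\idMat$, sorts the matrices according to which of the two eigenlines of $A_1$ they preserve, writes everything out in a common basis, and then explicitly computes $\tr[\;A_1,\;A_{i_0}A_{j_0}\;] - 2 = (v-v^{-1})^2(1-s^{-2})(1-t^{-2}) \neq 0$. Your argument instead proves the converse directly by induction on $n$, with \Cref{lem:reducible_commutator2} as base case, applying the inductive hypothesis to the two sub-tuples $(A_1,\ldots,A_{n-1})$ and $(A_1,\ldots,A_{n-2},A_n)$ to get candidate invariant lines $V, V'$, and then using the contrapositive of \Cref{lem:3_pairwise_reducible} (applied to a well-chosen triple $(A_k,A_{n-1},A_n)$ with $k \le n-2$ and $A_k \neq \pm\idMat$) to reconcile $V$ and $V'$ when they differ. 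Your route is somewhat cleaner on one technical point: you only ever invoke condition (2) for a triple with indices already in increasing order ($k < n-1 < n$), whereas the paper's ``after relabelling'' step tacitly permutes the tuple, so the triple $(1, i_0, j_0)$ it produces is ordered in the relabelled indexing but not a priori in the original one; making that step rigorous would require observing that the hypotheses are closed under permutation (via the trace identities \Cref{eq:tr_conjugate,eq:tr_permute_three}). On the other hand, the paper's direct computation buys a concrete normal form and an explicit formula for the nonvanishing trace, which it then reuses almost verbatim in the second half of the proof of \Cref{lem:traces_of_three_enough}. Your inductive version is more modular but would not yield that normal form for free.
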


\begin{proof}
If $(A_1,\ldots, A_n) \in\SLF^n$ is reducible, then the equations are clearly satisfied by the previous results. 
Hence suppose that $(A_1,\ldots, A_n) \in\SLF^n$ is not reducible. 

If there is a pair $(A_i, A_j)$ that is irreducible, then $\tr [\;A_i,\; A_j\;] \neq 2$. 
Hence assume that each pair  $(A_i, A_j)$ is reducible but $(A_1,\ldots, A_n)$ is irreducible. Then each $A_i$ must have at least two invariant 1--dimensional subspaces. 

Note that if $(A_1,\ldots, A_n) \in\SLF^n$ is irreducible and $A_1 = \pm \idMat,$ then $(A_2,\ldots, A_n) \in\SLF^{n-1}$ is irreducible. Hence without loss of generality, we may assume that  $A_1\neq \pm \idMat.$

Suppose the 1--dimensional invariant subspaces of $A_1$ are spanned by $e_1$ and $e_2,$ and, after relabelling, suppose $A_2, \ldots, A_{i_0-1}$ all have invariant subspaces spanned by $e_1$ and $e_2.$ Since $(A_1,\ldots, A_n) \in\SLF^n$ is irreducible, we have $i_0-1<n.$ If all other matrices have an invariant subspace spanned by $e_2,$ then the representation is reducible. Hence there is a matrix $A_{i_0}$ with no invariant subspace spanned by $e_2.$ 
Since $A_1$ and $A_{i_0}$ have a common invariant subspace, this is spanned by $e_1.$ 

Let $A_{i_0}, \ldots, A_{j_0-1}$ be the matrices with invariant subspaces spanned by $e_1$ but not $e_2.$ Again, $j_0-1<n,$ since otherwise 
$(A_1,\ldots, A_n) \in\SLF^n$ is reducible. Hence let $A_{j_0}, \ldots, A_n$ be those with invariant subspaces spanned by $e_2$ but not $e_1.$ In particular, $A_k\neq \pm \idMat$ for all $i_0 \le k \le n.$

With respect to the basis $(e_1, e_2),$ we have
\begin{equation}
\label{eq:irreducible_with_pw_red}
	A_{1} = \begin{pmatrix} v & 0 \\ 0 & v^{-1} \end{pmatrix} \quad\text{and}\quad
	A_{i_0}=\begin{pmatrix} s & 1 \\ 0 & s^{-1} \end{pmatrix} \quad\text{and}\quad
	A_{j_0}=\begin{pmatrix}   t & 0 \\ u & t^{-1} \end{pmatrix}
\end{equation}
where $u\neq 0,$ $v^2 \neq 1,$ $s^2 \neq 1,$ $t^2 \neq 1$ and $1 < i_0 < j_0.$
The condition on $A_{i_0}$ and $A_{j_0}$ having a common invariant 1--dimensional subspace  implies
\[ 
	u = (s^{-1}-s)(t-t^{-1})
\]
We compute $\tr [\;A_1,\; A_{i_0}A_{j_0}\;] - 2 =  (v-v^{-1})^2(1-s^{-2})(1-t^{-2})\neq 0.$
This completes the proof.
\end{proof}

We can also better understand irreducible representations and their the image under $\strn$. 
\begin{proposition}[Irreducible uniquely determined by traces of at most three]
\label{lem:traces_of_three_enough}
Suppose $A = (A_1,\ldots, A_n)\in\SLF^n$ is irreducible. Then $A$ is determined uniquely up to conjugation by $\strn(A).$ Hence the orbit of $A$ under the conjugation action is the Zariski closed set 
\[
	\SLF\cdot A = \{ B \in \SLF^n \mid \strn(B)=\strn(A)\} \subset \F^{4n}
\]
\end{proposition}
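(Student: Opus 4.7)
The inclusion $\SLF\cdot A\subseteq\{B\in\SLF^n : \strn(B)=\strn(A)\}$ is immediate from the conjugation-invariance of traces, and the right-hand side is Zariski closed, being the preimage of a point under the polynomial map $\strn$ of \Cref{eq:defn_of_trn}. The content of the proposition is the reverse inclusion: under the irreducibility hypothesis on $A$, the tuple is determined up to conjugation by $\strn(A)$. The backbone of my approach is the non-degeneracy of the trace pairing $(P,Q)\mapsto\tr(PQ)$ on $M_2(\F)$, which holds over any field. Once four matrices $W_1,W_2,W_3,W_4\in M_2(\F)$ -- built from the algebra generated by the $A_i$ -- are exhibited as a basis and placed in a normal form that $\strn(A)$ determines up to simultaneous conjugation, the traces $\tr(W_kA_l)$ uniquely pin down each $A_l$, provided those traces are themselves expressible via $\strn(A)$.

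I would split into the two cases dictated by \Cref{prop:reducible_closed}. In the first case, some pair $(A_i,A_j)$ with $i<j$ is irreducible. Classical Fricke--Vogt determines $(A_i,A_j)$ up to simultaneous conjugation from $\tr A_i$, $\tr A_j$, $\tr(A_iA_j)$, all in $\strn(A)$, and one may conjugate to the normal form of \Cref{eqn:irredForm}. Because $(A_i,A_j)$ is irreducible, the algebra it generates is all of $M_2(\F)$, so $\{\idMat,A_i,A_j,A_iA_j\}$ spans and, by dimension, bases $M_2(\F)$. For each remaining $A_l$, the four traces $\tr A_l,\tr(A_iA_l),\tr(A_jA_l),\tr(A_iA_jA_l)$ all lie in $\strn(A)$: the pair traces directly (after cyclic reordering), and the triple trace either directly when $i<j<l$ or otherwise after applying \Cref{eq:tr_permute_three} to eliminate the out-of-order triple in favour of the cyclically ordered one. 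Non-degeneracy of the trace pairing then forces $A_l$.

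The harder case is that every pair $(A_i,A_j)$ is reducible while $A$ itself is irreducible. Then \Cref{prop:reducible_closed} gives indices $i<j<k$ with $\tr[\;A_i,\;A_jA_k\;]\neq 2$, and \Cref{lem:3_pairwise_reducible} makes $(A_i,A_jA_k)$ an irreducible pair; moreover $(A_i,A_j,A_k)$ can be conjugated into the normal form of \Cref{eq:irreducible_with_pw_red}, with parameters $v,s,t$ and $u=(s^{-1}-s)(t-t^{-1})$. A direct $4\times 4$ determinant computation yields
\[
\det\bigl[\,\idMat\,\bigm|\, A_i\,\bigm|\, A_j\,\bigm|\, A_k\,\bigr] \;=\; u\,(v^{-1}-v),
\]
which is nonzero by the irreducibility constraints $u\neq 0$ and $v^2\neq 1$, so $\{\idMat,A_i,A_j,A_k\}$ is a basis of $M_2(\F)$. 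The seven traces in $\strn(A)$ involving only the indices $i,j,k$ -- the three singles, the three pairs, and $\tr(A_iA_jA_k)$ -- suffice to determine the triple up to conjugation in a Fricke--Vogt-style argument: the single traces pin down $v,s,t$ up to inversion, the pair traces fix the relative inversions, and the triple trace resolves the remaining discrete ambiguity. For any other $A_l$, the four traces $\tr A_l,\tr(A_iA_l),\tr(A_jA_l),\tr(A_kA_l)$ all lie in $\strn(A)$ after cyclic reordering, and the trace-pairing argument again yields $A_l$.

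The main obstacle I anticipate is justifying the choice of basis $\{\idMat,A_i,A_j,A_k\}$ in the second case, rather than the perhaps more natural $\{\idMat,X,Y,XY\}$ for the irreducible pair $X=A_i$, $Y=A_jA_k$: the latter choice would require $\tr(XYA_l)=\tr(A_iA_jA_kA_l)$, a four-fold trace that in characteristic $2$ is \emph{not} expressible via $\strn(A)$, since the governing identity \Cref{eq:tr_four_term} has a factor of $2$ on its left-hand side that vanishes. Working instead with $\{\idMat,A_i,A_j,A_k\}$ keeps every required trace at length $\le 2$ and makes the argument uniform across all characteristics.
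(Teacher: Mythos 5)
Your proposal is correct and is essentially the paper's proof, presented in slightly more conceptual language. Where the paper writes out explicit $4\times 4$ linear systems in the matrix entries of $A_l$ and checks that their coefficient matrices are invertible (with determinants $u(u-(s^{-1}-s)(t-t^{-1}))$ in case one and $u(v^{-1}-v)$ in case two), you express the same invertibility as non-degeneracy of the trace pairing together with the basis property of $\{\idMat,A_i,A_j,A_iA_j\}$ (resp.\ $\{\idMat,A_i,A_j,A_k\}$); the coefficient matrix of the paper's system is literally the matrix of entries of these four basis matrices, so the two formulations coincide. The normal forms, the case split via \Cref{prop:reducible_closed}, the reduction of the triple to a Fricke--Vogt determination (with the triple trace $t_{ijk}$ resolving the final root ambiguity in $v$), and the choice to use only length $\le 2$ traces in the second case are all exactly as in the paper. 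Your closing remark, that switching to $\{\idMat,X,Y,XY\}$ with $Y=A_jA_k$ would require the characteristic-$2$-inaccessible quadruple trace, correctly identifies why the basis $\{\idMat,A_i,A_j,A_k\}$ is the right one; the paper makes the same choice without flagging the pitfall.
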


\begin{proof}
The hypothesis implies $n\ge 2.$ First suppose that there are two matrices which have no common invariant 1--dimensional subspace. We can identify such a pair $A, B$ amongst the matrices $A_1,\ldots, A_n$ using the given traces since they satisfy 
\[
	2 \neq \tr [\;A,\; B\;] = (\tr (A))^2 + (\tr (B))^2 + (\tr (AB))^2 - \tr (A)\tr (B)\tr (AB) -2
\]
according to \Cref{lem:reducible_commutator2}, 
\Cref{eq:commutator_trace} and noting $\tr (AB) = \tr (BA).$

First assume that $A = A_1$ and $B = A_2$. Up to conjugation, they are of the form 
\[
	A_1=\begin{pmatrix} s_1 & 1 \\ 0 & s_1^{-1} \end{pmatrix} \quad\text{and}\quad
	A_2=\begin{pmatrix}  s_2 & 0 \\ u_2 & s_2^{-1} \end{pmatrix}
\]
with $u_2\neq 0.$
The equations $t_1=\tr (A_1)=s+s^{-1}$ and $t_2=\tr (A_2)=t+t^{-1}$ have up to four distinct pairs of solutions $(s,t),$ $(s,t^{-1}),$ $(s^{-1},t),$ $(s^{-1},t^{-1}).$ The pair $(s_1, s_2)$ equals one of these four pairs, and for each such choice, $u_2$ is uniquely determined by $t_{12}=\tr (A_1A_2)=st+u_2+s^{-1}t^{-1}.$ 
The triple $(t_1, t_2, t_{12})$ therefore determines up to four triples $(s_1, s_2, u_2)$:
\begin{equation*}
\begin{split}
	(s_1, s_2, u_2)  \in \{\; &(s, t, t_{12}-st-s^{-1}t^{-1}),\; (s, t^{-1}, t_{12}-st^{-1}-s^{-1}t),\; \\
	&(s^{-1}, t, t_{12}-st^{-1}-s^{-1}t),\; (s^{-1}, t^{-1}, t_{12}-st-s^{-1}t^{-1})\;\}
\end{split}
\end{equation*}
Any two these four triples give conjugate pairs of matrices.
This proves the lemma for the case $n=2.$ 

Suppose $n \ge 3,$ and that we have chosen 
$(s_1, s_2, u_2) = (s, t, t_{12}-st-s^{-1}t^{-1})$ in the above. This fixes the pair $(A_1, A_2)$ in its conjugacy class. We now need to show that the entries of all other matrices $A_3, \ldots, A_n$ are uniquely determined by the set of traces  in \Cref{eq:traces_of_three}.

Let $3 \le k \le n.$ The four entries of $A_k = \begin{pmatrix}  w_k & x_k \\ y_k & z_k \end{pmatrix}$ satisfy a quadratic equation from $\det A_k = 1,$ and four linear equations from $\tr (A_k) = t_{k},$ $\tr (A_1A_k) = t_{1k},$ $\tr (A_2A_k)=t_{2k},$ and $\tr (A_1A_2A_k)=t_{12k}.$ The linear equations give:
\[
	\begin{pmatrix}
	1 & 0 & 0 & 1\\
	s & 0 & 1 & s^{-1}\\
	t&u&0&t^{-1}\\
	st+u&s^{-1}u&t^{-1}&s^{-1}t^{-1}
	\end{pmatrix}
	\begin{pmatrix}
	w_k\\x_k\\y_k\\z_k
	\end{pmatrix}
	=
	\begin{pmatrix}
	t_{k}\\t_{1k}\\t_{2k}\\t_{12k}
	\end{pmatrix}
\]
This system of linear equations has a unique solution if and only if 
\[ 
	u \left( u - \left(s^{-1}-s\right)\left(t-t^{-1}\right)\right) \neq 0
\]
This is equivalent to $\tr\left(A_1A_2A_1^{-1}A_2^{-1}\right) \neq 2.$

Hence if $A_1$ and $A_2$ have no common invariant 1--dimensional subspace and $n\ge 3,$ then $A_1,\ldots, A_n\in\SLF$ are uniquely determined up to conjugation by the $n + (n-1) + (n-2) + (n-3) = 4n-6$ values
\[
	\tr (A_{1}),\; \tr (A_{2}),\; \tr (A_{k}),\; \tr (A_{1}A_{2}),\; \tr (A_{1}A_{k}),\;  \tr (A_{2}A_{k}), \;\tr (A_{1}A_2A_{k}) \quad 3\le k < n
\]
This is clearly a subset of the traces given in \Cref{eq:traces_of_three}.

Now given the pair of matrices $A,B$ with no common invariant $1$--dimensional subspace, if $A = A_i$ and $B = A_j,$ we may assume $i<j$ and determine the entries as above from the values of
\[
	\tr (A_{k}),\; \tr (A_{i}A_{k}),\;  \tr (A_{j}A_{k}), \;\tr (A_{i}A_jA_{k}) \quad 1\le k < n, \; k \notin \{ i, j\}
\]
It follows from \Cref{eq:tr_conjugate,eq:tr_permute_three} that all of these traces are polynomials in integral coefficients in the traces given in \Cref{eq:traces_of_three}. This completes the proof of the lemma if there are at least two matrices without a common invariant 1--dimensional subspace.

Suppose any two matrices have a common invariant 1--dimensional subspace, but there is no such subspace invariant under all matrices. This is exactly the set-up in the second half of the proof of \Cref{prop:reducible_closed}. Now \Cref{lem:3_pairwise_reducible} and \Cref{prop:reducible_closed} imply that amongst $A_1,\ldots, A_n$ there are three matrices $A,$ $B$ and $C$ with the property that $\tr[\;A, \; BC\;]\neq 2.$ Again, suppose first that 
$A = A_1,$ $B=A_2$ and $C=A_3.$ We make the Ansatz 
\[
	A_{1} = \begin{pmatrix} v & 0 \\ 0 & v^{-1} \end{pmatrix} \quad\text{and}\quad
	A_{2}=\begin{pmatrix} s & 1 \\ 0 & s^{-1} \end{pmatrix} \quad\text{and}\quad
	A_{3}=\begin{pmatrix}  t & 0 \\ u & t^{-1} \end{pmatrix}
\]
with $t_1 = \tr (A_1),$ $t_2 = \tr (A_2),$ $t_3 = \tr (A_3),$ 
$t_{12}= \tr (A_1A_2)$,
$t_{13}= \tr (A_1A_3)$,
$t_{23}= \tr (A_2A_3)$,
$t_{123}= \tr (A_1A_2A_3)$. Here, the left hand sides are the given values for the traces, and the right hand sides are given in terms of the indeterminants $v, s, t, u.$ We need to show that we can express them uniquely in terms of the values of the traces.

As in the proof of \Cref{prop:reducible_closed}, $\tr [\;A_{2},\; A_{3}\;] = 2$ implies
$u = (s^{-1}-s)(t-t^{-1}),$ and then 
\[
	0\neq  \tr [\;A_1,\; A_{2}A_{3}\;] - 2 = (v-v^{-1})^2(1-s^{-2})(1-t^{-2})
\]
This implies $v^{2}\neq 1,$ $s^{2}\neq 1$ and $t^{2}\neq 1.$ 

Combining $t_{12} = \tr (A_1A_2)$ and $t_2 = \tr (A_2)$ gives 
\[
	s = \frac{v-v^{-1}}{t_2 v - t_{12}}
\]
Combining $t_{13}=\tr (A_1A_3)$ and $t_3=\tr (A_3)$ gives
\[
	t = \frac{v-v^{-1}}{t_3 v - t_{13}}
\]
In particular, all entries of $A_1$ and $A_2$ are uniquely determined by the traces $t_2$, $t_3$, $t_{12}$, $t_{13}$, $t_{23}$ and choosing a zero $v$ of $t_1 = x + x^{-1}$ as upper left entry for $A_1$. We need to identify the correct zero. Note that repeated application of $v^2 = t_1 v-1$ allows us to express all higher powers of $v$ as linear polynomials with coefficients in $\Z[t_1].$ Hence we obtain $0 = t_{123} - \tr (A_{1}A_{2}A_{3})$ if and only if $0 = f_1 - f_2 v,$ where $f_1, f_2 \in \Z[t_1,t_2, t_3, t_{12}, t_{13}, t_{23},t_{123}].$ This uniquely determines $v$ unless $f_1=0=f_2.$ In terms of our indeterminants, we have 
\[
	f_1 = v^{-5} (s^2-1)(t^2-1)(v^2-1)^4 \neq 0
\]
This proves the claim for $A_1, A_2, A_3.$

For every $A_k$ where $k\ge 4,$ we again let $A_k = \begin{pmatrix}  w_k & x_k \\ y_k & z_k \end{pmatrix}$ and consider the four equations arising from 
$\tr (A_k) = t_k,$ 
$\tr (A_1A_k) = t_{1k},$
$\tr (A_2A_k) = t_{2k},$
$\tr (A_3A_k) = t_{3k}.$
As above this system of linear equations in the matrix entries, 
\[
	\begin{pmatrix}
	1 & 0 & 0 & 1\\
	v & 0 & 0 & v^{-1}\\
	s&0&1&s^{-1}\\
	t&u&0&t^{-1}
	\end{pmatrix}
	\begin{pmatrix}
	w_k\\x_k\\y_k\\z_k
	\end{pmatrix}
	=
	\begin{pmatrix}
	t_{k}\\t_{1k}\\t_{2k}\\t_{12k}
	\end{pmatrix}
\]
has a unique solution under our hypotheses on the eigenvalues.

Hence the representative in the conjugacy class is uniquely determined by (a subset of) the given traces. The result in the general case (where we find three matrices amongst the generators satisfying the required inequality on the trace of a commutator) now again follows by application of trace identities.

For the second statement note that \Cref{prop:reducible_closed} implies that $\strn(A)$ is not the image of a reducible $n$--tuple.
This proves the result.
\end{proof}

We can elaborate on some of these steps.
\begin{remark}
It is a pleasant exercise to explicitly determine the $n$--tuple in the second part of the proof.
Up to conjugation, we may assume that all of $A_1, \ldots, A_{i_0-1}$ are diagonal (with $e_1$ representing the first standard basis vector) and such that $A_1^2 \neq \idMat$, all of $A_{i_0}, \ldots, A_{j_0-1}$ are upper triangular (but not diagonal), and all of $A_{j_0}, \ldots, A_n$ are lower triangular (but not diagonal). We further fix $(A_1, \ldots, A_n)$ in its conjugacy class by stipulating that the off-diagonal entry of $A_{i_0}$ equals one. This completely fixes the matrix entries in $(A_1, \ldots, A_n)$ without ambiguity. 

We now show that all matrix entries are uniquely determined by the values of the traces given in \Cref{eq:traces_of_three}. Write
\[
	A_{k} = \begin{pmatrix} s_k & 0 \\ 0 & s_k^{-1} \end{pmatrix} \quad\text{and}\quad
	A_{i}=\begin{pmatrix} s_i & w_i \\ 0 & s_i^{-1} \end{pmatrix} \quad\text{and}\quad
	A_{j}=\begin{pmatrix}  s_j & 0 \\ u_j & s_j^{-1} \end{pmatrix}
\]
where $1\le k \le i_0-1,$ $i_0\le i \le j_0-1,$ $j_0\le j \le n,$ and $w_i\neq 0 \neq u_j$ and $w_{i_0}=1,$ $s_1^2 \neq 1,$ $s_i^2 \neq 1,$ $s_j^2 \neq 1.$
The condition on $A_{i_0}$ and $A_{j}$ having a common invariant 1--dimensional subspace implies for all $j_0\le j \le n,$
\[ 
	u_j = (s_{i_0}^{-1}-s_{i_0})(s_j-s_j^{-1}).
\]
The condition on $A_{j_0}$ and $A_{i}$ having a common invariant 1--dimensional subspace implies for all $i_0\le i \le j_0-1,$
\[ 
	w_i =  \frac{(s_{i}^{-1}-s_{i})(s_{j_0}-s_{j_0}^{-1})}{u_{j_0}} 
	= \frac{(s_{i}^{-1}-s_{i})(s_{j_0}-s_{j_0}^{-1})}{(s_{i_0}^{-1}-s_{i_0})(s_{j_0}-s_{j_0}^{-1})}
	= \frac{s_{i}^{-1}-s_{i}}{s_{i_0}^{-1}-s_{i_0}}
\]
This agrees with $w_{i_0}=1.$
In particular, the $n$--tuple $(A_1, \ldots, A_n)$ is completely determined by the values and position of the eigenvalues in the matrices. %\qed
\end{remark}

We denote the orbit space under the conjugation action by $\SLF^n / \SLF,$ and call it the \textbf{character stack of the free group in $n$ letters}. We write $[A] = \SLF \cdot (A_1,\ldots, A_n)\in \SLF^n / \SLF.$
The map $\strn \co \SLF^n \to \F^d$ is constant on conjugacy classes of $n$--tuples, and hence factors through the character stack. Let
\[
	\qn \co \SLF^n / \SLF \to \F^{d}
\]
denote the intermediate map taking conjugacy classes of $n$--tuples of matrices to the $d=n+\binom{n}{2}+\binom{n}{3}$ traces in \Cref{eq:traces_of_three}.

It follows from \Cref{lem:reducible_commutator2} and \Cref{eq:commutator_trace} that for each $x\in \F^{d},$ the preimage $\qn^{-1}(x)$ contains either reducible or irreducible conjugacy classes, but not both. \Cref{lem:traces_of_three_enough} implies that $\qn$ is injective on orbits of irreducible $n$--tuples. We record this as:

\begin{corollary}[Irreducible orbit is Zariski closed]
\label{cor:irreducible_orbit_closed}
Let $[A], [B] \in \SLF^n / \SLF$ with $[A]$ irreducible. If $\qn([A]) = \qn([B])$, then $[A] = [B].$ 

In particular, $\SLF^n\cdot (A_1,\ldots, A_n) \subset \SLF^n$ is a Zariski closed set, and two such orbits are either disjoint or equal.
\end{corollary}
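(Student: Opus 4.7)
The plan is to combine \Cref{prop:reducible_closed} with \Cref{lem:traces_of_three_enough}. The only new content beyond these two propositions is to observe that irreducibility is itself detected by the short trace coordinates, which then forces $B$ to be irreducible whenever $A$ is and $\qn([A]) = \qn([B])$.

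First, I would unwind the hypothesis: $\qn([A]) = \qn([B])$ means that the values $\tr(B_{j_1}\cdots B_{j_m})$ agree with $\tr(A_{j_1}\cdots A_{j_m})$ for all $1 \le j_1 < \cdots < j_m \le n$ with $m \le 3$. By \Cref{eq:commutator_trace} and \Cref{eq:commutator_trace_three}, each commutator trace $\tr[\,A_i,\,A_j\,]$ and $\tr[\,A_i,\,A_j A_k\,]$ is a polynomial with integer coefficients in exactly these short trace coordinates. Hence, for every $1 \le i < j \le n$ and $1 \le i < j < k \le n$,
\[
\tr[\,B_i,\,B_j\,] = \tr[\,A_i,\,A_j\,] \quad\text{and}\quad \tr[\,B_i,\,B_j B_k\,] = \tr[\,A_i,\,A_j A_k\,].
\]
Since $A$ is irreducible, \Cref{prop:reducible_closed} gives at least one such trace that differs from $2$, and the corresponding trace on $B$ also differs from $2$, so $B$ is irreducible as well by the contrapositive of \Cref{prop:reducible_closed}. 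Then \Cref{lem:traces_of_three_enough} applies to $B$ and yields a unique conjugacy class with these trace values, forcing $[A] = [B]$.

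For the second statement, \Cref{lem:traces_of_three_enough} already identifies the orbit $\SLF \cdot A$ with the fiber $\{B \in \SLF^n : \strn(B) = \strn(A)\}$. Since $\strn \co \SLF^n \to \F^d$ is a polynomial map and the singleton $\{\strn(A)\} \subset \F^d$ is Zariski closed, the orbit is the preimage of a Zariski closed set under a morphism, hence Zariski closed in $\SLF^n \subset \F^{4n}$. The dichotomy that two such orbits are disjoint or equal is the standard fact that the orbits of a group action partition the space.

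No serious obstacle is anticipated; the only nontrivial step is translating ``$A$ irreducible'' from a statement about invariant subspaces into a statement about the short trace coordinates, and this translation is precisely the content of \Cref{prop:reducible_closed} combined with the identities \Cref{eq:commutator_trace} and \Cref{eq:commutator_trace_three} already established. Everything else is formal bookkeeping about preimages and orbits.
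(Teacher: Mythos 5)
Your proposal is correct and follows essentially the same route the paper takes: the paper records this corollary as an immediate consequence of the two facts that the short trace coordinates distinguish reducible from irreducible orbits (using \Cref{prop:reducible_closed}, \Cref{eq:commutator_trace}, \Cref{eq:commutator_trace_three}) and that $\qn$ is injective on irreducible orbits (\Cref{lem:traces_of_three_enough}). You have simply spelled out that deduction in a bit more detail, including the routine Zariski-closedness and orbit-partition observations, and your citation of \Cref{prop:reducible_closed} is if anything slightly more precise than the paper's invocation of \Cref{lem:reducible_commutator2} alone.
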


We note that for reducible $[A] \in \SLF^n / \SLF$, the preimage of $\qn([A])\in \F^{d}$ may contain infinitely many orbits in $\SLF^n / \SLF.$

We say that $A, B \in \SLF^n$ are \textbf{closure-equivalent} if and only if the Zariski closures of their orbits intersect. The quotient space associated with this equivalence relation $\sim_c$ is the  \textbf{variety of $\SLF$--characters of the free group in $n$ letters},  
$$X(F_n, \F)= \SLF^n / \sim_c$$ We denote the equivalence class of $A \in \SLF^n$ by $[A]_c.$

\begin{corollary}[Irreducible orbit closures]
\label{cor:irreducible_orbit_closures}
Let $A \in \SLF^n$ be irreducible. Then $[A]_c=[A]= \strn^{-1}(\strn(A)).$
\end{corollary}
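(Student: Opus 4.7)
The plan is to establish a chain of inclusions
\[
[A] \;\subseteq\; [A]_c \;\subseteq\; \strn^{-1}(\strn(A)) \;=\; [A],
\]
which together force all three sets to coincide.

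For the first inclusion, I would note it is formal: any two elements of the $\SLF$-orbit of $A$ have the same orbit, hence trivially the same orbit closure, so they are closure-equivalent. Thus $[A] \subseteq [A]_c$.

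For the second inclusion, I would invoke the observation made in \Cref{sec:def_varieties}: since trace functions are algebraic (polynomial) and invariant under conjugation, they extend to be constant on Zariski closures of conjugation orbits, and therefore take equal values on any two closure-equivalent representations. In particular, closure-equivalent $n$-tuples agree on the $d$ trace functions that make up $\strn$, so $[A]_c \subseteq \strn^{-1}(\strn(A))$.

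For the final equality, I would simply appeal to \Cref{lem:traces_of_three_enough}, whose statement is precisely that for irreducible $A$ the conjugation orbit $\SLF \cdot A$ equals $\{B \in \SLF^n \mid \strn(B) = \strn(A)\}$. Concatenating the three relations gives $[A] = [A]_c = \strn^{-1}(\strn(A))$. There is no real obstacle here; the substantive content sits in \Cref{lem:traces_of_three_enough} (together with \Cref{cor:irreducible_orbit_closed}, which guarantees that $[A]$ itself is already Zariski closed, making the closure-equivalence class collapse to the orbit). The corollary is essentially a packaging statement saying that for irreducible characters the three natural notions of ``equivalence class of $A$'' --- orbit, closure-equivalence class, and trace-level fiber --- all agree.
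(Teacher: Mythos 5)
Your proof is correct, but it takes a genuinely different (and arguably more streamlined) route than the paper. The paper proves $[A]_c = [A]$ by a case analysis on which $B$ could lie in $[A]_c$: for irreducible $B$ it invokes \Cref{cor:irreducible_orbit_closed} to say $[B]$ is closed and either disjoint from or equal to $[A]$, while for reducible $B$ it uses \Cref{prop:reducible_closed} to argue $\overline{[B]} \subseteq R^{red}_n$, which misses $[A]$ entirely; the equality $[A]=\strn^{-1}(\strn(A))$ is then implicit from \Cref{lem:traces_of_three_enough}. You instead sandwich $[A]_c$ between $[A]$ and $\strn^{-1}(\strn(A))$, with the nontrivial inclusion $[A]_c \subseteq \strn^{-1}(\strn(A))$ following purely from the fact that $\strn$ is a conjugation-invariant regular map (so $\strn^{-1}(\strn(A))$ is closed, contains $[A]$, hence contains $\overline{[A]}$, and the same argument applied to $B$ propagates equality of traces across closure-equivalence). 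This bypasses the reducible/irreducible dichotomy and the appeal to the closedness of $R^{red}_n$; the only substantive input is \Cref{lem:traces_of_three_enough}. Both routes are sound, but yours reuses the proposition more economically; the paper's argument has the minor virtue of making explicit exactly why no reducible tuple can land in $[A]_c$, which is perhaps pedagogically useful given that the subsequent lemma treats the reducible case.
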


\begin{proof}
According to \Cref{cor:irreducible_orbit_closed}, $[A]$ is Zariski closed and if $B$ is irreducible, then either $[A]\cap [B] = \emptyset$ or $[A]= [B].$ Now if $B$ is reducible, then the orbit of $B$ is contained in the Zariski closed set $R^{red}_n$ from \Cref{prop:reducible_closed}. Hence the closure of the orbit of $B$ is contained in $R^{red}_n$. But $[A]\cap R^{red}_n = \emptyset$. Hence $[A]_c$ only contains the orbit of $A.$
\end{proof}

We can similarly look at reducible orbit closures.
\begin{lemma}[Reducible orbit closures]
\label{lem:reducible_orbit_closures}
Suppose $A = (A_1,\ldots, A_n)\in\SLF^n$ is reducible. Then there are diagonal matrices $D_1,\ldots, D_n\in\SLF$ such that $\strn(A) = \strn(D)$, where $D = (D_1,\ldots, D_n).$ Moreover, $D \in [A]_c$ and $D^{-1} = (D_1^{-1},\ldots, D_n^{-1})\in [A]_c.$ In particular, $[A]_c = \strn^{-1}(\strn(A)).$
\end{lemma}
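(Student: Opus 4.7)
The plan is to produce $D$ by upper-triangularization, use a one-parameter subgroup to place $D$ in the closure of the orbit of $A$, transport $D$ to $D^{-1}$ by a single conjugation, and then reduce the final equality to the case of two diagonal tuples with matching short traces.

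Since $A$ is reducible, the matrices $A_1,\ldots,A_n$ share a common invariant $1$-dimensional subspace, so after simultaneous conjugation each $A_i=\begin{pmatrix} s_i & x_i \\ 0 & s_i^{-1}\end{pmatrix}$ is upper triangular. Set $D_i=\mathrm{diag}(s_i,s_i^{-1})$. Because any product of upper triangular matrices is upper triangular and its trace depends only on the diagonal entries, $\strn(A)=\strn(D)$. To place $D$ in $[A]_c$, I would consider the one-parameter subgroup $Q_t=\mathrm{diag}(t,t^{-1})$, which conjugates $A_i$ to $\begin{pmatrix} s_i & t^2x_i\\ 0 & s_i^{-1}\end{pmatrix}$. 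The assignment $t\mapsto Q_t\cdot A$, originally defined on $\F\setminus\{0\}$, extends to a morphism $\F\to\SLF^n$ whose value at $t=0$ is $D$, so $D\in\overline{\SLF\cdot A}$ and hence $D\sim_c A$. For $D^{-1}$, conjugation by $J=\begin{pmatrix}0 & 1\\ -1 & 0\end{pmatrix}$ sends each $D_i$ to $D_i^{-1}$, so $D^{-1}\in\SLF\cdot D\subseteq[A]_c$.

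For the equality $[A]_c=\strn^{-1}(\strn(A))$, the inclusion $\subseteq$ is immediate since traces are constant on closure-equivalence classes. For $\supseteq$, take $B\in\strn^{-1}(\strn(A))$. The identities \Cref{eq:commutator_trace} and \Cref{eq:commutator_trace_three} express $\tr[\,A_i,A_j\,]$ and $\tr[\,A_i,A_jA_k\,]$ as polynomials in the entries of $\strn$, so \Cref{prop:reducible_closed} forces $B$ to be reducible. Applying the first part to $B$ yields a diagonal tuple $D'\in[B]_c$ with $\strn(D')=\strn(B)=\strn(D)$. It now suffices to show that $D$ and $D'$ are $\SLF$-conjugate. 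Writing $D'_i=\mathrm{diag}(s_i^{\epsilon_i},s_i^{-\epsilon_i})$ with $\epsilon_i\in\{\pm 1\}$ (forced by $\tr(D'_i)=\tr(D_i)$), I would analyze the pair-trace constraints $s_i^{\epsilon_i}s_j^{\epsilon_j}+s_i^{-\epsilon_i}s_j^{-\epsilon_j}=s_is_j+s_i^{-1}s_j^{-1}$ to conclude that on the set of indices where $s_i^2\neq 1$ the signs $\epsilon_i$ are either uniformly $+1$ or uniformly $-1$; at indices with $s_i^2=1$ one has $D_i=\pm\idMat=D_i^{-1}$ and the sign is irrelevant. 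In either case $D'\in\{D,D^{-1}\}\subset\SLF\cdot D$, so $B\sim_c D'\sim_c D\sim_c A$.

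The main obstacle is the sign-pattern analysis at the end. The cleanest way to implement it is: pick any index $i_0$ with $s_{i_0}^2\neq 1$ (if none exists every $D_i$ is $\pm\idMat$ and $D'=D$ automatically); by possibly replacing $D'$ with $J\cdot D'$ we may assume $\epsilon_{i_0}=+1$; then for any other index $j$ with $s_j^2\neq 1$, the pair-trace equation $s_{i_0}s_j^{\epsilon_j}+s_{i_0}^{-1}s_j^{-\epsilon_j}=s_{i_0}s_j+s_{i_0}^{-1}s_j^{-1}$ rearranges to $(s_{i_0}-s_{i_0}^{-1})(s_j^{\epsilon_j}-s_j)=0$, forcing $\epsilon_j=+1$. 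This finishes the proof.
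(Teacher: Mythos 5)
Your proof is correct and follows essentially the same route as the paper: upper-triangularize, take the diagonal parts to form $D$, degenerate to $D$ along the one-parameter subgroup $\mathrm{diag}(t,t^{-1})$, pass to $D^{-1}$ via the Weyl element, and then show $D,D^{-1}$ are the only diagonal tuples with the given short traces by comparing entry signs at indices where $s_i^2\neq 1$. You spell out slightly more explicitly than the paper the final chaining step --- that any $B\in\strn^{-1}(\strn(A))$ is reducible by \Cref{prop:reducible_closed}, hence closure-equivalent to a diagonal $D'$, which must be one of $D,D^{-1}$ --- but the underlying argument is the same.
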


\begin{proof}
If $A = (A_1,\ldots, A_n)\in\SLF^n$ is reducible, then we may conjugate so that each $A_k$ is upper triangular,
\[
	A_k = \begin{pmatrix} s_k & u_k \\ 0 & s_k^{-1} \end{pmatrix}
\]
Define 
\[
	D_k = \begin{pmatrix} s_k & 0 \\ 0 & s_k^{-1} \end{pmatrix}, \quad
	Q = \begin{pmatrix} 0 & 1 \\ -1 & 0 \end{pmatrix}, \quad
	Q_u =  \begin{pmatrix} u & 0 \\ 0 & u^{-1} \end{pmatrix}, \quad u \in \F\setminus \{0\}
\]
Let $D = (D_1,\ldots, D_n).$ Then $\strn(A) = \strn(D).$

We next show $D \in [A]_c$ and $D^{-1} \in [A]_c.$ Note $D \in [A]$ if and only if $D^{-1}$$=$ $QDQ^{-1}\in [A]$, in which case there is nothing to show. Hence suppose $D \notin [A].$

Now
\begin{equation}
\label{eqn:redOrbitClosures}
	Q_uA_kQ_u^{-1} = A_k = \begin{pmatrix} s_k & u_ku^2 \\ 0 & s_k^{-1} \end{pmatrix}
\end{equation}
for every $u \in \F \setminus \{0\}$.

Since $D \notin [A],$ there is an index $i_0$ with $u_{i_0} \neq 0.$
Given the entries $s_k, u_k$, consider the algebraic subset $X$ of $\F^{4n}$ with variables $w_k, x_k, y_k, z_k$ defined by the equations
\[
	w_k = s_k,\;
	u_{i_0} x_k  = u_k x_{i_0},\;
	y_k=0, \;
	z_k =  s_k^{-1} \quad \text{ for all } 1\le k \le n.
\]
These represent entries of some matrix that solves \Cref{eqn:redOrbitClosures}. 
Then all variables are specified except for $x_{i_0}$ and 
$X\cong  \F$. We also observe that $X = \{ Q_uA_kQ_u^{-1} \mid u \in \F \setminus \{0\} \} \cup \{ D\}.$ 
This shows that the Zariski closure of $\{ Q_uA_kQ_u^{-1} \mid u \in \F \setminus \{0\} \}$ contains $\{D\}.$ 
Since the Zariski closure of a set contains the Zariski closure of any of its subsets, this implies that the Zariski closure of $[A]$ contains $D.$ Hence $D \in [A]_c.$ Conjugation by $Q$ shows $D^{-1} \in [A]_c.$

For the last part, we need to show that $D$ and $D^{-1}$ are the only $n$--tuples of diagonal matrices in $\strn^{-1}(\strn(A)).$
Suppose $\strn(A) = (t_1, \ldots, t_{n-2\; n-1\; n}).$ We show that there are at most two $n$--tuples of diagonal matrices with the specified traces.

If $t_k= \varepsilon_k 2,$ where $\varepsilon \in \{ \pm 1\},$ then let $D_k = \varepsilon_k E.$ Note that $D_k = D_k^{-1}$ in this case. If each $t_k= \varepsilon_k 2,$ then there is no ambiguity in the choices and there is exactly one $n$--tuple of diagonal matrices with the specified traces.

Suppose $t_i$ is the first coordinate not equal to $\pm 2.$ Then $t_i = x + x^{-1}$ has two distinct roots. Choose a root $d_i$ of $t_i = x + x^{-1}$ to be the upper left entry of $D_i.$ Let $t_j$ be the next coordinate not equal to $\pm 2.$ Then $t_j = x+x^{-1}$ has two distinct roots, and we need to choose the upper left entry $d_j$ of $D_j$ amongst them. We also have the requirement on the upper left entry $d_j$ of $D_j$ to satisfy
$t_{ij} = d_id_j + d_i^{-1}d_j^{-1}.$ If this is satisfied for both choices of roots, then
$d_id_j + d_i^{-1}d_j^{-1} = d_id_j^{-1} + d_i^{-1}d_j$, which is equivalent to 
$(d_i-d_i^{-1})(d_j-d_j^{-1})=0$, which contradicts $t_i^2 \neq 4 \neq t_j^2.$ Hence the root $d_j$ of $t_j = s_j + s_j^{-1}$ is uniquely determined by $t_{ij} = d_id_j + d_i^{-1}d_j^{-1}.$

Inductively, each matrix $D_k$, $i<k\le n$ is uniquely determined by the first choice of upper left entry in $D_i$ and the values of $t_k$ and $t_{ik}$ or it is determined by $t_k=\pm 2.$ It also follows that making a different choice for the upper left entry of $D_i$ results in inverses of all $D_k.$ This completes the proof.
\end{proof}

%%%%%%%%%%%%%%%%%%%%%%%%%%%

\subsection{Reducible and irreducible characters}

We now revert to the notation in \Cref{sec:def_varieties}. Recall that we have identified $\R(F_n,\F) = \SLF^n$ and for each $\rho\in \R(F_n,\F),$ we have the trace function $\tau_\rho\co F_n\to \F.$ \Cref{cor:irreducible_orbit_closures} and \Cref{lem:reducible_orbit_closures} imply:
\begin{theorem}[Traces are isomorphic to characters]
\label{thm:character_variety_is_characters}
The closure-equivalence classes in $\R(F_n,\F)=\SLF^n$ are precisely the fibres of $\strn\co\SLF^n\to \F^d.$ Hence the image of $\strn$ is naturally isomorphic with $\X(F_n,\F).$
\end{theorem}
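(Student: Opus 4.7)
The plan is to combine \Cref{cor:irreducible_orbit_closures} and \Cref{lem:reducible_orbit_closures} to directly identify the fibres of $\strn$ with the closure-equivalence classes. Since $\strn$ is constant on conjugation orbits (traces are conjugation invariant) and continuous in the Zariski topology (the entries of $\strn$ are polynomials in the matrix entries), $\strn$ is constant on each closure-equivalence class. Hence the fibres of $\strn$ are unions of closure-equivalence classes, and the content of the theorem is the converse: each fibre is a single closure-equivalence class.

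To prove the converse, fix $A\in\SLF^n$ and let $B\in\SLF^n$ satisfy $\strn(A)=\strn(B)$. By \Cref{prop:reducible_closed}, reducibility is characterised by the vanishing of certain traces of commutators that are themselves polynomials in the coordinates of $\strn$; consequently $A$ is reducible if and only if $B$ is. In the irreducible case, \Cref{cor:irreducible_orbit_closures} gives $[A]_c=\strn^{-1}(\strn(A))$, which already contains $B$, so $B\in [A]_c$. In the reducible case, \Cref{lem:reducible_orbit_closures} directly gives $[A]_c=\strn^{-1}(\strn(A))$, so again $B\in [A]_c$. In both cases $A$ and $B$ are closure-equivalent, so the fibre $\strn^{-1}(\strn(A))$ coincides with $[A]_c$.

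For the second assertion, this shows that $\strn$ descends to a bijection $\overline{\strn}\co\X(F_n,\F)\to \strn(\SLF^n)\subseteq \F^d$. By construction, the variety of characters $\X(F_n,\F)$ carries the quotient topology from $\R(F_n,\F)$ and its structure sheaf is the ring of trace functions, while $\strn(\SLF^n)$ carries the subspace structure from $\F^d$ whose coordinate ring is generated by the images of the $d$ distinguished traces $t_{j_1\cdots j_m}$. Since these are precisely the coordinate functions generating the ring of trace functions (\Cref{lem:trace_generators}), $\overline{\strn}$ is a morphism of affine varieties, and the natural identification is established.

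The only subtle point is the reducible-fibre part, but that has already been handled in \Cref{lem:reducible_orbit_closures} where the nontrivial step was showing that the diagonal normal form $D$ and its inverse $D^{-1}$ lie in every closure-equivalence class with the given traces and are the only diagonal representatives. Granted those results, the present theorem is essentially a bookkeeping statement assembling the irreducible and reducible cases and invoking \Cref{lem:trace_generators} to identify the natural coordinate system.
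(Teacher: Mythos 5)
Your proof is correct and follows essentially the same route the paper takes: the paper states the theorem as an immediate consequence of \Cref{cor:irreducible_orbit_closures} and \Cref{lem:reducible_orbit_closures}, which is exactly the two-case assembly you carry out. The extra details you supply (that $\strn$ is constant on closure-equivalence classes because the fibres are Zariski closed, that a fibre cannot mix reducible and irreducible classes by \Cref{prop:reducible_closed}, and the remark identifying the coordinate ring with the trace ring via \Cref{lem:trace_generators}) are all consistent with, and fill out, the paper's terse presentation.
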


We get the following corollary.
\begin{corollary}
\label{cor:trace_function_characterisation}
Let $\Gamma$ be a finitely generated group.
Suppose $\rho, \sigma \in \R(\Gamma,\F).$ Then the following four statements are equivalent:
\begin{enumerate}
	\item $\tau_\rho = \tau_\sigma$;
	\item $\tau_\rho$ and $\tau_\sigma$ agree on all ordered products of distinct generators;
	\item $\tau_\rho$ and $\tau_\sigma$ agree on all ordered single, double and triple products of distinct generators;
	\item $\rho$ and $\sigma$ are closure-equivalent.
\end{enumerate}
\end{corollary}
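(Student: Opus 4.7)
The plan is to establish the chain of implications $(1) \Rightarrow (2) \Rightarrow (3) \Rightarrow (4) \Rightarrow (1)$, with the only substantive work occurring in $(3) \Rightarrow (4)$ and $(4) \Rightarrow (1)$.

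The implications $(1) \Rightarrow (2) \Rightarrow (3)$ are immediate from the definitions, since (3) asks for agreement on a subset of the products appearing in (2), which in turn is a subset of all elements of $\Gamma$ considered in (1).

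For $(4) \Rightarrow (1)$, I would use that for every $\gamma \in \Gamma$, the map $\R(\Gamma,\F) \to \F$ sending $\rho \mapsto \tr(\rho(\gamma))$ is a regular function (a polynomial in the matrix entries of $\rho(\gamma_1),\ldots,\rho(\gamma_n)$). This function is invariant under conjugation, hence constant on each $\SLF$--orbit, and therefore constant on each orbit closure (a regular function constant on a set is constant on its Zariski closure). If $\rho \sim_c \sigma,$ pick $\tau$ in the intersection $\overline{\SLF\cdot\rho}\cap\overline{\SLF\cdot\sigma}$; then $\tau_\rho(\gamma) = \tr(\tau(\gamma)) = \tau_\sigma(\gamma)$ for all $\gamma \in \Gamma$.

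The main step is $(3) \Rightarrow (4)$. I would reduce to the free group case handled by \Cref{thm:character_variety_is_characters}. Fix an ordered generating set $(\gamma_1,\ldots,\gamma_n)$ of $\Gamma$ and let $\pi\co F_n \twoheadrightarrow \Gamma$ be the induced surjection. Composition with $\pi$ identifies $\R(\Gamma,\F)$ with a Zariski closed subset of $\R(F_n,\F) = \SLF^n$, equivariantly with respect to the conjugation actions. Under this identification $\rho$ and $\sigma$ are sent to representations $\tilde\rho,\tilde\sigma$ of $F_n$ whose short trace coordinates $\strn(\tilde\rho)$ and $\strn(\tilde\sigma)$ agree by hypothesis (3), since these coordinates are precisely the traces of single, double, and triple products of the generators. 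By \Cref{thm:character_variety_is_characters}, $\tilde\rho$ and $\tilde\sigma$ lie in the same closure-equivalence class in $\SLF^n$, i.e.\ their $\SLF$--orbit closures in $\SLF^n$ intersect.

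It then remains to upgrade this to closure-equivalence inside $\R(\Gamma,\F)$. The hard part will be this last step, but it is resolved by observing that $\R(\Gamma,\F)$ is Zariski closed in $\R(F_n,\F)$, so for any $\rho \in \R(\Gamma,\F)$ the orbit closure satisfies
\[
	\overline{\SLF\cdot \rho}^{\R(\Gamma,\F)} = \overline{\SLF\cdot \rho}^{\R(F_n,\F)} \cap \R(\Gamma,\F) = \overline{\SLF\cdot\rho}^{\R(F_n,\F)},
\]
since the orbit is already contained in $\R(\Gamma,\F).$ Hence the ambient intersection point in $\R(F_n,\F)$ already lies in $\R(\Gamma,\F),$ and $\rho \sim_c \sigma$ in $\R(\Gamma,\F).$ This closes the cycle and proves the corollary.
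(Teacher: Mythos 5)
Your proof is correct, and it takes a route that is structurally a bit different from the paper's. The paper proves the pairwise equivalences $(1)\Leftrightarrow(2)$ (via the Fricke--Vogt lemma, \Cref{lem:trace_generators}), $(2)\Rightarrow(3)$, $(3)\Leftrightarrow(4)$ (via \Cref{thm:character_variety_is_characters}), and $(4)\Rightarrow(1)$. You instead run the single cycle $(1)\Rightarrow(2)\Rightarrow(3)\Rightarrow(4)\Rightarrow(1)$, which makes the nontrivial direction $(2)\Rightarrow(1)$ unnecessary: the first two implications are restriction-to-a-subset trivialities, and the Fricke--Vogt lemma drops out of the logical chain entirely, since all the heavy lifting is delegated to \Cref{thm:character_variety_is_characters} and to the observation that trace functions are regular and conjugation-invariant. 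This is leaner, at the cost of losing the explicit structural statement that traces of words reduce to traces of ordered products. The other place where you go further than the paper is the reduction to $F_n$ in step $(3)\Rightarrow(4)$: the paper simply declares that it suffices to work with $\Gamma=F_n$, whereas you supply the justification — the conjugation-equivariant closed embedding $\R(\Gamma,\F)\hookrightarrow\R(F_n,\F)$ and the consequent equality of orbit closures $\overline{\SLF\cdot\rho}^{\R(\Gamma,\F)}=\overline{\SLF\cdot\rho}^{\R(F_n,\F)}$, which holds because $\SLF\cdot\rho$ already lies in the closed subset $\R(\Gamma,\F)$. That is exactly the point one needs to check, and your argument for it is correct.
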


\begin{proof}
It suffices to prove this statement for $\Gamma=F_n$.

We first note that the equivalence of (1) and (2) is the content of \Cref{lem:trace_generators}.
Now (2) implies (3), since we restrict to a smaller set.
The equivalence of (3) and (4) is the content of \Cref{thm:character_variety_is_characters}.
Finally, (4) implies (1) since every trace function is an algebraic function on $\R(F_n,\F).$
\end{proof}

From now on, we will often identify $\X(\Gamma,\F)$ with its image in $\F^d$. 
We say that an element of the \emph{variety of characters} is \textbf{irreducible} (resp. \textbf{reducible}) if it is associated with an irreducible (resp. {reducible}) $n$--tuple of matrices. 

Reducible characters form a Zariski closed set since they are the image of a Zariski closed set (see \Cref{prop:reducible_closed}; specific equations are given in \Cref{cor:redChar}). We denote $\Xred(\Gamma,\F)$ the union of all reducible characters. The union of irreducible characters may not be Zariski closed. We denote $\Xirr(\Gamma,\F)$ the Zariski closure of the union of all irreducible characters.

%%%%%%%%%%%%%%%%%%%%%%%%%%%

\subsection{Traces in characteristic 2}
\label{sec:traces_char_2}

Recall from the introduction that $\F_2$ denotes an (arbitrary) algebraically closed field of characteristic 2. 
We introduce extra trace identities in characteristic 2 and use them to determine the short trace coordinates and birational inverse for $\varphi_n(\X(\Gamma,\F_2))\to \varphi_d(\X(\Gamma,\F_2))$ in characteristic 2, as mentioned in \Cref{rem:inverseChar2}.

Note that in all previously established identities, we have $2=0.$ 
In particular, \Cref{eq:tr_four_term} reads:
\begin{align}\label{eq:tr_four_term_c2}
	\notag0=& \tr(AC) \tr B \tr D + \tr A \tr B \tr (CD) + \tr B \tr C \tr (DA)  \\
		 & + \tr(AC) \tr (BD) + \tr(AB) \tr (CD) + \tr(BC) \tr (DA) \\
	\notag & + \tr(ACB) \tr D + \tr A \tr (BCD) + \tr B \tr (CDA)  + \tr C \tr (DAB)
\end{align}

As before, write $t_{i_0\ldots i_m} = \tr(A_{i_0}\cdots A_{i_m})$. We also use the notation $t_A = \tr(A)$ for all $A\in \SLFt$. 

\begin{lemma}\label{lem:reduction_using_single_c2}
Let $A, B, C, D, E \in \SLFt.$ Then 
\begin{equation}
\begin{split}
	t_A t_{BCDE}	&=(t_{BD}+t_Bt_D)(t_{ACE}+t_Ct_{AE})+(t_{CE}+t_Ct_E)(t_{BAD}+t_Bt_{AD})\\
				&+t_{BC}t_{ADE}+t_{BE}(t_{ACD}+t_Ct_{AD}+t_Dt_{AC}+t_At_Ct_D)\\
				&+t_{CD}(t_{BAE}+t_Bt_{AE}+t_Et_{AB}+t_At_Bt_E)+t_{DE}(t_{BAC}+t_Bt_{AC}+t_Ct_{AB}+t_At_Bt_C)
\end{split}
\end{equation}
In particular,
\begin{equation}
\label{eq:trace_four_char2}
\begin{split}
	t_A t_{ABCD}	&=(t_{AC}+t_At_C)t_{ABD}+t_{AB}t_{ACD}+t_Ct_{AB}t_{AD}+t_{AD}t_{ABC}\\
			&+t_Bt_Ct_D+t_Bt_{CD}+t_Ct_{BD}+t_Dt_{BC}
\end{split}
\end{equation}
\end{lemma}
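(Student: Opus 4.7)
I would prove the five-matrix identity first, and obtain the second as the specialisation $E = A$.

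For the first identity, the natural algebraic route starts by applying the characteristic-2 four-matrix identity \Cref{eq:tr_four_term_c2} to the grouped quadruple $(A, B, C, DE)$; after rearranging for $t_A t_{BCDE}$ and normalising the out-of-order triple $t_{ACB}$ via the characteristic-2 form of \Cref{eq:tr_permute_three}, one obtains an expression that still contains the four-fold traces $t_B t_{ACDE}$ and $t_C t_{ABDE}$. Additional applications of \Cref{eq:tr_four_term_c2} to other groupings (such as $(A, B, CD, E)$ and $(A, BC, D, E)$) produce further relations among four-fold traces, and a suitable combination in characteristic 2 (where $2 = 0$) eliminates the residual four-fold products in pairs. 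The surviving terms, all products of traces of length at most three, must then be reorganised by grouping by their double-trace factors $t_{BD}, t_{CE}, t_{BC}, t_{BE}, t_{CD}, t_{DE}$; one last application of \Cref{eq:tr_permute_three} rewrites expressions like $t_{BAD}+t_Bt_{AD}+t_Dt_{AB}+t_At_Bt_D$ so that coefficient blocks assemble into the factored form on the RHS.

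Because the algebraic bookkeeping is delicate, a cleaner verification uses Zariski density. By \Cref{lem:traces_of_three_enough} and \Cref{cor:irreducible_orbit_closures}, an irreducible tuple is determined up to conjugation by its short trace coordinates, and the irreducible locus is Zariski dense in $\R(F_5,\F_2)$. Since both sides of the identity are regular functions of the matrix entries, it suffices to verify it on a Zariski-dense subset of irreducible tuples. Choosing a pair without a common invariant line and placing it in the standard form \Cref{eqn:irredForm} reduces the identity to a direct (if long) polynomial verification in finitely many indeterminates.

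For the second identity, I would substitute $E = A$ into the first. The new double occurrences of $A$ are reduced using the characteristic-2 Cayley--Hamilton relations $t_{A^2} = t_A^2$ and $t_{A^2 X} = t_A t_{AX} + t_X$ (the $m = 1$ case of \Cref{eq:tracePowers}); these reduce $t_{AA}$, $t_{ACA}$, $t_{ADA}$, $t_{BAA}$ to shorter traces, while cyclic invariance rewrites $t_{BCDA} = t_{ABCD}$ on the LHS. The remaining simplifications in characteristic 2 are then direct.

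\textbf{Main obstacle.} The factored shape of the RHS of the first identity is not visible from any single application of \Cref{eq:tr_four_term_c2}, and uncovering it algebraically requires simultaneously eliminating two four-fold traces while organising the surviving terms by a nonobvious grouping. The characteristic-2 cancellations do much of the work, but tracking them across several applications of the four-matrix identity is the main point where care is required.
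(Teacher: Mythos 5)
Your proposal matches the paper's proof: the first identity is checked by direct computation, and the second follows by substituting $E = A$ and applying the characteristic-2 Cayley--Hamilton reductions $t_{A^2} = t_A^2$ and $t_{A^2X} = t_A t_{AX} + t_X$, together with cyclic invariance of the trace. The Zariski-density framing you add for the first identity is sound but superfluous --- both sides are already polynomials in the matrix entries, so nothing beyond direct expansion is required, and \Cref{lem:traces_of_three_enough} plays no role.
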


\begin{proof}
The first equation can be verified via direct computation in characteristic 2.

The second follows from the first because, in characteristic 2, for all $X, Y \in \SLFt$ we have $t_{XX}=t_X^2$ and $t_{XXY}+t_{X}t_{XY}=t_Y$ using \Cref{eq:tr_invert,eq:tracePowers} respectively.
\end{proof}

It follows from \Cref{lem:reduction_using_single_c2} and induction that if $Z \subseteq \X(\Gamma,\F_2)$ is a component with the property that for some generator $\gamma_i$ of $\Gamma$ the trace function $t_i$ is not in the ideal defining ${\varphi}_d(Z),$ then we can define a birational inverse of the projection map when restricted to $\F_2^{2^n-1} \supset {\varphi}_n(Z) \to {\varphi}_d(Z)  \subset \F_2^{d}$ of the form
\[
	f=(f_1, \ldots, f_{2^n-1}) \co \F_2^{d}  \to \F_2^{2^n-1}
\]
with $f_k = t_k$ for all $1 \le k \le d$, and $f_k \in \F_2[t_1, \ldots, t_{i-1}, t_i^{\pm 1}, t_{i+1}, \ldots t_d]$ for all $k > d$. 
In particular, if $\Gamma$ is the free group, then we may take $i=1$ and apply $f$ to all of ${\varphi}_d(\X(\Gamma,\F_2)).$

\begin{lemma}\label{lem:trivial_traces_trivial_char_c2}
Let $Z \subseteq \X(\Gamma,\F_2)$ be a component with the property that for each generator $\gamma_i,$ the trace function $t_i$ vanishes on $Z.$ If all trace functions $t_{ij}$ also vanish on $Z$, then $Z$ only contains the character of the trivial representation. In particular, $Z$ is mapped to the origin in $\F_2^{d}.$
\end{lemma}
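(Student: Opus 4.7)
The plan is to prove the lemma by showing that every representation $\rho \in \R(\Gamma, \F_2)$ whose character lies in $Z$ is closure-equivalent to the trivial representation $\rho_0$ (which sends every element to $\idMat$). By \Cref{cor:trace_function_characterisation}, this will force every character in $Z$ to agree with $\tau_{\rho_0}$; since $\tr \idMat = 2 = 0$ in characteristic $2$, all trace functions vanish on $\rho_0$, so the image of $Z$ in $\F_2^d$ is the origin.

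The key observation is a characteristic-$2$ rigidity phenomenon. If $t_i(\rho) = \tr \rho(\gamma_i) = 0$ and $\det \rho(\gamma_i) = 1$, then the characteristic polynomial of $\rho(\gamma_i)$ is $x^2 + 1 = (x+1)^2$, so both eigenvalues equal $1$ and $\rho(\gamma_i)$ is either $\idMat$ or a non-trivial unipotent matrix with a \emph{unique} invariant $1$-dimensional subspace. If every $\rho(\gamma_i) = \idMat$, then $\rho = \rho_0$ and there is nothing to prove. Otherwise, after conjugating $\rho$ within its conjugacy class, we may assume $\rho(\gamma_1) \neq \idMat$ is non-trivial unipotent with unique invariant line $\langle e_1 \rangle$. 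For every other generator $\gamma_j$, \Cref{eq:commutator_trace} combined with the hypotheses $t_1 = t_j = t_{1j} = 0$ yields
\begin{equation*}
\tr[\,\rho(\gamma_1),\, \rho(\gamma_j)\,] = t_1^2 + t_j^2 + t_{1j}^2 - t_1 t_j t_{1j} - 2 = -2 = 0 = 2
\end{equation*}
in $\F_2$. By \Cref{lem:reducible_commutator2}, $\rho(\gamma_1)$ and $\rho(\gamma_j)$ share a common invariant $1$-dimensional subspace, and by uniqueness this must be $\langle e_1 \rangle$. Hence every $\rho(\gamma_j)$ is upper triangular; combined with the eigenvalue constraint, each has the form $\begin{pmatrix} 1 & u_j \\ 0 & 1 \end{pmatrix}$, so $\rho$ is reducible with all diagonal entries equal to $1$.

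Applying \Cref{lem:reducible_orbit_closures} to this reducible $\rho$ produces a closure-equivalent diagonal representation $D = (D_1,\ldots,D_n)$ with $D_j = \begin{pmatrix} 1 & 0 \\ 0 & 1 \end{pmatrix} = \idMat$; that is, $D = \rho_0$. Therefore $\rho \sim_c \rho_0$, which is the desired conclusion. The main technical point is the collapse of the two possible eigenvalues $\pm 1$ into the single value $1$ in characteristic $2$: this is what upgrades the pairwise-invariant-subspace information coming from \Cref{lem:reducible_commutator2} into full unipotency, and is what forces the diagonal representative in \Cref{lem:reducible_orbit_closures} to be trivial rather than merely having eigenvalues $\pm 1$.
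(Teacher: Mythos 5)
Your proof is correct and follows essentially the same route as the paper's: observe that trace zero plus determinant one forces eigenvalue $1$ with multiplicity two in characteristic $2$, use $\tr[A,B] = -2 = 2$ together with \Cref{lem:reducible_commutator2} to get pairwise reducibility with the non-trivial generator, pin down the common invariant line by uniqueness, and conclude that $\rho$ is conjugate into the unipotent upper-triangular group. The one cosmetic difference is the final step: you pass through \Cref{lem:reducible_orbit_closures} to exhibit an explicit closure-equivalent diagonal (hence trivial) representative, while the paper simply observes that once every $\rho(\gamma_i)$ is unipotent upper triangular, every word has trace $2 = 0$, so $\tau_\rho$ vanishes identically; either reading gives the same conclusion.
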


\begin{proof}
Suppose $\rho$ is a non-trivial representation with character $\chi \in Z.$
Then some generator, say $\gamma_i$ has non-trivial image and since $t_i(\chi)=0,$ it is parabolic and hence has a unique invariant subspace.
Now for every $1\le j\le n,$ we have $t_j(\chi) = t_{ij}(\chi) = 0.$ By computation modulo 2, it follows that for every $1\le j,k\le n,$, $t_{ijk}=0$. \Cref{eq:commutator_trace,eq:commutator_trace_three} and \Cref{prop:reducible_closed} imply that the image under $\rho$ of the subgroup generated by $\gamma_i$ and $\gamma_j$ is reducible. Since $\rho(\gamma_j)$ is either parabolic or trivial, this implies that it also leaves the unique 1--dimensional invariant subspace of $\rho(\gamma_i)$ invariant. 
Hence $\rho$ is reducible and conjugate into the upper triangular group. Since all generators have trace zero, they are either parabolic or the identity. Hence $\rho$ has the same character as the trivial representation.
\end{proof}

We get the following immediate corollary.
\begin{corollary}
Let $Z \subseteq \X(\Gamma,\F_2)$ be a component that contains at least two (and hence infinitely many) characters. Then at least one of the coordinate functions $t_i$ ($1\le i \le n$) or $t_{ij}$ ($1\le i < j \le n$) does not vanish on all of ${\varphi}_d(Z).$
\end{corollary}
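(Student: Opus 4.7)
The plan is to argue by contrapositive, directly invoking \Cref{lem:trivial_traces_trivial_char_c2}. Suppose that all the coordinate functions $t_i$ (for $1 \le i \le n$) and $t_{ij}$ (for $1 \le i < j \le n$) vanish identically on ${\varphi}_d(Z)$. Then $Z$ satisfies precisely the hypotheses of \Cref{lem:trivial_traces_trivial_char_c2}, so $Z$ consists only of the character of the trivial representation. In particular, $Z$ is a single point of $\X(\Gamma,\F_2)$, contradicting the assumption that $Z$ contains at least two distinct characters.

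The parenthetical ``and hence infinitely many'' deserves a brief justification, though it is not strictly necessary for the proof. Since $Z$ is (by convention) an irreducible component of $\X(\Gamma,\F_2) \subset \F_2^d$, it is an irreducible affine algebraic set over the algebraically closed field $\F_2$. Such a set is either a single point (dimension $0$) or contains infinitely many points (positive dimension, over an infinite field). Thus the dichotomy between ``single point'' (the trivial character, by \Cref{lem:trivial_traces_trivial_char_c2}) and ``infinitely many points'' is automatic, and a component with at least two characters necessarily has infinitely many.

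There is no real obstacle here: the corollary is essentially a restatement of the contrapositive of \Cref{lem:trivial_traces_trivial_char_c2}, together with the standard fact that an irreducible affine variety over an algebraically closed field is either a single point or infinite.
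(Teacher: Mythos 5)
Your proof is correct and is exactly the argument the paper implicitly makes (the paper labels it an ``immediate corollary'' of \Cref{lem:trivial_traces_trivial_char_c2} and gives no proof): the contrapositive is the lemma, and a component containing the trivial character alone is a single point. Your extra remark justifying the parenthetical ``and hence infinitely many'' via irreducibility over an algebraically closed field is also correct and is a reasonable completion of a detail the paper leaves to the reader.
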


We are now in a position to describe the birational inverse to the projection map in the case where all coordinate functions $t_i$, $1\le i \le n$, vanish on an arbitrary component $Z$. If ${\varphi}_d(Z) = \{ 0\} \subset \F_2^d,$ then this is mapped to the origin in $\F_2^{2^n-1}.$ Otherwise at least one of the functions  $t_{ij}$ does not vanish on all of ${\varphi}_d(Z)$.

Consider $t_{j_1j_2\ldots j_m}$ where $m \ge 4.$ 
Recall $A_i\in\SLFt$ is the trace of the image of the generator $\gamma_i$, $1\leq i \leq n$. 
Suppose for some pair of indices $j_a, j_b$ satisfying $j_1 \leq j_a < j_b \leq j_m$, the function $t_{j_aj_b}$ does not vanish on all of ${\varphi}_d(Z)$. If $j_b<j_m$ we apply \Cref{eq:tr_four_term_c2} with 
$A = A_{j_1}A_{j_2}\ldots A_{j_{m-1}}$, $B = A_{j_m},$ $C = A_{j_a}$ and $D = A_{j_b}.$
Since the traces of generators vanish, we obtain the equality on ${\varphi}_d(Z)$:
\[
	\tr (CD) \tr(AB)  =  \tr(AC) \tr (BD)  + \tr (AD) \tr(BC) + \tr A \tr (BCD)
\]
We note that on ${\varphi}_d(Z)$, \Cref{eq:tr_permute_three} implies that we may permute images of generators without changing traces, and applying \Cref{eq:tracePowers} gives $t_Y = t_{A_kA_kY}+t_{A_k}t_{A_kY}=t_{A_kA_kY}$, for any $1\leq k \leq n$ and any $Y\in\SLFt.$ 
This implies that adding or removing even powers of generators does not change the trace. 
Substituting and simplifying gives: 
\begin{equation}
\label{eq:trace_four_char2_ver2}
	t_{j_aj_b} t_{j_1j_2\ldots j_m} = t_{j_1\ldots \hat{j_a} \ldots j_{m-1}} t_{j_b j_m} + t_{j_1\ldots \hat{j_b} \ldots j_{m-1}} t_{j_a j_m} + 
	t_{j_1j_2\ldots j_{m-1}}t_{j_aj_b j_m}
\end{equation}
where the hat means that an index does not appear.

Hence $t_{j_1j_2\ldots j_m}$ can be expressed as a rational function in shorter traces and with denominator involving only $t_{j_aj_b}$. If $j_b=j_m$, we use $A = A_{j_m}A_{j_1}A_{j_2}\ldots A_{j_{m-2}}$ and $B = A_{j_{m-1}}$ instead, and get a similar expression to \Cref{eq:trace_four_char2_ver2}.

Suppose for every pair of indices $j_a < j_b$ the function $t_{j_aj_b}$ vanishes on the whole component ${\varphi}_d(Z)$. Then \Cref{lem:trivial_traces_trivial_char_c2} implies that the character restricted to the subgroup generated by $A_{j_1}, A_{j_2}, \ldots, A_{j_{m}}$ has the same character as the trivial representation and hence all trace functions with indices in this subset vanish identically on ${\varphi}_d(Z).$ Hence the birational inverse is identically zero on the corresponding coordinates in $\F_2^{2^n-1}.$ 

Combining the results from this section we can define the birational inverse. If there exists a generator $\gamma_i$ $1\leq i\leq n$ with $t_i$ nonzero on $\varphi_d(Z)$, then we can use \Cref{eq:trace_four_char2} and invert $t_i$ to define the birational inverse for $i=A$. If, for all generators $\gamma_i$, $1\leq i\leq n$, $t_i$ vanishes on $\varphi_d(Z)$ and there exists $1\leq k< l \leq n$ such that $t_{kl}$ is nonzero on $\varphi_d(Z)$, then we can use \Cref{eq:trace_four_char2_ver2} and invert $t_{kl}$ to define the birational inverse with $k=j_a,l=j_b$. If, for all generators $\gamma_i, \gamma_j$, $1\leq i \leq j \leq n$, $t_i$ and $t_{ij}$ vanish on $\varphi_d(Z)$, then the birational inverse is zero on the corresponding coordinates in $\F_2^{2^n-1}.$

%%%%%%%%%%%%%%%%%%%%%%%%%%%

\subsection{Two generator groups}

Consider $\Gamma=\langle \alpha, \beta \rangle$ the free group on two elements. We can fully describe $\X(\Gamma,\F)$ following well-known classical arguments.

\begin{lemma}
\label{cor:twoGenChar}
	The embedding $\varphi_2\co \X(\Gamma,\SLF)\hookrightarrow \F^{2^2-1}=\F^3$ is an isomorphism. 
	That is, for all $(a, b, c) \in \F^3$ there exists some $(A,B)\in \SLF^2$ such that $\tr A = a,$ $\tr B = b,$ and $\tr AB = c.$
	Moreover, $(A,B)$ is reducible if and only if $a^2 + b^2 + c^2 -abc = 4.$
\end{lemma}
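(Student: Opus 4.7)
The strategy is to break the statement into three pieces: injectivity of $\varphi_2$, surjectivity on points, and the reducibility criterion. For injectivity, note that with $n=2$ one has $d = 2 + \binom{2}{2} + \binom{2}{3} = 3 = 2^n - 1$, so the long and short trace coordinates coincide, and the three trace functions $t_1, t_2, t_{12}$ already separate closure-equivalence classes by \Cref{thm:character_variety_is_characters} (equivalently \Cref{cor:trace_function_characterisation}). Since $\varphi_2$ is in addition a closed immersion of affine varieties onto its image, upgrading it to an isomorphism with $\F^3$ reduces to showing that the image is all of $\F^3$, i.e. that every triple $(a, b, c) \in \F^3$ is realised as $(\tr A, \tr B, \tr AB)$ for some $(A, B) \in \SLF^2$.

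For the surjectivity step, the plan is to use the explicit ansatz
\[
	A = \begin{pmatrix} a & -1 \\ 1 & 0 \end{pmatrix}, \qquad B = \begin{pmatrix} 0 & q \\ r & b \end{pmatrix},
\]
for which $\det A = 1$ and $\tr A = a$ are automatic, and $\tr B = b$ is built in. The remaining conditions $\det B = 1$ and $\tr AB = c$ reduce to $qr = -1$ and $q - r = c$ respectively, and substituting the second into the first shows that $r$ must satisfy $r^2 + cr + 1 = 0$. Because $\F$ is algebraically closed, this quadratic has a root, and the resulting $(A, B)$ lies in $\SLF^2$ with the prescribed traces. This works uniformly in the characteristic of $\F$.

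For the reducibility criterion, the plan is to combine \Cref{lem:reducible_commutator2} with the commutator-trace identity \Cref{eq:commutator_trace}. The former says $(A, B)$ is reducible if and only if $\tr[A, B] = 2$; the latter expands $\tr[A, B] = a^2 + b^2 + c^2 - abc - 2$. Setting the right-hand side equal to $2$ and rearranging gives exactly $a^2 + b^2 + c^2 - abc = 4$. The main thing to watch is that the argument must be characteristic-free, but each identity invoked holds in arbitrary characteristic, and in particular in characteristic $2$ both sides of the criterion collapse correctly (with $4 = 0$). No step therefore presents a genuine obstacle; the main technical input is already packaged into the earlier lemmas.
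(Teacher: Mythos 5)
Your proof is correct, and the reducibility half is identical to the paper's (both invoke the commutator criterion and \Cref{eq:commutator_trace}). The surjectivity step, however, takes a genuinely different route. The paper first extracts eigenvalues $d_a, d_b$ from the prescribed traces by solving $a = d_a + d_a^{-1}$ and $b = d_b + d_b^{-1}$ (two quadratics), writes $A$ upper triangular with diagonal $(d_a, d_a^{-1})$ and $B$ lower triangular with a free off-diagonal parameter $u$, and then solves linearly for $u$ so that $\tr(AB) = c$. You instead use the companion-matrix-style ansatz
\[
	A = \begin{pmatrix} a & -1 \\ 1 & 0 \end{pmatrix}, \qquad B = \begin{pmatrix} 0 & q \\ r & b \end{pmatrix},
\]
which builds $\tr A = a$, $\tr B = b$, $\det A = 1$ in for free and reduces everything to the single quadratic $r^2 + cr + 1 = 0$ (with $q = r + c$ then determined). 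Both arguments use algebraic closure only to produce a root of a quadratic, and both are manifestly characteristic-free, but your version is a touch more economical: one quadratic rather than two, and no need to pass through eigenvalues of $A$ or $B$. One small framing point: the claim that $\varphi_2$ is a ``closed immersion onto its image'' is not really a separate input here — in the paper's setup $\X(\Gamma,\F)$ is by definition identified with its image in $\F^d$ as a subvariety, so ``$\varphi_2$ is an isomorphism with $\F^3$'' just means the image is all of $\F^3$; you correctly then carry out exactly that.
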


\begin{proof}
	Consider the representation variety $R(\Gamma,\F)=\{(A,B)\in\SLF^2\}$.
	It follows from \Cref{thm:character_variety_is_characters} that 
	\[
		\X(\Gamma,\F)=\{ (\tr(A),\tr(B),\tr(AB))\in\F^3 \mid (A,B)\in \R(\Gamma,\F)\}
	\]
	We want to show $\X(\Gamma,\F) \cong \F^3$.
	
	Let $\chi=(a,b,c) \in\F^3$. 
	We need to show there is a pair of matrices $A, B$ such that $(\tr(A),\tr(B),\tr(AB))=\chi$. 
	
	We may choose $d_a, d_b \in \F\setminus\{0\}$ such that $a=d_a+d_a^{-1}$ and $b=d_b+d_b^{-1}.$ Let
	\[
		A = \begin{pmatrix} d_a & 1 \\ 0 & d_a^{-1} \end{pmatrix}, \quad 
		B = \begin{pmatrix} d_b & 0 \\ u & d_b^{-1} \end{pmatrix} 
	\]
with $u\in\F$ to be determined.
	Then 
	\[
	\begin{split}
		AB &= \begin{pmatrix} d_ad_b+u & d_b^{-1} \\ d_a^{-1}u & d_a^{-1}d_b^{-1} \end{pmatrix}
	\end{split}
	\]
	We can choose $u=c-(d_ad_b+d_a^{-1}d_b^{-1})$. Then $\tr(AB)=(d_ad_b+d_a^{-1}d_b^{-1})+u=c$ and $(\tr(A),\tr(B),\tr(AB))$ $=$ $(a,b,c)$. This completes the first part of the proof.
	
	Consider now the reducible representations. 	
	By \Cref{prop:reducible_closed}, $(A,B)$ is reducible if and only if $\tr[\;A,\;B\;]=2$. 
	The trace identity \Cref{eq:commutator_trace} gives $\tr[\;A,\;B\;]=a^2 + b^2 + c^2 -abc -2$.
	Then the reducible representations are precisely $a^2 + b^2 + c^2 -abc = 4$ for $(a,b,c)$ $=$ $(\tr(A),\tr(B),\tr(AB))$. 
	This finishes the proof.
\end{proof}

%%%%%%%%%%%%%%%%%%%%%%%%%%%

\subsection{Equations for reducible characters}

A method to derive equations for the variety of characters for $\F = \C$ is given in \cite{Gonzalez-character-1993}. 
Here, we describe equations for the reducible characters of the free group. 
The difference with the arguments in the proof of \Cref{lem:reducible_orbit_closures} is that there we knew that a point in $\F^d$ is the image of a reducible $n$--tuple under $\strn.$ 
The situation now is that we specify points in $\F^d$ using some equations and then need to show that a suitable $n$--tuple with the specified image exists. 
A priori, the set we specify may be too large.

\begin{corollary}
\label{cor:redChar}
For $n\geq 3$, the image under $\strn \co \SLF^n \to \F^d$ of the set of all reducible $n$--tuples is the Zariski closed set
\begin{align*}
	X^{red}_n = \{ &(\;t_1, \ldots,\; t_{n-2 \; n-1 \; n}) \in \F^d \mid \\
	&\text{for all } 1 \le i < j \le n,\\ 
	&\quad(1)\quad 	t_i^2+t_j^2+t_{ij}^2-t_it_jt_{ij}=4\;\\  
	& \text{and for all } 1 \le i < j<k \le n,\\ 
	&\quad(2)\quad 	t_{ij}^2 + t_{ik}^2 + t_{jk}^2 - 2 t_j t_{jk} t_k + t_j^2 t_k^2 + t_{ij} t_{ik} (t_{jk} - t_j t_k)=4 \;\\ 
	&\quad(3)\quad 	t_{ij}^2 + t_{ik}^2 + t_{jk}^2 - 2 t_i t_{ik} t_k + t_i^2 t_k^2 + t_{ij} t_{jk} (t_{ik} - t_i t_k)=4 \;\\ 
	&\quad(4)\quad 	t_{ij}^2 + t_{ik}^2 + t_{jk}^2 - 2 t_i t_{ij} t_j + t_i^2 t_j^2 + t_{ik} t_{jk} (t_{ij} - t_i t_j) =4 \;\\ 
	&\quad(5)\quad 	t_i^2+t_{jk}^2+t_{ijk}^2-t_it_{jk}t_{ijk}=4\;\\
	&\quad(6)\quad 	t_j^2+t_{ik}^2+t_{ijk}^2-t_jt_{ik}t_{ijk}=4\;\\
	&\quad(7)\quad 	t_k^2+t_{ij}^2+t_{ijk}^2-t_kt_{ij}t_{ijk}=4\;\\
	&\quad(8)\quad 	t_i^2 + t_{jk}^2 - t_i t_{jk} ( t_i t_{jk} + t_j t_{ik}+ t_k t_{ij} - t_i t_j t_k-t_{ijk} ) \\
			&\qquad \qquad+ ( t_i t_{jk} + t_j t_{ik}+ t_k t_{ij} - t_i t_j t_k-t_{ijk} )^2=4 \;\\
	&\quad(9)\quad 	t_j^2 + t_{ijk}^2 - t_j t_{ijk} (t_{ik} + t_{ij} t_{jk} - t_i t_k) + (t_{ik} + t_{ij} t_{jk} - t_i t_k)^2=4 \;
	\}
\end{align*}
\end{corollary}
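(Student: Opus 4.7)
The plan is to establish both inclusions of $\strn(R^{red}_n) = X^{red}_n$. The forward inclusion follows from direct verification: conjugate any reducible $(A_1, \ldots, A_n)$ to the upper-triangular form of \Cref{eqn:redForm}, so any word in the $A_i$ is upper triangular and any two such words share the invariant subspace spanned by $(1,0)^\intercal$. By \Cref{lem:reducible_commutator2}, the trace of the commutator of any two such words equals $2$. Using the trace identities \Cref{eq:tr_invert,eq:tr_permute_three,eq:commutator_trace,eq:commutator_trace_three,eq:tracePowers}, each of the nine families becomes precisely one of these commutator identities: (1) and (5)--(7) are $\tr[\;A_i,\;A_j\;]=2$ and $\tr[\;A_i,\;A_jA_k\;]=2$ (the conditions already appearing in \Cref{prop:reducible_closed}); equation (2) rearranges to $\tr[\;A_iA_j,\;(A_iA_k)^{-1}\;]=2$ using $t_jt_k-t_{jk}=\tr(A_jA_k^{-1})$, with (3) and (4) the two cyclic siblings; and equations (8), (9) become $\tr[\;A_i,\;A_kA_j\;]=2$ and $\tr[\;A_j,\;A_iA_jA_k\;]=2$ after identifying $\tr(A_iA_kA_j)=t_it_{jk}+t_jt_{ik}+t_kt_{ij}-t_it_jt_k-t_{ijk}$ and $\tr(A_iA_j^2A_k)=t_{ik}+t_{ij}t_{jk}-t_it_k$.

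For the reverse inclusion, given $(t_*) \in X^{red}_n$, I will construct a diagonal $n$--tuple $D_i = \begin{pmatrix} s_i & 0 \\ 0 & s_i^{-1}\end{pmatrix}$ realising it; by \Cref{lem:reducible_orbit_closures} this suffices. In the generic case, pick an index $i_0$ with $t_{i_0}^2\neq4$ and a root $s_{i_0}\in\F\setminus\{0\}$ of $x+x^{-1}=t_{i_0}$. For each $j\neq i_0$, the linear system $s_j+s_j^{-1}=t_j$, $s_{i_0}s_j+s_{i_0}^{-1}s_j^{-1}=t_{i_0 j}$ has a unique solution $(s_j,s_j^{-1})$ in $\F\times\F$, and the required $s_j \cdot s_j^{-1}=1$ is precisely equation (1) for the pair $(i_0,j)$ (using the computation at the end of the proof of \Cref{lem:reducible_orbit_closures}). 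It then remains to verify $t_{jk}=s_js_k+s_j^{-1}s_k^{-1}$ for each pair $\{j,k\}$ not involving $i_0$, and $t_{jk\ell}=s_js_ks_\ell+s_j^{-1}s_k^{-1}s_\ell^{-1}$ for each such triple; equation (1) for $(j,k)$ restricts $t_{jk}$ to one of two candidate values, and equation (2) (or the appropriate sibling) applied to the triple $(i_0,j,k)$ selects the correct one, while an analogous combination of equations (5)--(9) determines the triple traces. The degenerate case where every $t_i^2=4$ forces each $s_i\in\{\pm1\}$, each $D_i=\pm E$, and equations (1), (5)--(7) pin down $t_{ij}=2s_is_j$ and $t_{ijk}=2s_is_js_k$ consistently.

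The main technical work lies in the consistency verifications in the reverse direction. Each such verification is a polynomial identity in $s_{i_0}$ and the $t_*$; eliminating $s_{i_0}$ via $s_{i_0}^2-t_{i_0}s_{i_0}+1=0$ reduces it to a polynomial that is linear in $s_{i_0}$, so vanishing of both the coefficient of $s_{i_0}$ and the constant term must be established. The claim is that these two vanishing conditions follow, for double-trace consistency, from the corresponding instance of (2)--(4), and, for triple-trace consistency, from the corresponding instances of (5)--(9). The principal obstacle is to present these computations cleanly rather than by brute force; a tidy argument would likely exploit the commutator-trace interpretations from the forward direction to rewrite each consistency condition as a commutator identity that matches termwise against one of the defining equations of $X^{red}_n$.
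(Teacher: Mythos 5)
Your overall strategy mirrors the paper's: identify each defining equation as a commutator trace condition for the forward inclusion, and for the converse construct a diagonal $n$--tuple from equation (1) relative to an anchor index $i_0$ with $t_{i_0}^2\neq4$, appealing to \Cref{lem:reducible_orbit_closures} to conclude. The forward-direction commutator identifications (1)--(9) are essentially right, apart from a small slip in (9): the relevant trace is $\tr(A_jA_iA_jA_k)=t_{ik}+t_{ij}t_{jk}-t_it_k$, not $\tr(A_iA_j^2A_k)$, which instead equals $t_jt_{ijk}-t_{ik}$ by \Cref{eq:tracePowers}. The initial step of the converse (the linear system for $(s_j,s_j^{-1})$ with $s_js_j^{-1}=1$ being equivalent to equation (1) for the pair $(i_0,j)$) is a slightly cleaner way to handle indices with $t_j^2=4$ together with the generic case, rather than partitioning the index set as the paper does.

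However, the converse is not actually proved. Two concrete issues. First, the list of traces to verify is incomplete: besides $t_{jk}$ and $t_{jk\ell}$ for indices not involving $i_0$, one must also check the triple traces $t_{i_0jk}$ involving the anchor, since only $t_{i_0j}$ (not $t_{i_0jk}$) is built into the definition of $s_j$; the paper verifies these separately. Second, and more importantly, the consistency verifications themselves are deferred with the phrase ``must be established''. The proposed mechanism --- reduce the consistency polynomial modulo $s_{i_0}^2-t_{i_0}s_{i_0}+1$ to a linear expression $a(t_*)+b(t_*)s_{i_0}$ and show that $a$ and $b$ vanish --- is only a reformulation of what needs to be shown, not an argument that it is true. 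It is not automatic that $a$ and $b$ lie in the ideal generated by equations (1)--(9): for the particular root $s_{i_0}$ chosen, the relation $a+bs_{i_0}=0$ could conceivably hold without $a=b=0$, so one cannot appeal to symmetry under $s_{i_0}\mapsto s_{i_0}^{-1}$ without first knowing the consistency holds for both roots, which is circular. The paper resolves this by explicit computation: it factors equation (1) for the pair $(j,k)$ into a ``correct'' and a ``wrong'' branch, then shows that assuming the wrong branch, equations (2)--(4) applied to $(\ii,j,k)$ force relations among $d_\ii,d_j,d_k$ contradicting $t_m^2\neq4$ (and analogously with (5)--(9) for triple traces, where characteristic 2 requires equation (9) in addition to (8)). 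This case analysis, not a coefficient-vanishing heuristic, is the substance of the converse and is what your proposal leaves open.
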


The sets of equations all arise from traces of commutators. Equation (1) arise from the first criterion of \Cref{prop:reducible_closed};  equations (2), (3), (4) come from equations of the form $\tr[\;AB,\;AC\;]=2$; equations (5), (6), (7) arise from the second criterion of \Cref{prop:reducible_closed} and cyclic permutation; equation (8) comes the second criterion of \Cref{prop:reducible_closed} and changing the cyclic order; and equation (9) comes from equations of the form $\tr[\;B,\;ABC\;]=2$. 
Note that the right-hand side of each equation equates to $0$ in characteristic $2$. 
In fact, the last equation (9) is only required in characteristic 2. 

If $n=1$ there is only one generator and all $n$--tuples are reducible. 
If $n=2$ there are only two generators and we only need consider the equation (1). This is covered in the previous section.

\begin{proof}
We start without any assumptions on the characteristic.
It follows from the characterisation of reducible $n$--tuples in \Cref{prop:reducible_closed} and \Cref{eq:commutator_trace,eq:commutator_trace_three} that the image of each reducible $n$--tuple satisfies all equations and hence lies in $X^{red}_n.$

Let $\chi=(\;t_1, \ldots,\; t_{n-2 \; n-1 \; n})\in X^{red}_n.$ We need to show there is a reducible $n$--tuple such that $\strn(D_1,\ldots,D_n)=\chi$. We construct the $n$--tuple to be diagonal. 

Without loss of generality, we can rearrange the terms so there exists $\ii$, $1\leq \ii \leq n+1$ such that $t_m^2=4$ for all $m\in\{1,\ldots,\ii-1\}$ and $t_m^2\neq4$ for all $m\in\{\ii,\ldots,n\}$. Then $\ii$ is the lowest integer such that $t_{\ii}^2\neq4$. In particular, if $\ii=n+1$, then $t_m^2=4$ for all $m$, and if $\ii=1$, $t_m^2\neq4$ for all $m$. 

We go through the cases for substituting pairs and triples from $1\leq a \leq b \leq c \leq \ii \leq i \leq j \leq k \leq n$ into equations (1)--(9). 

Consider $1\leq a\leq \ii-1$. There is $\varepsilon_a \in \{ \pm 1\}$ such that $t_a =\varepsilon_a 2$. For all $1\leq a < \ii$, define 
\begin{equation}
\begin{split}
\label{eqn:option1}
	D_a &= \varepsilon_a \idMat
\end{split}
\end{equation}
Then $\tr(D_a)=t_a$.

For each $1\leq a<b\leq \ii-1$, consider $t_{ab}$. Equation (1) is equivalent to 
\begin{equation}
\begin{split}
\label{eqn:doubleAllTrivial}
	(t_{ab} -\varepsilon_a\varepsilon_b 2)^2 	&=0, \\ 
	\implies t_{ab} 						&= \varepsilon_a\varepsilon_b 2 = \tr(D_aD_b)
\end{split}
\end{equation}

For each $1\leq a<b<c\leq \ii-1$, consider $t_{abc}$. Equation (5) is equivalent to 
\begin{equation}
\begin{split}
\label{eqn:tripleAllTrivial}
	(t_{abc} - \varepsilon_a\varepsilon_b\varepsilon_c 2)^2 	&=0,\\
	\implies t_{abc} 								&= \varepsilon_a\varepsilon_b \varepsilon_c 2 = \tr(D_aD_bD_c)
\end{split}
\end{equation}
Substituting the results into the remaining equations gives $0=0$.

Consider $\ii$. Choose $d_{\ii}$ such that $t_{\ii}=d_{\ii}+d_{\ii}^{-1}$ and define 
\begin{equation}
\begin{split}
	D_{\ii} &=\begin{pmatrix} d_{\ii} & 0 \\ 0 & d_{\ii}^{-1} \end{pmatrix}
\end{split}
\end{equation}
Then $\tr(D_{\ii})=t_{\ii}.$

For $1\leq \ii\leq i\leq n$, consider $t_i$ and $t_{{\ii}i}$. Let $x \in \F$ be such that $t_i=x+x^{-1}$. Then equation (1) applied to $\ii, i$ gives
\begin{equation}
\begin{split}
\label{eqn:doubleOneTrivial}
	\left(\left(d_{\ii}x+d_{\ii}^{-1}x^{-1}\right) - t_{{\ii}i}\right) \left(\left(d_{\ii}x^{-1}+d_{\ii}^{-1}x\right) - t_{{\ii}i}\right)	&= 0
\end{split}
\end{equation}
Note that $t_{\ii}^2 \neq 4 \neq t_i^2$ implies that only one of these factors is equal to zero. This uniquely determines
$d_i\in\{x,x^{-1}\}$ such that 
\[
\begin{split}
	d_{\ii}d_i+d_{\ii}^{-1}d_i^{-1}-t_{{\ii}i} 	&=0,\\
	\implies t_{{\ii}i}					&=d_{\ii}d_i+d_{\ii}^{-1}d_i^{-1}
\end{split}
\]	
We let
\begin{equation}
\begin{split}
	D_{i} &=\begin{pmatrix} d_{i} & 0 \\ 0 & d_{i}^{-1} \end{pmatrix}
\end{split}
\end{equation}
Then $\tr(D_{i})=t_{i}$ and $\tr(D_{\ii}D_{i})=t_{\ii i}$. The $n$--tuple $(D_1,\ldots,D_n)$ is now completely defined, and we need to show that the traces of their double and triple products have the correct values.

For each $1\leq a < \ii \leq i \leq n$, consider $t_{ai}$. Equation (1) is equivalent to 
\begin{equation}
\begin{split}
\label{eqn:doubleTwoTrivial}
	\left(\left(d_{i}+d_{i}^{-1}\right) - \varepsilon_a t_{a{i}}\right)^2 	&= 0,\\
	\implies 					t_{a{i}} 		&=\varepsilon_a\left(d_{i}+d_{i}^{-1}\right)=\tr(D_aD_{i})
\end{split}
\end{equation}

For each $1\leq a < b < \ii \leq i \leq n$, consider $t_{abi}$. Equation (5) is equivalent to 
\begin{equation}
\begin{split}
\label{eqn:tripleTwoTrivial}
	\left(\left(d_{i}+d_{i}^{-1}\right) - \varepsilon_a\varepsilon_b t_{ab{i}}\right)^2 	&= 0,\\
	\implies t_{ab{i}} 									&= \varepsilon_a\varepsilon_b \left(d_{i}+d_{i}^{-1}\right)=\tr(D_aD_bD_{i})
\end{split}
\end{equation}
Substituting the results into the remaining equations gives $0=0$.

For each $1\leq a < \ii \leq i < j \leq n$, consider $t_{aij}$. Equation (5) is equivalent to 
\begin{equation}
\begin{split}
	\left(t_{ai j} -\varepsilon_a \left(d_{i}d_j+d_{i}^{-1}d_j^{-1}\right)\right)^2	&= 0 \\
	\implies t_{a{i}j} 							&= \varepsilon_a\left(d_{i}d_j+d_{i}^{-1}d_j^{-1}\right)=\tr(D_aD_{i}D_j)
\end{split}
\end{equation}
Substituting the results into the remaining equations gives $0=0$.

For each $1\leq \ii < j < k \leq n$, consider $t_{jk}$. Equation (1) is equivalent to 
\begin{equation}
\begin{split}
	\left(d_{j}d_{k}+d_{j}^{-1}d_k^{-1} - t_{jk}\right) \left(d_{j}d_k^{-1}+d_{j}^{-1}d_k - t_{jk}\right)	&= 0
\end{split}
\end{equation}

Then equations (2), (3), and (4) applied to $\ii,j,k$ gives
\begin{equation}
\begin{split}
\label{eqn:redChar234}
	\left(d_{j}d_{k}+d_{j}^{-1}d_k^{-1} - t_{jk}\right) \left(\left(1 - d_{\ii}^{-2}\right)d_j^{-1}d_k^{-1} \right. \quad& \\ \left.+\left(1-d_{\ii}^2\right)d_jd_k+d_j^{-1}d_k+d_j d_k^{-1}-t_{jk}\right) 	&=0 \\
	\left(d_{j}d_{k}+d_{j}^{-1}d_k^{-1} - t_{jk}\right) \left(d_{\ii}^2 d_j d_k^{-1} + d_{\ii}^{-2} d_j^{-1} d_k - t_{jk}\right) 							&=0 \\
	\left(d_{j}d_{k}+d_{j}^{-1}d_k^{-1} - t_{jk}\right) \left(d_{\ii}^{-2} d_j d_k^{-1} + d_{\ii}^2 d_j^{-1} d_k - t_{jk}\right) 							&=0
\end{split}
\end{equation}

Assume $t_{jk}\neq d_{j}d_{k}+d_{j}^{-1}d_k^{-1}$. Then,
\[
\begin{split}
	t_{jk}	&=d_{j}d_k^{-1}+d_{j}^{-1}d_k, \\
		&=\left(1 - d_i^{-2}\right)d_j^{-1}d_k^{-1}+\left(1-d_i^2\right)d_jd_k+d_j^{-1}d_k+d_j d_k^{-1} \\
		&=d_i^2 d_j d_k^{-1} + d_i^{-2} d_j^{-1} d_k \\
		&=d_i^{-2} d_j d_k^{-1} + d_i^2 d_j^{-1} d_k
\end{split}
\]

Since $t_i^2\neq4$, we have $d_i\neq d_i^{-1}$. 
The equations can be compared to get
\[
\begin{split}
	d_i^2 d_j^2 d_k^2 					&=1\\
	d_i^2 d_j^2 						&= d_k^2\\
	d_i^2 d_k^2 						&= d_j^2\\
	\left(d_j +d_j^{-1}\right)\left(d_i d_k  - d_i^{-1} d_k^{-1}\right)	&=0\\
	\left(d_i+d_i^{-1}\right)\left(d_jd_k^{-1}-d_j^{-1}d_k\right)	&=0.
\end{split}
\]
The last two equations give multiple possibilities.
Together with the first three equations,
\begin{equation}
\begin{split}
	d_j +d_j^{-1}=0 \text{ and } d_i+d_i^{-1} = 0 					&\implies d_k^2=1\\
	d_j +d_j^{-1}=0 \text{ and } d_jd_k^{-1}-d_j^{-1}d_k = 0 			&\implies d_i^2=1 \\
	d_jd_k^{-1}-d_j^{-1}d_k=0 \text{ and } d_i+d_i^{-1} = 0 			&\implies d_j^2=1 \\
	d_jd_k^{-1}-d_j^{-1}d_k=0 \text{ and } d_jd_k^{-1}-d_j^{-1}d_k = 0 	&\implies d_i^2=1
\end{split}
\end{equation}
which all give contradictions to $t_m^2\neq4$ for each $m\in\{i,j,k\}$.

Thus to satisfy \Cref{eqn:redChar234} we must have $t_{jk} = d_{j}d_{k}+d_{j}^{-1}d_k^{-1} = \tr(D_j D_k)$. This means equations (2), (3), (4) evaluate to $0$.

For each $1\leq \ii \leq i < j < k \leq n$, now consider $t_{ijk}$. 
Equations (5), (6), and (7) are equivalent to
\begin{equation}
\begin{split}
\label{eqn:redChar567}
	\left(d_{i} d_j^{-1} d_k^{-1} + d_{i}^{-1} d_j d_k - t_{{i}jk}\right) \left(d_{i}^{-1} d_j^{-1} d_k^{-1} + d_{i} d_j d_k - t_{{i}jk}\right) 	&= 0 \\
	\left(d_{i}^{-1} d_j d_k^{-1}  + d_{i} d_j^{-1} d_k - t_{{i}jk}\right) \left(d_{i}^{-1} d_j^{-1} d_k^{-1} + d_{i} d_j d_k - t_{{i}jk}\right) 	&= 0 \\
	\left(d_{i} d_j d_k^{-1}  + d_{i}^{-1} d_j^{-1} d_k - t_{{i}jk}\right) \left(d_{i}^{-1} d_j^{-1} d_k^{-1} + d_{i} d_j d_k - t_{{i}jk}\right) 	&= 0
\end{split}
\end{equation}

Assume $t_{{i}jk} \neq d_{i}^{-1} d_j^{-1} d_k^{-1} + d_{i} d_j d_k$. Then,
\[
\begin{split}
	t_{i jk}	&=d_{i} d_j^{-1} d_k^{-1} + d_{i}^{-1} d_j d_k\\
			&=d_{i}^{-1} d_j d_k^{-1}  + d_{i} d_j^{-1} d_k \\
			&=d_{i} d_j d_k^{-1}  + d_{i}^{-1} d_j^{-1} d_k
\end{split}
\]

Since $t_m^2\neq4$ for each $m\in\{i,j,k\}$, we have $d_m\neq d_m^{-1}$ for each $m\in\{i,j,k\}$. 
The equations can be compared to get $d_{i}^2=d_j^2=d_k^2$.

Then equations (8) and (9) applied to $i,j,k$ give
\begin{align}
	\label{eqn:redChar8}
	\frac{2 \left(d_k^2-1\right)^4 \left(d_k^2+1\right)^2}{d_k^6} &=0 \\
	\label{eqn:redChar9}
	\frac{ \left(d_k^2-1\right)^4 \left(d_k^2+1\right)^2 \left(1 + d_k^2 +  d_k^4\right)}{d_k^8} &=0
\end{align}
Again since $t_k^2\neq4$, we have $d_k\neq d_k^{-1}$ and $d_k^2-1\neq0$.

In characteristic $p\neq2$, \Cref{eqn:redChar8} implies $d_k^2=-1$. Given $d_{i}^2=d_j^2=d_k^2$ we also have $d_{i}^2=-1=d_j^2$. Then $t_{{i}jk}=0=\tr(D_{i}D_jD_k)$. Substituting the results into \Cref{eqn:redChar9} gives $0=0$. 

In characteristic $2$, \Cref{eqn:redChar8} gives $0=0$. \Cref{eqn:redChar9} this implies either $d_k^2=1$, which gives a contradiction, or $1 + d_k +  d_k^2=0$.
Then $d_k^{-1}=d_k+1$ and $t_{{i}jk}=0=\tr(D_{i}D_jD_k)$.

Assume instead $t_{{i}jk} = d_{i}^{-1} d_j^{-1} d_k^{-1} + d_{i} d_j d_k$ from \Cref{eqn:redChar567}. Then equations (8) and (9) evaluate to $0$ and $t_{{i}jk}=\tr(D_{i}D_jD_k)$.

Thus, given $\chi \in X^{red}_n$, for all possible combinations of $1\leq a \leq b \leq c \leq \ii \leq i \leq j \leq k \leq n$, we can construct a representation $(D_1,\ldots,D_n)$ such that $\strn(D_1,\ldots,D_n)=\chi$. The proves the result. 
\end{proof}

%%%%%%%%%%%%%%%%%%%%%%%%%%%

\subsection{Normal forms; conjugacy classes}

We record a result on normal forms and conjugacy classes that will be useful later. The following observation is stated in \cite[Lemma 3.1]{Paoluzzi-examples-2020} in terms of $p\ge 3$ and $\PSLF$ with reference to the structure of projective special groups over finite fields. We give a different argument for $\SLF$ using minimal polynomials and Jordan normal form.

\begin{lemma}
\label{lem:JNF_in_SLk}
Let $\F$ be an algebraically closed field of characteristic $p$ and $A \in \SLF$ be an element of finite order. If $p\ge 3,$ then either the order of $A$ is coprime with $p$ or it equals $p$ or $2p.$ If $p=2,$ then the order of $A$ is either odd or equal to two. In addition:
\begin{enumerate}
	\item If the order of $A$ is coprime with $p$, then $A$ is diagonalisable.
	\item If the order of $A$ equals $p$, then $A$ is conjugate to $\begin{pmatrix} 1 & 1 \\ 0 & 1 \end{pmatrix}.$
	\item If $p$ is odd and the order of $A$ equals $2p$, then $A$ is conjugate to $ \begin{pmatrix} -1 & \phantom{-}1 \\ 0 & -1 \end{pmatrix}.$
\end{enumerate}
\end{lemma}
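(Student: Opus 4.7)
The plan is to use the Jordan normal form of $A$ over the algebraically closed field $\F$, and split the analysis into the two mutually exclusive cases: $A$ is diagonalisable, or $A$ is similar to a single non-trivial Jordan block. The key input is that an element $A\in\SLF$ has two eigenvalues $\lambda,\lambda^{-1}\in \F\setminus\{0\}$, so whenever $A$ is non-diagonalisable its unique eigenvalue $\lambda$ satisfies $\lambda^2=1$, i.e.\ $\lambda=\pm1$.

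First, suppose $A$ is diagonalisable, so up to conjugation $A=\Diag(\lambda,\lambda^{-1})$ and the order of $A$ equals the order of $\lambda$ in $\F\setminus\{0\}$. In characteristic $p>0$, the identity $x^p-1=(x-1)^p$ shows that $\F\setminus\{0\}$ contains no element of order $p$, hence no element whose order is divisible by $p$. Thus the order of $\lambda$, and so of $A$, is coprime with $p$. This proves claim (1).

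Next, suppose $A$ is not diagonalisable. Then up to conjugation
\[
	A=\begin{pmatrix}\lambda & 1\\ 0 & \lambda\end{pmatrix}\quad\text{with}\quad\lambda\in\{\pm1\}.
\]
A direct induction yields
\[
	A^m=\begin{pmatrix}\lambda^m & m\lambda^{m-1}\\ 0 & \lambda^m\end{pmatrix}.
\]
If $\lambda=1$, then $A^m=\idMat$ exactly when $m\cdot 1=0$ in $\F$, which forces $p>0$ and $p\mid m$; the smallest such $m$ is $p$, giving claim (2). If $\lambda=-1$ (which is only distinct from $\lambda=1$ when $p\neq 2$), then $A^m=\idMat$ requires $\lambda^m=1$ and $m\lambda^{m-1}=0$, i.e.\ $m$ is even and $p\mid m$; in characteristic $p\geq 3$ the smallest such $m$ is $2p$, giving claim (3). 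These two subcases together show that in non-diagonalisable cases the order is $p$ (when $\lambda=1$) or $2p$ (when $p$ is odd and $\lambda=-1$).

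Finally, combining the diagonalisable dichotomy with the two non-diagonalisable possibilities gives the list of possible orders: coprime with $p$, equal to $p$, or equal to $2p$, and the last option collapses into the second when $p=2$ because then $-1=1$. The main obstacle, such as it is, will be bookkeeping the characteristic $2$ degeneracy carefully so that the Jordan block $\bigl(\begin{smallmatrix}-1 & 1\\ 0 & -1\end{smallmatrix}\bigr)$ is not counted as a separate case; everything else is routine matrix computation combined with the standard fact that $\F\setminus\{0\}$ is $p$-torsion free.
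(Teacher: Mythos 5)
Your proof is correct and follows essentially the same route as the paper's: split on whether $A$ is diagonalisable; in the diagonalisable case use that $\F^\times$ has no $p$-torsion (the paper phrases this via the factorisation of the minimal polynomial under Frobenius, $x^{p^m q}-1=(x^q-1)^{p^m}$, but your observation $x^p-1=(x-1)^p$ is the same fact); in the non-diagonalisable case identify $\lambda=\pm1$ and compute $A^m$ for the Jordan block. One small presentational nit: at the end of your diagonalisable paragraph you write "This proves claim (1)," but you have actually only shown the converse (diagonalisable $\Rightarrow$ order coprime with $p$); claim (1) itself follows only once you have also established that the non-diagonalisable case gives order $p$ or $2p$ — which you do in the next paragraph — so the conclusion is secured by contraposition after both cases are in hand.
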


\begin{proof}
Suppose $A^n = \idMat.$ Then the minimal polynomial of $A$ divides $x^n-1$ in $\F[x].$ Suppose $n=p^mq$, where $q$ is coprime with $p.$ Applying the Frobenius map gives $x^n-1 = x^{p^mq} -1^{p^m} = (x^q -1)^{p^m}.$
In particular, the eigenvalues of $A$ are roots of $x^q-1.$ Let $\lambda$ be an eigenvalue of $A.$

 If $\lambda \neq \lambda^{-1},$ then $A$ is diagonalisable. Since $\lambda^q = 1= \lambda^{-q},$ we have $A^q=\idMat$ and hence the order of $A$ divides $q,$ which is co-prime with $p.$ This is the first case of the lemma.

Hence suppose $\lambda = \lambda^{-1}.$ Then $\lambda \in \{\pm 1\}$ and $A$ has normal form either $\pm \idMat$ or 
$\begin{pmatrix} 1 & 1 \\ 0 & 1 \end{pmatrix}$
or
$ \begin{pmatrix} -1 & \phantom{-}1 \\ 0 & -1 \end{pmatrix}.$

Now if $p=2,$ then $\idMat = -\idMat$ is diagonal of order one which is coprime with $2$ and hence is in the first case. If $p\ge 3,$ then $\idMat$ has order one and $-\idMat$ has order two; both are diagonal with order coprime with $p$ and are again covered by the first case.

For any prime $p,$ the matrix $\begin{pmatrix} 1 & 1 \\ 0 & 1 \end{pmatrix}$ has order equal to $p$, and hence this is the second case of the lemma. If $p=2,$ then $-1=1$ and we are done. If $p\ge 3$, then $ \begin{pmatrix} -1 & \phantom{-}1 \\ 0 & -1 \end{pmatrix}$ has order $2p$ and we are in the third case.

This completes the proof of the lemma.
\end{proof}

%%%%%%%%%%%%%%%%%%%%%%%%%%%
%\newpage
%%%%%%%%%%%%%%%%%%%%%%%%%%%

\section{Splittings of groups detected by ideal points}
\label{sec:Splittings}

Following the treatment in \cite{Culler-Shalen-varieties-1983}, we define ideal points of curves in the variety of characters for arbitrary algebraically closed fields in such a way that we can apply Bass-Serre theory~\cite{Bass-covering-1993, Serre-trees-2003} as in \cite{Culler-Shalen-varieties-1983}. The up-shot is that an ideal point of a curve in $\X(\Gamma, \F)$ gives rise to a \textbf{splitting} of $\Gamma,$ that is, an isomorphism of $\Gamma$ with the fundamental group of a graph of groups. 

%%%%%%%%%%%%%%%%%%%%%%%%%%%

\subsection{Ideal points and valuations}

We call a 1--dimensional irreducible subvariety a \textbf{curve}. We briefly recall the definitions of ideal points of a curve and their relationship with non-trivial discrete rank 1 valuation on the function field of the curve.

Let $\K$ be a field. A \textbf{discrete rank 1 valuation} on $\K$ is a map $v\co \K \to\Z \cup \{\infty\}$ that is the extension of a map $\K\setminus \{0\} \to\Z$ by letting $v(0)=\infty$ satisfying
\begin{enumerate}
	\item $v(ab)=v(a)+v(b)$ for all $a,b\in \K\setminus\{0\}$,
	\item $v(a+b)\geq\min\{v(a),v(b)\}$,
\end{enumerate}
where $z\leq\infty$ and $z+\infty=\infty=\infty+z$ for all $z\in\Z$.

Two varieties $V, W$ are \textbf{birationally equivalent} is there are rational, dense maps $\varphi \co V\to W$, $\psi \co W\to V$, such that $\varphi \circ \psi=1=\psi\circ\varphi$ where defined. 
The \textbf{projective completion} of a variety $V$ over $\F$ is the closure of $V$ under the map $J: \F^m\to \F P^m$ defined by $J(z_1,\ldots,z_m)=[1,z_1,\ldots,z_m]$. 
When $V$ is a 1--dimensional irreducible variety there is a unique non-singular projective variety $\tilde{V}$, which is birationally equivalent to the projective completion $\overline{J(V)}$. 
This is called the \textbf{smooth projective closure} of $V$, and exists over all algebraically closed fields $\F$ (and more generally, for all perfect fields). 

Let $\Gamma$ be a finitely generated group and consider a curve in the variety of characters, $C\subset\X(\Gamma,\F)$. 
The \textbf{ideal points} of $C$ are the points of the smooth projective closure $\tilde{C}$ that correspond to the set $\overline{J(C)}-J(C)$ under the birational equivalence. Intuitively, these are the points added to $C$ by the process of taking the smooth projective closure. Note $\F(\tilde{C})\cong \F(C)$. 

Take an ideal point $\xi\in\tilde{C}$. This defines a non-trivial discrete rank 1 valuation on $\F(C)$ via,
\begin{equation}
	\begin{split}
	v_\xi\co \F(C)&\to\Z\cup\{\infty\}\\
	v_\xi(f)&=
		\begin{cases}
		q & f\text{ has a zero of order }q\text{ at }\xi\\
		\infty & f=0 \\
		-q & f\text{ has a pole of order }q\text{ at }\xi
		\end{cases}
	\end{split}
\end{equation}

Take $R_C\subset \R(\Gamma,\F)$ to be an irreducible subvariety with the property that $q(R_C)=C$ under the natural map $q\co \R(\Gamma,\F)\to\X(\Gamma,\F)$. 
We can extend the valuation to $\F(R_C)$ using the following result from Alperin and Shalen~\cite{Alperin-linear-1982}.
\begin{lemma}[Alperin-Shalen]
\label{lem:valuationExtension}
Let $E$ be a discretely valued field with valuation $v$. If $F$ is a finitely generated extension field of $E$ then there is an extension of $v$ to a discrete valuation $\tilde{v}$ of $F$.
\end{lemma}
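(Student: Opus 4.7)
The plan is to proceed in the standard fashion from valuation theory, reducing to the case of a simple extension and then splitting into transcendental and algebraic subcases. Since $F$ is finitely generated over $E$ we may write $F = E(\alpha_1, \ldots, \alpha_k)$ and set $E_0 = E$, $E_i = E_{i-1}(\alpha_i)$, so that the result will follow by induction on $k$ once we handle a single simple extension $E \subseteq E(\alpha)$. At each stage the valuation on $E_{i-1}$ supplied by the inductive hypothesis is discrete of rank 1, and we only need to extend it one step at a time.

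For a simple transcendental extension $E \subseteq E(t)$, I would use the Gauss extension: for $f = \sum a_i t^i \in E[t]$ set $v'(f) = \min_i v(a_i)$, and extend to $E(t)$ multiplicatively by $v'(f/g) = v'(f) - v'(g)$. A direct check that $v'$ is well-defined on $E(t)$, satisfies the two valuation axioms, and takes values in $\Z \cup \{\infty\}$ with the same value group $\Z$ as $v$, shows that $v'$ is a discrete rank 1 valuation on $E(t)$ extending $v$.

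For a simple algebraic extension $E \subseteq E(\alpha)$, the extension is finite, and I would invoke the classical theory of extensions of valuations to finite extensions. Let $\mathcal{O}_v$ denote the valuation ring of $v$ with maximal ideal $\mathfrak{m}_v$, and let $\tilde{\mathcal{O}}$ denote the integral closure of $\mathcal{O}_v$ in $E(\alpha)$. Since $\mathcal{O}_v$ is a Noetherian integrally closed domain of Krull dimension one (i.e.\ a DVR) and $E(\alpha)/E$ is a finite separable or purely inseparable extension, $\tilde{\mathcal{O}}$ is a Dedekind domain whose field of fractions is $E(\alpha)$. Pick any maximal ideal $\mathfrak{p}$ of $\tilde{\mathcal{O}}$ lying over $\mathfrak{m}_v$ (which exists by going-up, since $\tilde{\mathcal{O}}$ is integral over $\mathcal{O}_v$). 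The localisation $\tilde{\mathcal{O}}_{\mathfrak{p}}$ is a DVR with fraction field $E(\alpha)$; let $\tilde v$ be its associated normalised discrete valuation, rescaled so that $\tilde v|_E$ is a positive integer multiple of $v$. Then $\tilde v$ is a discrete rank 1 valuation on $E(\alpha)$ restricting to a nonzero multiple of $v$, which is all that is required since the notion of a discrete rank 1 valuation is closed under positive integer rescaling.

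The main obstacle is the algebraic case, where one must know that integral closures of DVRs in finite extensions are Dedekind and that primes lie above primes; in the imperfect-residue-characteristic situation one cannot always assume separability, but finiteness of the integral closure still holds in our setting because $E(\alpha)/E$ is a finite extension of fields, so $\tilde{\mathcal{O}}$ remains a Dedekind domain and the argument goes through. Combining the two cases and the induction on the number of generators yields the desired extension $\tilde v$ of $v$ to a discrete rank 1 valuation on $F$.
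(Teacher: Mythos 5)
The paper states this lemma as a cited result from Alperin and Shalen \cite{Alperin-linear-1982} and does not supply a proof, so there is no argument in the paper to compare yours against. Your argument is the standard valuation-theoretic one: reduce by induction to a simple extension, extend by the Gauss valuation in the purely transcendental case, and in the finite case pass to the integral closure of the valuation ring in the larger field, pick a prime above the maximal ideal, and localise. The Gauss step is correct, the induction is fine, and the overall strategy is sound.

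The one place where the reasoning as written is inaccurate is the finite-extension step. You claim that ``finiteness of the integral closure still holds in our setting because $E(\alpha)/E$ is a finite extension of fields.'' This is false in general: for a discrete valuation ring $\mathcal{O}_v$ that is not Nagata (not complete, not excellent) and a finite \emph{inseparable} extension $E(\alpha)/E$, the integral closure $\tilde{\mathcal{O}}$ of $\mathcal{O}_v$ in $E(\alpha)$ need not be module-finite over $\mathcal{O}_v$; the classical counterexamples are due to Nagata. (Relatedly, a simple finite extension need not be ``separable or purely inseparable'' as you write.) What does hold unconditionally, and what you actually need, is that $\tilde{\mathcal{O}}$ is a Dedekind domain; this follows from the Krull--Akizuki theorem, which requires only that $\mathcal{O}_v$ be a one-dimensional Noetherian domain and $E(\alpha)/E$ be finite, with no finiteness of the integral closure needed. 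With that substitution the rest goes through: lying-over holds for arbitrary integral extensions, localising $\tilde{\mathcal{O}}$ at a prime $\mathfrak{p}$ over $\mathfrak{m}_v$ gives a DVR with fraction field $E(\alpha)$, and its valuation restricts to $E$ as a positive integer multiple of $v$ (same valuation ring), which is the standard meaning of ``extension'' here. So your proof is correct once the appeal to finiteness of the integral closure is replaced by Krull--Akizuki.
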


The natural algebraic map $q\co \R(\Gamma,\F)\to\X(\Gamma,\F)$ induces
\[
	\F(\X(\Gamma,\F))=\F[t_i]/I\subseteq \F[z_i]/I=\F(\R(\Gamma,\F))
\]
for ideal $I$ and $z_i$ coordinates for $\R(\Gamma,\F)$ such that $t_i\in\text{span}\{z_j\}$ for all $i$. 
We can restrict this to
\[
	\F(C)=\F[t_i]/I_C\subseteq \F[z_i]/I_C=\F(R_C)
\]
for the ideal $I_C$ defining the curve $C$, with $I \subset I_C$. 
Then setting $E=\F(C)$, $v=v_\xi$ and $F=\F(R_C)$, we can use \Cref{lem:valuationExtension} to extend the valuation $v_\xi$ to a discrete rank 1 valuation on $\F(R_C)$,
\begin{equation}
	\tilde{v}_\xi\co \F(R_C)\to\Z\cup\{\infty\}
\end{equation}

Let $\K=\F(R_C).$ The \textbf{tautological representation} is 
\begin{equation}
\begin{split}
	\P\co \pie(M)&\to \SL_2(\K),\\
	\P(\gamma)&=\begin{pmatrix} a & b\\ c& d \end{pmatrix} \text{ for }\rho(\gamma)=\begin{pmatrix} a(\rho) & b(\rho)\\ c(\rho)& d(\rho) \end{pmatrix}
\end{split}
\end{equation}
Here, $a, b, c, d$ are polynomial functions in the ambient coordinates defining $\R(M,\F)$ and can hence be viewed as elements in $\K=\F(R_C)$.

%%%%%%%%%%%%%%%%%%%%%%%%%%%

\subsection{Actions of groups on trees}

We continue with the notation from the previous section. Let $\K=\F(R_C)$, $\xi$ an ideal point of $C$ and $v=v_\xi$ the associated valuation. We recall the construction in \cite[\S 2]{Culler-Shalen-varieties-1983} of the \textbf{Bass-Serre tree} $\tree_\xi$ with an action of $\SL_2(\K)$. More details on the construction can be found in \cite{Baumslag-topics-1993, Serre-trees-2003, Shalen-representations-2002}.

The \textbf{valuation ring} is
\begin{equation}
	\O=\{a\in \K\mid v(a)\geq0\}
\end{equation}
The valuation ring is a principal ideal domain and will contain a maximal ideal $\mathcal{M}$ consisting of the non-units of $\O$ with $\mathcal{M}=\langle \pi \rangle$ for $v(\pi)=1$. 

Consider $V=\K^2$ a left $\O$-module. An \textbf{$\O$-lattice} $L$ is an $\O$-submodule of the form
\begin{equation}
	L=\O x+\O y
\end{equation}
for $x,y\in V$ linearly independent. The multiplicative group $\K^\times$ acts on $\O$-lattices by multiplication via
\begin{equation}
	aL=\O ax+\O ay
\end{equation}
with orbits of this action defining equivalence classes $\Lambda=[L]$. 

The key observation is that given $\O$-lattices $L_1$ and $L_2$, there is $m\in\Z$ such that $\pi^m L_2\subset L_1$. Hence for $\pi^m L_2\subset L_1$, $m\in\Z$, there is a basis $\{x,y\}$ for $L_1$ such that $\{\pi^f x,\pi^g y\}$ is a basis for $\pi^m L_2$ for some $f,g\in\Z$. One obtains a well-defined distance between the 
equivalence classes of $\O$-lattices $\Lambda_1=[L_1]$ and $\Lambda_2=[L_2]$ by letting
\begin{equation}
	d\left(\Lambda_1,\Lambda_2\right)=\lvert f-g \rvert
\end{equation}
This defines a metric on the set of all equivalence classes of $\O$-lattices.

We construct a graph (which turns out to be a tree) $\tree_\xi$ by the following set of vertices $\V$ and edges $\E$,
\begin{equation}
\begin{split}
	\V &= \{\Lambda=[L] \mid L \text{ an $\O$-lattice}\}\\
	\E &= \{\left(\Lambda_1,\Lambda_2\right) \mid d\left(\Lambda_1,\Lambda_2\right)=1\}
\end{split}
\end{equation}

The group $\GL(V)$ acts on $\O$-lattices via
\begin{equation}
	A\circ L=A\circ\left(\O x+\O y\right)=\O Ax+\O Ay
\end{equation}
for $A\in\GL(V)$. The action is well-defined on equivalence classes, giving an action on the tree $\tree_\xi$. In fact, this action is via isometries. That is, $d\left(A\circ\Lambda_1,A\circ\Lambda_2\right)=d\left(\Lambda_1,\Lambda_2\right)$, for all $A\in\GL(V)$ and for all $\Lambda_1, \Lambda_2$.

If we restrict the action to $\SL_2(\K)$, it can be proven that the action of $\SL_2(\K)$ on $\tree_\xi$ is non-trivial, simplicial, and without inversions.
We use the tautological representation $\P$ to pull back the action of $\SL_2(\K)$ on $\tree_\xi$ to an action of $\Gamma$ on $\tree_\xi$ by $\gamma\circ\Lambda=\P(\gamma)\circ\Lambda$ for all $\gamma\in\Gamma$. As per the action of $\SL_2(\K)$, the action of $\Gamma$ is simplicial and without inversions. 

We use the tautological representation to define a function $I_\gamma=\tr(\P(\gamma)) \in \F(\R(M,\F))$ for each $\gamma \in \pi_1(M)$:
\begin{equation}
\begin{split}
	I_\gamma \co \R(M,\F) 	&\to \F \\
					\rho	&\mapsto\tr(\rho(\gamma))
\end{split}	
\end{equation}
We say that $I_\gamma$ \textbf{blows up} (at $\xi$) if $v_\xi(I_\gamma)<0.$ Otherwise we say that $I_\gamma$ is \textbf{bounded} (at $\xi$).

The proofs of \cite[Property 5.4.2]{Shalen-representations-2002} and \cite[Theorem 2.2.1]{Culler-Shalen-varieties-1983} apply verbatim with the current set-up, and hence we have the following result:
\begin{theorem}[Culler-Shalen]
\label{pro:stabilisers}
Let $C$ be a curve in $\X(\Gamma, \F).$ To each ideal point $\xi$ of $\tilde{C},$ one can associate a splitting of $\Gamma$ with the property that for each element $\gamma \in \Gamma,$ the following are equivalent:
\begin{enumerate}
	\item $v_\xi(\trFunc_\gamma)\ge 0$
	\item A vertex of the Bass-Serre tree $\tree_\xi$ is fixed by $\gamma.$
\end{enumerate}
Thus, in particular, the action of $\Gamma$ on $\tree_\xi$ is non-trivial and the associated splitting of $\Gamma$ is non-trivial.
\end{theorem}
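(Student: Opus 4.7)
The plan is to follow the Bass-Serre theoretic argument of Culler and Shalen. The action of $\Gamma$ on $\tree_\xi$ via the tautological representation $\P$ is already in place, and it is simplicial and without inversions by the remarks preceding the statement. Two ingredients remain: first, proving the trace-valuation/fixed-vertex equivalence; second, producing some $\gamma$ without a fixed vertex so that Bass-Serre theory converts the action into a non-trivial splitting of $\Gamma$.

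For the equivalence, the core claim is that for $A \in \SL_2(\K)$ with $\K = \F(R_C)$ and valuation ring $\O$, the element $A$ fixes a vertex of $\tree_\xi$ if and only if $\tr(A) \in \O$. I would first show that if $A$ fixes a vertex $[L]$, then $A \cdot L = cL$ for some $c \in \K^\times$; the matrix $c^{-1}A$ then stabilizes $L$ setwise, hence lies in $\GL_2(\O)$, and because $\det(c^{-1}A) = c^{-2}$ must be a unit of $\O$, we get $v(c) = 0$. Consequently $A$ itself is represented by a matrix in $\SL_2(\O)$ in any $\O$-basis of $L$, yielding $\tr(A) \in \O$. For the converse, when $\tr(A) \in \O$ the eigenvalues of $A$ are roots of the monic polynomial $\lambda^2 - \tr(A)\lambda + 1 \in \O[\lambda]$, hence are integral over the integrally closed DVR $\O$; a standard lattice construction (exhibiting an $A$-stable $\O$-line in a suitable eigen-decomposition over an unramified extension, then completing to a lattice) yields a vertex fixed by $A$. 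Specializing to $A = \P(\gamma)$ gives $\trFunc_\gamma = \tr(\P(\gamma))$, and the equivalence (1)$\Leftrightarrow$(2) follows.

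For non-triviality, I must produce $\gamma \in \Gamma$ with $v_\xi(\trFunc_\gamma) < 0$. By \Cref{thm:character_variety_is_characters}, the variety $\X(\Gamma,\F)$ embeds into $\F^d$ via the trace coordinates of ordered products of generators, and the curve $C$ inherits these as regular functions on the affine part $J(C)$. By the definition of an ideal point, $\xi \in \tilde{C}$ does not lie in $J(C)$, so at least one of these coordinate functions must fail to extend regularly to $\xi$ on the projective completion. That is, some trace function $t_{j_1 \ldots j_m}$ has a pole at $\xi$; taking $\gamma = \gamma_{j_1} \cdots \gamma_{j_m}$ gives $v_\xi(\trFunc_\gamma) < 0$. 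By the equivalence just established, this $\gamma$ fixes no vertex of $\tree_\xi$, so the $\Gamma$-action has no global fixed vertex. Bass-Serre theory then produces a non-trivial splitting of $\Gamma$ as the fundamental group of a graph of groups, with vertex and edge groups corresponding respectively to the vertex and edge stabilizers of the $\Gamma$-action on $\tree_\xi$.

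The principal obstacle I anticipate is the integrality-of-trace criterion for vertex stabilization over an arbitrary algebraically closed ground field and in arbitrary residue characteristic, since over $\C$ this is classical but must here be checked without recourse to characteristic-zero structure. Fortunately the valuation ring $\O$ is automatically a DVR, hence a PID and integrally closed in $\K$, and the $\det = 1$ constraint forces any scalar factor $c$ appearing above to be a unit independently of $\mathrm{char}(\F)$. Thus the classical lattice-theoretic argument carries over verbatim once it is phrased in terms of the lattice description of $\tree_\xi$ rather than in terms of $\C$-analytic data.
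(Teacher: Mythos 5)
Your proposal takes essentially the same route as the paper, which simply cites Shalen's survey \cite{Shalen-representations-2002} and Culler-Shalen's original article \cite{Culler-Shalen-varieties-1983} and observes that those proofs carry over verbatim to the present setting; you have instead reconstructed the argument explicitly, correctly identifying the trace-integrality criterion for vertex stabilization and the blow-up of some coordinate trace function at the ideal point as the two key ingredients, and correctly noting that the DVR structure of $\O$ (rather than any characteristic-zero data) is all the classical argument uses. One small technical remark on your sketch of the converse direction of the stabilization criterion (that $\tr(A) \in \O$ implies $A$ fixes a vertex): the eigen-decomposition route you outline would need extra care when $A$ is not semisimple or when its eigenvalues lie in a proper extension of $\K$; the cleaner standard argument avoids both issues via Cayley-Hamilton, namely since $A^2 = \tr(A)A - I$ with $\tr(A) \in \O$, for any lattice $L_0$ the $\O$-module $L = L_0 + AL_0$ is finitely generated and torsion-free, hence a free $\O$-lattice, and $A L = L$ because $\det A$ is a unit, so $[L]$ is a fixed vertex.
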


%%%%%%%%%%%%%%%%%%%%%%%%%%%

\section{Essential surfaces in 3--manifolds detected by ideal points}
\label{sec:CullerShalen}

We outline the tools and approach used by Culler and Shalen \cite{Culler-Shalen-varieties-1983} now over $\F$ to associate essential surfaces in $3$--manifolds to ideal points of curves in the variety of $\SLF$--characters. A brief outline is given that follows~\cite{Shalen-representations-2002}, where further details and proofs can be found. We conclude with remarks on bounds on the dimension of the variety of characters, which were first provided in \cite{Culler-Shalen-varieties-1983} based on work by Thurston \cite{Thurston-notes}. 

%%%%%%%%%%%%%%%%%%%%%%%%%%%

\subsection{Essential surfaces in 3--manifolds}
\label{sec:EssentialSurfaces}

We start by defining essential surfaces following standard terminology from Jaco \cite{Jaco-lectures-1980}.
A {surface} $S$ in a compact 3--manifold $M$ will always mean a 2--dimensional piecewise linear submanifold \emph{properly embedded} in $M$. That is, a closed subset of $M$ with $\partial S = S \cap \partial M$. If $M$ is not compact, we replace it by a compact core. 

An embedded sphere $S^2$ in a 3--manifold $M$ is called \textbf{incompressible} or \textbf{essential} if it does not bound an embedded ball in $M$, and a 3--manifold is \textbf{irreducible} if it contains no incompressible 2--spheres. 
A surface $S$ without 2--sphere components in the 3--manifold $M$ is called \textbf{incompressible} if for each disc $D\subset M$ with $D \cap S = \partial D$ there is a disc $D' \subset S$ with $\partial D' = \partial D$. Note that \emph{every} properly embedded disc in $M$ is incompressible.

A surface $S$ in a 3--manifold $M$ is \textbf{$\partial$--compressible} if either 
\begin{enumerate}
\item $S$ is a disc and $S$ is parallel to a disc in $\partial M,$ or
\item $S$ is not a disc and there exists a disc $D\subset M$ such that $D\cap S = \alpha$ is an arc in $\partial D,$ $D\cap \partial M = \beta$ is and arc in $\partial D,$ with $\alpha \cap \beta = \partial \alpha = \partial \beta$ amnd $\alpha \cup \beta = \partial D$ and either $\alpha$ does not  separate $S$ or $\alpha$ separates $S$ into two components and the closure of neither is a disc.
\end{enumerate}
Otherwise $S$ is \textbf{$\partial$--incompressible}.

\begin{definition}  \cite{Shalen-representations-2002}\label{def:essential}
A surface $S$ in a compact, irreducible, orientable 3--manifold is said to be \textbf{essential} if it has the following properties:
    \begin{enumerate}
       \item $S$ is bicollared;
       \item the inclusion homomorphism $\pi_1(S_i) \to \pi_1(M)$ is
          injective for
           every component $S_i$ of $S$;
       \item no component of $S$ is a 2--sphere;
       \item no component of $S$ is boundary parallel;
       \item $S$ is non-empty.
    \end{enumerate}
\end{definition}
In the first condition, bicollared means $S$ admits a map $h\co S\times [-1,1] \to M$ that is a homeomorphism onto a neighbourhood of $S$ in $M$ such that $h(x,0)=x$ for every $x \in S$ and $h(S\times [-1,1])\cap \bound M = h(\bound S\times [-1,1])$.
The surface $S$ being bicollared in orientable $M$ implies $S$ is orientable. 
The second condition is equivalent to saying that there are no compression discs for the surface (cf.~\cite[Lemma 6.1]{Hempel-manifolds-1976}). Hatcher~\cite[Lemma 1.10]{Hatcher-notes} implies that if each boundary component of the essential surface $S$ lies on a torus boundary component of $M$, then $S$ is both incompressible and $\partial$--incompressible.

See \cite{Shalen-representations-2002} for a discussion of the definition and its ramifications. A frequently used alternative definition is that an orientable surface $S$ properly embedded in $M$ is said to be {essential} if it is incompressible, boundary-incompressible, and not boundary-parallel. 
A compact, irreducible 3--manifold that contains an essential surface is called \textbf{Haken}.

%%%%%%%%%%%%%%%%%%%%%%%%%%%

\subsection{Splittings and surfaces}

Assume $M$ is compact and orientable, but not necessarily irreducible.
Stallings~\cite{Stallings-topological-1965} previously showed that a free-product decomposition of a fundamental group of a 3--manifold gives rise to a system of essential 2--spheres in the manifold and Epstein~\cite{Epstein-free-1961} and Waldhausen~\cite{Waldhausen-gruppen-1967} previously showed that a decomposition of the fundamental group as a free product with amalgamation or HNN extension gives rise to a system of incompressible surfaces. 
\Cref{pro:stabilisers} states that an ideal point of a curve in the variety of characters of $\pi_1(M)$ gives a non-trivial splitting of $\pi_1(M).$ 

As a consequence of this result, Culler and Shalen~\cite[Proposition 2.3.1]{Culler-Shalen-varieties-1983} distill the earlier work of Stallings~\cite{Stallings-topological-1965}, Epstein~\cite{Epstein-free-1961} and Waldhausen~\cite{Waldhausen-gruppen-1967} into the following result.

\begin{proposition}[Culler-Shalen] \cite[Proposition 2.3.1]{Culler-Shalen-varieties-1983}
\label{pro:CSsplittingsurface}
Let $M$ be a compact, orientable 3--manifold. For any non-trivial splitting of $\pi_1(M)$ there exists a non-empty system $S = S_1 \cup \ldots \cup S_m$ of orientable incompressible surfaces in $N,$ none of which is boundary parallel, such that $\im(\pie(S_i) \to \pi_1(M))$ is contained in an edge group for $i = 1, \ldots, m,$ and 
$\im(\pie(R) \to \pie(M))$ is contained in a vertex group for each connected component of $M\setminus S.$ 

Moreover, if $K \subset \partial M$ is a subcomplex such that $\im(\pie(K) \to \pie(M))$ is contained in a vertex group for each connected component of $K$, we may take $S$ to be disjoint from $K.$
\end{proposition}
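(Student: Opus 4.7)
The plan is to follow the classical route pioneered by Stallings, Epstein, Waldhausen and packaged in this form by Culler--Shalen. The splitting of $\pi_1(M)$ corresponds by Bass--Serre theory to a non-trivial, minimal, simplicial action of $\pi_1(M)$ on a tree $T$ without inversions, in which vertex and edge stabilisers are precisely the conjugates of the vertex and edge groups of the splitting. The goal is to convert this combinatorial action into a geometric system of surfaces in $M$ via an equivariant map to $T$ and then clean the surfaces up.

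First, I would build a $\pi_1(M)$--equivariant piecewise linear map $\tilde f \colon \widetilde M \to T$ from the universal cover $\widetilde M$ to $T$. Fix a $\pi_1(M)$--equivariant triangulation of $\widetilde M$. For each $\pi_1(M)$--orbit of 0--simplices choose a representative and send it to any vertex of $T$; for the orbit of 0--simplices coming from a lift of the subcomplex $K$ in the \emph{moreover} part, choose a vertex of $T$ whose stabiliser contains $\im(\pi_1(K) \to \pi_1(M))$, which is possible since by hypothesis this image lies in a vertex group. Extend equivariantly to all 0--simplices, then extend skeleton by skeleton using contractibility of $T$: each higher simplex has image in a subtree spanned by finitely many vertices, so a PL extension exists. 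After a small equivariant perturbation we may assume $\tilde f$ is transverse to the set $\mathfrak m(T)$ of midpoints of edges; near $K$ we may take $\tilde f$ to be the constant map to a vertex, so the preimage misses a neighbourhood of the lift of $K$.

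Second, set $\widetilde S = \tilde f^{-1}(\mathfrak m(T)) \subset \widetilde M$ and $S = \widetilde S / \pi_1(M) \subset M$. Transversality forces $\widetilde S$ to be a properly embedded bicollared 2--submanifold, and equivariance makes $S$ a properly embedded surface in $M$ disjoint from $K$. By construction, each component $\widetilde R$ of $\widetilde M \setminus \widetilde S$ has $\tilde f(\widetilde R)$ contained in a single open star of a vertex, so its $\pi_1(M)$--stabiliser is contained in a vertex group; likewise each component of $\widetilde S$ maps into a single edge, so its stabiliser sits in an edge group. Translating back to $M$ yields precisely the group-theoretic incidence stated in the proposition.

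Third, and this is the serious step, I would improve $S$ to a non-empty essential system by iterated surgery. Standard moves are: (i) discard sphere components using irreducibility of $M$ if applicable, or replace them using Stallings' trick so the dual graph-of-groups is unaffected; (ii) compress along any compressing disc, which replaces a component of $S$ by simpler ones; (iii) discard boundary-parallel components. Each such move can be performed by an equivariant homotopy of $\tilde f$ supported away from a neighbourhood of the lift of $K$, so disjointness from $K$ is preserved, and the subgroup-containment properties survive because compression discs and parallelism annuli lie in components of $\widetilde M \setminus \widetilde S$ whose stabilisers already lie in vertex groups. The main obstacle is to show that this process terminates with a non-empty system: a priori, repeated compression could destroy $S$ entirely, but then the equivariant homotopy would push $\tilde f$ off $\mathfrak m(T)$ on every component, producing a $\pi_1(M)$--equivariant map into a single vertex of $T$ and contradicting non-triviality of the action guaranteed by \Cref{pro:stabilisers}. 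This non-vanishing argument, together with a complexity function on surfaces to ensure termination, is the technical heart of the proof.
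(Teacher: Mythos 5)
The paper does not reprove this proposition; it cites \cite[Proposition~2.3.1]{Culler-Shalen-varieties-1983} directly, since the statement is purely a fact about 3--manifold topology and group actions on trees with no dependence on the coefficient field $\F$. Your sketch reconstructs the standard Stallings/Epstein/Waldhausen argument that Culler and Shalen package, and the outline — equivariant PL map to the Bass--Serre tree, preimage of edge midpoints, compression and discarding of inessential pieces, non-vanishing via connectedness of $\widetilde M$ — is the right one and is correct in substance.

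One misattribution is worth flagging. You invoke \Cref{pro:stabilisers} to guarantee non-triviality of the action on the tree, but that theorem is about actions arising from ideal points of curves in the variety of characters. In the present proposition the input is an \emph{arbitrary} non-trivial splitting of $\pi_1(M)$, so the corresponding simplicial action on the Bass--Serre tree has no global fixed vertex by Bass--Serre theory itself — this is part of what ``non-trivial splitting'' means, and needs no appeal to \Cref{pro:stabilisers}. The termination/non-emptiness argument you outline is then: the compression moves strictly decrease a complexity (e.g.\ lexicographic in $(-\chi, \text{number of components})$), so the process terminates; if the resulting $\widetilde S$ were empty, $\widetilde f$ would map connected $\widetilde M$ into $T\setminus\mathfrak m(T)$, hence equivariantly retract to a single vertex of $T^{(0)}$, giving a global fixed vertex and contradicting non-triviality of the splitting. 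Two smaller points: (i) the proposition does not assume $M$ irreducible, so incompressible sphere components cannot automatically be discarded — they are retained when they fail to bound balls, which is consistent with the statement; (ii) in the ``moreover'' clause $K$ may have several components, and you should choose, for each component $K_i$ of $K$, a vertex of $T$ whose stabiliser contains $\im(\pi_1(K_i)\to\pi_1(M))$ and send the corresponding lifts there, rather than a single vertex for all of $K$.
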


%%%%%%%%%%%%%%%%%%%%%%%%%%%

\subsection{Detecting essential surfaces}

From now onwards, we assume that $M$ is irreducible. Following Culler and Shalen, we now obtain finer information on surfaces detected by the ideal point $\xi$ of a curve $C$ in the variety of characters. We outline the general approach and refer the reader again to \cite{Culler-Shalen-varieties-1983, Shalen-representations-2002} for a detailed account relevant to this paper.

The action by $\pie(M)$ on the Bass-Serre tree $\tree=\tree_\xi$ is used to find an essential surface in $M$ via the following construction that was described by Stallings~\cite{Stallings-topological-1965}. Given a triangulation on $M$, lift the triangulation to the universal cover $\tilde{M}$ so that $\pie(M)$ acts simplicially on $\tilde{M}$. Construct a simplicial, $\pie(M)$--equivariant map $f\colon \tilde{M}\to\tree$. Denote the set of midpoints of edges of $\tree$ to be $E$. Then $f^{-1}(E)$ is a surface $\tilde{S}$ in $\tilde{M}$ that descends to a surface $S$ in $M$.

The surface associated with the ideal point $\xi$ depends on the choice of triangulation of $M$ and the choice of map $f$. A surface found in this way is called \textbf{dual to the action} of $\pi_1(M)$ on $\tree$. 
 The surface may contain finitely many parallel copies of some of its components, which we implicitly discard. The definition of a surface being dual to the action combined with \Cref{pro:CSsplittingsurface} has the following consequence.

\begin{corollary}
If $S$ is a surface dual to the action of $\pi_1(M)$ on $\tree$, then,
\begin{enumerate}[label=(\roman*)]
	\item for each component $M_i$ of $M-S$, the subgroup $\text{im}(\pi_1(M_i)\to \pi_1(M))$ of $\pi_1(M)$ is contained in the stabiliser of a vertex of $\tree$; and 
	\item for each component $S_j$ of $S$, the subgroup $\text{im}(\pi_1(S_j)\to \pi_1(M))$ of $\pi_1(M)$ is contained in the stabiliser of an edge of $\tree$.
\end{enumerate}
\end{corollary}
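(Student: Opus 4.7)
The plan is to use the definition of \emph{dual to the action}: the surface $S$ is the image in $M$ of $\tilde S = f^{-1}(E)$, where $E$ is the set of midpoints of edges of $\tree$ and $f \co \tilde M \to \tree$ is the chosen simplicial, $\pie(M)$--equivariant map. Both parts will follow by analysing the image of a lifted component under $f$, together with equivariance.

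For (i), I would fix a component $M_i$ of $M - S$ and choose a specific lift $\tilde M_i$ in $\tilde M - \tilde S$. With a compatible choice of basepoint, standard covering-space theory identifies $\im(\pie(M_i) \to \pie(M))$ with $\text{Stab}_{\pie(M)}(\tilde M_i)$. The key observation is that the components of $\tree - E$ are precisely the open stars of vertices of $\tree$, since removing the midpoint of each edge splits it into two half-open segments attached to its two endpoints. Because $\tilde M_i$ is connected and $f$ is continuous, $f(\tilde M_i)$ lies in a single open star $\text{star}(v)$. For any $\gamma \in \text{Stab}(\tilde M_i)$, equivariance gives $\gamma \cdot f(\tilde M_i) = f(\gamma \cdot \tilde M_i) = f(\tilde M_i) \subset \text{star}(v) \cap \gamma \cdot \text{star}(v)$; since open stars of distinct vertices are disjoint, this forces $\gamma v = v$, so $\gamma$ lies in the vertex stabiliser of $v$.

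For (ii), I would take a lift $\tilde S_j$ of a component $S_j$ and identify $\im(\pie(S_j) \to \pie(M))$ with $\text{Stab}_{\pie(M)}(\tilde S_j)$ as before. Since $E$ is a discrete subset of the $1$--dimensional simplicial complex $\tree$, any continuous map from the connected set $\tilde S_j$ into $E$ is constant, so $f(\tilde S_j) = \{m_e\}$ for the midpoint of some edge $e$. Equivariance then yields $\gamma \cdot m_e = m_e$ for every $\gamma \in \text{Stab}(\tilde S_j)$, so $\gamma$ stabilises the edge $e$ setwise. Since the action of $\pie(M)$ on $\tree$ is without inversions by the construction recalled in \Cref{sec:Splittings}, $\gamma$ must fix $e$ pointwise and therefore lies in the edge stabiliser.

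The main obstacle is bookkeeping rather than conceptual: one must set up basepoints so that the identifications $\im(\pie(M_i) \to \pie(M)) \cong \text{Stab}(\tilde M_i)$ and $\im(\pie(S_j) \to \pie(M)) \cong \text{Stab}(\tilde S_j)$ are realised simultaneously by a single lift of the basepoint of $M$. Once this is arranged, the proof reduces to the two elementary topological inputs used above, namely that the components of $\tree - E$ are open vertex stars, and that a continuous map from a connected set to the discrete set $E$ is constant.
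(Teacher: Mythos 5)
Your proposal is correct, and it spells out the standard Stallings-type argument that the paper silently invokes (the paper gives no proof, attributing the statement to the definition of ``dual to the action'' together with \Cref{pro:CSsplittingsurface}). Both parts of your argument are sound: the preimage $\tree - E$ decomposes into disjoint open vertex stars for a graph, so a connected component of $\tilde M - \tilde S$ maps into a unique star, and equivariance combined with the disjointness of distinct stars forces the stabiliser of that component to fix the corresponding vertex; similarly a connected component of $\tilde S = f^{-1}(E)$ maps to a single midpoint, and the hypothesis that the action is without inversions upgrades ``fixes the midpoint'' to ``fixes the edge pointwise''. You are also right that the only real care needed is in the basepoint bookkeeping to realise $\im(\pie(M_i)\to\pie(M))$ and $\im(\pie(S_j)\to\pie(M))$ as stabilisers of the chosen lifts; since the corollary's conclusion is conjugation-invariant, this is harmless. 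If anything, your derivation is more direct than the paper's framing: \Cref{pro:CSsplittingsurface} is an existence statement that is itself proved by this construction, so the corollary is really a restatement of what the Stallings construction produces rather than a consequence of the proposition. You have essentially written the proof the paper has in mind.
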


If the surface $S$ dual to the action is not essential we can find a map homotopic to $f$ such that it is (see \cite{Culler-Shalen-varieties-1983, Shalen-representations-2002}). 
An essential surface that arises in this way is said to be \textbf{detected} by an ideal point. 

%%%%%%%%%%%%%%%%%%%%%%%%%%%

\subsection{Boundary slopes and essential surfaces}

A \textbf{boundary slope} is the slope of the boundary curve of an essential surface $S$ in $M$ that has non-empty intersection with $\bound M$. 
The relationship between boundary slopes of essential surfaces in $M$ and valuations of trace functions of peripheral elements is given in the following reformulation of \cite[Proposition 1.3.9]{Culler-Dehn-1987}.

\begin{proposition}
Let $M$ be a compact, orientable, irreducible 3--manifold with $\partial M$ a torus. 
Suppose there is an element $\gamma \in \pi_1(M)$ such
that $v_\xi(I_\gamma) < 0.$
  Let 
  $\alpha \in \im (\pi_1(T) \to \pi_1(M))$ such that
  $v_\xi (I_\alpha) \ge 0.$  Then either
  \begin{enumerate}
    \item $v_\xi (I_\beta) \ge 0$ for all
      $\beta \in \im (\pi_1(T) \to \pi_1(M))$ and there is a
      closed essential surface in $M$, or
    \item $\alpha$ determines a boundary slope of $M$.
  \end{enumerate}
\end{proposition}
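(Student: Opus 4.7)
The plan is to apply the Bass--Serre tree machinery of \Cref{sec:Splittings} at the ideal point $\xi$, produce an essential surface dual to the action, and analyse how the peripheral subgroup $P \defeq \im(\pi_1(\bound M) \to \pie(M)) \cong \Z^2$ acts on $\tree_\xi$. By \Cref{pro:stabilisers}, the hypothesis $v_\xi(I_\gamma) < 0$ means $\gamma$ fixes no vertex of $\tree_\xi$, so the action of $\pie(M)$ on $\tree_\xi$ is non-trivial, and the construction outlined in \Cref{sec:CullerShalen} yields an essential surface $S \subset M$ dual to this action. \Cref{pro:stabilisers} further says that an element $\beta \in P$ fixes a vertex of $\tree_\xi$ (is \emph{elliptic}) if and only if $v_\xi(I_\beta) \ge 0$, so the hypothesis is precisely that $\alpha$ is elliptic; the two possible conclusions will correspond to whether or not every element of $P$ is elliptic.

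In the first case, I would invoke the classical fixed-point theorem that a finitely generated abelian group acting on a simplicial tree, all of whose elements are elliptic, has a common fixed vertex (via the Helly property for fixed-point subtrees of a tree). Homotoping the equivariant map $f\co \tilde M \to \tree_\xi$ so that each lift of the torus $\bound M$ is sent into the star of such a common vertex produces a dual surface disjoint from $\bound M$. Since $S$ is non-empty and essential, this gives a closed essential surface in $M$, establishing the first alternative.

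In the second case, some $\beta \in P$ is hyperbolic. Because $P$ is abelian, all hyperbolic elements of $P$ share a common translation axis $L \subset \tree_\xi$ preserved setwise by $P$, the translation-length function restricts to a non-zero homomorphism $\ell\co P \to \Z$, and its kernel $P_0$ is an infinite cyclic subgroup of $P$ containing $\alpha$. I would then analyse the restriction of the equivariant map to a lift $\tilde T$ of the peripheral torus (a plane on which $P$ acts by translations): the preimage of the edge midpoints is a $P$-invariant family of disjoint lines in $\tilde T$, each stabilised by a rank-one subgroup of $P_0$. The quotient in $T$ is a union of parallel simple closed curves in $S \cap \bound M$ whose common slope is represented by a generator of $P_0$, and therefore also by $\alpha$, establishing the second alternative.

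The main obstacle is the Case~2 verification that the boundary slopes of the dual surface are exactly those represented by $P_0$. This requires the fact that edge stabilisers in $\tree_\xi$ are elliptic (so their intersections with $P$ lie in $P_0$) together with the observation that the equivariant map must realise the hyperbolic translation on $L$ by translation transverse to the line system in $\tilde T$. A further subtlety is to check that the essential-surface homotopy used in the dual-surface construction can be arranged without destroying the boundary-slope information provided by the tree action, so that the boundary curves obtained above actually survive in the essential surface.
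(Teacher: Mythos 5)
Your proposal takes a genuinely different route from the paper's. The paper's proof is a short application of the final clause of \Cref{pro:CSsplittingsurface}: since $v_\xi(I_\alpha)\ge 0$, the element $\alpha$ fixes a vertex of $\tree_\xi$, so one may choose the dual essential surface $S$ disjoint from a simple closed curve $C\subset T$ representing $\alpha$. Then either $\partial S=\emptyset$, in which case $T$ lies in a single complementary component of $S$, its fundamental group maps into a vertex group, every peripheral element is bounded, and $S$ itself is the closed essential surface of alternative~(1); or $\partial S\neq\emptyset$, and since $\partial S\subset T$ is disjoint from $C$ it is parallel to $C$, so $\alpha$ determines a boundary slope, giving alternative~(2). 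You instead split on whether the peripheral subgroup $P$ is wholly elliptic. Your Case~1 (fixed-point theorem for abelian groups acting on trees) is fine and reaches the same place. Your Case~2, which extracts the boundary slope directly from the translation-length homomorphism $\ell\colon P\to\Z$, the common axis of commuting hyperbolic elements, edge-stabiliser ellipticity, and a $P$-invariant line system in a lift of $T$, is conceptually an unpacking of the dual-surface construction restricted to the peripheral torus; it would in fact recover the stronger conclusion in \Cref{pro:detected_surface}(1) that \emph{every} detected surface has this boundary slope. But it carries substantially more overhead, and as you note yourself, it leaves unclosed the issue of whether the essential-surface homotopies in the Stallings construction preserve the boundary-curve data. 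The paper avoids all of that: the control over where $S$ meets $\partial M$ is packaged into the ``take $S$ disjoint from $K$'' clause of \Cref{pro:CSsplittingsurface} with $K=C$, and the slope then follows from elementary surface topology on the torus, with no need for a case split on the dynamics of $P$ on the tree.
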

\begin{proof}
Since $v_\xi (I_\alpha) \ge 0,$ we know that $\alpha$ stabilises a vertex in the associated Bass-Serre tree. Thus, there is an
associated essential surface $S$ disjoint from any curve $C$ on
$T \subseteq \partial M$ representing $\alpha$. If $\partial S \ne \emptyset$ then
$C$ is a boundary slope. Otherwise $\im (\pi_1(\partial M)\to \pi_1(M))$ stabilises a vertex and the first part
is true.
\end{proof}

\begin{corollary}
\label{pro:detected_surface}
Let $M$ be a compact, orientable, irreducible 3--manifold with $\partial M$ a torus. Let $C$ be a curve in the variety of characters with an ideal point $\xi$.
We have the following mutually exclusive cases.
\begin{enumerate}
  \item If there is an element $\gamma$ in $\im (\pi_1(\partial M) \to \pi_1(M))$
     such that $v_\xi(I_\gamma)<0$, then up to inversion there
     is a unique primitive element $\alpha \in \im(\pi_1(\partial M)\to \pi_1(M))$ such that 
     $v_\xi(I_\alpha)\ge 0$. Then every essential surface $S$ detected by $\xi$ has non-empty boundary and 
     $\alpha$ is parallel to its boundary
     components.  In this case, $\alpha$ is called a \textbf{strongly detected boundary slope}.
  \item If $v_\xi(I_\gamma)\ge 0$ for all 
     $\gamma \in \im (\pi_1(\partial M)\to\pi_1(M))$, 
     then an essential surface $S$ detected by $\xi$ may be chosen that is disjoint from $\partial M$. Moreover, if $S$ is detected by $\xi$ and has non-empty boundary, then its boundary slope is \textbf{weakly detected} by $\xi.$
\end{enumerate}
\end{corollary}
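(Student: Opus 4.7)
The plan is to interpret both alternatives in terms of whether the peripheral subgroup $\im(\pie(\bound M) \to \pie(M)) \cong \Z^2$ has a global fixed vertex in $\tree_\xi,$ then assemble the claim using \Cref{pro:stabilisers}, \Cref{pro:CSsplittingsurface}, and the Proposition immediately preceding this corollary.

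First I would dispatch case 2. The hypothesis $v_\xi(I_\gamma) \ge 0$ for every peripheral $\gamma$ and \Cref{pro:stabilisers} imply that every peripheral element individually fixes a vertex of $\tree_\xi.$ Since the action is simplicial without inversions and $\Z^2$ is abelian, a standard Bass-Serre argument upgrades these individual fixed vertices to a common fixed vertex, so $\im(\pie(\bound M) \to \pie(M))$ is contained in a single vertex stabiliser of $\tree_\xi.$ The last clause of \Cref{pro:CSsplittingsurface} (applied with $K = \bound M$) then produces a detected essential surface disjoint from $\bound M.$ The remaining claim about weakly detected slopes is immediate from the definition built into the statement.

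For case 1, the key structural input I would invoke is that if an abelian group acts simplicially without inversions and without a global fixed vertex on a tree, then its hyperbolic elements share a common invariant axis, and the translation length is a homomorphism whose kernel coincides with the set of elliptic elements. The hypothesis supplies $\gamma$ with $v_\xi(I_\gamma) < 0,$ so by \Cref{pro:stabilisers} the peripheral $\Z^2$ does not fix a vertex and hence acts along an axis; call the resulting translation length homomorphism $\ell \co \Z^2 \to \Z.$ The kernel $\ker \ell$ cannot equal all of $\Z^2$ (else there would be a global fixed vertex) and cannot be trivial (since $\Z^2$ does not embed into $\Z$), so it has rank exactly one and is generated by a unique primitive element $\alpha$ up to inversion. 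By \Cref{pro:stabilisers}, $\ker \ell$ is precisely the set of peripheral $\beta$ with $v_\xi(I_\beta) \ge 0.$ Applying the preceding Proposition to the pair $(\alpha, \gamma)$ rules out its alternative~(1) by hypothesis and forces $\alpha$ to be a boundary slope.

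To close case 1, I would show every essential surface $S$ detected by $\xi$ has non-empty boundary with slope $\alpha.$ If $S$ were disjoint from $\bound M,$ then $\bound M$ would lie in a single component of $M \setminus S,$ forcing $\im(\pie(\bound M)\to \pie(M))$ into a single vertex stabiliser of $\tree_\xi$ via \Cref{pro:CSsplittingsurface}, contradicting our standing assumption. Hence $\bound S \neq \emptyset,$ and each boundary circle of $S$ represents a peripheral element whose image lies in an edge stabiliser, hence fixes a vertex, so has $v_\xi \ge 0$ and is therefore a power of $\alpha.$ The main obstacle will be the careful justification of the structural fact for abelian group actions on trees (common axis for hyperbolic elements, and kernel of translation length equals the elliptic subgroup); this is classical Bass-Serre material, and once it is in place the rest is an assembly of the results already cited.
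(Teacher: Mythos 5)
Your proposal is correct; the paper itself gives no proof of this corollary (it is stated as an immediate consequence of the preceding Proposition, presumably following the original treatment in Culler--Dehn), so there is nothing to compare directly. Your reconstruction is the standard one: in case~2 you upgrade elliptic peripherals to a common fixed vertex for the abelian $\Z^2$, then invoke the final clause of \Cref{pro:CSsplittingsurface} with $K = \partial M$; in case~1 you use the translation-length homomorphism $\ell\co \Z^2 \to \Z$ determined by the invariant axis, observe that $\ker \ell$ is the set of elliptic (equivalently bounded-trace, by \Cref{pro:stabilisers}) peripherals and is a direct summand of rank one (since $\Z^2/\ker\ell \cong \im\ell \le \Z$ is nontrivial and torsion-free), hence generated by a primitive $\alpha$ unique up to inversion; the preceding Proposition then rules out its alternative~(1) and forces $\alpha$ to be a boundary slope, and the closing argument via edge and vertex stabilisers of a dual surface is exactly what is needed.

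Two small points worth tightening. First, the claim that a detected surface disjoint from $\partial M$ forces $\im(\pi_1(\partial M) \to \pi_1(M))$ into a vertex stabiliser, and that boundary circles of a dual surface lie in edge stabilisers, both appeal to the unnamed corollary following \Cref{pro:CSsplittingsurface} (about surfaces dual to the action), not to \Cref{pro:CSsplittingsurface} itself; attribute accordingly. Second, when you invoke the structural fact that a commuting elliptic $h$ preserving the axis of a hyperbolic $g$ cannot act as a reflection, it is worth flagging that this is exactly where the absence of inversions and the commutativity $hgh^{-1}=g$ are used: a reflection would conjugate $g$ to $g^{-1}$ on the axis, contradicting $\ell(g) \ne 0$. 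With those attributions spelled out, the argument is complete.
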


%%%%%%%%%%%%%%%%%%%%%%%%%%%%

\subsection{Remarks on detected surfaces}

Not all essential surfaces can be detected by the $\SLC$--character variety. Much work has been done to further characterise which surfaces are detected by which ideal points (for example, see \cite{Chesebro-closed-2013,Dunfield-incompressibility-2012,Segerman-detection-2011,Tillus-character-2004,Tillus-degenerations-2012, Yoshida-ideal-1991}). 

This paper gives an example of a closed, Haken, hyperbolic 3--manifold with no essential surface detected in any characteristic (\Cref{sec:m019}). Essential surfaces in closed hyperbolic 3--manifolds have genus at least two. Forthcoming work~\cite{Garden-essential-2024} of the authors with Ben Martin shows that there are essential tori in infinitely many graph manifolds and infinitely many Seifert fibered spaces that are not detected by any ideal point of the variety of $\SLF$--characters for any algebraically closed field $\F.$

In contrast, the theory has been extended to $\SLnC$--character varieties \cite{Hara-character-2021}, and it was shown that for  every essential surface in a 3--manifold there is some $n$ with the property that the surface is detected by the $\SLnC$--character variety~\cite{Friedl-representation-2018}. 

\subsection{Bounds on dimension}
\label{sec:dim}

Based on Thurston~\cite[Theorem 5.6]{Thurston-notes}, Culler and Shalen~\cite[Proposition 3.2.1]{Culler-Shalen-varieties-1983} provided a bound on the dimension of the $\SLC$--character variety. The proofs apply almost verbatim in our context.

\begin{lemma}~\cite[Lemma 1.5.1]{Culler-Shalen-varieties-1983}
	\label{lem:irrTraceNotTwo}
	Let $\F$ be an algebraically closed field of characteristic $p\neq2$ and let $\Gamma$ be a finitely generated group. Suppose $\rho\co \Gamma \to \SL_2(\F)$ is an irreducible representation and $\alpha \in \Gamma$ such that $\rho(\alpha) \neq  \pm E$. Then there exists $\gamma \in \Gamma$ such that the restriction of $\rho$ to the subgroup generated by $\alpha$ and $\gamma$ is irreducible and $\tr(\rho(\gamma))^2\neq4$.
\end{lemma}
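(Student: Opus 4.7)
The plan is to find a companion $\delta\in\Gamma$ such that $\rho|_{\langle\alpha,\delta\rangle}$ is irreducible, then to produce $\gamma$ either as $\alpha^n\delta$ for appropriate $n$ or as the conjugate $\delta\alpha\delta^{-1}$. For the companion, note that the set $\mathcal{L}$ of $\rho(\alpha)$-invariant $1$-dimensional subspaces of $\F^2$ has at most two elements, and for each $L\in\mathcal{L}$ the stabiliser of $L$ in $\rho(\Gamma)$ is a proper subgroup (otherwise $\rho$ would be reducible). Since a group is never a union of two proper subgroups, some $\delta\in\Gamma$ satisfies $\rho(\delta)L\neq L$ for every $L\in\mathcal{L}$, giving $\rho|_{\langle\alpha,\delta\rangle}$ irreducible. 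As $\langle\alpha,\alpha^n\delta\rangle=\langle\alpha,\delta\rangle$ for every $n\in\Z$, we may freely replace $\delta$ by any $\alpha^n\delta$. Set $T=\tr(\rho(\alpha))$ and $t_n=\tr(\rho(\alpha^n\delta))$; the trace identity \Cref{eq:tr_invert} yields the recurrence $t_{n+1}=Tt_n-t_{n-1}$.

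In the case $T^2=4$, conjugate so that $\rho(\alpha)=\varepsilon\bigl(\begin{smallmatrix}1&1\\0&1\end{smallmatrix}\bigr)$ with $\varepsilon=\pm 1$, and write $\rho(\delta)=\bigl(\begin{smallmatrix}a&b\\c&d\end{smallmatrix}\bigr)$; the non-preservation of $\langle e_1\rangle$ by $\rho(\delta)$ forces $c\neq 0$. A direct computation gives $t_n=\varepsilon^n\bigl((a+d)+nc\bigr)$, so $t_n^2=\bigl((a+d)+nc\bigr)^2$; the equation $(a+d)+nc=\pm 2$ has at most two solutions for $n$, and since $p\neq 2$ some $n\in\Z$ yields $t_n^2\neq 4$. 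Set $\gamma=\alpha^n\delta$.

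In the case $T^2\neq 4$: if some $n$ gives $t_n^2\neq 4$, set $\gamma=\alpha^n\delta$. Otherwise $t_n\in\{\pm 2\}$ for all $n$; expanding $(Tt_1-t_0)^2=t_2^2=4$ with $t_0^2=t_1^2=4$ yields $2T(2T-t_0t_1)=0$, and since $t_0t_1=\pm 4$ together with $T^2\neq 4$ we deduce $T=0$, whence $s^2=-1$, where $s,s^{-1}$ are the eigenvalues of $\rho(\alpha)$. Conjugate so that $\rho(\alpha)=\bigl(\begin{smallmatrix}s&0\\0&s^{-1}\end{smallmatrix}\bigr)$, with invariant lines $L_1=\langle e_1\rangle$, $L_2=\langle e_2\rangle$, and write $\rho(\delta)=\bigl(\begin{smallmatrix}a&b\\c&d\end{smallmatrix}\bigr)$. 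Using $s^{-1}=-s$, the identities $t_0=a+d$ and $t_1=s(a-d)$ solve to $a=(t_0-st_1)/2$, $d=(t_0+st_1)/2$; the vanishing $a=0$ would force $s\in\{\pm 1\}$, contradicting $s^2=-1$, and similarly $d\neq 0$, so $\rho(\delta)$ maps each $L_i$ to a line outside $\{L_1,L_2\}$. Setting $\gamma=\delta\alpha\delta^{-1}$ gives $\tr(\rho(\gamma))=\tr(\rho(\alpha))=0$ (so $\tr(\rho(\gamma))^2=0\neq 4$ in characteristic $\neq 2$), and the invariant lines of $\rho(\gamma)$ are $\rho(\delta)L_1,\rho(\delta)L_2$, disjoint from $\{L_1,L_2\}$, giving $\rho|_{\langle\alpha,\gamma\rangle}$ irreducible.

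The main obstacle is this exceptional sub-case of $T^2\neq 4$ in which $\rho(\alpha)$ has order $4$ and every $t_n$ lies in $\{\pm 2\}$, so that the family $\{\alpha^n\delta\}$ fails entirely; the conjugation $\gamma=\delta\alpha\delta^{-1}$ rescues the argument via the identity $s^{-1}=-s$. The hypothesis $p\neq 2$ is crucial both to ensure $0\neq\pm 2$ and to provide enough residues for the affine argument in the previous case.
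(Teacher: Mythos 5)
Your proof is correct and takes a genuinely different route from the paper's. The paper fixes a $\gamma_0$ with $\rho|_{\langle\alpha,\gamma_0\rangle}$ irreducible and $\tr(\rho(\gamma_0))^2=4$, conjugates $\rho(\gamma_0)$ to parabolic form, and perturbs by right-multiplication with powers of $\gamma_0$ (the family $\alpha\gamma_0^n$); the residual problem case is tied specifically to characteristic $3$, where only two nonzero exponents $n=\pm1$ are available, and is resolved with the commutator $\alpha\gamma_0\alpha^{-1}\gamma_0^{-1}$. You perturb instead by left-multiplication with powers of $\alpha$ (the family $\alpha^n\delta$), which has the pleasant feature that $\langle\alpha,\alpha^n\delta\rangle=\langle\alpha,\delta\rangle$ for every $n$, so irreducibility of the restricted pair is automatic and never needs re-checking; and your case split is by $\tr(\rho(\alpha))^2$ rather than by characteristic. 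When $\tr(\rho(\alpha))^2=4$, an arithmetic-progression count mod $p$ works uniformly for all $p\neq2$; when $\tr(\rho(\alpha))^2\neq4$, the recurrence $t_{n+1}=Tt_n-t_{n-1}$ pins the only possible obstruction down to $T=0$ (equivalently, $\rho(\alpha)$ of order $4$), which you rescue with the conjugate $\delta\alpha\delta^{-1}$ rather than a commutator. Your route buys uniformity across characteristics and a structurally motivated description of the exceptional case; the paper's is slightly shorter generically but has to single out characteristic $3$ by hand. You also justify the existence of the companion $\delta$ via the fact that a group is not a union of two proper subgroups, a step the paper's proof leaves implicit as ``by definition''.
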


\begin{proof}
	Take $\alpha \in \Gamma$ such that $\rho(\alpha) \neq  \pm E$. As $\rho$ is irreducible, by definition there exists $\gamma\in\Gamma$ such that $\rho$ restricted to the subgroup generated by $\alpha$ and $\gamma$ is irreducible. 
	We show $\gamma$ can be chosen so that $\tr(\rho(\gamma))^2\neq4$. 
	
	Suppose $\gamma_0\in\Gamma$ is such that $\rho$ restricted to the subgroup generated by $\alpha$ and $\gamma_0$ is irreducible and $\tr(\rho(\gamma_0))^2=4$. 
	Then $\rho(\gamma_0)\neq\pm E$ and we may assume $\rho(\gamma_0)$ is parabolic.
	Up to conjugation we may assume
	\begin{equation}
	\label{eqn:irrTraceNotTwo}
		\rho(\gamma_0)=\pm\begin{pmatrix} 1 & 1 \\ 0 & 1 \end{pmatrix},\quad
		\rho(\alpha)=\begin{pmatrix} a & 0 \\ c & a^{-1} \end{pmatrix}, \quad c\neq 0
	\end{equation}
	Then for all $n\in\Z$ we have $\tr(\rho(\alpha \gamma_0^{n}))=(\pm 1)^n\left(a+a^{-1}+ cn\right)$.
	
	For characteristic $3$, there are only two options for $n=\pm1$. We have for $n=1$, 
	\[
		\tr(\rho(\alpha \gamma_0))=(\pm 1)\left(a+a^{-1}+ c\right)
	\] 
	and for $n=-1$, 
	\[
		\tr(\rho(\alpha \gamma_0^{-1}))=(\pm 1)\left(a+a^{-1}- c\right)
	\]
	If at least one of $\tr(\rho(\alpha \gamma_0^n))\neq\pm1$ then take $\gamma=\alpha\gamma_0^{n}$. Otherwise we have $a+a^{-1}=0$ and $c^2=1$.
	In this case, consider instead the commutator 
	\[
		\gamma = \alpha \gamma_0 \alpha^{-1} \gamma_0^{-1} = \begin{pmatrix} 1 - a c & a c \\ -1 & -1  + a c \end{pmatrix}
	\]
	 Then $\tr(\rho(\gamma))= 0 \neq \pm1$. It remains to prove that $\rho(\alpha)$ and $\rho(\gamma)$ form an irreducible pair. 
	We calculate $\rho([\alpha,\gamma])=-E$ and $\tr (\rho([\alpha,\gamma])) = 1\neq2\Mod{3}$. By \Cref{lem:reducible_commutator2}, the restriction of $\rho$ to the subgroup generated by $\alpha$ and $\gamma$ is irreducible and $\gamma$ satisfies the requirements. 
	Note, in characteristic $3$ this only works assuming $c^2=1$ and $a+a^{-1}=0$, otherwise the conditions may not be satisfied. 
\end{proof}

The issue with the result of \Cref{lem:irrTraceNotTwo} in characteristic $2$ is that we only have one option for $n\in\Z_p\setminus\{0\}$, $n=1$, to get combinations $\alpha \gamma_0^n$. 
In order to refine the result for characteristic $2$ we use the infinite dihedral group $\D \cong \Z_2 \ast \Z_2 \cong \Z \rtimes \Z_2 = \langle s, t\mid s^2=1,\ sts^{-1}=t^{-1}\rangle$.
We say a group $G$ is \textbf{$\D$--like} if it is isomorphic to the generalised dihedral group $\Dn$, written
\begin{equation*}
\begin{split}
	\Dn 	&=\Dih\cong \Z^n \rtimes \Z_2 \\ 
		&= \langle s, t_1,\ldots t_n \mid s^2=1,\ st_is^{-1}=t_i^{-1},\ t_it_j=t_jt_i\ \text{ for all } i,j=1,\ldots,n\rangle
\end{split}
\end{equation*}
 for some $n\in\N$.
	
\begin{lemma}
	\label{lem:irrTraceTwo}
	Let $\F$ be an algebraically closed field of characteristic $2$ and let $\Gamma$ be a finitely generated group.
	Suppose $\rho\co \Gamma \to \SL_2(\F)$ is an irreducible representation and $\alpha \in \Gamma$ such that $\rho(\alpha) \neq E$. 
	Assume $\im(\rho)$ is not $\D$--like.
	Then there is $\gamma\in \Gamma$ such that the restriction of $\rho$ to the subgroup generated by $\alpha$ and $\gamma$ is irreducible and $\tr(\rho(\gamma))\neq0$.
\end{lemma}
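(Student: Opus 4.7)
The strategy follows the same outline as \Cref{lem:irrTraceNotTwo}, but the substitutions of powers of $\gamma_0$ used there collapse in characteristic~$2$, so we have to work harder in the ``bad'' case and exploit the hypothesis that $\im(\rho)$ is not $\D$--like.

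Since $\rho$ is irreducible, we may pick $\gamma_0 \in \Gamma$ so that the restriction of $\rho$ to $\langle \alpha,\gamma_0\rangle$ is irreducible. If $\tr(\rho(\gamma_0)) \neq 0$ we simply take $\gamma = \gamma_0$. Otherwise $\rho(\gamma_0)$ has trace zero; as in the proof of \Cref{lem:irrTraceNotTwo} it cannot be the identity and hence is parabolic, and by the discussion of \Cref{lem:JNF_in_SLk} it is an involution. Up to conjugation we arrange
\[
        \rho(\gamma_0) = \begin{pmatrix} 1 & 1 \\ 0 & 1 \end{pmatrix}, \qquad
        \rho(\alpha) = \begin{pmatrix} a & b \\ c & d \end{pmatrix}, \qquad c \neq 0, \quad ad + bc = 1,
\]
where $c \neq 0$ captures the fact that $\rho(\alpha)$ does not fix the unique fixed point of $\rho(\gamma_0)$.

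Next I try $\gamma = \alpha\gamma_0$. Since $\langle \alpha, \alpha\gamma_0\rangle = \langle \alpha, \gamma_0\rangle$, the restriction of $\rho$ is still irreducible; and a direct computation gives $\tr(\rho(\alpha\gamma_0)) = a + c + d = \tr(\rho(\alpha)) + c$. If this is nonzero, we are done. Assume therefore that $\tr(\rho(\alpha)) = c$, writing $s = a + d = c$ (note $s \neq 0$). A further matrix computation then shows $\rho(\gamma_0)\rho(\alpha)\rho(\gamma_0) = \rho(\alpha)^{-1}$, so $\langle \rho(\alpha), \rho(\gamma_0)\rangle$ is abstractly $\langle \rho(\alpha)\rangle \rtimes \langle \rho(\gamma_0)\rangle$, a (possibly infinite) dihedral group. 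In particular, if this subgroup were all of $\im(\rho)$, then $\im(\rho)$ would be $\D$--like, contradicting the hypothesis. Hence there exists $\delta \in \Gamma$ with $\rho(\delta) \notin \langle \rho(\alpha), \rho(\gamma_0)\rangle$.

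The heart of the argument, and the main obstacle, is to promote such a $\delta$ to a suitable $\gamma$. I would case-split on whether $\tr(\rho(\delta)) \neq 0$ and on whether $\rho(\delta)$ shares a fixed point on $\mathbb{P}^1(\F)$ with $\rho(\alpha)$ (equivalently, whether $\tr[\rho(\alpha), \rho(\delta)] = 0$ by \Cref{lem:reducible_commutator2}). If $\tr(\rho(\delta)) \neq 0$ and $\langle \rho(\alpha), \rho(\delta)\rangle$ is irreducible, take $\gamma = \delta$. Otherwise, among the candidates
\[
        \delta,\qquad \alpha\delta,\qquad \delta\gamma_0,\qquad \gamma_0\delta\gamma_0,\qquad \alpha \gamma_0 \delta,
\]
at least one will have $\rho$--image of nonzero trace and generate with $\rho(\alpha)$ an irreducible subgroup. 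The key calculations are: if $\rho(\delta)$ is parabolic whose fixed point differs from $[1:0]$, then writing $\rho(\delta) = E + p\bigl(\begin{smallmatrix} x & x^2 \\ 1 & x \end{smallmatrix}\bigr)$ with $p \neq 0$ gives $\tr(\rho(\delta)\rho(\gamma_0)) = p \neq 0$; and if $\rho(\delta)$ is semisimple with nonzero trace but $\{\rho(\delta), \rho(\alpha)\}$ is reducible, then $\rho(\delta)$ lies in one of the two Borel subgroups containing $\rho(\alpha)$, and multiplying by $\rho(\gamma_0)$ or $\rho(\alpha)$ takes it out of $\langle \rho(\alpha), \rho(\gamma_0)\rangle$ while preserving nonvanishing of the trace, via the identities \Cref{eq:tr_invert} and \Cref{eq:commutator_trace}. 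The verifications of irreducibility in each subcase reduce via \Cref{lem:reducible_commutator2} to checking that a specific trace of a commutator is nonzero, which in turn follows from $c = s \neq 0$ together with the chosen position of $\rho(\delta)$.
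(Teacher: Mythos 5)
The proposal follows the same high-level strategy as the paper's proof: pick $\gamma_0$ with $\rho\vert_{\langle\alpha,\gamma_0\rangle}$ irreducible; if $\tr\rho(\gamma_0)=0$, conjugate $\rho(\gamma_0)$ to the standard parabolic; try $\alpha\gamma_0$; observe that if that also fails then $\rho(\gamma_0)\rho(\alpha)\rho(\gamma_0)=\rho(\alpha)^{-1}$, so $\rho(\langle\alpha,\gamma_0\rangle)$ is dihedral; invoke the not-$\D$--like hypothesis to produce some $\delta$ with $\rho(\delta)$ outside this subgroup; and finally analyze products of $\delta$ with $\alpha$ and $\gamma_0$. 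Through the identification of the dihedral structure your steps are correct and match the paper (which, in addition, conjugates $\rho(\alpha)$ to lower-triangular form $\begin{pmatrix} a & 0 \\ c & a^{-1} \end{pmatrix}$, streamlining all the subsequent trace computations).

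The gap is in the final step, which is exactly where the content of the lemma sits. You list the candidates $\delta$, $\alpha\delta$, $\delta\gamma_0$, $\gamma_0\delta\gamma_0$, $\alpha\gamma_0\delta$ and assert that ``at least one'' has $\rho$--image of nonzero trace while generating an irreducible pair with $\rho(\alpha)$, but the ``key calculations'' offered cover only two sub-cases (a parabolic $\rho(\delta)$ whose fixed point differs from $\rho(\gamma_0)$'s, and a semisimple $\rho(\delta)$ lying in a Borel of $\rho(\alpha)$), and even there the irreducibility of the resulting pair with $\rho(\alpha)$ is not checked. Missing, for instance, are the case where $\rho(\delta)$ is parabolic sharing $\rho(\gamma_0)$'s fixed point, and --- the hardest case --- where $\langle\rho(\alpha),\rho(\delta)\rangle$ is already irreducible but $\tr\rho(\delta)=0$. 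The paper resolves the latter by computing $\tr\rho(\gamma_k)$, $\tr\rho(\gamma_1\gamma_k)$, $\tr\rho(\alpha\gamma_k)$ explicitly with the normalization in place and showing they cannot all vanish (that would force $\rho(\gamma_k)=\rho(\gamma_1)$). The paper is also more careful about where the outside element $\delta$ comes from: it iterates over a finite generating set of $\Gamma$ and argues that if every generator either lands in $\rho(\langle\alpha,\gamma_0\rangle)$ or is forced into a fixed dihedral form, the whole image is $\D^n$--like for some $n$; your sketch invokes the hypothesis once to get a single $\delta$ but does not say what to do if $\rho(\alpha),\rho(\gamma_0),\rho(\delta)$ together again generate a generalized dihedral group --- that repetition is what actually drives the contradiction. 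As written the proposal is a plausible outline, not a proof.
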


\begin{proof}
	We follow the same initial steps as \Cref{lem:irrTraceNotTwo}.
	
	Take $\alpha \in \Gamma$ such that $\rho(\alpha) \neq E$. As $\rho$ is irreducible, by definition there exists $\gamma_0\in\Gamma$ such that $\rho$ restricted to the subgroup generated by $\alpha$ and $\gamma_0$ is irreducible. 
	Denote a generating set of $\Gamma$ by $\{ \gamma_1, \ldots \gamma_n\}$. 
	Without loss of generality, we can consider $\gamma_0$ to be one of the generators, say $\gamma_0=\gamma_1$.
	
	If $\tr(\rho(\gamma_1))\neq0$ then we are done.
	Assume that $\tr(\rho(\gamma_1))=0$. Then up to conjugation we may assume
	\begin{equation}
	\label{eqn:irrTraceTwo}
		\rho(\gamma_1)=\begin{pmatrix} 1 & 1 \\ 0 & 1 \end{pmatrix},\quad
		\rho(\alpha)=\begin{pmatrix} a & 0 \\ c & a^{-1} \end{pmatrix}, \quad c\neq 0
	\end{equation}
	
	Consider $\tr(\rho(\alpha \gamma_1))=a+a^{-1}+c$ and
	assume $\im(\rho)$ is not $\D$--like. 
	If we have $\tr(\rho(\alpha \gamma_1))\neq0$ then we can again take $\gamma=\alpha\gamma_1$ and we are done.
	If $\tr(\rho(\alpha \gamma_1))=0$ then $c=a+a^{-1}$ and 
	\[
		\im\left(\rho\left(\langle \alpha, \gamma_1 \rangle \right)\right) = \left \langle \begin{pmatrix} 1 & 1 \\ 0 & 1 \end{pmatrix},  \begin{pmatrix} a & 0 \\ a+a^{-1} & a^{-1} \end{pmatrix} \right \rangle \cong \D
	\] 
	where the isomorphism is constructed by sending $s$ to the first matrix, $\rho(\gamma_1)$, and $t$ to the second matrix, $\rho(\alpha)$, for $s$ and $t$ the generators of $\D$.
		
	Then $\im\left(\rho\left(\langle \alpha, \gamma_1 \rangle \right)\right)$ cannot be the whole image, otherwise this would contradict our assumption that $\im(\rho)$ is not $\D$--like. Therefore there is an element $\delta\in\Gamma$ such that $\rho(\delta)\notin \im\left(\rho\left(\langle \alpha, \gamma_0 \rangle \right)\right)$. Say without loss of generality $\delta=\gamma_2$,
	\begin{equation*}
		\rho(\gamma_2)=\begin{pmatrix} e & f \\ g & h \end{pmatrix},\quad eh-fg=1
	\end{equation*}

	We get $\tr(\rho(\gamma_2\alpha\gamma_2^{-1}\alpha^{-1}))=(a+a^{-1})^2f(e+f+g+h)$. 
	Assume the restriction of $\rho$ to the subgroup generated by $\alpha$ and $\gamma_2$ is reducible. Then $(a+a^{-1})^2f(e+f+g+h)=0$. 
	Given $\tr(\rho(\alpha))\neq0$, we must have $f=0$ or $f=e+g+h$. 
	A simple calculation shows the restriction of $\rho$ to the subgroup generated by $\alpha$ and $\gamma_1\gamma_2$ is irreducible with trace $\tr(\rho(\gamma_1\gamma_2))=e+g+h$. 
	If $\tr(\rho(\gamma_1\gamma_2))=0$ then $f=0$ and $g=e+e^{-1}$, implying
	\[
		\im\left(\rho\left(\langle \alpha, \gamma_1, \gamma_2 \rangle \right)\right) = \left \langle \begin{pmatrix} 1 & 1 \\ 0 & 1 \end{pmatrix},  \begin{pmatrix} a & 0 \\ a+a^{-1} & a^{-1} \end{pmatrix},  \begin{pmatrix} e & 0 \\ e+e^{-1} & e^{-1} \end{pmatrix} \right \rangle \cong D_{\infty}^2
	\] 
	where the isomorphism is constructed by sending $s$ to the first matrix, $\rho(\gamma_1)$, $t_1$ to the second matrix, $\rho(\alpha)$, and $t_2$ to the third matrix, $\rho(\gamma_2)$, for $s$, $t_1$, and $t_2$ the generators of $D_{\infty}^2$.

	We can repeat this process over each generator $\gamma_i$. 
	Given our assumption that $\im(\rho)$ is not $\D$--like, there must be a generator $\gamma_k$, $2\leq k\leq n$ such that $\rho(\gamma_k)\notin \im\left(\rho\left(\langle \alpha, \gamma_1 \rangle \right)\right)$ and either the restriction of $\rho$ to the subgroup generated by $\alpha$ and $\gamma_k$ is reducible and $\tr(\rho(\gamma_1\gamma_k))\neq0$ or the restriction of $\rho$ to the subgroup generated by $\alpha$ and $\gamma_k$ is irreducible.
	Again let
	\begin{equation*}
		\rho(\gamma_k)=\begin{pmatrix} e & f \\ g & h \end{pmatrix},\quad eh-fg=1
	\end{equation*}
	
	In the first case take $\gamma=\gamma_1\gamma_k$. 
	In the second case consider the options $\gamma=\gamma_k$, $\gamma=\gamma_1\gamma_k$ or $\gamma=\alpha\gamma_k$. The restriction of $\rho$ to the subgroup generated by $\alpha$ and $\gamma$ is irreducible for each of these choices of $\gamma$. 
	We have $\tr(\rho(\gamma_k))=e+h$, $\tr(\rho(\gamma_1\gamma_k))=e+g+h$ and $\tr(\rho(\alpha\gamma_k))=ae+(a+a^{-1})f+a^{-1}h$. 
	If $\tr(\rho(\gamma_k))=\tr(\rho(\gamma_1\gamma_k))=\tr(\rho(\alpha\gamma_k))=0$ then $h=e$, $g=0$ and $f=e$. Combining with the determinant condition this would give $\rho(\gamma_k)=\rho(\gamma_1)$, which contradicts $\rho(\gamma_k)\notin \im\left(\rho\left(\langle \alpha, \gamma_1 \rangle \right)\right)$. 
	Thus one of $\tr(\rho(\gamma_k))$, $\tr(\rho(\gamma_1\gamma_k))$, $\tr(\rho(\alpha\gamma_k))$ is nonzero and we can choose $\gamma$ accordingly. 
\end{proof}

In fact, the converse statement of \Cref{lem:irrTraceTwo} is true if we also assume $\tr(\rho(\alpha)) \neq 0$.

\begin{lemma}
	Let $\F$ be an algebraically closed field of characteristic $2$ and let $\Gamma$ be a finitely generated group.
	Suppose $\rho\co \Gamma \to \SL_2(\F)$ is an irreducible representation and $\alpha \in \Gamma$ such that $\rho(\alpha) \neq E$ and $\tr(\rho(\alpha)) \neq 0$. 
	Assume $\gamma\in\Gamma$ such that the restriction of $\rho$ to the subgroup generated by $\alpha$ and $\gamma$ is irreducible and $\tr(\rho(\gamma))\neq0$. 
	Then $\im(\rho)$ is not $\D$--like.
\end{lemma}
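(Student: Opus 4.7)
The plan is to prove the contrapositive: assume $\im(\rho) \cong \Dn$ for some $n$, and derive a contradiction from the hypotheses on $\alpha$ and $\gamma$. The natural target is to show that under this assumption, every element of $\im(\rho)$ with non-zero trace must lie in a single abelian subgroup, which would force $\langle \rho(\alpha), \rho(\gamma) \rangle$ to be reducible.

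Let $H \lhd \im(\rho)$ be the normal abelian subgroup of index $2$ corresponding to $\Z^n \lhd \Dn$. Note $H$ must be non-trivial, since otherwise $\im(\rho) \cong \Z_2$ is abelian, hence reducible, contradicting irreducibility of $\rho$. Because $\F$ is algebraically closed, the abelian subgroup $H \subset \SLF$ is simultaneously triangularisable, so it has at least one common invariant line $L \subset \F^2$. I would split on whether $H$ has a unique such line.

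If $L$ is the unique common invariant line of $H$, then normality of $H$ in $\im(\rho)$ implies that for any $g \in \im(\rho)$, $gL$ is also a common invariant line of $H$; uniqueness forces $gL = L$, so $L$ is invariant under all of $\im(\rho)$, contradicting irreducibility. Otherwise $H$ is simultaneously diagonalisable, so after conjugating we may take $H$ to lie in the diagonal torus. Choose a non-identity $D = \mathrm{diag}(d, d^{-1}) \in H$ (which exists because $H \ne 1$, and in characteristic $2$, $D \ne E$ implies $d \ne d^{-1}$). For any $g \in \im(\rho) \setminus H$, the element $gDg^{-1} \in H$ is diagonal with the same eigenvalues as $D$, so it equals $D$ or $D^{-1}$; the first case would make $g$ commute with $D$ and hence be diagonal, so $g \in H$, a contradiction. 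Thus $gDg^{-1} = D^{-1}$, forcing $g$ to swap the two coordinate lines, i.e.\ $g = \begin{pmatrix} 0 & a \\ a^{-1} & 0 \end{pmatrix}$ in characteristic $2$, which has $\tr(g) = 0$.

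Combining these observations: every element of $\im(\rho) \setminus H$ has trace $0$. Since by hypothesis $\tr(\rho(\alpha)) \ne 0$ and $\tr(\rho(\gamma)) \ne 0$, both $\rho(\alpha)$ and $\rho(\gamma)$ lie in the abelian subgroup $H$. Then $\langle \rho(\alpha), \rho(\gamma) \rangle$ is abelian, and any abelian subgroup of $\SLF$ over an algebraically closed field is reducible, contradicting the assumption that the restriction of $\rho$ to $\langle \alpha, \gamma \rangle$ is irreducible. The main technical step is the structural analysis of the normaliser of a non-trivial diagonal subgroup in $\SL_2(\F_2)$, in particular the fact that in characteristic $2$ any off-diagonal element swapping two eigenlines automatically has vanishing trace; this is a short calculation but it is what pins the argument down to characteristic $2$.
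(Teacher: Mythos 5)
Your proof is correct and takes a genuinely different, more structural route than the paper's. The paper fixes the images of the presentation generators of $\Dn$ --- $\rho(s)$ upper unipotent, the $\rho(t_i)$ lower triangular with $u_i = y_i + y_i^{-1}$ --- and then runs a word-by-word induction on arbitrary elements of $\im(\rho)$, showing each one either has trace zero or shares a common eigenvector with the $\rho(t_i)$; the conclusion is then extracted from this case analysis. You instead pass directly to the index-two normal abelian subgroup $H \cong \Z^n$, simultaneously diagonalise it (disposing of the one-invariant-line case via normality plus irreducibility), and observe that every element outside $H$ must be anti-diagonal, hence of trace zero in characteristic $2$; so both $\rho(\alpha)$ and $\rho(\gamma)$ would lie in the abelian group $H$ and generate a reducible subgroup. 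This isolates the real mechanism (trace-nonzero elements live in the diagonal torus $H$) and avoids the explicit word bookkeeping, at the cost of a short structural argument about the normaliser of a non-trivial diagonal subgroup.

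One step is slightly terse: the deduction ``$gDg^{-1}=D$ forces $g$ diagonal, so $g\in H$, a contradiction'' assumes that every diagonal element of $\im(\rho)$ lies in $H$. This is true but needs a sentence --- if $g = sh$ with $h\in H$ were diagonal then $s$ would be diagonal, hence commute with all of $H$, forcing $t=t^{-1}$ for every $t\in H$, which contradicts $H\cong\Z^n\neq 1$ being torsion-free. Even cleaner would be to bypass the case split: since $H$ is abelian, conjugation by any $g\notin H$ agrees with conjugation by the involution $s$, which is inversion on $H$, so $gDg^{-1}=D^{-1}\neq D$ holds automatically and only the anti-diagonal case occurs.
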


\begin{proof}
	Assume for the purposes of a contradiction that $\im(\rho)$ is $\D$--like and assume first $\im(\rho) \cong \D = \langle s, t\mid s^2=1,\ sts^{-1}=t^{-1}\rangle$.
	
	As $\rho$ is irreducible there exists some $\gamma \in\Gamma$ such that the restriction of $\rho$ to the subgroup generated by $\alpha$ and $\gamma$ is irreducible.
	Assign $\mu, \nu\in \Gamma$ to be the elements such that $\rho(\mu)$ and $\rho(\nu)$ are mapped to $s,t$ respectively in the isomorphism. Then the restriction of $\rho$ to the subgroup generated by $\mu$ and $\nu$ must be irreducible and we can conjugate their image to
	\begin{equation*}
		\rho(\mu)=\begin{pmatrix} x & 1 \\ 0 & x^{-1} \end{pmatrix},\quad
		\rho(\nu)=\begin{pmatrix} y & 0 \\ u & y^{-1} \end{pmatrix}, \quad u\neq 0.
	\end{equation*}
	Then the isomorphism forces $\rho(\mu)^2=E$ and $\rho(\mu\nu\mu^{-1}\nu)=E$, which implies $x=1$ and $u=y+y^{-1}$.
	
	We will see that any element $\delta \in \Gamma$ then either has trace $0$ or the restriction of $\rho$ to the subgroup generated by $\nu$ and $\delta$ is reducible.
	Given $\tr(\rho(\alpha))\neq0$, the restriction of $\rho$ to the subgroup generated by $\nu$ and $\alpha$ must be reducible and the only option for an element $\gamma$ that is irreducible with $\alpha$ must have $\tr(\rho(\gamma))=0$. 
	
	To see all elements must be one of these two options, take an arbitrary element $\delta \in \Gamma$.
	Then 
	\[ 
	\rho(\delta)=\rho(\nu)^{k_1}\rho(\mu)^{l_1}\ldots \rho(\nu)^{k_n}\rho(\mu)^{l_n}
	\]
	for some sequences $k_i, l_i\in\Z$, $i=1,\ldots,n$. Without loss of generality, assume $k_i\neq 0$ for all $i\in\{2,\ldots,n\}$ and $l_i\neq 0$ for all $i\in\{1,\ldots,n-1\}$. The terms $\rho(\mu)^{l_i}$ are defined by the parity of $l_i$, so we can take $l_i=1$ for all $i\in\{1,\ldots,n-1\}$.
	Then the terms in $\rho(\delta)$ satisfy
	\[
	\begin{split}
		\rho(\nu)^{k_i}\rho(\mu) 					&= \begin{pmatrix} y^{k_i} 		& y^{k_i} \\  y^{k_i}+y^{-{k_i}} 	& y^{k_i} \end{pmatrix}, \\
		\rho(\nu)^{k_i}\rho(\mu)\rho(\nu)^{k_j}\rho(\mu)	&= \begin{pmatrix} y^{k_i-k_j} 	& 0 \\  y^{k_i-k_j}+y^{-k_i+k_j} 	& y^{-k_i+k_j} \end{pmatrix}.
	\end{split}
	\]
	We can deduce
	\begin{equation}	
	\label{eqn:delta1}
		\rho(\delta) 	= \begin{pmatrix} y^{k_1-k_2+\ldots -k_{n-1}+k_n} & 0 \\  y^{k_1-k_2+\ldots -k_{n-1}+k_n}+y^{-k_1+k_2-\ldots +k_{n-1}-k_n} & y^{-k_1+k_2-\ldots +k_{n-1}-k_n} \end{pmatrix} \begin{pmatrix} 1 & 1 \\  0 & 1 \end{pmatrix}^{l_n}
	\end{equation}	
	for $n$ odd or
	\begin{equation}		
	\label{eqn:delta2}
		\rho(\delta) 	= \begin{pmatrix} y^{k_1-k_2+\ldots +k_{n-1}-k_n} & y^{k_1-k_2+\ldots +k_{n-1}-k_n} \\  y^{k_1-k_2+\ldots +k_{n-1}-k_n}+y^{-k_1+k_2-\ldots -k_{n-1}+k_n} & y^{k_1-k_2-\ldots +k_{n-1}-k_n} \end{pmatrix} \begin{pmatrix} 1 & 1 \\  0 & 1 \end{pmatrix}^{l_n}
	\end{equation}	
	for $n$ even.
	
	There are four cases.
	If $n$ is odd and $l_n=1\Mod{2}$ or if $n$ is even and $l_n=0\Mod{2}$ then $\tr(\rho(\delta))=0$. 	
	If $n$ is odd and $l_n=0\Mod{2}$ or if $n$ is even and $l_n=1\Mod{2}$ then the restriction of $\rho$ to the subgroup generated by $\nu$ and $\delta$ is reducible.

	 Assume now 
	 \begin{equation*}
	 \begin{split}
	 	\im(\rho) 	&\cong D_{\infty}^q \\
				&= \langle s, t_1, \ldots t_q \mid s^2=1,\ st_is^{-1}=t_i^{-1},\ t_it_j=t_jt_i\  \text{ for all }\ i,j=1\ldots q\rangle
	\end{split}
	\end{equation*}
	 Assign $\mu, \nu_i \in \Gamma$, $i=1,\ldots,q$ to be the elements such that $\rho(\mu)$ and $\rho(\nu_i)$ are mapped to $s,t_i$ respectively in the isomorphism. Then  similar to the case for $q=1$, we can conjugate their image to,
	\begin{equation*}
		\rho(\mu)=\begin{pmatrix} 1 & 1 \\ 0 & 1 \end{pmatrix},\quad
		\rho(\nu_i)=\begin{pmatrix} y_i & 0 \\ y_i+y_i^{-1} & y_i^{-1} \end{pmatrix}, \quad \text{ for all } i=1,\ldots, q.
	\end{equation*}
	Again we will get that all elements $\delta\in\Gamma$ must be of one of the forms \Cref{eqn:delta1,eqn:delta2}, with the entries now in terms of combinations of powers of $y_1,\ldots, y_q$. 
	The same case analysis will show that any element $\delta \in \Gamma$ then either has trace $0$ or the restriction of $\rho$ to the subgroup generated by $\nu_i$ and $\delta$ is reducible for all $i=1,\ldots,q$.
	Given $\tr(\rho(\alpha))\neq0$, the restriction of $\rho$ to the subgroup generated by $\nu_i$ and $\alpha$ must be reducible for all $i=1,\ldots,q$ and the only option for an element $\gamma$ that is irreducible with $\alpha$ must have $\tr(\rho(\gamma))=0$, which gives the contradiction.
\end{proof}

We also use the following result relating the dimension of components of $\R(\Gamma,\F)$ and $\X(\Gamma,\F)$.

\begin{lemma}~\cite[Lemma 1.5.3]{Culler-Shalen-varieties-1983}
\label{lem:varDim}
	Let $\F$ an algebraically closed field of characteristic $p$ and let $\Gamma$ be a finitely generated group.
	If $R_0 \subseteq R(\Gamma,\F)$ is an irreducible component containing an irreducible representation $\rho_0 \in \R_0$, and if $X_0=\strn(R_0)$, then
\[
	\dim(X_0)=\dim(R_0)-3.
\]
\end{lemma}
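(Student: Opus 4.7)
The plan is to apply the fibre dimension theorem to the restriction of $\strn$ to $R_0$, using that on irreducible representations the conjugation action of $\SLF$ has finite stabilisers and, by the corollary on irreducible orbit closures, its orbits coincide with the fibres of $\strn$.

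First I would isolate the open dense locus of irreducible representations in $R_0$. Since the set of reducible $n$--tuples in $\SLF^n$ is Zariski closed by \Cref{prop:reducible_closed}, the subset $R_0^{irr} \defeq R_0 \setminus R_n^{red}$ is Zariski open in $R_0$; the hypothesis that $\rho_0 \in R_0$ is irreducible ensures $R_0^{irr}$ is non-empty, hence dense in the irreducible variety $R_0$, and in particular $\dim R_0^{irr} = \dim R_0$. Similarly the image $X_0^{irr} \defeq \strn(R_0^{irr}) \subseteq X_0$ is dense in $X_0$.

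Next I would analyse the fibres of $\strn|_{R_0^{irr}}$. Because $\SLF$ is a connected (indeed irreducible) algebraic group in any characteristic, it preserves each irreducible component of $\R(\Gamma,\F)$, so conjugation restricts to an action on $R_0$. For irreducible $\rho$, the stabiliser of $\rho$ under conjugation is the centraliser in $\SLF$ of $\rho(\Gamma)$; by Schur's lemma applied to the irreducible action of $\rho(\Gamma)$ on $\F^2$, this centraliser is contained in the scalar centre $\{\pm \idMat\}$ and is in particular finite. Hence the orbit $\SLF \cdot \rho$ has dimension $\dim \SLF = 3$. By \Cref{cor:irreducible_orbit_closures}, this orbit is Zariski closed and equals the entire fibre $\strn^{-1}(\strn(\rho))$, so every fibre of $\strn|_{R_0^{irr}} \colon R_0^{irr} \to X_0^{irr}$ has dimension exactly $3$.

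Finally I would invoke the fibre dimension theorem for a dominant morphism between irreducible varieties (after passing to Zariski closures if necessary) to conclude
\[
\dim R_0 \;=\; \dim R_0^{irr} \;=\; \dim X_0^{irr} + 3 \;=\; \dim X_0 + 3.
\]
The main point to be careful about is that $X_0 = \strn(R_0)$ is a priori only constructible, so one should replace it by its Zariski closure, which is irreducible of the same dimension; once this is done the fibre structure is uniform on the dense open $X_0^{irr}$ and the standard form of the theorem applies without modification. The argument is insensitive to the characteristic: in characteristic $2$ the scalar centre collapses to $\{\idMat\}$, but this only shrinks an already-finite stabiliser and does not affect the orbit dimension.
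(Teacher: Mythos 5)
Your proposal is correct and follows essentially the same route as the paper: both use that the reducible locus is Zariski closed to isolate a dense open set of irreducible representations, invoke \Cref{cor:irreducible_orbit_closures} to identify fibres of $\strn$ over irreducible characters with single $\SLF$--orbits, use the finite centraliser ($\{\pm \idMat\}$, or $\{\idMat\}$ in characteristic $2$) to get orbit dimension $3$, and conclude by the fibre dimension theorem. The only differences are cosmetic: you make explicit two small points the paper leaves implicit, namely that conjugation preserves $R_0$ since $\SLF$ is connected, and that $X_0$ should be replaced by its closure before applying the fibre dimension theorem.
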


\begin{proof}
We follow the proof of~\cite[Lemma 1.5.3]{Culler-Shalen-varieties-1983} verbatim. 
	By \Cref{cor:redChar}, the set of reducible representations has the form $\strn^{-1}(X^{red}_n)$ for the closed algebraic set $X^{red}_n\subseteq\F^d$.
	
	Set $U=X_0\cap(\F^d-X^{red}_n)$. For each $\chi\in U$, $\strn^{-1}(\chi)$ consists of irreducible representations. 
	By \Cref{cor:irreducible_orbit_closures}, $\strn^{-1}(\chi)$ must be a single equivalence class of representations.
	
	Fix $\rho\in\strn^{-1}(\chi)$. Define $\sigma_{\rho}\co \SLF \to \strn^{-1}(\chi)$ by $\sigma_{\rho}(A)=A\rho A^{-1}$. This is a covering map and for characteristic $p\neq2$ this will be a two-sheeting covering map. 
	This is because an irreducible representation has centraliser $\{\pm E\}$ in characteristic $p\neq2$ and $\{E\}$ in characteristic $2$.
	
	Then $\dim(\strn^{-1}(\chi))=\dim(\SLF) = 3$ for every point $\chi$ in a Zariski open set $U\subset X_0$ and thus $\dim(\strn^{-1}(X_0))$ $=$ $3$. We also have $\dim(\strn^{-1}(X_0))=\dim(R_0)-\dim(X_0)$. Together these give the result.
\end{proof}

We combine \Cref{lem:irrTraceNotTwo,lem:irrTraceTwo,lem:varDim} as in \cite[Proposition 3.2.1]{Culler-Shalen-varieties-1983} to get the following bound on dimension initially due to Thurston now over $\F$.

\begin{proposition}[Thurston]~\cite[Proposition 3.2.1]{Culler-Shalen-varieties-1983}
\label{pro:dimSL_Thurston}
Let $M$ be a compact orientable 3--manifold and $\F$ an algebraically closed field of characteristic $p$.
Let $\rho_0 \co \pi_1(M)  \to \SLF$ be an irreducible representation such that for each torus component $T$ of $\partial M$, $\rho_0(\im( \pi_1(T) \to \pi_1(M))) \not\subseteq \{ \pm E\}.$ 
In characteristic $2$, also assume that $\im(\rho_0)$ is not $\D$--like. 

Let $R_0$ be an irreducible component of $R(M)$ containing $\rho_0$. Then $X_0 = \strn(R_0)$ has dimension $\ge s - 3\chi(M),$ where $s$ is the number of torus components of $\partial M.$
\end{proposition}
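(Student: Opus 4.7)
The plan is to reduce via Lemma~\ref{lem:varDim} to a lower bound on $\dim R_0$. Since $R_0$ contains the irreducible representation $\rho_0$, that lemma gives $\dim X_0 = \dim R_0 - 3$, so it suffices to prove
\[
	\dim R_0 \ge s - 3\chi(M) + 3.
\]

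The next step would be a tangent-space estimate following Thurston's original approach. I would identify the Zariski tangent space to $R(M,\F)$ at $\rho_0$ with the space of twisted $1$--cocycles $Z^1(\pi_1(M); \mathrm{Ad}\,\rho_0)$ on the adjoint module $\mathfrak{sl}_2(\F)$. Since $\dim \mathfrak{sl}_2 = 3$, the Euler characteristic identity $\sum_i (-1)^i \dim H^i(\pi_1(M);\mathrm{Ad}\,\rho_0) = 3\chi(M)$, combined with $\dim Z^1 = \dim B^1 + \dim H^1$ and $\dim B^1 = 3 - \dim H^0$, reduces the question to bounding $H^0$ from above and $H^2$ from below.

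For the $H^0$ bound, I would apply Lemma~\ref{lem:irrTraceNotTwo} (in odd characteristic) or Lemma~\ref{lem:irrTraceTwo} (in characteristic $2$, where the $\D$--like hypothesis enters) to produce a generator $\alpha$ with $\rho_0(\alpha) \neq \pm E$ and a companion generator $\gamma$ such that the restriction to $\langle \alpha,\gamma\rangle$ remains irreducible and carries enough trace information to force the $\mathfrak{sl}_2$--centraliser of $\rho_0(\pi_1(M))$ to be as small as possible. The $s$ summand arises from the torus boundary components: for each torus $T_i$ the hypothesis $\rho_0(\mathrm{im}(\pi_1(T_i)\to\pi_1(M))) \not\subseteq \{\pm E\}$ means the centraliser of $\rho_0(\pi_1(T_i))$ in $\SLF$ is a one-parameter subgroup, giving a one-dimensional deformation class attached to each $T_i$. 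Assembling these via Poincar\'e--Lefschetz duality for the pair $(M,\partial M)$ identifies them as an $s$--dimensional contribution to $H^2$ (equivalently $H^1(M,\partial M)$).

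The main obstacle will be the characteristic~$2$ case, where $\mathfrak{sl}_2(\F)$ contains the scalar line as an invariant subspace (since $\tr E = 2 = 0$), so the adjoint module is no longer simple and both $H^0$ and $H^3$ gain a contribution from scalars. The $\D$--like exclusion is precisely engineered so that Lemma~\ref{lem:irrTraceTwo} still supplies a generator with non-zero trace and an irreducible pair, certifying that these extra scalar contributions cancel symmetrically under Poincar\'e duality and leave the count $\dim R_0 \ge s - 3\chi(M) + 3$ intact. Assembling the cohomological inequalities then yields the bound on $\dim R_0$, and a final application of Lemma~\ref{lem:varDim} completes the proof.
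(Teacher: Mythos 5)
Your first step, passing through Lemma~\ref{lem:varDim} to reduce to the bound $\dim R_0 \ge s - 3\chi(M) + 3$, matches the paper. But from there the two arguments diverge completely, and your cohomological sketch has real gaps that the paper's route avoids.

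The paper's proof is deliberately elementary and avoids cohomology: the base case $s=0$ treats $R(M,\F)$ as the fibre over a point of the regular map $f \co \SLF^m \to \SLF^n$ given by the relators, so every irreducible component has dimension at least $3m - 3n = 3 - 3\chi(M)$. The inductive step for $s>0$ is geometric: Lemmas~\ref{lem:irrTraceNotTwo} and \ref{lem:irrTraceTwo} produce a loop $\gamma$ on the torus $T$ with $\tr(\rho_0(\gamma))^2 \ne 4$ and $\rho_0|_{\langle \alpha,\gamma\rangle}$ irreducible; drilling a neighbourhood of that loop yields $M'$ with one fewer torus boundary, and attaching a $2$--handle to recover $M$ loses at most $\dim \F^2 = 2$ because the two trace conditions $\tr(\rho(\delta')) = \tr(\rho(\sigma)) = 2$ define the fibre over $(2,2)$ of a regular map $g\co R'_0 \to \F^2$ and one verifies $\rho_0 \notin Y \cup Z$ so that $R_0$ actually lives in that fibre. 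The $\tr \ne \pm 2$ condition and the $\D$--like exclusion in characteristic $2$ are not there to control $H^0$; they ensure $\rho_0 \notin Z$ (and, inside the proof of Lemma~\ref{lem:irrTraceTwo}, that a non-parabolic companion can be found at all).

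Your route goes through the twisted cohomology of $\mathrm{Ad}\,\rho_0$, which is Thurston's original approach but requires more machinery than you supply. Three gaps stand out. First, the identification $T_{\rho_0}R(M,\F) \cong Z^1(\pi_1 M;\mathrm{Ad}\,\rho_0)$ holds for the scheme-theoretic tangent space and hence gives an \emph{upper} bound on $\dim R_0$; what you actually need is the deformation-theoretic \emph{lower} bound $\dim R_0 \ge \dim Z^1 - \dim H^2$, which is a nontrivial statement (obstruction theory / complete intersection estimate) that you assert without proof. Second, even granting that bound, the Euler-characteristic bookkeeping you describe only produces $3 - 3\chi(M)$, i.e.\ the $s=0$ case; the extra $s$ has to come from Poincar\'e--Lefschetz duality on the restriction map $H^1(M;\mathrm{Ad}) \to H^1(\partial M;\mathrm{Ad})$ being half-rank, together with $\dim H^1(T_i;\mathrm{Ad}) = 2$ for each non-central torus. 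You gesture at this but give no computation, and the step is exactly where the boundary hypothesis enters, so it cannot be left implicit. Third, in characteristic $2$ the adjoint module $\mathfrak{sl}_2(\F)$ has the scalar line as a trivial submodule, so $H^0 \ne 0$ even for irreducible $\rho_0$, the duality pairing on $\mathrm{Ad}$ is degenerate, and the claim that the extra scalar contributions ``cancel symmetrically'' is not something you can wave away; making it rigorous is at least as much work as the inductive argument the paper uses. The paper's route sidesteps all three issues, which is why the authors chose it, and it also makes the role of the $\D$--like hypothesis transparent in a way your sketch obscures.
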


\begin{proof}
	We again almost follow the proof of~\cite[Proposition 3.2.1]{Culler-Shalen-varieties-1983} verbatim, with the exception of characteristic $2$. 
	By \Cref{lem:varDim}, $\dim(X_0)\geq s-3\chi(M)$ if and only if  $\dim(R_0)\geq s-3\chi(M)+3$. 
	Continue by induction on $s$.
	
	Assume $s=0$. We assume the existence of an irreducible representation $\rho_0$. If $\partial M = \emptyset$ then $\chi(M)=0$ and the result is trivially satisfied. Hence we can assume $\partial M\neq\emptyset$. 
	
	Then $M$ will have the same homotopy type as a finite 2--dimensional CW-complex $K$ with one $0$--cell, $m$ 1--cells, and $n$ 2--cells. Then $\pi_1(M)$ has a presentation
	\[
		\pi_1(M)=\langle g_1, \ldots, g_m \mid r_1,\ldots r_n \rangle
	\]
	with $\chi(M)=\chi(K)=1-m+n$.
	
	Define $f$ to be the regular map 
	\begin{equation*}
	\begin{split}
		f \co \SLF^m &\to \SLF^n \\ 
		\left(x_1,\ldots,x_m\right)&\mapsto\left(r_1\left(x_1,\ldots,x_m\right),\ldots,r_m\left(x_1,\ldots,x_m\right)\right)
	\end{split}
	\end{equation*}
	 Then $R(M,\F)=f^{-1}(E,\ldots,E)$ and $R(M,\F)$ is the inverse image of a point under a regular map from a $3m$--dimensional affine variety to a $3n$--dimensional affine variety with irreducible components. This gives
	\[
		\dim(R_0)\geq 3m-3n = -3\chi(M)+3
	\]
	which proves the result for $s=0$.
	
	Consider $s>0$. Let $T$ be a torus component of $\partial M$. 
	Then $\rho_0(\im( \pi_1(T) \to \pi_1(M))) \not\subseteq \{ \pm E\}$ means there exists $\alpha\in\pi_1(M)$ represented by a simple closed curve on $T$ such that $\rho_0(\alpha)\neq \pm E$. In characteristic $2$, we also required $\im(\rho_0)$ is not $\D$--like.
	Then by \Cref{lem:irrTraceNotTwo} and \Cref{lem:irrTraceTwo} there exists $\gamma\in\pi_1(M)$ such that $\rho_0$ restricted to the subgroup generated by $\alpha$ and $\gamma$ is irreducible and $\tr(\rho_0(\gamma))^2\neq 4$.
 	
	We can construct a manifold $M'$ with one less torus boundary component than $M$ by removing a neighbourhood of an embedded loop on $T$ that represents $\gamma\in\pi_1(M)$. Then $T$ will become a genus two boundary component $T'$. Our original manifold $M$ can be obtained from $M'$ by adding a $2$--handle to $T'$. 
	
	We may choose a standard basis $\alpha', \beta', \gamma', \delta'$ of $\pi_1(T')$ such that $\alpha', \gamma'$ are mapped to $\alpha, \gamma$ under the natural surjection $i_*\co\pi_1(M')\to\pi_1(N)$. The $2$--handle is attached to $T'$ along a simple closed curve that represents $\delta'$. We write $\sigma=[\alpha', \beta']=[\gamma', \delta']$.
	
	Define a representation $\rho'_0=\rho_0\circ i_* \in R(M',\F)$. This is irreducible, as $\rho_0$ is irreducible and $i_*$ is surjective.
	Since the kernel of $i_*$ is the normal closure of $\delta'$, we can identify $R(M,\F)$ with $W \subset R(M',\F)$ the closed algebraic set of representations $\rho$ such that $\rho(\delta')=E$. Let $R'_0$ be an irreducible component of $R(M',\F)$ containing $\rho_0'$. Then $R_0$ is an irreducible component of $R'_0\cap W$.
	
	By the induction hypothesis, $M'$ has $s-1$ torus components and 
	\[
		\dim(R'_0)\geq (s-1)-3\chi(M')+3=s-3\chi(M)+5
	\]

	Let $g\co R'_0\to\F^2$ be defined by $g(\rho)=\left( \tr(\rho(\delta')), \tr(\rho(\sigma)) \right)$. Then $g(R_0) = (2,2)$ and $R_0\subset g^{-1}(2,2)$. 
	
	Let $Y$ be the set of all representations $\rho \in R'_0$ whose restriction to the subgroup in $\pi_1(M')$ generated by $\alpha'$, $\gamma'$ is reducible. By \Cref{cor:redChar}, $Y$ is a closed algebraic set. 
	Let $Z$ be the closed algebraic set consisting of all $\rho\in\R'_0$ such that $\tr(\rho(\gamma'))=\pm2$. Clearly, $\rho_0\notin Y\cup Z$.
	
	Take $\rho\in g^{-1}(2,2)$. Then $\tr(\rho(\delta'))= \tr(\rho(\sigma))=2$. By \Cref{lem:reducible_commutator2}, the restriction of $\rho$ 
to the subgroups generated by $\alpha', \beta'$ and by $\gamma', \delta'$ are reducible and $\rho(\sigma)$ has an eigenvector in common with $\rho(\alpha'), \rho(\gamma')$. 
	If $\rho(\sigma)\neq E$, then it has a unique, 1--dimensional invariant subspace and the pair $\rho(\alpha'), \rho(\gamma')$ is reducible. Then $\rho\in Y$. 
	If $\rho(\sigma) = E$, then $\rho(\gamma')$ and $\rho(\delta')$ commute. Since $\tr(\rho(\delta'))=2$, we either have $\rho(\delta')=E$, which implies $\rho\in W$, or $\tr(\rho(\gamma'))=\pm2$, which implies $\rho\in Z$.
	
	Putting this all together, $g^{-1}(2,2)\subset (W\cap R'_0)\cup Y \cup Z$.  
	Then
	\[
	\begin{split}
		\dim(R_0)	&=\dim(R'_0)-\dim(\F^2) \\
				&\geq s-3\chi(M)+3
	\end{split}
	\]
	This finishes the proof.
\end{proof}

%%%%%%%%%%%%%%%%%%%%%%%%%%%
%\newpage
%%%%%%%%%%%%%%%%%%%%%%%%%%%

\section{$A$--polynomials and eigenvalue varieties}
\label{sec:APoly}

The $A$--polynomial was introduced in~\cite{Cooper-Plane-1994} over $\SLC$, with an alternative, equivalent, definition presented in \cite{Cooper-polynomial-1996} using the eigenvalue map and eigenvalue varieties. We define the $A$--polynomials and eigenvalue varieties over $\F$ and revise the theory in this setting for both a single boundary component following following \cite{Cooper-polynomial-1996} and multiple boundary components following \cite{Tillus-boundary-2005}. We compare the $A$--polynomial in arbitrary characteristic $p$ to the standard $A$--polynomial over $\C$.

%%%%%%%%%%%%%%%%%%%%%%%%%%%

\subsection{A single boundary component}
\label{sec:APolySingle}

Consider manifold $M$ with boundary a torus $T$. The $A$--polynomial was introduced in~\cite{Cooper-Plane-1994} over $\SLC$. We can define the $A$--polynomial and eigenvalue variety over a general field $\F$. We have fundamental group 
\[ 
	\pi_1(M)=\langle \gamma_1, \ldots \gamma_n \mid r_1, \ldots, r_m \rangle
\]
Call generators for $\pi_1(T)$ a meridian $\M$ and longitude $\L$ and identify them with their images in $\pi_1(M)$ under the inclusion homomorphism.
Let $\R_U(M,\F)$ be the subvariety of $\R(M,\F)$ defined so that the lower left entries of $\rho(\M)$ and $\rho(\L)$ are zero,
\begin{equation*}
\begin{split}
	\R_U(M,\F) 	= \left\{ \rho\co \pi_1(M)\to\SLF \,\Big|\, \rho(\M)=\begin{pmatrix} m & \ast \\ 0 & m^{-1} \end{pmatrix}, \, \rho(\L)=\begin{pmatrix}  l & \ast \\ 0 & l^{-1} \end{pmatrix} \right\}
\end{split}
\end{equation*}
Every representation in $\R(M,\F)$ is conjugate to such a representation as the peripheral subgroup is abelian.

Define the \textbf{eigenvalue map} to be
\begin{equation*}
\begin{split}
	e\colon\R_U(M,\F)	&\to\left(\F\setminus\{0\}\right)^2\\
	\rho				&\mapsto \left(m,l\right)
\end{split}
\end{equation*}
taking representation $\rho$ to the eigenvalues of $\rho(\M)$ and $\rho(\L)$ corresponding to the common invariant subspace spanned by the first basis vector.

We may assume that $\gamma_1=\M$, $\gamma_2=\L$ in the presentation of the fundamental group. Then 
\begin{equation}
\label{eqn:upperRepVar}
\begin{split}
	\R_U(M,\F) 	&= \left\{\, (A_1, \ldots, A_n)\in\SLF^n \;\Big|\; R_1, \ldots, R_m \text{ and } \right. \\ 
				& \qquad \left. A_1=\begin{pmatrix} m & \ast \\ 0 & m^{-1} \end{pmatrix}, \; A_2=\begin{pmatrix}  l & \ast \\ 0 & l^{-1} \end{pmatrix} \,\right\} \subseteq \F^{4n}
\end{split}
\end{equation}
where $A_i=\rho(\gamma_i)=\begin{pmatrix} a_i & b_i \\ c_i & d_i \end{pmatrix}$ and $R_j = r_j(A_1, \ldots, A_n)$. The entries of $R_j$ are polynomials in $\Z_p[a_1,\ldots,d_n]$ where $m=a_1$ and $l=a_2$. Note we have $c_1=0=c_2$ and the determinant conditions $a_i d_i-b_i c_i=1$. 

Then the map $e$ is precisely the projection
\[
\begin{split}
	e\colon\R_U(M,\F)	&\to\left(\F\setminus\{0\}\right)^2\\
	(a_1,\ldots, d_n) 	&\mapsto \left(a_1,a_2\right)=\left(m,l\right)
\end{split}
\] 

The \textbf{eigenvalue variety} is the closure of the image of this map, $\overline{e(\R_U(M,\F))}$. By \cite{Tillus-boundary-2005}, the dimension of the eigenvalue variety is at most one. We write
\[
	\overline{e(\R_U(M,\F))} = V_0\cup V_1
\]
where $V_0$ is the union of all $0$--dimensional components and $V_1$ is the union of all $1$--dimensional components.

If $V_1$ is non-empty, then it is defined by a principal ideal generated by some polynomial $\tilde{A}_{\F}$ in variables $m$  and $l$. A generator for the radical of the ideal is the \textbf{$A$--polynomial}, written $A_{\F}$.

The $A$--polynomial contains information about the incompressible surfaces in the manifold $M$. In particular, the well-known boundary slopes theorem from~\cite{Cooper-Plane-1994} asserts that the slopes in the Newtown polygon of the $A$--polynomial of $M$ are precisely the slopes of incompressible surfaces in $M$ associated to an action of $\pie(M)$ on a tree $\tau$. Recall the Newtown polygon of a polynomial $f(x,y)$ is the convex hull of integer lattice points $(s,t)$ in the plane that are degrees of monomials $x^sy^t$ that appear in $f(x,y)$.

The $A$--polynomial can be calculated using elimination theory by way of resultants or \textbf{Gr\"obner bases} (see \cite{Cooper-polynomial-1996, Cox-ideal-2015}), the latter of which give a standard basis for an ideal that behaves well with polynomial division.
Once a basis has been chosen for the boundary torus, the generator of the principal ideal $\tilde{A}_{\F}$ and the $A$--polynomial are both well-defined up to multiplication by units in $\F(m^{\pm1},l^{\pm1})$. 

We make some remarks about Gr\"obner bases and how they are calculated. 
\begin{remark}
\label{rem:buchAlg}
	The Gr\"obner basis of a polynomial ideal $I \subset k[x_1,\ldots,x_n]$ is calculated using \emph{Buchberger's algorithm} (see \cite[Theorem 2]{Cox-ideal-2015}). 
	Using the ascending chain condition for ideals we know we can do this using finitely many polynomials. 	
	
	For our purposes we use a modified version of {Buchberger's algorithm} where there is no division by coefficients in $k$. 
	{In particular, if an element has coefficients in a ring $R\subset k$ then they will remain in $R$ under our algorithm.}
	We describe the differences in the versions of the algorithm below.
	
	The main ingredients of Buchberger's algorithm are the \emph{division algorithm} (given in \cite[Theorem 3]{Cox-ideal-2015}) and the {$S$--polynomial}. 
	Given a non-zero polynomial $f\in k[x_1,\ldots x_n]$, we can write $f$ as a sum of monomials
	\[
		f=\sum_\alpha a_\alpha x^\alpha,  \quad a_\alpha \in k
	\]
	for $x=(x_1,\ldots,x_n)$, $\alpha=(\alpha_1,\ldots \alpha_n) \in \Z^n_{\geq0}$, $x^\alpha=x_1^{\alpha_1} \ldots x_n^{\alpha_n}$.
	For a given monomial ordering $>$, we say the multidegree of $f$ is
	\[
		\multi(f)=\max\{\alpha\in\Z^n_{\geq0} \mid a_\alpha\neq0\}
	\]
	Then the \textbf{leading monomial}, \textbf{leading coefficient}, and \textbf{leading term} of $f$ are respectively given by 
	\[
	\begin{split}
		\LM(f)=x^{\multi(f)},\quad \LC(f) = a_{\multi(f)},\quad \LT(f)=a_{\multi(f)}x^{\multi(f)}
	\end{split}
	\]
	
	Given a set of polynomials $f_1,\ldots, f_m$ and a monomial ordering, the \textbf{division algorithm} finds a way to write a polynomial $f$ in terms of $f_1,\ldots, f_m$, 
	\[
		f=a_1f_1+\ldots+a_mf_m+r, \quad a_i,r \in k[x_1,\ldots,x_n]
	\]
	with $r=0$ or no monomial in $r$ being divisible by $\LT(f_i)$ for all $1\leq i \leq m$. The algorithm involves dividing terms by $\LT(f_i)$. For example, in order to find the remainder $r$ in the steps of the algorithm, if $\LT(f_i)$ divides $\LT(f)$ then we take iterations of 
	\begin{equation}
	\label{eqn:divAlg1}
	\begin{split}
		f\mapsto f-\frac{\LT(f)}{\LT(f_i)}f_i
	\end{split}
	\end{equation}
	
	Given a pair of polynomials $f, g\in k[x_1,\ldots x_n]$ the \textbf{$S$--polynomial} of $f$ and $g$ cancels the leading terms in both $f$ and $g$ to get a polynomial of lower degree. Let $\beta_i$ be the powers of $x_i$ in $g$. Then
	\begin{equation}
	\label{eqn:Spoly1}
		S(f,g)=\frac{x^\gamma}{\LT(f)}f-\frac{x^\gamma}{\LT(g)}g \quad\text{for}\quad\gamma=(\gamma_1,\ldots,\gamma_n),\ \gamma_i=\max(\alpha_i,\beta_i)
	\end{equation}

	Approximately, \textbf{Buchberger's algorithm} starts with a generating set for the ideal, takes each pair of polynomials in the set, calculates the $S$--polynomial for this pair, and then performs the division algorithm of the $S$--polynomial with the generating set. If the remainder is non-zero this is added to the generating set, until all pairs have an $S$--polynomial with zero remainder under the division algorithm.
	The Gr\"obner basis can then be deduced from this. 

	In order to apply Buchberger's algorithm over $\Z$, we replace every instance of division by $\LT(h)$ for a polynomial $h$ to be instead division by $\LM(h)$ and multiply all other terms by $\LC(h)$. 
	For example \Cref{eqn:divAlg1} becomes
	\[
		f\mapsto \LC(f_i) f-\frac{\LT(f)}{\LM(f_i)}f_i
	\]
	and \Cref{eqn:Spoly1} becomes
	\[
		S(f,g)=\frac{\LC(g)x^\gamma}{\LM(f)}f-\frac{\LC(f)x^\gamma}{\LM(g)}g
	\]
	The result will be the same as the previous algorithm up to multiplication by a constant in $k$. 
	In particular the degrees of polynomials remain the same but no terms are ever divided by a coefficient. This makes it easier to compare the results of the algorithm across characteristics, which we will see later. 
\end{remark}
	
	Note when we refer to the Gr\"obner basis, we are typically thinking of the \emph{reduced} Gr\"obner basis, although this should not change any mathematical results. Typically, this is a Gr\"obner basis such that
	\begin{enumerate}
		\item \label{item:redGrob1} Every polynomial in the Gr\"obner basis has leading coefficient 1.
		\item For all elements $p$ of the Gr\"obner basis $G$, no monomial of $p$ lies in $\langle \LT(G\setminus\{p\})\rangle.$
	\end{enumerate}
	When using the modified Buchberger's algorithm described above we waive the requirement for \Cref{item:redGrob1}. Any nonzero polynomial ideal has a unique reduced Gr\"obner basis (see \cite[Proposition 6]{Cox-ideal-2015}), and under the modified algorithm this will be unique up to a common coefficient. 

\begin{lemma}
\label{lem:APolyChar}
	Let $\F$ be an algebraically closed field of characteristic $p$. 
	A generator for the radical of the principal ideal of $V_1$ can be chosen in $\Z_p[m^{\pm1},l^{\pm1}]$. Moreover, $A_{\F}$ only depends on $p$, not $\F$. 
\end{lemma}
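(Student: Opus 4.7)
The plan is to trace through the elimination-theoretic computation of the $A$--polynomial and verify that the coefficients remain in $\Z_p$ at every step. First, observe that the ideal defining $R_U(M,\F)$ in \Cref{eqn:upperRepVar} is generated by polynomials with integer coefficients: the entries of each $R_j = r_j(A_1,\ldots,A_n) - \idMat$ are polynomials over $\Z$ in the matrix entries $a_i, b_i, c_i, d_i$; the upper-triangular conditions are $c_1 = 0$ and $c_2 = 0$; and the determinant conditions are $a_id_i - b_ic_i - 1 = 0$. Let $I \subset \Z_p[a_1, b_1, c_1, d_1, \ldots, a_n, b_n, c_n, d_n]$ denote the ideal generated by their reductions mod $p$, and fix a lexicographic monomial ordering in which $m = a_1$ and $l = a_2$ are the smallest variables. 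Applying the modified Buchberger algorithm from \Cref{rem:buchAlg} to $I$ produces a Gr\"obner basis $G$ lying entirely in $\Z_p[a_1, \ldots, d_n]$, since that algorithm only uses cross-multiplication by leading coefficients and never divides by coefficients; after extension of scalars, $G$ remains a Gr\"obner basis of $I \cdot \F[a_1, \ldots, d_n]$. The Elimination Theorem then gives that $I \cdot \F[a_1, \ldots, d_n] \cap \F[m, l]$ is generated by $G \cap \Z_p[m, l]$, and the Closure Theorem identifies its vanishing locus in $\F^2$ with the Zariski closure of $e(R_U(M,\F))$.

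Next, I would extract the principal ideal cutting out $V_1$, working in $\Z_p[m^{\pm 1}, l^{\pm 1}]$. If $V_1$ is nonempty, a generator $\tilde A_{\F}$ of its defining principal ideal in $\F[m^{\pm 1}, l^{\pm 1}]$ is, up to a unit, the $\gcd$ in this UFD of any generating set for the elimination ideal after localisation at $ml$. Since the $\gcd$ of polynomials in $\Z_p[m, l]$ agrees with their $\gcd$ computed over $\F[m, l]$ (both may be obtained via the Euclidean algorithm one variable at a time, with pseudo-remainders living in $\Z_p[m^{\pm 1}, l^{\pm 1}]$), we may choose $\tilde A_{\F} \in \Z_p[m^{\pm 1}, l^{\pm 1}]$. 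The $A$--polynomial $A_{\F}$ is then the generator of the radical of $(\tilde A_{\F})$; because $\F_p$ is a perfect field, the squarefree part $\tilde A_{\F} / \gcd(\tilde A_{\F}, \partial_m \tilde A_{\F}, \partial_l \tilde A_{\F})$ lies in $\Z_p[m^{\pm 1}, l^{\pm 1}]$ and remains squarefree after base change to $\F$, so it generates the radical of $(\tilde A_{\F})$ both over $\Z_p$ and over $\F$. Hence $A_{\F}$ can be chosen in $\Z_p[m^{\pm 1}, l^{\pm 1}]$.

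Finally, every step of the construction --- the defining equations, the Gr\"obner basis computation, the $\gcd$ extraction, and the squarefree part --- takes place entirely inside $\Z_p[m^{\pm 1}, l^{\pm 1}]$ and its enveloping polynomial ring, and none of these steps references $\F$ beyond its characteristic. Consequently $A_{\F}$ depends only on $p$. The main subtlety I expect is ensuring that the radical operation commutes with base change from $\Z_p$ to $\F$; this is exactly where the perfectness of $\F_p$ is used, since over a perfect field squarefreeness of a polynomial is preserved under arbitrary field extensions.
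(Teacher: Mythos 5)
Your argument matches the paper's proof in its essential route: both set up the defining ideal over $\Z_p$, run the modified Buchberger algorithm of \Cref{rem:buchAlg} to obtain a lexicographic Gr\"obner basis with coefficients in $\Z_p$, and apply elimination theory to read off the $A$--polynomial, concluding that nothing in the computation references $\F$ beyond its characteristic. You in fact supply two steps the paper elides --- extracting the principal ideal cutting out $V_1$ as a $\gcd$ in the UFD $\Z_p[m^{\pm1},l^{\pm1}]$ and noting it is stable under base change to $\F$, and justifying that the squarefree part (the radical) also has coefficients in $\Z_p$ and stays squarefree over $\F$ because $\Z_p$ is perfect --- so your write-up is a more careful version of the same argument.
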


\begin{proof}
	Let $\F$ be an algebraically closed field of characteristic $p$. 
	The argument is similar to a result in \cite{Cooper-Plane-1994}, where it is shown we may multiply $A_{\C}$ by a suitable nonzero constant so that the coefficients are integers. We may similarly multiply $A_\F$ by a suitable nonzero constant so that the coefficients are elements in $\Z_p$. 
	
	Consider the polynomial ideals associated with $R_U(M,\F)$. 
	Let $I_{\F} = \langle f_1,\ldots,f_k \rangle$, $f_i \in \Z_p[a_1,\ldots,d_n ]$ be the ideal defining the variety $R_U(M,\F)$ such that
	\begin{equation}
	\label{eqn:ideal}
	\begin{split}
		R_U(M,\F) 	&= V\left(I_{\F}\right)\\
				 	&= \left\{ \left(w_1, \ldots, w_{4n} \right) \in \F^{4n} \mid f_i\left(w_1, \ldots, w_{4n} \right) =0 \text{ for all } 1\leq i \leq k \right\}
	\end{split}
	\end{equation}
	Here, the polynomials $f_i$ are determined by the entries of the relations $R_i$ from \Cref{eqn:upperRepVar} together with the determinant conditions.
	By definition, the closure of the image under the eigenvalue map is $\overline{e(V(I_{\F}))}=V(\langle \tilde{A}_{\F}\rangle)$. 
	
	Elimination theory shows us that for any field $\F$ the polynomial $\tilde{A}_\F$ will appear as an entry in the Gr\"obner bases calculated with respect to lexicographic ordering of $I_\F$ (see \cite{Cox-ideal-2015}). In fact, the polynomial will be the unique entry with only variables $m$ and $l$. 
	Following Buchberger's algorithm, we know we can calculate the Gr\"obner basis with finitely many polynomials. 
	
	We follow Buchberger's algorithm so that all operations are done over $\Z$ as described in \Cref{rem:buchAlg}.
	Following this process, $\tilde{A}_\F$ will appear in the Gr\"obner basis as a the unique entry in $\Z_p[m^{\pm1},l^{\pm1}]$. 
	For all distinct algebraically closed fields $\F$ and $\F'$ both of characteristic $p$, $\tilde{A}_\F$ and $\tilde{A}_{\F'}$ are calculated the same way and will be the same. 
	Then the $A$--polynomials also agree.
	This proves the result.
\end{proof}

We omit the specific field and denote the different $A$--polynomials by characteristic, $\tilde{A}_{p}=\tilde{A}_{\F}$ and $A_{p}=A_{\F}$ for $\F$ a field of characteristic $p$. 
If we require the $A$--polynomial has no common integer factor, then we can consider the polynomial unique up to sign. 

We have a version of the boundary slopes theorem for characteristic $p$, which follows, as in characteristic 0, from \Cref{pro:detected_surface} and the relationship between valuations and boundary slopes of the Newton polygon:

\begin{theorem}
	Let $\F$ be an algebraically closed field of characteristic $p$. 
	The boundary slopes of the Newton polygon for $A_p$ are the strongly detected boundary slopes for $\X(M,\F)$.
\end{theorem}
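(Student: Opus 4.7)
The plan is to follow the argument of \cite{Cooper-Plane-1994} almost verbatim, checking at each step that nothing used the characteristic. First, I would relate the eigenvalue variety to the variety of characters. The map $e\co \R_U(M,\F) \to (\F\setminus\{0\})^2$ factors through $\X(M,\F)$, because trace functions on the upper triangular peripheral elements determine $m+m^{-1}$ and $l+l^{-1}$, and this determines $(m,l)$ up to the inversion $(m,l) \mapsto (m^{-1},l^{-1})$ corresponding to conjugation by the Weyl element. In particular, an irreducible curve $D \subset V_1$ defining a factor of $A_p$ is dominated by an irreducible curve $C \subset \X(M,\F)$ of characters restricting nontrivially to the peripheral torus, and the eigenvalue map induces a dominant rational map $\tilde{C} \to \tilde{D}$ between smooth projective completions.

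Second, I would translate the combinatorics of the Newton polygon into valuation-theoretic data, following \cite{Cooper-Plane-1994} verbatim. Given a side $\sigma$ of the Newton polygon of $A_p$ with primitive integral direction $(q,-p)$, the classical Newton polygon argument produces an ideal point $\eta$ of $\tilde{D}$ at which $v_\eta(m^p l^q) > 0$ while $v_\eta(m^a l^b) < 0$ for the peripheral elements $\M^a \L^b$ dual to $\sigma$ (the sign depending on which of the two edges incident to a vertex one is on). This argument is purely about the Newton polytope of a Laurent polynomial and works over any algebraically closed field. Lift $\eta$ to an ideal point $\xi$ of $\tilde{C}$, and extend the associated valuation $v_\xi$ to the tautological representation via \Cref{lem:valuationExtension}.

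Third, I would apply \Cref{pro:stabilisers} and \Cref{pro:detected_surface} to $\xi$. By construction $v_\xi(I_{\M^a\L^b}) < 0$ for some peripheral element, so we are in case (1) of \Cref{pro:detected_surface}: there is a unique primitive peripheral class $\alpha$ with $v_\xi(I_\alpha) \ge 0$, and $\alpha$ is a strongly detected boundary slope. Checking that this primitive class is exactly $\M^p\L^q$, i.e.\ matches the slope read off from $\sigma$, reduces to the identity $v_\xi(I_{\M^a\L^b}) = \min\{a v_\xi(m) + b v_\xi(l),\, -a v_\xi(m) - b v_\xi(l)\}$ whenever at least one of these quantities is negative; this again is characteristic-free.

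Conversely, given a strongly detected boundary slope $\alpha$, there is a curve $C \subset \X(M,\F)$ and an ideal point $\xi \in \tilde{C}$ with $v_\xi(I_\alpha) \ge 0$ and some peripheral trace blowing up; passing to $\R_U(M,\F)$ and projecting via $e$ shows that $C$ maps to a curve in $V_1$ and that $(v_\xi(m), v_\xi(l))$ determines, up to sign, the direction of an edge of the Newton polygon of $A_p$ dual to $\alpha$. The only step requiring care is the first, making the factorisation of $e$ through $\X(M,\F)$ and the lift of ideal points rigorous; the Newton-polygon analysis and the invocation of \Cref{pro:stabilisers}, \Cref{pro:detected_surface} are formally identical to the characteristic zero case.
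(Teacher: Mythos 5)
Your sketch is correct and follows the same route the paper takes: the paper's own ``proof'' of this theorem is a single sentence asserting that it follows, as in characteristic zero, from \Cref{pro:detected_surface} together with the valuation/Newton-polygon correspondence of Cooper et al., and your sketch simply expands that citation into an explicit argument. The places you flag as requiring care (making the factorisation of $e$ through $\X(M,\F)$ and the lift of ideal points rigorous) are exactly the steps the paper leaves implicit by deferring to \cite{Cooper-Plane-1994}.
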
 

Another well-known theorem is the ones in the corners theorem from~\cite{Cooper-polynomial-1997}. This asserts that, if the $A$--polynomial is normalised so that the greatest common divisor of the coefficients is one, the corner vertices in the Newtown polygon of the $A$--polynomial of $M$ have associated coefficient $\pm1$. In this way we can think of the $A$--polynomial as being monic, in the sense that the dominant terms have coefficient $\pm1$. 

Considering fields of positive characteristic, a corollary to the ones in the corners theorem immediately follows. Let $\overline{A_0}=A_0\Mod p$ and more generally for $f$ a polynomial with integer coefficients, let $\overline{f}=f\Mod p$ for a given prime $p$ be the polynomial with coefficients modulo $p$. We will use this notation when $p$ is fixed. Since the coefficients in the corners will remain unchanged modulo $p$, we have:

\begin{corollary}
\label{cor:newtPolyModP}
	 The Newton polygon of $\overline{A_0}=A_0\Mod p$ is the same as the Newton polygon of $A_0$ for all primes $p$.
\end{corollary}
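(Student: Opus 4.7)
The plan is to combine the ones in the corners theorem of~\cite{Cooper-polynomial-1997} with the elementary observation that reduction modulo $p$ cannot enlarge the support of a polynomial. More precisely, write $A_0 = \sum_{(s,t)} c_{s,t} m^s l^t$ with $c_{s,t} \in \Z$ and with the coefficients normalised to have greatest common divisor one. Then the support of $\overline{A_0}$ is contained in the support of $A_0$, since passing from $c_{s,t}$ to $\overline{c_{s,t}} = c_{s,t} \Mod p$ can only kill monomials, never create them. Consequently the Newton polygon of $\overline{A_0}$, which is the convex hull of the support of $\overline{A_0}$, is contained in the Newton polygon of $A_0$.

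To prove the reverse inclusion, I would observe that a convex lattice polygon is determined by its vertex set, so it suffices to check that every corner vertex of the Newton polygon of $A_0$ is still a lattice point of the support of $\overline{A_0}$. This is immediate from the ones in the corners theorem: at each corner vertex $(s,t)$ of the Newton polygon of the normalised $A_0$, the coefficient $c_{s,t}$ equals $\pm 1$. Since $\pm 1 \not\equiv 0 \Mod p$ for any prime $p$, the residue $\overline{c_{s,t}} = \pm 1 \in \Z_p$ is non-zero, so the vertex $(s,t)$ lies in the support of $\overline{A_0}$. Hence every vertex of the Newton polygon of $A_0$ appears in the Newton polygon of $\overline{A_0}$, so the convex hulls coincide.

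There is essentially no obstacle here, since the statement is a direct consequence of the ones in the corners theorem, whose conclusion that corner coefficients are units of $\Z$ is precisely what one needs for corners to persist under reduction modulo any prime. The only thing to be careful about is the convention that $A_0$ is taken with the primitive normalisation so that the ones in the corners theorem applies; under this normalisation, the argument above yields the equality of Newton polygons for every prime $p$, with no exceptional primes to exclude.
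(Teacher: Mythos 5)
Your proof is correct and takes essentially the same approach as the paper: invoke the ones-in-the-corners theorem to conclude that corner coefficients of the primitively normalised $A_0$ are $\pm 1$ and hence survive reduction modulo any prime, then combine with the trivial inclusion of supports. The paper compresses this to one sentence; you merely spell out the two inclusions explicitly.
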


%We continue to compare the $A$-polynomial across characteristics. 
Recall $\tilde{A}_p$ is the polynomial that generates the ideal that defines the eigenvalue variety in characteristic $p$ (whilst ${A}_p$ is a generator of the radical of the ideal defined by $\tilde{A}_p$).

\begin{lemma}
\label{thm:tildeApDividesA0}
	For all but finitely many primes $p$, 
	\begin{equation}
	\label{eqn:tildeAPoly}
		\overline{\tilde{A}_0}=\tilde{A}_p.
	\end{equation}
\end{lemma}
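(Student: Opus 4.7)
The plan is to exploit the explicit algorithmic characterisation of $\tilde{A}_{\F}$ given in the proof of \Cref{lem:APolyChar}: namely, $\tilde{A}_{\F}$ is (up to a unit coefficient) the unique element lying in the subring $\F[m^{\pm 1},l^{\pm 1}]$ of the reduced Gr\"obner basis of the ideal $I_{\F} = \langle f_1,\ldots,f_k\rangle$ computed with respect to a lexicographic monomial ordering via the modified Buchberger algorithm of \Cref{rem:buchAlg}. Since the generators $f_1,\ldots,f_k$ have coefficients in $\Z$ and are independent of the characteristic, the computation of $\tilde{A}_0$ and the computation of $\tilde{A}_p$ use the same input polynomials.

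The key point is that the modified Buchberger algorithm performs no coefficient division: the steps consist of (i) multiplying polynomials by leading coefficients and (ii) subtracting multiples with monomial factors determined by leading monomials. I would run the algorithm once over $\Z$ and record the finite list $\LC(h_1),\ldots,\LC(h_N)$ of leading coefficients of all polynomials $h_i$ whose leading term is invoked (either as a divisor in the division-algorithm step \Cref{eqn:divAlg1}-analogue or as a leading term in the $S$--polynomial construction \Cref{eqn:Spoly1}-analogue). Let $D = \prod_i \LC(h_i) \in \Z\setminus\{0\}$. For any prime $p$ not dividing $D$, reducing coefficients modulo $p$ commutes with every step of the algorithm: no leading coefficient vanishes, so leading monomials are preserved, and the same sequence of $S$--polynomials, reductions, and additions to the basis is produced. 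Consequently, the mod-$p$ reduction of the Gr\"obner basis over $\Z$ is precisely the Gr\"obner basis over $\F_p$ produced by the algorithm.

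The lexicographic ordering is arranged so that the unique Gr\"obner basis element $g$ in $\Z[m^{\pm 1},l^{\pm 1}]$ is $\tilde{A}_0$ (up to a unit in $\Z$). Reducing modulo any such $p$, the image $\overline{g}$ is a nonzero element of the mod-$p$ Gr\"obner basis and lies in $\Z_p[m^{\pm 1},l^{\pm 1}]$; by uniqueness in the mod-$p$ computation it equals $\tilde{A}_p$ up to a unit, which yields $\overline{\tilde{A}_0}=\tilde{A}_p$ after normalising. The only primes excluded are the finitely many prime divisors of $D$, i.e.\ the primes that divide some leading coefficient encountered while running the algorithm over $\Z$, as anticipated in the overview of \Cref{sec:APoly}.

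The main obstacle is the bookkeeping to ensure that the reduction modulo $p$ really commutes with each step: one has to verify that invariance of leading monomials under reduction is enough to preserve the entire run of the algorithm, and that the resulting basis element in $\Z_p[m^{\pm 1},l^{\pm 1}]$ is nonzero (which is ensured precisely by $p\nmid \LC(g)$, a condition automatically included by taking $p\nmid D$). The remaining assertion that this nonzero reduction equals $\tilde{A}_p$ up to a unit is then immediate from uniqueness of the element of the reduced Gr\"obner basis lying in $\Z_p[m^{\pm 1},l^{\pm 1}]$.
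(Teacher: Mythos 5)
Your proposal is correct and follows essentially the same route as the paper's proof: run the modified Buchberger algorithm of \Cref{rem:buchAlg} over $\Z$, exclude the finitely many primes dividing leading coefficients encountered during the computation, and observe that for the remaining primes reduction modulo $p$ commutes with every step, so the mod-$p$ Gr\"obner basis element in $\Z_p[m^{\pm1},l^{\pm1}]$ is $\overline{\tilde{A}_0}$. Your version is a bit more explicit in naming the product $D$ of leading coefficients and in flagging that $p\nmid\LC(g)$ guarantees the reduction is nonzero, but the argument is the same one the paper gives.
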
 

\begin{proof}
	Let $\F$ be an algebraically closed field of characteristic $p$. 
	By \Cref{lem:APolyChar} it is sufficient to only consider the characteristic of the field. 
	Consider the polynomial ideals associated with $R_U(M,\F)$ in characteristic 0 and characteristic $p$. 
	
	Let $I_0 = \langle f_1,\ldots,f_k \rangle$, $f_i \in \Z[a_1,\ldots,d_n ]$ be the ideal associated with the variety $\R_U(M,\F_0)$ such that
	\begin{equation}
	\label{eqn:I_0}
	\begin{split}
		\R_U(M,\F_0) 	&= V\left(I_0\right)\\
		 			&= \left\{ \left(w_1, \ldots, w_{4n} \right) \in \F_0^{4n} \mid f_i\left(w_1, \ldots, w_{4n} \right) =0\quad \text{ for all } 1\leq i \leq k \right\}
	\end{split}
	\end{equation}
	Here, the polynomials $f_i$ are determined by the relations in \Cref{eqn:upperRepVar}. By definition, the closure of the image under the eigenvalue map is $\overline{e(V(I_0))}=V(\langle \tilde{A}_0\rangle)$. 
	
	Then in characteristic $p$, write $I_p = \langle \overline{f}_1,\ldots,\overline{f}_k \rangle$, $\overline{f}_i \in \Z_p[a_1,\ldots,d_n ]$ for the corresponding ideal $I_0$ modulo $p$. We have
	\begin{equation}
	\label{eqn:I_p}
	\begin{split}
		\R_U(M,\F_p) 	&= V\left(I_p\right) \\
					& = \left\{ \left(w_1, \ldots, w_{4n} \right) \in \F_p^{4n} \mid \overline{f}_i\left(w_1, \ldots, w_{4n} \right) =0\quad \text{ for all } 1\leq i \leq k \right\}
	\end{split}
	\end{equation}
	Again by definition, the closure of the image under the eigenvalue map is $\overline{e(V(I_p))}=V(\langle \tilde{A}_p\rangle)$. 
	
	Elimination theory shows us that the respective polynomials $\tilde{A}_0$ and $\tilde{A}_p$ will appear as an entry in the Gr\"obner bases calculated with respect to lexicographic ordering of $I_0$ and $I_p$ (see \cite{Cox-ideal-2015}). In fact, they will be the unique entry with only variables $m$ and $l$. 
	
	Follow the modified Buchberger's algorithm from \Cref{rem:buchAlg} to calculate the Gr\"obner bases, $G_0$ and $G_p$ respectively, of $I_0$ and $I_p$ over $\Z$ and $\Z_p$ with respect to lexicographic ordering. 
	Suppose that the prime $p$ does not divide any of the leading terms of polynomials that appear in the Gr\"obner basis calculation for $G_0$. Then the Gr\"obner basis $G_0$ modulo $p$, $\overline{G_0}$, is the same as the Gr\"obner basis $G_p$. In this case, we get 
	\begin{equation*}
		\overline{\tilde{A}_0}=\tilde{A}_p
	\end{equation*}
	
	This proves the result for all primes $p$ that do not divide the leading term of a polynomial in Buchberger's algorithm over $\Z$. Since there are only finitely many leading terms, the proof is complete.
\end{proof}

This leads to the following.

\begin{lemma}
\label{thm:ApDividesA0}
	For all but finitely many primes $p$, the $A$--polynomial in characteristic $p$ divides the $A$--polynomial in characteristic 0 modulo $p$. Moreover,
	\begin{equation}
	\label{eqn:ApDividesA0}
		\overline{A_0}(m,l)=g(m,l)A_p(m,l)
	\end{equation}
	where $g \in\Z_p[m,l]$ consists only of polynomial factors that appear in $A_p$.
\end{lemma}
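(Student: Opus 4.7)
The plan is to leverage \Cref{thm:tildeApDividesA0} and track how the radical operation interacts with reduction modulo $p$. Since $A_0$ and $A_p$ generate the radicals of $(\tilde{A}_0)$ and $(\tilde{A}_p)$ respectively, and $\overline{\tilde{A}_0} = \tilde{A}_p$ for all but finitely many primes, the task reduces to comparing the squarefree part of $\tilde{A}_0$ reduced modulo $p$ with the squarefree part of $\tilde{A}_p$.

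First, I would factorise $\tilde{A}_0$ in the UFD $\Z[m,l]$, writing $\tilde{A}_0 = c \cdot \prod_i f_i^{e_i}$ with the $f_i$ pairwise distinct irreducibles. By definition of the radical ideal, $A_0 = u \prod_i f_i$ for some unit $u$. The exceptional set of primes to exclude then consists of the finite set from \Cref{thm:tildeApDividesA0} together with primes dividing $c$, primes dividing the leading coefficient of any $f_i$, and primes dividing the resultants of pairs $(f_i,f_j)$ for $i \ne j$. Each such condition excludes only finitely many primes, so the combined exceptional set is finite; for every remaining good prime $p$, the $\overline{f_i}$ are nonzero and pairwise coprime in $\Z_p[m,l]$.

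For such good $p$, I would then write $\overline{A_0} = \overline{u} \prod_i \overline{f_i}$ and $\tilde{A}_p = \overline{\tilde{A}_0} = \overline{c} \prod_i \overline{f_i}^{e_i}$. Since the $\overline{f_i}$ are pairwise coprime in $\Z_p[m,l]$, taking the radical distributes over the product, yielding $A_p = u' \prod_i \mathrm{rad}(\overline{f_i})$ for some unit $u'$. Each $\mathrm{rad}(\overline{f_i})$ divides $\overline{f_i}$, so $A_p \mid \overline{A_0}$, and the quotient $g = \overline{A_0}/A_p$ equals a unit times $\prod_i \bigl(\overline{f_i}/\mathrm{rad}(\overline{f_i})\bigr)$. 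By construction, any irreducible factor of $\overline{f_i}/\mathrm{rad}(\overline{f_i})$ is also an irreducible factor of $\mathrm{rad}(\overline{f_i})$, and hence of $A_p$, giving the stated conclusion.

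The main obstacle is establishing the distributivity of the radical over the product $\prod_i \overline{f_i}^{e_i}$ after reducing modulo $p$; this requires the $\overline{f_i}$ to be pairwise coprime in $\Z_p[m,l]$, which is precisely what motivates excluding primes that divide the relevant resultants. A secondary subtlety is that an individual $\overline{f_i}$ may fail to be squarefree in characteristic $p$ (through inseparability or coefficient collapse), and this is exactly what can produce a nontrivial $g$; crucially, no further primes need to be excluded to accommodate this, since the polynomial factors of $g$ are already factors of $A_p$ by the argument above.
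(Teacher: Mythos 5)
Your proof is correct and follows essentially the same skeleton as the paper's: factor $\tilde{A}_0$ over $\Z[m,l]$, observe $A_0$ is the product of the distinct irreducibles, invoke \Cref{thm:tildeApDividesA0}, and then compare radicals after reduction. The genuine difference is in how the two arguments handle the possibility that distinct irreducible factors $f_i,f_j$ of $\tilde A_0$ acquire a common factor modulo $p$. The paper tracks this directly: it partitions the $\overline{g_i}$ into those that stay irreducible and distinct and those that split or collapse, and then reads off $g$ as a product of lower powers, so the collapse case is absorbed with no further exclusions beyond the primes in \Cref{thm:tildeApDividesA0}. You instead enlarge the excluded set (primes dividing $c$, leading coefficients, and ``resultants'') so that the $\overline{f_i}$ remain pairwise coprime, which then lets you split the radical across the product; the residual phenomenon you have to handle is only individual factors losing separability. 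This buys a cleaner final computation $g = \prod_i \bigl(\overline{f_i}/\mathrm{rad}(\overline{f_i})\bigr)$ at the cost of a larger (still finite) exceptional set and a small gap in precision: since $\mathrm{Res}_l(f_i,f_j)$ is a polynomial in $m$ rather than an integer, ``primes dividing the resultant'' should be read as primes dividing its content (and symmetrically for $\mathrm{Res}_m$), combined with the condition that neither degree drops modulo $p$, to genuinely force $\gcd(\overline{f_i},\overline{f_j})=1$ in $\F_p[m,l]$. With that clarification your route is sound and yields the same conclusion.
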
 

\begin{proof}	
	This follows directly from \Cref{thm:tildeApDividesA0} and the definition of the $A$--polynomial. Let us break this down further. As in the proof of \Cref{thm:tildeApDividesA0}, consider the finitely many primes $p$ that do not divide any of the leading terms of polynomials that appear in the Gr\"obner basis calculations over $\Z$. 
	 
	The $A$--polynomial $A_0(m,l)$ is the radical of $\tilde{A}_0(m,l)$. Then looking at their irreducible factorisation over $\Z[m,l]$ we find,
	\[
	\begin{split}
		\tilde{A}_0 &=a_0 g_1^{k_1} \ldots g_r^{k_r}, \quad \,a_0\in\Z\setminus\{0\},\ k_i \in\Z_{>0} \\
		{A}_0 &=g_1 \ldots g_r.
	\end{split}
	\]
	for $g_i\in\Z[m,l]$ irreducible and pairwise distinct ($g_i\neq g_j$ for all $i\neq j$). Note \Cref{eqn:tildeAPoly} implies $\overline{a_0}\neq0 \Mod p$.
	
	Consider how this changes modulo $p$. We could find that there is some $i$ such that $\overline{g_i}$ is no longer irreducible, for example, $\overline{g_i}=h_{i_1}\ldots h_{i_s}$ for $h_{i_k} \in \Z[m,l]$ irreducible, or we could find that the collection of $\overline{g_i}$, $i\in\{1,\ldots,r\}$ is no longer distinct, for example, $\overline{g_i}=\overline{g_j}$ for $i\neq j$, or both. Then,
	\[
	\begin{split}
		\overline{\tilde{A}_0} 		&=\overline{a_0}\overline{g_1}^{k_1} \ldots \overline{g_r}^{k_r}, \\ 
							&=\overline{a_0}\overline{g_{\beta_1}}^{b'_1} \ldots \overline{g_{\beta_s}}^{b'_s}h_{1}^{c'_1}\ldots h_{t}^{c'_t}, \quad b'_i,c'_i \in{\Z_p}\setminus\{0\} \\
		\overline{{A}_0} 		&=\overline{g_1} \ldots \overline{g_r}, \\ 
							&=\overline{g_{\beta_1}}^{b_1} \ldots \overline{g_{\beta_s}}^{b_s}h_{1}^{c_1}\ldots h_{t}^{c_t}, \quad b_i,c_i \in{\Z_p}\setminus\{0\}
	\end{split}
	\]
	
	Here, $\{\beta_1,\ldots,\beta_s\}$ is the subset of $\{1,\ldots,r\}$ such that $\overline{g_{\beta_1}}, \ldots, \overline{g_{\beta_s}}$ are irreducible and distinct, and $h_j$ are the collection of irreducible and distinct factors of the remaining $\overline{g_i}$. We have $1\leq b_i\leq b'_i$ and $1\leq c_i\leq c'_i$.
	
	Then using \Cref{eqn:tildeAPoly}, taking the radical of $\overline{\tilde{A}_0}$ and removing common integer factors gives,
	\[
	\begin{split}
		A_p &=\overline{g_{\beta_1}} \ldots \overline{g_{\beta_s}}h_{1}\ldots h_{t}.
	\end{split}
	\]

	Then it is clear that 
	\begin{equation*}
		\overline{A_0}=\overline{g_{\beta_1}}^{b_1-1} \ldots \overline{g_{\beta_s}}^{b_s-1}h_{1}^{c_1-1}\ldots h_{t}^{c_t-1}A_p
	\end{equation*}
	and $g=\overline{g_{\beta_1}}^{b_1-1} \ldots \overline{g_{\beta_s}}^{b_s-1}h_{1}^{c_1-1}\ldots h_{t}^{c_t-1} \in\Z[m,l]$ satisfies the statement. 

	This proves the result for all primes $p$ that do not divide the leading term of a polynomial in Buchberger's algorithm over $\Z$.	
\end{proof}

\begin{remark} 
	\begin{enumerate}
		\item The factor $g(m,l)$ is important and the $A$--polynomial in characteristic $p$ is not necessarily the same as reducing the $A$--polynomial in characteristic $0$ modulo $p$. 
		\item There may be examples where the prime $p$ does divide the leading term of a polynomial in the Gr\"obner basis calculations over $\Z$ and the result still holds.
	\end{enumerate}
\end{remark}
We will see an example where the $A$--polynomial changes under different characteristics in \Cref{sec:m137} (although the detected boundary slopes does not change). \Cref{thm:ApDividesA0} is not applicable in the example because the prime $p$ divides leading terms when calculating the Gr\"obner basis.

\begin{theorem}
	Consider $M$ with a single boundary torus $\bound M=T$. 
	For all but finitely many primes $p$, the boundary curve of an essential surface in $M$ is strongly detected by $A_0$ if and only if it is strongly detected by $A_p$.
\end{theorem}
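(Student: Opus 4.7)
The plan is to translate the statement into a comparison of Newton polygons and then invoke the earlier preparatory results. By the boundary slopes theorem stated just above, a slope is strongly detected by $A_0$ (resp.\ $A_p$) if and only if it appears as the slope of an edge of the Newton polygon $N(A_0)$ (resp.\ $N(A_p)$). So the task reduces to showing that, for all but finitely many primes $p$, the set of edge slopes of $N(A_0)$ coincides with the set of edge slopes of $N(A_p)$.

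First I would apply \Cref{cor:newtPolyModP}, which asserts that the Newton polygon is invariant under reduction modulo $p$, that is, $N(\overline{A_0}) = N(A_0)$ for every prime $p$. This replaces $A_0$ by its mod $p$ reduction without changing the set of edge slopes.

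Next I would bring in \Cref{thm:ApDividesA0}, which says that for all but finitely many $p$ we have a factorisation
\[
\overline{A_0}(m,l) = g(m,l)\, A_p(m,l)
\]
in $\Z_p[m,l]$, where every irreducible factor of $g$ already appears as an irreducible factor of $A_p$. Recall that the Newton polygon of a product of polynomials is the Minkowski sum of the Newton polygons of the factors, and the set of edge slopes of a Minkowski sum equals the union of the sets of edge slopes of its summands. Since every irreducible factor of $g$ divides $A_p$, the set of edge slopes of $N(g)$ is contained in the set of edge slopes of $N(A_p)$. Therefore the set of slopes of $N(\overline{A_0}) = N(g) + N(A_p)$ agrees with the set of slopes of $N(A_p)$ (the case $g$ a unit constant is trivial).

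Chaining the two equalities gives: for all but finitely many primes $p$, the edge slopes of $N(A_0)$ equal the edge slopes of $N(\overline{A_0})$ equal the edge slopes of $N(A_p)$. Invoking the boundary slopes theorem in both characteristic $0$ and characteristic $p$ then yields the desired conclusion. The only real subtlety is the Minkowski sum observation about which slopes can appear in $N(g \cdot A_p)$, and being careful that the finite set of bad primes here is exactly the one from \Cref{thm:ApDividesA0}, namely those dividing a leading term arising in the Buchberger computation of the relevant Gröbner basis.
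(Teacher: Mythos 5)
Your proof is correct and follows essentially the same approach as the paper: translate strong detection into edge slopes of Newton polygons via the boundary slopes theorem, use \Cref{cor:newtPolyModP} to replace $A_0$ by $\overline{A_0}$, and combine the factorisation from \Cref{thm:ApDividesA0} with the Minkowski-sum property of Newton polygons to conclude that $N(\overline{A_0})$ and $N(A_p)$ have the same edge slopes. The only difference is presentational — you chain equalities of slope sets in one pass, whereas the paper argues the two implication directions separately — and you might also explicitly cite \Cref{lem:APolyChar} to justify that $A_p$ depends only on the characteristic and not on the choice of $\F$, as the paper does.
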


\begin{proof}
	%Let $\F$ be an algebraically closed field of characteristic $p$.
	This follows from \Cref{lem:APolyChar,thm:tildeApDividesA0,thm:ApDividesA0}. For any product of polynomials $f(x,y)=h(x,y)i(x,y)$, the Newton polygon of $f$ is the sum of the Newton polygons of $h$ and $i$. In particular, if neither $h(x,y)$ and $i(x,y)$ are monomials, then the slopes of the Newton polygon of $f$ will consist of the slopes of the Newton polygon of $h$ and the slopes of the Newton polygon of $i$ and, if one of $h(x,y)$ and $i(x,y)$ is a monomial, then the slopes of the Newton polygon of $f$ will consist of the slopes of the Newton polygon of the other polynomial. We have \Cref{eqn:ApDividesA0} from \Cref{thm:ApDividesA0} of this form.
	Recall by \Cref{lem:APolyChar}, the $A$--polynomial depends only on the characteristic of $\F$.
	
	Consider a slope on $\bound M$ that is strongly detected by $\X(M,\F)$. By the boundary slopes theorem in characteristic $p$, the slope corresponds to a boundary slope of the Newton polygon of $A_p(m,l)$. By \Cref{eqn:ApDividesA0} and above, for all but finitely many primes, $p$, this must also be a boundary slope of the Newton polygon of $\overline{A_0}(m,l)$. By \Cref{cor:newtPolyModP}, this is the same as a boundary slope of the Newton polygon of ${A_0}(m,l)$. Then, by \Cref{lem:APolyChar} and the boundary slopes theorem in characteristic $0$, this must be strongly detected by $X(M,\C)$.
		
	Consider a slope on $\bound M$ that is strongly detected by $X(M,\C)$. By the boundary slopes theorem in characteristic $0$, the slope corresponds to a boundary slope of the Newton polygon of $A_0(m,l)$. By \Cref{cor:newtPolyModP}, this is a boundary slope of the Newton polygon of $\overline{A_0(m,l)}$. Then any slope of the Newton polygon of $\overline{A_0(m,l)}$ is either a slope of the Newton polygon of $g(m,l)$ or a slope of the Newton polygon of ${A_p(m,l)}$. But $g(m,l)$ consists only of polynomial factors already present in ${A_p(m,l)}$. Thus the slope must be a slope of the Newton polygon of ${A_p(m,l)}$. By the boundary slopes theorem in characteristic $p$, this must be strongly detected by $X(M,\F)$.
	This proves the statement.
\end{proof}

%%%%%%%%%%%%%%%%%%%%%%%%%%%

\subsection{The case of multiple boundary components}

Consider $M$ with $h$ boundary tori $T_1, \ldots, T_h$. We can extend the definitions of the $A$--polynomial and the eigenvalue variety over a general field $\F$ for a manifold with  multiple boundary tori. Assume $\F$ is an algebraically closed field of characteristic $p$. We follow the approach in \cite{Tillus-boundary-2005}. 

Assume we have fundamental group 
\[ 
	\pi_1(M)=\langle \gamma_1, \ldots \gamma_n \mid r_1, \ldots, r_m \rangle
\]
Label the meridian and longitude for each boundary torus $\M_i,\L_i$. This forms a basis of $\pi_1(T_i)$ for each $i=1,\ldots, h$. Consider the representation variety,
\begin{equation}
\begin{split}
	\R(M,\F) 	&= \left\{ (A_1, \ldots, A_n)\in\SLF^n \mid R_1, \ldots, R_m \right \} \subseteq \F^{4n}
\end{split}
\end{equation}
where $A_j=\rho(\gamma_j)=\begin{pmatrix} a_j & b_j \\ c_j & d_j \end{pmatrix}$ and $R_k = r_k(A_1, \ldots, A_n)$.
The entries of $R_k$ are polynomials in $\Z_p[a_1,\ldots,d_n]$; we also have the determinant conditions $a_j d_j-b_j c_j=1$. 

For each $i$, we can identify $\M_i,\L_i$ with generators of $\im(\pi_1(T_i)\to\pi_1(M))$. Then $\M_i$ and $\L_i$ are words in the generators for $\pi_1(M)$. Label the eigenvalues of $\M_i$ and $\L_i$ as $m_i^{\pm1}$ and $l_i^{\pm1}$, respectively, such that $m_i$ and $l_i$ correspond to a common 1--dimensional invariant subspace.

Define the \textbf{eigenvalue map} to be
\begin{equation*}
\begin{split}
	e\colon\R(M,\F)	&\to\left(\F\setminus\{0\}\right)^{2h}\\
	\rho				&\mapsto \left(m_1,l_1,\ldots,m_h,l_h\right)
\end{split}
\end{equation*}
taking representation $\rho$ to eigenvalues of $\rho(\M_i)$ and $\rho(\L_i)$ for $i=1,\ldots,h$. 

We can frame this another way. The representation variety is the affine variety of some ideal $\R(M,\F)=V(J_{\F})$, $J_{\F} = \langle f_1,\ldots,f_k \rangle$, $f_i \in \Z_p[a_1,\ldots,d_n]$. 
For each $\gamma\in\pi_1(M)$, there is an element $I_\gamma\in\Z_p[a_1,\ldots,d_n]$ such that $I_\gamma(\rho)=\tr(\rho(\gamma))$. In particular, this holds for each $\M_i,\L_i$, for all $i =1,\ldots,h.$
Consider the ideal 
\[
\begin{split}
J'_{\F}	&=\langle\, m_i+m_i^{-1}-I_{\M_i},l_i+l_i^{-1}-I_{\L_i}, m_il_i+m_i^{-1}l_i^{-1}-I_{\M_i\L_i} \text{ for all } i=1,\ldots,h\,\rangle \\ 
	&\subseteq\Z_p[a_1,\ldots,d_n,m_1^{\pm1},l_1^{\pm1},\ldots,m_h^{\pm1},l_h^{\pm1}]
\end{split}
\]

We combine ideals $J_{\F}$ and $J'_{\F}$ to get the variety
\begin{equation}
\begin{split}
	\R_E(M,\F) 	&= V(\langle J_{\F},J'_{\F}\rangle) \subseteq \F^{4n}\times(\F\setminus\{0\})^{2h}
\end{split}
\end{equation}

Then the map $e$ can also be seen as the projection
\[
\begin{split}
	e\colon\R_E(M,\F)	&\to\left(\F\setminus\{0\}\right)^{2h}\\
	(a_1,\ldots, d_n, m_1,l_1,\ldots,m_h,l_h) 	&\mapsto \left(m_1,l_1,\ldots,m_h,l_h\right)
\end{split}
\] 

The \textbf{eigenvalue variety} is the closure of the image of this map, $\overline{e(\R_E(M,\F))}$. Again, the proofs in \cite{Tillus-boundary-2005} apply verbatim to show that the dimension of the eigenvalue variety is at most the number of torus boundary components. 

The eigenvalue variety is defined by an ideal $K_{\F}\subseteq\F[m_1,l_1,\ldots,m_h,l_h]$ with $\overline{e(\R_E(M,\F))}=V(K_{\F})$. 
The ideal $K_{\F}$ can be calculated using elimination theory by way of resultants or Gr\"obner bases and is the multiple boundary component analogue to the $A$--polynomial. 
As described in the previous section, the Gr\"obner basis can be calculated using Buchberger's algorithm (or a modified version of this described in \Cref{rem:buchAlg}) using finitely many polynomials.
Once a basis has been chosen for each boundary tori, $K_{\F}$  is well-defined up to multiplication by units in $\F(m_i^{\pm1},l_i^{\pm1})$. 

\begin{lemma}
\label{lem:APolyMultChar}
	Let $\F$ be an algebraically closed field of characteristic $p$. 
	The defining ideal for the eigenvalue variety can be chosen such that the generating set for $K_{\F}$ is inside $ 	\Z_p[m_1^{\pm1},l_1^{\pm1},\ldots,m_h^{\pm1},l_h^{\pm1}]$. 
\end{lemma}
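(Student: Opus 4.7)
The plan is to mirror the proof of \Cref{lem:APolyChar} using the elimination-theoretic description of $K_\F$ together with the modified Buchberger's algorithm from \Cref{rem:buchAlg}.

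First I would observe that the ideal $\langle J_\F, J'_\F\rangle$ already has a natural generating set with coefficients in $\Z_p$. Indeed, the generators $f_1,\ldots,f_k$ of $J_\F$ come from the matrix entries of the relations $R_1,\ldots,R_m$ together with the determinant conditions $a_jd_j - b_jc_j - 1$, all of which are polynomials in $\Z_p[a_1,\ldots,d_n]$ (the relations only involve integer combinations of products of matrix entries). The generators of $J'_\F$ have the form $m_i + m_i^{-1} - I_{\M_i}$, $l_i + l_i^{-1} - I_{\L_i}$, and $m_il_i + m_i^{-1}l_i^{-1} - I_{\M_i\L_i}$, where $I_\gamma \in \Z_p[a_1,\ldots,d_n]$ for every $\gamma\in\pi_1(M)$ (again, traces of products of matrices over $\Z_p$ are integer polynomials in the entries). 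Hence we may view $\langle J_\F,J'_\F\rangle$ as generated by elements of
\[
\Z_p[a_1,\ldots,d_n,m_1^{\pm 1},l_1^{\pm 1},\ldots,m_h^{\pm 1},l_h^{\pm 1}].
\]

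Next, I would recall that the defining ideal $K_\F$ for the eigenvalue variety is obtained from $\langle J_\F,J'_\F\rangle$ by eliminating the variables $a_1,\ldots,d_n$. By elimination theory (see \cite{Cox-ideal-2015}), if one computes a Gr\"obner basis $G$ of $\langle J_\F,J'_\F\rangle$ with respect to a lexicographic monomial order in which the variables $a_1,\ldots,d_n$ are larger than $m_1^{\pm 1},l_1^{\pm 1},\ldots,m_h^{\pm 1},l_h^{\pm 1}$, then the subset of $G$ consisting of elements involving only the eigenvalue variables generates $K_\F$.

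The crucial step is then to run the modified Buchberger's algorithm of \Cref{rem:buchAlg} on this generating set. That version of the algorithm never divides by coefficients: division-algorithm steps and $S$-polynomial formations multiply by integer leading coefficients rather than inverting them. Consequently, starting from generators whose coefficients lie in $\Z_p$, every intermediate polynomial produced by the algorithm also has coefficients in $\Z_p$, and the algorithm terminates in finitely many steps by the ascending chain condition. The resulting Gr\"obner basis therefore lies entirely in $\Z_p[a_1,\ldots,d_n,m_1^{\pm 1},l_1^{\pm 1},\ldots,m_h^{\pm 1},l_h^{\pm 1}]$, and its subset involving only the $m_i^{\pm 1}, l_i^{\pm 1}$ provides a generating set for $K_\F$ inside $\Z_p[m_1^{\pm 1},l_1^{\pm 1},\ldots,m_h^{\pm 1},l_h^{\pm 1}]$, as required.

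The main subtlety, as in \Cref{lem:APolyChar}, is ensuring that inverting the $m_i, l_i$ is harmless; this is handled by working in the Laurent polynomial ring from the outset (or equivalently introducing auxiliary variables $m_i', l_i'$ with relations $m_im_i' - 1$, $l_il_i' - 1$, whose coefficients are again in $\Z_p$) so that the elimination remains purely over $\Z_p$. Given the ingredients already developed in \Cref{rem:buchAlg} and \Cref{lem:APolyChar}, no new obstacle appears in the multi-cusped case beyond keeping track of more eigenvalue variables.
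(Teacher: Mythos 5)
Your proposal is correct and takes essentially the same approach as the paper's proof: both rely on the elimination-theoretic description of $K_\F$ via a lexicographic Gr\"obner basis of $\langle J_\F, J'_\F\rangle$, run the modified Buchberger's algorithm of \Cref{rem:buchAlg} so that no coefficient division occurs, and conclude that the elimination ideal inherits coefficients in $\Z_p$. The only difference is that the paper additionally records that the outcome depends only on the characteristic $p$ (not the specific field $\F$), while you spell out the Laurent-ring bookkeeping a bit more explicitly; both points are minor and compatible.
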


\begin{proof}
	Let $\F$ be an algebraically closed field of characteristic $p$. 
	Similar to the proof in \cite{Cooper-Plane-1994} for a single torus, we may multiply $K_{\C}$ by a suitable nonzero constant so that the coefficients are integers. We may similarly multiply $K_\F$ by a suitable nonzero constant so that the coefficients are elements in $\Z_p$. 

	Consider the polynomial ideals associated with $R_E(M,\F)$, $J_{\F}$ and $J'_{\F}$. 
	Here, the $f_j$ in $J_{\F}$ are determined by the entries of the relations $R_j$ in \Cref{eqn:upperRepVar} together with the determinant conditions and the equations in $J'_{\F}$ are trace relations for the meridian and longitude of each boundary tori. 
	By definition, the closure of the image under the eigenvalue map is $\overline{e(V(I_{\F}))}=V(K_{\F})$. 
	
	Elimination theory shows us that for any field $\F$ the elements in $K_\F$ will appear as entries in the Gr\"obner basis calculated with respect to lexicographic ordering of $J_{\F}$ and $J'_{\F}$ (see \cite{Cox-ideal-2015}). Following Buchberger's algorithm, we know we can calculate the Gr\"obner basis with finitely many polynomials. 
	
	We follow Buchberger's algorithm so that all operations are done over $\Z$ as described in \Cref{rem:buchAlg}. Then $K_\F$ is the resulting Gr\"obner basis. 
	For all distinct algebraically closed fields $\F$ and $\F'$ both of characteristic $p$, $K_\F$ and $K_{\F'}$ are calculated the same way and will be the same. 
	This proves the result.
\end{proof}

Let $\F$ and $\F'$ be distinct algebraically closed fields of characteristic $p$.
The generating sets for $K_\F$ and $K_{\F'}$ can be chosen to be in $\Z_p[m_1^{\pm1},l_1^{\pm1},\ldots,m_h^{\pm1},l_h^{\pm1}]$ but we still consider the ideals in separate rings with $K_\F \subseteq \F[m_1^{\pm1},l_1^{\pm1},\ldots,m_h^{\pm1},l_h^{\pm1}]$ and  $K_{\F'} \subseteq \F'[m_1^{\pm1},l_1^{\pm1},\ldots,m_h^{\pm1},l_h^{\pm1}]$.

We recover the information about boundary slopes by taking the logarithmic limit set of the eigenvalue variety. 
Given an affine variety $V=V(I)$, we list three definitions for the \textbf{logarithmic limit set} following \cite{Bergman-logarithmic-1971}. 

Let $\F[X^{\pm}]=\F[X_1^{\pm1},\ldots,X_m^{\pm1}]$, $\left(x_1,\ldots,x_m \right)\in V$.

\begin{itemize}
	\item An absolute value function on $\F$ is a function $\lvert \cdot \rvert \co \F \to \R_{\geq 0}$ such that 
	$\lvert 0 \rvert = 0$, $\lvert 1 \rvert = 1$, $\lvert xy \rvert = \lvert x \rvert \lvert y \rvert$, $\lvert x+y \rvert \leq \lvert x \rvert + \lvert y \rvert$ and there exists $x \in\F$ such that $0<  \lvert x \rvert <1$.
	If $\F$ has an absolute value function, define $V^{(a)}_\infty(I)$ as the set of limit points on $S^{m-1}$ of points of the form
		\[
		\begin{split}
			\frac{\left(\log\lvert x_1\rvert,\ldots, \log\lvert x_m\rvert\right)}{\sqrt{1+\sum\left(\log\lvert x_i\rvert\right)^2}}.
		\end{split}
		\]
	\item Define $V^{(b)}_\infty(I)$ to be the set of $m$-tuples $\left(-v(X_1),\ldots, -v(X_m)\right)$ as $v$ ranges over all real-valued valuations on $\F[X^{\pm}]/I$ with $\sum v(X_i)^2=1$.
	\item For $f\in\F[X^{\pm1}]$, let the support of $f$ be
		\[
		\begin{split}
			s(f) = 	&\left\{ \alpha=\left( \alpha_1,\ldots,\alpha_m\right) \in\Z^m \; \big| \; x_1^{\alpha_1}\ldots x_m^{\alpha_m} \right.\\
					& \left. \quad\text{ occurs with nonzero coefficient in }f\right\}.
		\end{split}
		\]
		Define $V^{(c)}_\infty(I)$ to be the set of points $\zeta \in S^{m-1}$ such that, for all $0\neq f \in I$, $\underset{\alpha\in s(f)}{\max}\left(\zeta \cdot \alpha \right)$ is assumed at least twice. 
\end{itemize}

The last definition gives another interpretation of the logarithmic limit set as the intersection of the spherical duals of the Newtown polytopes of nonzero elements of $I$. That is, the intersection of all outward pointing unit normal vectors to the support places of the Newton polytopes of nonzero elements of $I$ (see \cite{Tillus-boundary-2005}). 
We write,
\[
	V^{(c)}_\infty(I) = \bigcap_{f\in I} \sphD(\newt(f))
\]
for $\newt(f)$ the Newton polytope of $f$ and $\sphD(P)$ the spherical dual of a polytope $P$. 
Recall, the \textbf{Newton polytope} of a polynomial $f(x_1,\ldots,x_m)$ is the convex hull of the support of $f$. This is a generalisation of the Newton polygon. 

Bergman \cite[Theorem 2]{Bergman-logarithmic-1971} proves that $V^{(b)}_\infty(I) = V^{(c)}_\infty(I)$. We write $V_\infty(I)=V^{(b)}_\infty(I)=V^{(c)}_\infty(I)$. Furthermore, if $\F$ is algebraically closed and equipped with an absolute value function, then $V^{(a)}_\infty(I)$ is a closed subset of $V_\infty(I)$, containing all points therein with rational coordinate ratios. This implies the definition of $V^{(a)}_\infty(I)$ does not depend on the choice of absolute value function and the definition of $V^{(b)}_\infty(I)$ does not depend on the choice of valuation.

A useful tool to understand and compute the logarithmic limit set through the third definition is \emph{tropical bases}.
An introduction to these concepts is in \cite{Maclagan-tropical-2015}.
The tropical basis of an ideal over $\F[x_1^\pm,\ldots,x_n^\pm]$ is the analogue of the \emph{universal} Gr\"obner basis of an ideal over $\F[x_1,\ldots,x_n]$ (which is a Gr\"obner basis over every monomial ordering).
Not every universal Gr\"obner basis is a tropical basis. 

Consider an ideal $I\subseteq \F[x_1^\pm,\ldots,x_n^\pm]$. 
Over $\C$, Bogart et al. \cite{Bogart-computing-2007} prove that $I$ has a finite tropical basis $\{f_1,\ldots,f_k\}$ with the property that
\[
	\bigcap_{f\in I} \sphD(\newt(f)) = \bigcap_{1\leq i \leq k} \sphD(\newt(f_i)).
\]
Maclagan and Sturmfels prove that this result holds over a field $\F$ with non-trivial valuation, 
see \cite[Theorem 2.6.5 and Corollary 3.2.3]{Maclagan-tropical-2015}. 
In \cite[Section 2]{Joswig-degree-2018} it is stated that this works over fields of positive characteristic. This can be obtained by using the results of Maclagan and Sturmfels and observing the results hold for a field with trivial valuation. In particular, an algebraically closed field $\F$ always has a trivial valuation and can be embedded into another valued field with a non-trivial valuation so that the new valuation extends the trivial valuation. For example, consider $\F$ embedded in $\F(t)$ with the valuation defined by $v(t^k)=k$. {Then the results of \cite{Maclagan-tropical-2015} can be applied using the extended field and will give the desired tropical basis over $\F$.} 

Recall for a polynomial $f$ with integer coefficients, $\overline{f}=f\Mod p$ is the polynomial with the same coefficients modulo $p$.  
Consider an ideal $I=\langle g_1,\ldots g_l \rangle  \subseteq \C[x_1,\ldots,x_n]$ with generators $g_i\in\Z[x_1,\ldots,x_n]$. We denote the corresponding ideal over an algebraically closed field $\F$ with characteristic $p$ as $\overline{I}^\F = \langle \overline{g_1},\ldots \overline{g_l} \rangle \subseteq \F[x_1,\ldots,x_n]$. Here, $\overline{g_i}\in\Z_p[x_1,\ldots,x_n]$. Then we can prove the following.

\begin{lemma}
\label{lem:newtPolytopeModP}
	Let $I$ be a finitely generated ideal in $\C[x_1,\ldots,x_n]$ with generators over the integers such that the greatest common divisor of all the coefficients is $1$. 
	Let $\F$ be an algebraically closed field of characteristic $p$. 
	For all but finitely many $p$, the logarithmic limit set of $I$ is the same as the logarithmic limit set of $\overline{I}^\F$,
	$$V_{\infty}(I) = V_{\infty}(\overline{I}^\F).$$
\end{lemma}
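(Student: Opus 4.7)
The plan is to express both logarithmic limit sets as intersections of spherical duals of Newton polytopes of a common finite set of polynomials, and then read off the equality combinatorially. More precisely, the aim is to exhibit a finite tropical basis of $I$ with integer coefficients whose reduction modulo $p$, for all but finitely many primes, is a tropical basis of $\overline{I}^\F$ with the same Newton polytopes as the originals. Since the logarithmic limit set admits the presentation $V_\infty(J) = \bigcap_{f \in \mathcal{B}} \sphD(\newt(f))$ for any tropical basis $\mathcal{B}$ of $J$, the conclusion then follows by comparing two identical intersections.

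First, I would invoke the algorithmic existence of finite tropical bases over $\C$ \cite{Bogart-computing-2007} and over algebraically closed fields of positive characteristic equipped with the trivial valuation \cite{Maclagan-tropical-2015}, which proceeds by extracting a tropical basis from a suitable universal Gr\"obner basis. Running the integer-preserving variant of Buchberger's algorithm described in \Cref{rem:buchAlg} on the integer generators of $I$ would produce a tropical basis $\{f_1, \ldots, f_k\} \subseteq \Z[x_1, \ldots, x_n]$ of $I$ in $\C[x_1,\ldots,x_n]$. Let $\mathcal{P}$ denote the finite set of primes dividing either a leading coefficient produced at an intermediate step of this computation, or any coefficient of some $f_i$ attached to a vertex of $\newt(f_i)$.

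For $p \notin \mathcal{P}$ and $\F$ algebraically closed of characteristic $p$, the same algorithm applied to $\overline{g_1}, \ldots, \overline{g_l}$ proceeds step-by-step in parallel with its execution over $\Z$, producing $\{\overline{f_1}, \ldots, \overline{f_k}\}$ as a tropical basis of $\overline{I}^\F$. Since no vertex coefficient of any $f_i$ vanishes modulo $p$, we have $\newt(\overline{f_i}) = \newt(f_i)$ for each $i$. Combining this with the combinatorial description of $V_\infty$ yields
\[
V_\infty(I) \;=\; \bigcap_{i=1}^{k} \sphD(\newt(f_i)) \;=\; \bigcap_{i=1}^{k} \sphD(\newt(\overline{f_i})) \;=\; V_\infty(\overline{I}^\F),
\]
completing the proof.

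The main obstacle is justifying that the reduction of a tropical basis of $I$ modulo $p$ is again a tropical basis of $\overline{I}^\F$, since being a tropical basis is strictly stronger than being a generating set. The argument rests on the observation that the Gr\"obner-basis operations used to certify the tropical basis property---in particular the identification of weight vectors $w$ for which $\operatorname{in}_w(I)$ contains a monomial---are carried out by integer-coefficient manipulations whose output agrees with the corresponding computation over $\Z_p$ whenever $p$ does not divide any leading coefficient encountered. Excluding the finite set $\mathcal{P}$ ensures that the two computations coincide exactly, so that $\operatorname{in}_w(I)$ and $\operatorname{in}_w(\overline{I}^\F)$ contain a monomial for the same set of weight vectors $w$, which is precisely what is needed to transport the tropical basis property across the reduction.
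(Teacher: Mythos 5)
Your proposal follows essentially the same route as the paper: invoke the existence of finite tropical bases over $\C$ (via \cite{Bogart-computing-2007}) and over algebraically closed fields with a valuation (via \cite{Maclagan-tropical-2015}), run the integer-preserving variant of Buchberger's algorithm from \Cref{rem:buchAlg} to obtain an integral tropical basis, observe that reduction modulo a prime not dividing any intermediate leading coefficient preserves Newton polytopes, and conclude by comparing the two intersections $\bigcap_i \sphD(\newt(f_i))$. The one place where you are more explicit than the paper is in flagging the obstacle that being a tropical basis is strictly stronger than being a generating set, so that one must actually argue that $\{\overline{f_1},\ldots,\overline{f_k}\}$ is a tropical basis of $\overline{I}^\F$ rather than merely a set of generators; the paper leaves this point implicit in the remark that the construction ``can be done with operations for coefficients over $\Z$''. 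Your paragraph justifying the parallel execution of the certification step over $\Z$ and over $\Z_p$ (for $p\notin\mathcal{P}$) is the right idea and genuinely tightens the exposition, though a fully rigorous account would still need to check that the Gr\"obner fan and the monomial-initial-ideal test for each of its finitely many cones are stable under reduction modulo the good primes; both you and the paper take this at roughly the same level of detail.
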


\begin{proof}
	Consider $I$ given by
	\[
		I = \langle g_1,\ldots,g_l \rangle \subseteq \C[x_1,\ldots,x_n],\ g_i \in \Z[x_1,\ldots,x_n]
	\]
	and assume the greatest common divisor of the coefficients of $g_i$ for all $i$ is $1$. 
	
	By \cite{Bogart-computing-2007,Maclagan-tropical-2015}, $I$ has a finite tropical basis $\{f_1,\ldots,f_k\}$.  
	This is done using initial forms.
	For $f\in\Z[x_1,\ldots,x_n]$, $\mathbf{w}\in\Z^n$, the \emph{initial form} $\text{in}_\mathbf{w}(f)$ is the terms in $f$ of lowest $\mathbf{w}$-weight (see  \cite{Bogart-computing-2007,Maclagan-tropical-2015} for formal definitions and examples).
	The construction of the tropical basis takes the original generating set $g_i$ and constructs additional polynomials $f$ such that their initial form $\text{in}_\mathbf{w}(f)$ is a unit for specific weight vectors $\mathbf{w}\in\Z^n$. 
	This can be done with operations for coefficients over $\Z$ similar to the modified version of Buchberger's algorithm discussed in \Cref{rem:buchAlg}.
	
	Using the third definition for the logarithmic limits and the existence of a tropical basis, $V_{\infty}(I)$ is precisely the intersection of the spherical duals of the Newton polytopes of elements of the tropical basis for $I$. 
	\[
		V_\infty(I) = \bigcap_{1\leq i \leq k} \sphD(\newt(f_i)).
	\]
	The Newton polytopes of nonzero elements of $I$ will only change modulo $p$ if the coefficient of a leading term of at least one of the elements of the tropical basis is divisible by $p$. 
	Because there are finitely many elements in the basis, there are finitely many primes $p$ that divide coefficients of the leading term. 
	
	Excluding these finitely many primes, the Newton polytopes of nonzero elements of $\overline{I}^\F$ will remain the same as the Newton polytopes of nonzero elements of $I$ and $V_{\infty}(I) = V_{\infty}(\overline{I}^\F)$.
\end{proof}

Following \cite{Tillus-boundary-2005}, consider the logarithmic limit set of the eigenvalue variety, $V_\infty(K_\F)$. For each torus boundary component the eigenvalue variety has symmetries in the choice of eigenvalues $\left(m_i^{\pm1},l_i^{\pm1}\right)$. This leads to symmetries in the logarithmic limit set. If $\left(x_1,\ldots,x_{2i-1},x_{2i},\ldots,x_{2h}\right) \in V_\infty(K)$ then $\left(x_1,\ldots,-x_{2i-1},-x_{2i},\ldots,x_{2h}\right) \in V_\infty(K)$. 

A quotient of the logarithmic limit set can be found by factoring by these symmetries inside $\mathbb{RP}^{2h-1}/\Z_2^{h-1}\cong S^{2h-1}$. Then the quotient map extends to a map $\varphi \co S^{2h-1} \to S^{2h-1}$ of degree $2^h$. Let $N_h$ be the $2h\times2h$ matrix
\[
	N_h=\begin{pmatrix} \begin{matrix} 0 & 1 \\ -1 & 0 \end{matrix} & & \\ & \ddots & \\ & & \begin{matrix} 0 & 1 \\ -1 & 0 \end{matrix} \end{pmatrix}
\]
 with $2\times2$ blocks given along the diagonal and $0$ otherwise. 

The corresponding boundary slopes theorem for multiple boundary tori proceeds loosely as follows. If $\zeta \in V_\infty(K)$ is a point with rational coordinate ratios then there is an essential surface with boundary in $M$ whose projectivised boundary curve coordinate is $\varphi(N_h\zeta)$. Boundary slopes detected this way are again called strongly detected boundary slopes. When $M$ has a single boundary torus, this is precisely the same boundary slopes theorem as stated in \Cref{sec:APolySingle}. 
We have a version of this theorem over multiple boundary tori for all algebraically closed fields $\F$.

\begin{theorem}
\label{thm:KBoundarySlopes}
	Let $\F$ be an algebraically closed field. 
	If $\zeta \in V_\infty(K_\F)$ is a point with rational coordinate ratios then there is an essential surface with boundary in $M$ whose projectivised boundary curve coordinate is $\varphi(N_h\zeta)$.
\end{theorem}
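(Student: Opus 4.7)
The plan is to follow the strategy of \cite{Tillus-boundary-2005} and verify that each step transfers to arbitrary characteristic using the machinery of \Cref{sec:Splittings} and \Cref{sec:CullerShalen}, all of which was shown earlier to hold over any algebraically closed field.

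Given $\zeta \in V_\infty(K_\F)$ with rational coordinate ratios, the equality $V^{(b)}_\infty(K_\F) = V^{(c)}_\infty(K_\F)$ produces a real-valued valuation $v$ on the coordinate ring of $\overline{e(\R_E(M,\F))}$ with
\[
	\zeta = -\bigl(v(m_1), v(l_1), \ldots, v(m_h), v(l_h)\bigr)
\]
up to positive scaling. The rational coordinate-ratios assumption allows us to rescale so that $v$ takes integer values, yielding a non-trivial discrete rank 1 valuation on the function field $\F(C)$ of some irreducible component $C$ of the eigenvalue variety on which $v$ is non-trivial. Choose an irreducible component $R_C \subseteq \R_E(M,\F)$ mapping dominantly to $C$ under $e$, and apply \Cref{lem:valuationExtension} to extend $v$ to a discrete rank 1 valuation $\tilde{v}$ on $\F(R_C)$. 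Pushing forward along the quotient $R_C \to \X(M,\F)$ and then cutting by sufficiently generic hyperplane sections gives a curve $C' \subseteq \X(M,\F)$ on whose function field $\tilde{v}$ remains non-trivial, hence defining an ideal point $\xi$ of $C'$.

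By \Cref{pro:stabilisers} the ideal point $\xi$ yields a non-trivial action of $\pi_1(M)$ on the Bass-Serre tree $\tree_\xi$, and by \Cref{pro:CSsplittingsurface} this action is realised by a non-empty system $S$ of essential surfaces in $M$ dual to the action. To compute boundary slopes, fix a torus $T_i$. The peripheral subgroup $\langle \M_i, \L_i \rangle$ is abelian, so its image under the tautological representation $\P$ may be simultaneously upper-triangulised with diagonal entries $m_i^{\pm 1}$ and $l_i^{\pm 1}$. For a primitive peripheral element $\alpha_i = \M_i^{p_i}\L_i^{q_i}$ one then has
\[
	\trFunc_{\alpha_i} = m_i^{p_i} l_i^{q_i} + m_i^{-p_i} l_i^{-q_i}.
\]
Writing $X_i = p_i\tilde{v}(m_i) + q_i \tilde{v}(l_i)$, the ultrametric inequality gives $\tilde{v}(\trFunc_{\alpha_i}) = -|X_i|$, which is negative unless $X_i = 0$. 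By \Cref{pro:detected_surface}, $\alpha_i$ is parallel to $\partial S \cap T_i$ exactly when $\tilde{v}(\trFunc_{\alpha_i}) \geq 0$, that is, when $(p_i, q_i)$ is (up to scalar) the $\pi/2$-rotation of $(\tilde{v}(m_i), \tilde{v}(l_i))$. Rotation by $\pi/2$ on each torus coordinate pair is precisely the action of the block matrix $N_h$, and the $\Z_2^h$ symmetry from the choice of eigenvalue labelling on each torus is exactly the kernel of the quotient $\varphi$. Hence the projectivised boundary coordinate of $S$ is $\varphi(N_h \zeta)$.

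The main obstacle is the descent step: ensuring that the valuation $v$ coming from the logarithmic limit set gives a \emph{non-trivial} valuation on the function field of a curve in $\X(M,\F)$, so that it genuinely defines an ideal point in the sense of \Cref{sec:Splittings}. One must check that the quotient $\R(M,\F) \to \X(M,\F)$ does not collapse the direction picked out by $v$, and that the cut-down curve $C'$ remains 1-dimensional with $\tilde{v}$ non-trivial on its function field. This is standard algebraic geometry and goes through in any characteristic; the characteristic-specific input is that the Culler-Shalen passage from ideal point to essential surface has already been set up over arbitrary algebraically closed fields in the preceding sections.
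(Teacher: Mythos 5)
Your proof captures the correct overall strategy—pass from the point $\zeta$ on the logarithmic limit set to a discrete valuation, extend to the representation variety via \Cref{lem:valuationExtension}, apply the Culler-Shalen machinery to produce a surface, and compute boundary slopes from the valuations of the peripheral trace functions—and the final computation relating $\tilde{v}(\trFunc_{\alpha_i}) = -|X_i|$ to the rotation by $N_h$ and the quotient $\varphi$ is essentially correct.

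However, the descent step you flag is a genuine gap, not a routine verification. The valuation $\tilde{v}$ lives on $\F(R_C)$, the function field of a (possibly high-dimensional) component of $\R_E(M,\F)$. For a curve $C' \subseteq \X(M,\F)$, the field $\F(C')$ is a residue field of a local ring of $\X(M,\F)$, not a subfield of $\F(R_C)$, so the statement that ``$\tilde{v}$ remains non-trivial on its function field'' does not parse as written. One could try a genericity argument (arranging that the closure of $C'$ in a suitable projective completion passes through the centre of $\tilde{v}$, and checking compatibility of the induced valuation with $\tilde{v}$ on the peripheral traces), but you have not supplied it, and ``standard algebraic geometry'' is not a proof; this is precisely where the work lies if one insists on a curve.

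The cleaner route—and the one the paper intends by its citation of \cite{Tillus-boundary-2005}—is that the detour through a curve is unnecessary. The Bass-Serre construction in \Cref{sec:Splittings} depends only on a representation $\P\colon \pi_1(M) \to \SL_2(\K)$ over a discretely valued field $\K$; the hypothesis in \Cref{pro:stabilisers} that $\K = \F(R_C)$ for a \emph{curve} $C$ is a convenient formalism, but the cited proofs in \cite{Shalen-representations-2002, Culler-Shalen-varieties-1983} apply to any non-trivial discrete rank~1 valuation. Taking $\K = \F(R_0)$ for an irreducible component $R_0$ of $\R_E(M,\F)$ dominating the relevant component of the eigenvalue variety, extending $v$ to $\tilde{v}$ on $\F(R_0)$, and observing that $\zeta \neq 0$ forces $\tilde{v}(\trFunc_\gamma) < 0$ for some peripheral $\gamma$, one obtains a non-trivial tree action and hence (via \Cref{pro:CSsplittingsurface} and \Cref{pro:detected_surface}) an essential surface whose boundary-slope computation is exactly the one you performed. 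This avoids the descent entirely.
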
 

The proof follows the same steps as that over $\C$ from \cite{Tillus-boundary-2005} also using the aforementioned theory for detecting essential surfaces through ideal points. 

We now compare the defining ideal $K_\F$ across characteristics. 
\begin{proposition}
\label{thm:tildeKpDividesK0}
	Let $\F$ be an algebraically closed field of characteristic $p$. 
	For all but finitely many primes $p$, 
	\begin{equation}
	\label{eqn:tildeK}
		{\overline{K_\C}^\F}={K}_\F.
	\end{equation}
\end{proposition}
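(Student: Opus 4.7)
The plan is to adapt the argument of Lemma~\ref{thm:tildeApDividesA0} from the single boundary component case to the setting of multiple boundary tori. The key observation is that both $K_\C$ and $K_\F$ arise as eliminations of the same combinatorial data (the relations of $\pi_1(M)$, the determinant conditions on the matrix entries, and the trace relations encoded in $J'$), with the only difference being the characteristic of the ground field.

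First, I would set up the analogues of the ideals $I_0$ and $I_p$ from the proof of Lemma~\ref{thm:tildeApDividesA0}. Writing $J_\C = \langle f_1, \ldots, f_k \rangle$ with $f_i \in \Z[a_1, \ldots, d_n]$ for the ideal defining $R(M,\C)$, and $J'_\C = \langle m_i + m_i^{-1} - I_{\M_i}, \; l_i + l_i^{-1} - I_{\L_i}, \; m_i l_i + m_i^{-1} l_i^{-1} - I_{\M_i \L_i} \rangle$ for the trace relations, the combined ideal $\langle J_\C, J'_\C \rangle$ lives in the ring $\Z[a_1, \ldots, d_n, m_1^{\pm 1}, l_1^{\pm 1}, \ldots, m_h^{\pm 1}, l_h^{\pm 1}]$. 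The corresponding ideal $\langle J_\F, J'_\F \rangle$ in characteristic $p$ is literally obtained by reducing each of these polynomial generators modulo $p$, since the generators already have integer coefficients.

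Next, by elimination theory and Lemma~\ref{lem:APolyMultChar}, the ideal $K_\C$ is obtained as the intersection of (a Gröbner basis of) $\langle J_\C, J'_\C\rangle$ with the subring $\C[m_1^{\pm 1}, l_1^{\pm 1}, \ldots, m_h^{\pm 1}, l_h^{\pm 1}]$, and likewise for $K_\F$. I would run the modified Buchberger algorithm described in Remark~\ref{rem:buchAlg} over $\Z$, which never divides by coefficients; this produces a finite sequence of $S$-polynomials and division-algorithm reductions, each involving multiplication and subtraction by integer multiples of leading monomials. Let $\mathcal{P}$ denote the finite set of leading coefficients in $\Z$ appearing at every intermediate step of this computation (for both the full basis of $\langle J_\C, J'_\C\rangle$ and the elimination yielding $K_\C$).

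For any prime $p$ that does not divide any element of $\mathcal{P}$, reducing modulo $p$ commutes with every step of the modified Buchberger algorithm: leading terms remain leading terms, and the same $S$-polynomials and reductions produce $\overline{K_\C}^\F$ as the output of running the algorithm on $\langle J_\F, J'_\F \rangle$. By uniqueness of the output (up to a common coefficient, which is a unit in $\F$ since $p \nmid \mathcal{P}$), this output coincides with $K_\F$. Hence $\overline{K_\C}^\F = K_\F$ for all but the finitely many primes dividing elements of $\mathcal{P}$.

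The main obstacle is purely bookkeeping: verifying that the modified Buchberger algorithm, when run over $\Z$ in the combined ring containing both the matrix entry variables and the eigenvalue variables, really does have the property that reduction modulo $p$ commutes with every step as long as no leading coefficient is killed. This is immediate from the definitions in Remark~\ref{rem:buchAlg}, but one must be careful that the elimination step (intersecting with the eigenvalue subring) also commutes with reduction modulo $p$; this holds because, under lexicographic order with the matrix-entry variables greater than the eigenvalue variables, the elements of the Gröbner basis lying in $\Z[m_1^{\pm 1}, l_1^{\pm 1}, \ldots, m_h^{\pm 1}, l_h^{\pm 1}]$ are preserved under reduction modulo $p$ provided their leading coefficients are not divisible by $p$, which is guaranteed by our choice of excluded primes.
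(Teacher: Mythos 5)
Your proposal is correct and follows essentially the same route as the paper: both run the modified, division-free Buchberger algorithm over $\Z$ on the combined ideal $\langle J, J'\rangle$, observe that reduction modulo $p$ commutes with each step whenever $p$ does not divide any leading coefficient encountered, and extract $K_\F$ by elimination. The paper simply invokes the proof of \Cref{thm:tildeApDividesA0} \emph{mutatis mutandis} without re-deriving the details you spell out, so your write-up is a faithful expansion of the intended argument.
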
 

The proof follows the same steps as \Cref{thm:tildeApDividesA0}, applied to the ideals $K_\C$ and $K_\F$, rather than the polynomials $\widetilde{A}_0$ and $\widetilde{A}_p$. This proves the result for an algebraically closed field $\F$ of characteristic $p$ for all $p$ that do not divide the leading term of a polynomial in Buchberger's algorithm over $\Z$. This leads to the following.
\begin{theorem}
\label{thm:KBoundarySlopesP}
	Let M be an orientable, irreducible, compact 3--manifold with non-empty boundary consisting of a disjoint union of $h$ tori.
	For all but finitely many primes $p$, the boundary curve of an essential surface in $M$ is strongly detected by $\X(M, \F)$ for $\F$ an algebraically closed field of characteristic $p$ if and only if it is strongly detected by $\X(M, \C)$.
\end{theorem}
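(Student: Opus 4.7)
The strategy is to reduce the statement to a comparison of logarithmic limit sets across characteristics, where we can apply the two preparatory results already in hand.

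The plan is to combine Theorem~\ref{thm:KBoundarySlopes} with Proposition~\ref{thm:tildeKpDividesK0} and Lemma~\ref{lem:newtPolytopeModP}. First, recall that Theorem~\ref{thm:KBoundarySlopes} says that the strongly detected boundary slopes are exactly the images under $\varphi \circ N_h$ of points in $V_\infty(K_\F)$ with rational coordinate ratios. This characterisation holds uniformly in any algebraically closed field $\F$, so matching detected slopes is reduced to matching the rational-ratio points of the logarithmic limit sets $V_\infty(K_\C)$ and $V_\infty(K_\F)$.

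Next, by Proposition~\ref{thm:tildeKpDividesK0}, for all primes $p$ outside a finite exceptional set $E_1$ (those $p$ dividing a leading coefficient produced by the modified Buchberger algorithm applied to the generators of the ideal defining $\R_E(M,\C)$ together with the eigenvalue relations), we have $\overline{K_\C}^\F = K_\F$. Independently, by Lemma~\ref{lem:newtPolytopeModP}, applied to $I = K_\C$ (which by Lemma~\ref{lem:APolyMultChar} can be taken with integer generators whose coefficients have gcd one), there is a finite exceptional set $E_2$ of primes such that for all $p \notin E_2$ the logarithmic limit set is preserved under reduction mod $p$, i.e.\ $V_\infty(K_\C) = V_\infty(\overline{K_\C}^\F)$. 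Setting $E = E_1 \cup E_2$, for every prime $p \notin E$ we obtain
\[
V_\infty(K_\F) \;=\; V_\infty\bigl(\overline{K_\C}^\F\bigr) \;=\; V_\infty(K_\C),
\]
and hence the two sets have identical rational-ratio points.

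Finally, applying $\varphi \circ N_h$ to these rational-ratio points produces the same collection of projectivised boundary slopes in both characteristics, and Theorem~\ref{thm:KBoundarySlopes} identifies this collection on each side with the set of strongly detected boundary slopes of essential surfaces in $M$. The main technical obstacle is confirming that Lemma~\ref{lem:newtPolytopeModP} really does apply to $K_\C$ in the required form (that is, that a finite tropical basis with integer coefficients exists and that only finitely many primes divide the leading coefficients appearing in its construction); this is the content of the Bogart--Sturmfels--Maclagan machinery cited before Lemma~\ref{lem:newtPolytopeModP}, and is what forces the ``all but finitely many'' qualifier. Once this and the ideal-comparison step are in place, the equivalence of strong detection in characteristic $0$ and characteristic $p$ is immediate.
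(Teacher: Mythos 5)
Your argument is correct and follows essentially the same route as the paper: both proofs combine Theorem~\ref{thm:KBoundarySlopes} with Proposition~\ref{thm:tildeKpDividesK0} and Lemma~\ref{lem:newtPolytopeModP} to conclude $V_\infty(K_\F) = V_\infty(\overline{K_\C}^\F) = V_\infty(K_\C)$ outside a finite set of primes and then transport rational-ratio points through $\varphi \circ N_h$. The only cosmetic difference is that you state the chain of equalities of logarithmic limit sets once and deduce both directions simultaneously, whereas the paper runs the two implications separately (and also observes that the exceptional primes of Lemma~\ref{lem:newtPolytopeModP} already form a subset of those of Proposition~\ref{thm:tildeKpDividesK0}, so taking the union $E_1 \cup E_2$ as you do is harmless but not needed).
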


\begin{proof}
	 Let $\F$ be an algebraically closed field of characteristic $p.$
	
	Consider a slope on $\bound M$ that is strongly detected by $\X(M,\F)$. 
	By \Cref{thm:KBoundarySlopes}, there is a point $\zeta\in V_{\infty}(K_\F)$ with rational coordinate ratios associated with the slope with projectivised boundary curve coordinate is $\varphi(N_h\zeta)$.
	By \Cref{thm:tildeKpDividesK0}, for all but finitely many primes $p$, $K_\F = \overline{K_\C}^\F$ and thus $\zeta\in V_{\infty}(\overline{K_\C}^\F)$.
	By \Cref{lem:newtPolytopeModP}, for all but finitely many primes $p$, $V_{\infty}(K_\C) = V_{\infty}(\overline{K_\C}^\F)$ and thus $\zeta\in V_{\infty}({K_\C})$.
	Then, by \Cref{lem:APolyMultChar} and the boundary slopes theorem in characteristic $0$, the slope must also be strongly detected by $\X(M,\C)$.
	
	Consider a slope on $\bound M$ that is strongly detected by $\X(M,\C)$. 
	By \Cref{thm:KBoundarySlopes}, there is a point $\zeta\in V_{\infty}(K_\C)$ with rational coordinate ratios associated with the slope with projectivised boundary curve coordinate is $\varphi(N_h\zeta)$.
	By \Cref{lem:newtPolytopeModP}, for all but finitely many primes $p$, $V_{\infty}(K_\C) = V_{\infty}(\overline{K_\C}^\F)$ and thus $\zeta\in V_{\infty}(\overline{K_\C}^\F)$.
	By \Cref{thm:tildeKpDividesK0}, for all but finitely many primes $p$, $K_\F = \overline{K_\C}^\F$ and thus $\zeta\in V_{\infty}({K_\F})$.
	Then, by \Cref{lem:APolyMultChar} and the boundary slopes theorem in characteristic $p$, the slope must also be strongly detected by $\X(M,\F)$.
	This proves the statement.
	
	Note the finitely many primes $p$ excluded in \Cref{lem:newtPolytopeModP} are those that divide the leading terms in certain elements of the Gr\"obner basis and thus a subset of the finitely many primes excluded by  \Cref{thm:tildeKpDividesK0}.
\end{proof}

%%%%%%%%%%%%%%%%%%%%%%%%%%%
%\newpage
%%%%%%%%%%%%%%%%%%%%%%%%%%%

\section{A trip to the zoo}
\label{examples}

We survey several applications and examples of Culler-Shalen theory over algebraically closed fields of arbitrary characteristic. 
We start with free products of groups and detecting prime decomposition, and then include three extended examples with different behaviour. The first example ${\tt m137}\rm$ shows a change of the $A$--polynomial; the second example ${\tt m019(3,4)}\rm$ shows a decrease of the dimension of the variety of characters; the third example ${\tt m188(2,3)}\rm$ shows an increase of the dimension of the variety of characters. In particular, we show that no essential surface in ${\tt m019(3,4)}\rm$ is detected by the variety of characters in every characteristic, and an essential surface in ${\tt m188(2,3)}\rm$ is only detected in characteristic 2.

In the examples, we let $\F$ be an algebraically closed field of characteristic $p$ and let $\X_p=\X^\text{irr}(M,\F)$ denote the components of the variety of characters that contain irreducible representations. 

%%%%%%%%%%%%%%%%%%%%%%%%%%%

\subsection{Free products of groups and detecting prime decomposition}
\label{subsec:direct_and_prime}

We can characterise a certain type of group that admits curves in the variety of characters in positive characteristic: free products of groups that admit non-central representations. 

Let $Z(H)$ be the centre of a group $H$ and consider a group $G$. We say $G$ \textbf{admits a non-central representation} into $H$ if there exists a representation $\rho:G\to H$ such that $\im(\rho)$ is not central. That is, there exists $\gamma \in G$ such that $\rho(\gamma)\notin Z(H)$.

\begin{lemma}[Free product of groups]
\label{lem:directProduct}
	Let $\F$ be an algebraically closed field and
	let $G_i$, $i=0,\ldots,k$ be groups that admit non-central representations into $\SLF$. Then the variety of characters $X(G_0 \ast \ldots \ast G_k,\F)$ contains a curve with the property each character on the curve restricted to $G_i$ is constant for all $i$. 
	Moreover, the ideal points of the curve detect the splitting. 
\end{lemma}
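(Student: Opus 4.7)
The plan is to construct a positive-dimensional family of representations by independently conjugating non-central representations of each factor, and then to analyse the resulting tautological representation at an ideal point to recover the free product splitting. First, for each $i$, I would fix a non-central representation $\rho_i \co G_i \to \SLF$ and, via the universal property of free products, define for each tuple $(B_0, \ldots, B_k) \in \SLF^{k+1}$ the representation $\rho_{(B_0, \ldots, B_k)} \co \Gamma \to \SLF$ with $\rho_{(B_0, \ldots, B_k)}\vert_{G_i} = B_i \rho_i B_i^{-1}$. Write $Z_i = Z_{\SLF}(\im \rho_i)$ for the centraliser; since each $\rho_i$ is non-central, $\dim Z_i \le 1$. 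A direct check shows that the fibre of the induced character map $\SLF^{k+1} \to \X(\Gamma, \F)$ has dimension $3 + \sum_i \dim Z_i \le 3 + (k+1)$, so the image has dimension at least $3(k+1) - 3 - (k+1) = 2k - 1 \ge 1$ whenever $k \ge 1$. Its Zariski closure therefore contains an irreducible curve $C$ in $\X(\Gamma, \F)$.

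Since $\rho_{(B_0, \ldots, B_k)}\vert_{G_i}$ is $\SLF$-conjugate to $\rho_i$, its character on $G_i$ equals that of $\rho_i$, independent of the parameters. Hence every point of $C$ restricts to the same character on each $G_i$, which verifies the first clause.

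For the splitting, I would fix an ideal point $\xi$ of $C$ and consider the tautological representation $\P \co \Gamma \to \SL_2(\K)$ over $\K = \F(R_C)$. For every $g \in G_i$, the trace $I_g = \tr \P(g)$ is a constant in $\F \subset \O$, so $v_\xi(I_g) \ge 0$, and \Cref{pro:stabilisers} yields that each such $g$ fixes a vertex of the Bass--Serre tree $\tree_\xi$. Moreover, since the family is produced by conjugating $\rho_i$ by a \emph{single} matrix $B_i \in \SL_2(\K)$, the whole image $\P(G_i) = B_i \rho_i(G_i) B_i^{-1}$ lies in $B_i \SL_2(\F) B_i^{-1} \subseteq B_i \SL_2(\O) B_i^{-1}$, and therefore $\P(G_i)$ stabilises the single vertex $\Lambda_i \defeq B_i \cdot [\O \oplus \O]$ of $\tree_\xi$. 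Each $G_i$ is thus contained in a vertex stabiliser; combined with the non-triviality of the action from \Cref{pro:stabilisers}, the quotient graph of groups gives a splitting of $\Gamma$ in which each factor $G_i$ is contained in a distinct vertex group, detecting the free product decomposition.

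The principal obstacle is ensuring that the ideal points of $C$ actually separate the vertices $\Lambda_i$, so that no two factors collapse into a common vertex group. I would address this by parameterising the curve so that $B_0 = E$ and $B_1 = \mathrm{diag}(t, t^{-1})$ (with any remaining $B_i$ fixed), and by using the normal forms of \Cref{eqn:redForm} and \Cref{eqn:irredForm} to position elements $g_0 \in G_0$ and $g_1 \in G_1$ with $\rho_0(g_0), \rho_1(g_1) \ne \pm E$ so that $\tr(\rho_0(g_0) B_1 \rho_1(g_1) B_1^{-1})$ is a non-constant polynomial in $t$. Specialising at an ideal point where this trace blows up forces $g_0 g_1$ to act hyperbolically on $\tree_\xi$, and hence $\Lambda_0 \ne \Lambda_1$; iterating over the remaining factors separates all of the $\Lambda_i$.
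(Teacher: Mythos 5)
Your overall strategy is the same as the paper's: conjugate a fixed non-central representation of each free factor by a varying matrix, observe that this keeps all traces on each $G_i$ constant, and then use blow-up of traces of cross-products to separate the vertex stabilisers in the Bass--Serre tree. The paper also prepares the $\rho_i$ so that, after conjugation, no two of the chosen witness elements $\rho_i(\gamma_i)$ share an eigenline and none has an eigenline spanned by $(1,0)^\intercal$ or $(0,1)^\intercal$; this guarantees the off-diagonal entries $b_i, c_i$ are all nonzero, which is exactly what makes the relevant trace a genuinely non-constant Laurent polynomial in the deformation parameter. Your appeal to the normal forms is aiming at the same effect, and your dimension count (fibres of $\SLF^{k+1}\to\X$ over irreducible points have dimension $3+\sum\dim Z_i$) is a valid alternative to the paper's explicit verification that the one-parameter family does not collapse in $\X$.

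There is, however, a real gap in the final step. With $B_0=E$, $B_1=\mathrm{diag}(t,t^{-1})$, and all remaining $B_i$ \emph{fixed} in $\SL_2(\F)\subset\SL_2(\O)$, every $\Lambda_i$ with $i\neq 1$ is the standard lattice class $[\O\oplus\O]$. You only ever separate $\Lambda_1$ from this common vertex: the traces $\tr(\P(g_ig_j))$ for $i,j\neq 1$ take values in $\F\subset\O$ and therefore never blow up, and ``iterating over the remaining factors'' would require changing the curve, not merely examining more elements at the same ideal point. So this particular curve detects only a two-piece splitting, not the full free product $G_0*\cdots*G_k$. The paper repairs exactly this by setting $B_j=\mathrm{diag}(t^{-j},t^{j})$ with \emph{distinct} exponents $j=0,\ldots,k$, so that for every pair $i\neq j$ one gets $\tr(\rho_t(\gamma_i\gamma_j))=a_ia_j+t^{2(j-i)}b_ic_j+t^{2(i-j)}c_ib_j+d_id_j$ with $b_ic_j\neq0\neq c_ib_j$, hence $\nu(\tr(\rho_t(\gamma_i\gamma_j)))<0$ simultaneously for all $i\neq j$ at the single ideal point $t\to 0$. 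To make your argument complete, you should parameterise \emph{all} of the $B_j$ by $t$ with distinct exponents, and then verify the blow-up for every pair rather than just $(0,1)$.

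A smaller point worth noting: the positive-dimensionality argument alone does not hand you a curve with the desired ideal-point behaviour. A $(2k-1)$-dimensional image certainly contains irreducible curves, but an arbitrary such curve need not have any ideal point where cross-traces blow up (for instance, a curve obtained by varying a parameter in one fixed $Z_i$-direction gives nothing). So the explicit one-parameter family is not merely convenient; it is load-bearing, and the general dimension count should be replaced by (or supplemented with) the explicit parameterisation.
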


\begin{proof}
Consider non-central representations $\rho_i \co G_i \to \SLF$ for each $i=0,\ldots,k$. That is, there exists $\gamma_i \in G_i$ such that $\rho_i(\gamma_i)\notin Z(\SLF)=\left\{\begin{pmatrix} \pm1 & 0 \\ 0 & \pm 1\end{pmatrix}\right\}$. Then $\rho_i(\gamma_i)$ has either a unique $1$--dimensional eigenspace or two distinct $1$--dimensional eigenspaces. Hence we can conjugate the representations so that no pair $\rho_i(\gamma_i)$, $\rho_j(\gamma_j)$, $i\neq j$, has a common eigenspace. Moreover,. we may assume that none of the 1--dimensional eigenspaces of each $\rho_i(\gamma_i)$ is spanned by $(1,0)^\intercal$ or $(0,1)^\intercal.$ In particular, writing
\[
\rho_i(\gamma_i) = \begin{pmatrix} a_i & b_i \\ c_i & d_i \end{pmatrix}
\]
we have $b_i  \neq 0 \neq c_i.$

We now construct a parameterised curve $\{\, \rho_t \,\mid\, t \in \F\setminus \{0\} \, \}\subseteq \R(G_0 \ast \ldots \ast G_k,\F)$ with a natural valuation defined by $\nu(t^l) = l$ that detects the splitting.

Let $t \in \F\setminus \{0\}$. Construct a representation $\rho_t \co G_0 \ast \ldots \ast G_k \to \SLF$ by setting
\[
	\rho_t(\alpha_j) = \begin{pmatrix} t^{-j} & 0 \\ 0 & t^{j} \end{pmatrix} \rho_j(\alpha_j) \begin{pmatrix} t^{j} & 0 \\ 0 & t^{-j} \end{pmatrix}
\]
for each $\alpha_j \in G_j$ and extending over $G_0 \ast \ldots \ast G_k$. Note that since $\rho_j(\gamma_j)$ did not have a 1--dimensional eigenspace spanned by $(1,0)^\intercal$ or $(0,1)^\intercal,$ the same is true for $\rho_t(\gamma_j).$

For every $\alpha_j \in G_j$, we have $\tr(\rho_t(\alpha_j))=\tr(\rho_j(\alpha_j))$, which is constant in $t$.

For every pair $\gamma_i \in G_i$,  $\gamma_j \in G_j$, $i\neq j$, 
\[
\begin{split}
	\rho_t(\gamma_i\gamma_j) 	&=  \begin{pmatrix} \phantom{t^{2i}}a_i & t^{-2i}b_i \\ t^{2i} c_i & \phantom{t^{-2i}}d_i \end{pmatrix}
								\begin{pmatrix} \phantom{t^{2i}}a_j & t^{-2j}b_j\\ t^{2j} c_j & \phantom{t^{-2i}}d_j \end{pmatrix}  \\
							&=  \begin{pmatrix} a_ia_j+t^{2(j-i)}b_ic_j & \star \\ \star & t^{2(i-j)}c_ib_j +d_id_j \end{pmatrix}
\end{split}	
\]
with
\[
	\tr\left(\rho_t(\gamma_i\gamma_j) \right) = a_ia_j+t^{2(j-i)}b_ic_j + t^{2(i-j)}c_ib_j +d_id_j
\]
Since $b_ic_j \neq 0 \neq c_ib_j,$ we have $\nu(\tr\left(\rho_t(\gamma_i\gamma_j) \right)) <0$ for each such pair $i,j\in\{0,\ldots, k\}$ with $i \neq j.$ This then gives the desired 1--parameter family of representations. It is indeed a curve, as it can be viewed as the image under an algebraic map of the variety $\{\, (t, s) \in \F^2 \mid ts = 1\,\} \cong \F\setminus \{0\}.$
\end{proof}

We can consider the special case of cyclic groups. 

\begin{corollary}[Free product of cyclic groups]
\label{lem:directProductCyclic}
Let $\F$ be an algebraically closed field of characteristic $p$.

Let $n_0, \ldots, n_k \ge 2$ be integers and $C_{n}$ the cyclic group order $n$.
Then for $p=2$, $X(C_{n_0} \ast \ldots \ast C_{n_k},\F)$ contains a curve that detects the splitting.

If $n_0, \ldots, n_k \ge 3$, then for every prime $p$, $X(C_{n_0} \ast \ldots \ast C_{n_k},\F)$ contains a curve that detects the splitting. 
\end{corollary}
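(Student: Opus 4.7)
The plan is to reduce the corollary to \Cref{lem:directProduct} by verifying that each factor $C_{n_i}$ admits a non-central representation into $\SLF$ under the stated hypotheses. Since a homomorphism $C_n \to \SLF$ is determined by the image $g$ of a generator subject to $g^n = \idMat$, and since $Z(\SLF) = \{\pm \idMat\}$, this reduces to exhibiting, for each $i$, an element $g \in \SLF$ satisfying $g^{n_i} = \idMat$ and $g \notin \{\pm \idMat\}$.

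In characteristic $2$ the centre collapses to $\{\idMat\}$. I would split on the parity of $n_i$. For $n_i$ even, the unipotent $\begin{pmatrix} 1 & 1 \\ 0 & 1 \end{pmatrix}$ has order $2$ by \Cref{lem:JNF_in_SLk}(2) and is not the identity. For $n_i$ odd (necessarily $n_i \ge 3$), $\gcd(n_i,2)=1$ implies $x^{n_i}-1$ has $n_i$ distinct roots in the algebraically closed field $\F$, so I can pick a primitive $n_i$-th root of unity $\zeta \in \F$ and take $g = \Diag(\zeta, \zeta^{-1})$, which is diagonal of order $n_i$ and distinct from $\idMat$.

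In characteristic $p \neq 2$ with $n_i \ge 3$, I would write $n_i = p^m q$ with $\gcd(q,p)=1$ and split on $q$. When $q \ge 3$, the polynomial $x^q - 1$ has a primitive $q$-th root of unity $\zeta$ with $\zeta \neq \pm 1$ (since $q > 2$), and $g = \Diag(\zeta, \zeta^{-1})$ has order $q \mid n_i$ and is not $\pm \idMat$. When $q \le 2$, so that $n_i \in \{p^m, 2p^m\}$, the constraint $n_i \ge 3$ together with $p \ge 3$ forces $m \ge 1$; hence $p \mid n_i$, and the unipotent $\begin{pmatrix} 1 & 1 \\ 0 & 1 \end{pmatrix}$ of order $p$ (\Cref{lem:JNF_in_SLk}(2)) provides a non-central element with $g^{n_i}=\idMat$.

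With non-central representations $\rho_i \co C_{n_i} \to \SLF$ in hand in all stated cases, \Cref{lem:directProduct} directly produces a curve in $\X(C_{n_0} \ast \cdots \ast C_{n_k}, \F)$ whose ideal points detect the free-product splitting, completing the proof. The hardest thing to get right is not any one case but the sharpness of the hypothesis $n_i \ge 3$ in odd characteristic: when $p \ne 2$ and $n_i = 2$, the equation $g^2 = \idMat$ forces $g \in \{\pm \idMat\}$ (the minimal polynomial divides $(x-1)(x+1)$, giving diagonalisable $g$ with determinant $1$ and eigenvalues in $\{\pm 1\}$), so no non-central representation of $C_2$ exists and the reduction to \Cref{lem:directProduct} genuinely breaks down there, explaining the exclusion of $n_i = 2$ in the second assertion.
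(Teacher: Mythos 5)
Your proof is correct and follows the same route as the paper: both reduce to \Cref{lem:directProduct} by exhibiting non-central representations $C_{n_i} \to \SLF$, with the existence of the required elements of prescribed order going back to \Cref{lem:JNF_in_SLk}. The paper's own proof is just a one-line invocation of these two lemmas, whereas you have spelled out the case analysis (parity of $n_i$ in characteristic $2$, the $p$-free part $q$ in odd characteristic) that makes the invocation legitimate, together with the sharpness of the hypothesis $n_i \ge 3$ when $p \ne 2$; all of this is correct and consistent with the paper's argument.
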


\begin{proof}
	This follows from \Cref{lem:JNF_in_SLk,lem:directProduct}. For all $n \ge 2$, when $p=2$, $C_n$ admits a non-central representation into $\SL_2(\F)$ and the first result for $X(C_{n_0} \ast \ldots \ast C_{n_k},\F)$ when $p=2$ follows. For all $n \ge 3$ and primes $p$, $C_n$ admits a non-central representation into $\SL_2(\F)$ and the second result for $X(C_{n_0} \ast \ldots \ast C_{n_k},\F)$ follows. 
\end{proof}

We can explain further why this difference occurs between characteristic $2$ and characteristic $p\geq3$.

\begin{remark}	
	Consider characteristic $p\ge 3$. If there exists $i$ such that $n_i=2$ the obstacle that arises is that $C_{n_i}=C_2$ admits no non-central representations in $\SL_2(\F)$. The only element of order two in $\SL_2(\F)$ is the central element $-\idMat.$ Hence any homomorphism $C_{n_0} \ast \ldots \ast C_{n_k} \to \SL_2(\F)$ maps the generator of $C_{n_i}$ to $\pm \idMat,$ and the generator of the other factors $C_{n_j}$ to an element of order dividing $n_j$. In this case, there could still be a curve in $X(C_{n_0} \ast \ldots \ast C_{n_k},\F)$, but it would not detect the splitting. \qed
\end{remark}

In the interests of space we will not completely describe the variety of characters $X(C_n \ast C_m,\F)$.

We have the following immediate consequence to \Cref{lem:directProductCyclic}.

\begin{corollary}[Sufficient criterion for curves]
\label{cor:sufficient_criterion_curve}
Let $\F$ be an algebraically closed field of characteristic $p$.
If the finitely generated group $G$ admits an epimorphism to $G_{0} \ast \ldots \ast G_{k}$, where $G_i$ admits a non-central representation into $\SL_2(\F)$, then $X(G,\F)$ contains a curve that detects the splitting for all primes $p$. 
\end{corollary}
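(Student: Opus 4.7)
The plan is to reduce to the free-product case already handled in \Cref{lem:directProduct}. Let $\pi\co G \twoheadrightarrow H \defeq G_0 \ast \ldots \ast G_k$ denote the given epimorphism.

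First, I would show that $\pi$ induces an injective morphism $\pi^*\co \X(H, \F) \hookrightarrow \X(G, \F)$. Precomposition with $\pi$ defines a map $\R(H, \F) \to \R(G, \F)$ sending $\rho \mapsto \rho \circ \pi$: after fixing a finite generating set for $G$, each generator of $G$ maps under $\pi$ to a specific word in the chosen generators of $H$, so the matrix entries of $(\rho \circ \pi)(g_i)$ are polynomials in the matrix entries of $\rho$ evaluated on generators of $H$. Hence this map is a morphism of affine varieties; it is $\SLF$--equivariant with respect to conjugation and injective because $\pi$ is surjective. By the characterisation of closure-equivalence via trace functions (\Cref{cor:trace_function_characterisation}) it descends to an injective morphism $\pi^*\co \X(H,\F) \to \X(G,\F)$.

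Next, I would invoke \Cref{lem:directProduct} to obtain a curve $C \subset \X(H, \F)$ whose ideal points detect the free-product splitting of $H$. Since $\pi^*$ is an injective morphism of varieties, the image $\pi^*(C)$ is an irreducible $1$-dimensional subvariety of $\X(G, \F)$, i.e., a curve, and $\pi^*$ restricts to an isomorphism $C \xrightarrow{\sim} \pi^*(C)$. This induces an isomorphism of function fields $\F(\pi^*(C)) \cong \F(C)$, under which the smooth projective closures are identified, and so ideal points and the associated discrete rank $1$ valuations transfer directly between the two curves.

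Finally, let $\xi$ be an ideal point of $C$ and $\xi'$ the corresponding ideal point of $\pi^*(C)$. Choose an irreducible component $R_C \subset \R(H,\F)$ mapping onto $C$; then its image in $\R(G,\F)$ under pullback by $\pi$ is an irreducible component $R_{\pi^*(C)}$ mapping onto $\pi^*(C)$, and the tautological representation $\P\co G \to \SL_2(\F(R_{\pi^*(C)}))$ equals the composition of $\pi$ with the tautological representation of $H$ on $R_C$. Consequently the Bass-Serre tree $\tree_{\xi'}$ is naturally identified with $\tree_\xi$, and the action of $G$ on $\tree_{\xi'}$ factors through the non-trivial action of $H$ on $\tree_\xi$ supplied by \Cref{lem:directProduct}. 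Since $\pi$ is surjective, the induced $G$--action is still non-trivial, so by \Cref{pro:stabilisers} the ideal point $\xi'$ detects a non-trivial splitting of $G$, namely the pullback along $\pi$ of the free-product splitting of $H$.

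The main point to verify carefully is the transfer of the ideal-point data from $C$ to $\pi^*(C)$; this is the only non-formal step, and it reduces to the observation that $\pi^*$ identifies $C$ with $\pi^*(C)$ as abstract affine curves, hence their smooth projective completions and associated valuations are canonically identified.
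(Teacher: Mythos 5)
Your overall strategy—pull the curve constructed in \Cref{lem:directProduct} back along the epimorphism $\pi$—is the intended one, and the final conclusion is correct. However, the key justification has a genuine gap: you infer that $\pi^*$ restricts to an isomorphism $C \xrightarrow{\sim} \pi^*(C)$ from the fact that $\pi^*$ is an injective morphism. Injectivity of a morphism of affine varieties over an algebraically closed field does \emph{not} imply it is an isomorphism onto its image; the Frobenius endomorphism in characteristic $p$ is the standard counterexample, and since this paper is precisely about positive characteristic the point is not cosmetic. Without this step you only know that the closure $\overline{\pi^*(C)}$ is an irreducible curve whose function field embeds as a subfield of $\F(C)$, which is not by itself enough to transport the ideal point, the valuation, and the Bass–Serre tree as you claim.

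The fix is to show that $\pi^* \colon \X(H,\F) \to \X(G,\F)$ is a \emph{closed immersion}, not merely injective. This follows from surjectivity of $\pi$: the coordinate ring of $\X(G,\F)$ is generated by trace functions $t_\gamma$ for $\gamma$ running over words in the generators of $G$, and the pullback sends $t_\gamma \mapsto t_{\pi(\gamma)}$; because every $\delta \in H$ has the form $\pi(\gamma)$, every trace function on $H$ lies in the image, so the map on coordinate rings is surjective and $\pi^*$ is a closed immersion, giving the isomorphism $C \cong \pi^*(C)$ you need. (Equivalently, one can bypass the abstract argument by composing the explicit $1$--parameter family $\rho_t$ from the proof of \Cref{lem:directProduct} with $\pi$ and checking directly that the trace functions $\tr\left(\rho_t(\pi^{-1}(\gamma_i)\pi^{-1}(\gamma_j))\right)$ still blow up with negative $t$--valuation while traces of elements mapped into a single factor stay bounded.) With that repair, the remainder of your argument—identification of the Bass–Serre trees, the $G$--action factoring through the nontrivial $H$--action, and nontriviality of the induced splitting because $\pi$ is surjective—is correct.
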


%%%%%%%%%%%%%%%%%%%%%%%%%%%

The above results combined with \Cref{pro:CSsplittingsurface} allow us to detect the prime decomposition of certain 3--manifolds. 

\begin{corollary}
\label{cor:primeDecomp}
Let $\F$ be an algebraically closed field of characteristic $p$.

Suppose $M$ has prime decomposition $M=M_0\#\ldots\#M_k$ along the union of essential spheres $S=S_1\cup\ldots\cup S_k$. If $\pi_1(M_i)$ admits a non-central representation into $\SL_2(\F)$ for all $i = 0,\ldots k$, then $S$ is detected by an ideal point of a curve in $X(M,\F)$.

In particular, if $\pi_1(M_i)$ admits an epimorphism into $C_{n_i}$ the cyclic group of order ${n_i} \ge 2$ for all $i= 0,\ldots, k$, then $S$ is detected by an ideal point of a curve in $X(M,\F)$ if $p=2$.
If ${n_i} \ge 3$ for all $i = 0,\ldots, k$, then $S$ is detected by an ideal point of a curve in $X(M,\F)$ for all primes $p$.
\end{corollary}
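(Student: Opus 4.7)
The plan is to combine Van Kampen's theorem with \Cref{lem:directProduct}, \Cref{pro:stabilisers} and \Cref{pro:CSsplittingsurface} to produce a detected system of essential spheres realising the given decomposition. First I would use Van Kampen to identify $\pi_1(M)\cong\pi_1(M_0)\ast\cdots\ast\pi_1(M_k)$. The hypothesis that each $\pi_1(M_i)$ admits a non-central representation into $\SLF$ is precisely what \Cref{lem:directProduct} needs, so it yields a curve $C\subset X(M,\F)$ together with an ideal point $\xi$ of its smooth projective closure whose valuation $v_\xi$ is non-negative on $I_\alpha$ for every $\alpha\in\pi_1(M_i)$ and every $i$, while $v_\xi(I_{\gamma_i\gamma_j})<0$ for the distinguishing generators $\gamma_i\in\pi_1(M_i)$, $\gamma_j\in\pi_1(M_j)$ with $i\neq j$.

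By \Cref{pro:stabilisers}, $\xi$ determines a non-trivial splitting of $\pi_1(M)$ with Bass--Serre tree $\tree_\xi$ on which each $\pi_1(M_i)$ consists of elliptic elements (so $\pi_1(M_i)$ fixes a vertex $v_i$), while no $\gamma_i\gamma_j$ with $i\neq j$ is elliptic, forcing the $v_i$ to be pairwise distinct. I would then apply \Cref{pro:CSsplittingsurface} to this splitting to obtain a non-empty system $S'=S'_1\cup\cdots\cup S'_m$ of orientable incompressible surfaces in $M$, none boundary-parallel, with $\im(\pi_1(S'_j)\to\pi_1(M))$ contained in an edge group and $\im(\pi_1(R)\to\pi_1(M))$ contained in a vertex group for each component $R$ of $M\setminus S'$. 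Since $\pi_1(M)$ is a free product whose free factors all lie in vertex groups, standard Bass--Serre theory for free-product splittings forces every edge stabiliser of this graph-of-groups to be trivial (a non-trivial element stabilising an edge would stabilise both of its endpoint vertices, witnessing a non-trivial amalgamation inside the free product). Hence $\pi_1(S'_j)=1$ for each $j$, so $S'_j$ is a $2$--sphere, and incompressibility promotes each $S'_j$ to an essential sphere.

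Finally, the vertex/edge analysis forces each component of $M\setminus S'$ to carry (conjugates of) a union of the free factors $\pi_1(M_i)$, so $S'$ realises the same prime decomposition as $S$; Kneser--Milnor uniqueness then identifies $S'$ with $S$ up to isotopy (any extra spheres would have to bound $3$--balls and thus fail to be essential). This proves the first assertion. The two particular cases follow by applying \Cref{cor:sufficient_criterion_curve} to the epimorphism $\pi_1(M)\twoheadrightarrow C_{n_0}\ast\cdots\ast C_{n_k}$ induced by the given quotients $\pi_1(M_i)\twoheadrightarrow C_{n_i}$, which produces the required curve in $X(M,\F)$ under the stated hypotheses on the orders $n_i$ and on the characteristic $p$, after which the argument above is unchanged. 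The main obstacle is the edge-group argument and the Kneser--Milnor identification: verifying that the detected surface system $S'$ really consists of spheres realising the same prime decomposition as $S$, rather than a strictly finer or coarser system, which requires combining Bass--Serre theory for free-product splittings with the uniqueness of prime decomposition.
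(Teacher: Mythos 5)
Your overall route — Van Kampen, then \Cref{lem:directProduct} to produce the curve and ideal point, then \Cref{pro:stabilisers} for the tree action, then \Cref{pro:CSsplittingsurface} for the dual surface system, with the particular cases handled via \Cref{cor:sufficient_criterion_curve} — is exactly the chain the paper leaves implicit (the corollary is stated without proof, following the remark that the results above ``combined with \Cref{pro:CSsplittingsurface} allow us to detect the prime decomposition''). The skeleton is right.

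There is, however, a genuine gap in the edge-group step. You assert that ``standard Bass--Serre theory for free-product splittings forces every edge stabiliser of this graph-of-groups to be trivial,'' with the parenthetical that a non-trivial edge element ``witnesses a non-trivial amalgamation inside the free product.'' That is not a contradiction: a free product can perfectly well admit splittings with non-trivial edge groups, and a free product acting on a tree with each free factor elliptic can still have non-trivial edge stabilisers. (For instance, $F_3 = \langle a,b,c\rangle = \langle a\rangle*\langle b\rangle*\langle c\rangle$ also splits as $\langle a,b\rangle *_{\langle b\rangle} \langle b,c\rangle$, and in the Bass--Serre tree of the latter splitting every free factor $\langle a\rangle,\langle b\rangle,\langle c\rangle$ is elliptic while the edge group $\langle b\rangle$ is infinite.) So triviality of the edge groups here is not a formal consequence of $\pi_1(M)$ being a free product; it has to come from the specific curve constructed in \Cref{lem:directProduct}. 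The intended reading of \Cref{lem:directProduct} is that its ideal points \emph{detect the given free-product splitting} — i.e.\ the graph-of-groups decomposition arising from the tree action is the free product decomposition itself, with its trivial edge groups. You should invoke that assertion directly (or verify it for the explicit one-parameter family $\rho_t$), rather than try to rederive triviality of edge groups from general Bass--Serre theory, which does not give it.

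A secondary remark: the closing Kneser--Milnor step is heavier than needed. Once the edge groups are trivial, \Cref{pro:CSsplittingsurface} gives incompressible surfaces $S'_j$ with $\im(\pi_1(S'_j)\to\pi_1(M))$ trivial, hence spheres, and the complementary pieces carry the free factors $\pi_1(M_i)$ in vertex groups; that already exhibits the detected system as a prime-decomposition sphere system, which is what ``$S$ is detected'' means here. Appealing to uniqueness of prime decomposition is fine as a finishing touch, but the essential content is the identification of the splitting.
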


We can get some interesting examples for Lens spaces $L(q,r)$.
\begin{example}[Connected sum of lens spaces.]
Let $\F$ be an algebraically closed field of characteristic $p$.
Consider $M = L(q, r) \# L(2, 1)$ for $q\ge 3$. 
Then $\pi_1(M)\cong C_q\ast C_2$. 

By \Cref{cor:primeDecomp}, the essential sphere in $M$ is detected by $X(M,\F)$ for $p=2$. In characteristic $p\neq 2$, any homomorphism $C_q\ast C_2 \to \SL_2(\F)$ maps the generator of the first factor to an element of order dividing $q$ and the second factor to $\pm\idMat$. Since there are at most finitely many conjugacy classes of elements of any given finite order, this implies that $X(M,\F)$ has dimension zero. 
This shows the essential sphere in $M$ is only detected by $X(M,\F)$ when $p=2$ and we cannot detect the prime decomposition in every characteristic $p$. 
\end{example}

A natural question is then the following. 
\begin{question}
\label{q:primeDecomp}
	Can we always detect the prime decomposition in $\F$ an algebraically closed field of characteristic $2$?
\end{question}

\begin{remark}
Every prime manifold with non-trivial first homology group will admit a non-central representation into $\SL_2(\F)$ for $\F$ an algebraically closed field of characteristic $2$.  Thus, the prime decomposition is always detected in characteristic $2$ if each prime manifold in the decomposition has non-trivial first homology group.
\end{remark}

A positive answer to the next question gives a positive answer to the previous.
\begin{question}
	Does every 3--manifold (other than the 3--sphere) with trivial homology admit a non-central representation into $\SL_2(\F)$ for $\F$ an algebraically closed field of characteristic $2$? 
\end{question}

\begin{example}[Connected sum of Poincar\'e homology spheres.]

Consider the Poincar\'e homology sphere, $PH$. We have
\[
\begin{split}
	\pi_1(PH) 	&=\left\langle a,b\mid a^5=abab=b^3 \right\rangle
\end{split}
\]

This admits a non-central representation into $\SL_2(\F)$ for $\F$ a field of characteristic $p$ conjugate to the form
\[
\begin{split}
	\rho(a) =\begin{pmatrix} x & 1 \\ 0 & x^{-1} \end{pmatrix}, \quad
	\rho(b) =\begin{pmatrix} y & 0 \\ -y(x-x^{-1})-x^{-1} & 1-y \end{pmatrix},\quad x, y \in \F
\end{split}
\]
with
\[
\begin{split}
	1-y+y^2		  	&=0\\
	1-x+x^2-x^3+x^4 	&=0
\end{split}
\]

Take $M = PH \# \ldots \# PH$ the connected sum of some number of Poincar\'e homology spheres. By \Cref{cor:primeDecomp} the union of essential spheres in $M$ is detected by an ideal point of a curve in $X(M,\F)$ for all $p$.
Similarly for the connected sum of $PH$ with other manifolds $M_i$ that admit non-central representations, $M = PH \# M_1 \# \ldots \# M_n$. 
\end{example}

%%%%%%%%%%%%%%%%%%%%%%%%%%%

\subsection{Extended example: ${\tt m137}\rm$}
\label{sec:m137}

Take $M$ to be the manifold ${\tt m137}\rm$ in the cusped census of ${\tt SnapPea}\rm$. In this section we compare the variety of characters and $A$--polynomial for $M$ in characteristics 0, 2 and 3. 

Our manifold $M$ is hyperbolic, one-cusped, and is the complement of a knot in $S^2\times S^1$. It can be obtained by 0 Dehn surgery on either component of the link $7^2_1$ in $S^3$. ${\tt SnapPea}\rm$ computes the following fundamental group and peripheral system,
\[
	\pi_1(M)=\left\langle \alpha,\beta \mid \alpha^3\beta^2\alpha^{-1}\beta^{-3}\alpha^{-1}\beta^2\right\rangle,\quad
	\{\M,\L\}=\left\{\alpha^{-1}\beta^{-1},\alpha^{-1}\beta^2\alpha^4\beta^2\right\}
\]

We can adjust the presentation to get 
\[
\begin{split}
	\pie(M)	&=\left\langle \M,\beta \mid \beta^{-1}\M^{-1}\beta^{-1}\M^{-1}\beta^2\M=\M\beta^{-2}\M^{-1}\beta^2\right\rangle\\
	\{\M,\L\}	&=\left\{\M,\beta^2\M^{-1}\beta^{-3}\M^{-1}\beta^2\right\}
\end{split}
\]

Consider the components of the variety of characters containing irreducible representations. Up to conjugation, irreducible representations $\rho\co\pi_1(M)\to\SLF$ have the form
\[
	\rho(\M)=\begin{pmatrix} m & 1 \\ 0 & m^{-1} \end{pmatrix},\quad
	\rho(\beta)=\begin{pmatrix} b & 0 \\ u & b^{-1} \end{pmatrix}
\]
for $u\neq0,-(m-m^{-1})(b-b^{-1})$. 

We will see the only non-trivial coefficients that appear in the characteristic 0 case are multiples of 2 and 3 and thus those characteristics are the only situation where the terms vary. Assessing the group relation we find in all characteristics,
\begin{equation*}
\begin{split}
	u 			&= -\frac{(1-m^3)(1-mb^4)+b^2(1-m+m^2-m^3+m^4)}{m(1+m+m^2)b(1+b^2)}\\
	0 = f(m,b) 		&= (1-2m^3+m^6)(1+b^8)\\
				&\quad+(3-m+m^2-6m^3+m^4-m^5+3m^6)(b^2+b^6)\\
				&\quad+(4-2m+2m^2-9m^3+2m^4-2m^5+4m^6)b^4
\end{split}
\end{equation*}

As there are two generators, the variety of characters is generated by three trace coordinates, $\tr(\rho(\M))$, $\tr(\rho(\beta))$, and $\tr(\rho(\M\beta))$. Writing $s=\tr(\rho(\mathcal{M}))=m+m^{-1}$, $t=\tr(\rho(\beta))=b+b^{-1}$, and $r=\tr(\rho(\mathcal{M}\beta))$ gives in all characteristics
\[
\begin{split}
	g(s,t)		&=(4+4s-s^2-s^3)t^2-(2+3s-s^3)t^4-1 =0\\
	h(s,t,r)	&=r(s+1)t-(s+1)t^2-1 = 0
\end{split}
\] 

The respective irreducible components of the variety of characters are given by
\begin{equation}
\begin{split}
	X_p &= \left\{(s,t,r)\in \F_p^3 \mid g(s,t)=0, h(s,t,r)=0\right\} \\
	&\cong \left\{(s,t)\in \F_p^2 \mid g(s,t)=0 \right\}
\end{split}
\end{equation}
In this example, the equations in characteristic 2 and 3 can be obtained from the equations in characteristic 0 by simply reducing the integer coefficients modulo 2 or modulo 3 respectively. This is because in the calculation of $X_p$, no terms are divided by $2$ or $3$. 

We now turn to the $A$--polynomial, where such a reduction is not possible. Assessing the matrix associated with the longitude $\mathcal{L}$ in each characteristic and using Gr\"obner bases in ${\tt Singular}\rm$ produces 
\[
\begin{split}
	{A}_0(m,l) = \tilde{A}_0(m,l)	&= m^4+2m^5+3m^6+m^7-m^8-3m^9-2m^{10}-m^{11}\\
							&\hspace{-1cm}+l^2\left(-1-3m-2m^2-m^3+2m^4+4m^5+m^6+4m^7\right.\\
							&\left.+m^8+4m^9+2m^{10}-m^{11}-2m^{12}-3m^{13}-m^{14}\right)\\
							&\hspace{-1cm}+l^4\left(-m^3-2m^4-3m^5-m^6+m^7+3m^8+2m^9+m^{10}\right), \\ 
	{A}_2(m,l) = \tilde{A}_2(m,l) 	& =m^4+m^5+m^7+m^9+m^{10}\\
							&\hspace{-1cm}+l^2\left(1+m^3+m^4+m^5+m^8+m^9+m^{10}+m^{13}\right)\\
							&\hspace{-1cm}+l^4\left(m^3+m^4+m^6+m^8+m^9\right)\\
	{A}_3(m,l) =\tilde{A}_3(m,l)	&= m^4-m^5+m^7-m^8+m^{10}-m^{11}\\
							&\hspace{-1cm}+l^2\left(-1+m^2-m^3-m^4+m^5+m^6+m^7\right. \\
							&\left. +m^8+m^9-m^{10}-m^{11}+m^{12}-m^{14}\right)\\
							&\hspace{-1cm}+l^4\left(-m^3+m^4-m^6+m^7-m^9+m^{10}\right)
\end{split}
\]

If we reduce $A_0$ modulo 2, the result gives $A_2$ times a factor $(m-1)$. There is a chance that in characteristic 2, we have a second component containing irreducible representations where we map:
\[
	\rho(\mathcal{M})=\begin{pmatrix} 1 & 1 \\ 0 & 1 \end{pmatrix},\quad
	\rho(\mathcal{\beta})=\begin{pmatrix} b & 0 \\ u & b^{-1} \end{pmatrix}
\]
However, there is no representation conjugate into this form. Hence $A_2$ is obtained from $A_0$ by reducing modulo 2 and deleting a factor of the resulting polynomial. In characteristic 3 the result is more straightforward, and $A_3$ is obtained from $A_0$ by reducing modulo 3. 

Although there is a change in the polynomial in characteristic $2$, we see there is no change in the strongly detected boundary slopes as the slopes of the Newton polygons remain the same across all characteristics.

%%%%%%%%%%%%%%%%%%%%%%%%%%%

\subsection{Extended example: ${\tt m019(3,4)}\rm$}
\label{sec:m019}

We focus on an example of a closed manifold where the dimension of the variety of characters decreases in characteristic $2$. Let us take $M$ to be the manifold ${\tt m019(3,4)}\rm$.  Our manifold $M$ is closed and hyperbolic and appears in the census of~\cite{Burton-computing-2018} of closed Haken hyperbolic 3--manifolds.

${\tt Regina}\rm$ computes the following fundamental group,
\[
	\pie(M)=\left\langle \alpha,\beta \mid \alpha \beta^2 \alpha \beta^2 \alpha \beta^{-1} \alpha^3 \beta^{-1}, \alpha^3 \beta^{-3} \alpha^3 \beta^{-1} \alpha^4 \beta^{-1} \right\rangle
\]
Note that $H_1(M) \cong \Z_{40}$ is finite cyclic, and hence there are only finitely many reducible characters.

We will show that in characteristic $p\neq2$, the components of the variety of characters that contain irreducible representations consist of points, and in characteristic $2$, there are no irreducible representations. That is, 
\[
	\dim(X^\text{irr}_p(M))=0,\quad\dim(X^\text{irr}_2(M))=-1
\]
In particular, this means that no essential surfaces in $M$ are detected by the variety of characters in any characteristic. 

Consider the components of the variety of characters containing irreducible representations. Up to conjugation, irreducible representations $\rho:\pi_1(M)\to\SLF$ have the form
\begin{equation}
\label{eqn:m019rep}
	\rho(\alpha)=\begin{pmatrix} a & 1 \\ 0 & a^{-1} \end{pmatrix},\quad
	\rho(\beta)=\begin{pmatrix} b & 0 \\ u & b^{-1} \end{pmatrix}
\end{equation}
for $u\neq0,-(a-a^{-1})(b-b^{-1})$. 

Substitute the irreducible representation \Cref{eqn:m019rep} into the group relations. In characteristic $2$, we find no such representations exist as the expressions either force a contradiction, with $a=0$ or $b=0$, or force that $u=0$ and the representation is reducible. 

In characteristic $p\neq2$,
\begin{equation*}
\begin{split}
	u 		&= \frac{-a^2 + a^4 + b^2 + a^4 b^2 + b^4 - a^2 b^4}{a (1 + a^2) b(1 + b^2)}\\
	0 = f(a,b) 	&= (1 + a^4)(1+b^8) + 2(1 + a^4) (b^2+b^6) + 2(2 + a^2 + 2a^4) b^4 \\
	0 = g(a,b) 	&= (1 + a^2 + a^4)(1+b^2) - a^2 b \\ 
	0 = h(a,b) 	&= a^2(1+b^6) + (1 - a^2 + a^4) (b+b^5) + 2 a^2 (b^2-b^3+b^4)
\end{split}
\end{equation*}
Note that the equations given for $u$, $f(a,b)$, $g(a,b)$, $h(a,b)$ modulo $2$ do not alone satisfy the group relations in characteristic $2$.

As there are two generators, the variety of characters is generated by three trace coordinates, $\tr(\rho(\alpha))$, $\tr(\rho(\beta))$, and $\tr(\rho(\alpha\beta))$. Writing $s=\tr(\rho(\alpha))=a+a^{-1}$, $t=\tr(\rho(\beta))=b+b^{-1}$, and $r=\tr(\rho(\alpha\beta))$ gives in characteristic $p\neq2$,
\[
\begin{split}
	k(s,t,r)	&=str-t^2(s^2-2)-2 =0\\
	l(s,t)		&=(s^2-1)t-1 = 0\\
	m(s,t)	&=t^4(s^2-2)-2t^2(s^2-2)+2(s^2-1)=0\\
	n(s,t)		&=t^3+t^2(s^2-3)-t-2(s^2-2) = 0
\end{split}
\] 
which solves to 
\[
\begin{split}
	r		&=\frac{s(2s^2-3)}{s^2-1}\\
	t		&=\frac{1}{s^2-1}\\
	q(s)		&=2s^8-10s^6+18s^4-12s^2+1 = 0
\end{split}
\] 

Solving the system of equations we find the the respective irreducible components of the variety of characters for $p\neq2$ are given by
\begin{equation}
\begin{split}
	X_p 	&= \left\{(s,t,r)\in \F_p^3 \mid k(s,t,r)=0, l(s,t)=0, m(s,t)=0, n(s,t)=0\right\} \\
		&\cong \left\{s\in \F_p \mid q(s)=0 \right\}.
\end{split}
\end{equation}
Each version of $q(s)=0$ modulo $p\neq 2$ has eight distinct solutions over $\F_p$, so each $X_p$ contains eight points. 

Hence $M$ is an example of a hyperbolic, Haken, 3--manifold with none of its essential surfaces detected by the variety of characters in any characteristic. 

%%%%%%%%%%%%%%%%%%%%%%%%%%%

\subsection{Extended example: ${\tt m188(2,3)}\rm$}
\label{sec:m188}

We now focus on an example of a closed manifold where the dimension of the variety of characters increases in characteristic $2$. Take $M$ to be the manifold ${\tt m188(2,3)}\rm$.  Our manifold $M$ is closed, Haken, and hyperbolic. As in the previous example, it is known to be Haken because it appears in the  census of~\cite{Burton-computing-2018}. We show that in characteristic $p\neq 2$, the components of the variety of characters that contain irreducible representations consist of finitely many points, and in characteristic $2$, there is a curve. In particular,
\[
	\dim(X^\text{irr}_0(M))=0,\quad\dim(X^\text{irr}_2(M))=1
\]
In particular, this gives an independent proof of the Haken property.

${\tt Regina}\rm$ computes the following fundamental group,
\[
	\pie(M)=\left\langle
	\alpha^3 \beta^{-1} \alpha \beta^2 \alpha \beta^2 \alpha \beta^{-1},\alpha^{-2} \beta^3 \alpha^{-2} \beta^3 \alpha \beta^{-1} \alpha \beta^3 \right\rangle
\]
Note that $H_1(M) \cong \Z_{2}\oplus \Z_{26}$ is finite cyclic, and hence there are only finitely many reducible characters.

Consider the components of the variety of characters containing irreducible representations. Up to conjugation, irreducible representations $\rho:\pi_1(M)\to\SLF$ have the form
\begin{equation}
\label{eqn:m188rep}
	\rho(\alpha)=\begin{pmatrix} a & 1 \\ 0 & a^{-1} \end{pmatrix},\quad
	\rho(\beta)=\begin{pmatrix} b & 0 \\ u & b^{-1} \end{pmatrix}
\end{equation}
for $u\neq0,-(a-a^{-1})(b-b^{-1})$. 

Substitute the irreducible representation \Cref{eqn:m188rep} into the group relations. 

In characteristic 2 either, 
		\begin{align*}
			u_2 &= a b (1 + b)^8\\
			0 = f_2(b) &= 1 + a^2 \\
			0 = g_2(b) &= 1 + b + b^2 + b^3 + b^4 \\
			\intertext{ or }
			u_2 &= \frac{1 + a^2}{ab}\\
			0 = h_2(b) &= 1 + b^2
		\end{align*}
		
In characteristic $p\neq2$,
		\begin{equation*}
		\begin{split}
			u_p &= \frac{-a^2 + a^4 + b^2 + a^4 b^2 + b^4 - a^2 b^4}{a (1 + a^2) b(1 + b^2)}\\
			0 = f_p(a,b) &= (1 + a^4)(1+b^8) + 2(1 + a^4) (b^2+b^6) + 2(2 + a^2 + 2a^4) b^4 \\
			0 = g_p(b) &= (1 + b^4) (1 + 3 b^2 + 5 b^4 + 3 b^6 + b^8)
		\end{split}
		\end{equation*}

If we took the solution in characteristic $p\neq 2$ and reduced modulo $2$, we would get $u_p=0$, contradicting irreducibility of the representation.
	
As there are two generators, the variety of characters is generated by three trace coordinates, $\tr(\rho(\alpha))$, $\tr(\rho(\beta))$, and $\tr(\rho(\alpha\beta))$. Write $s=\tr(\rho(\alpha))=a+a^{-1}$, $t=\tr(\rho(\beta))=b+b^{-1}$, and $r=\tr(\rho(\alpha\beta))$.

In characteristic 2 either,
		\begin{align*}
			s		& =0\\
			r		& =0\\
			m_2(t)	&=t^2+t+1=0 \\
		\intertext{or }
			t		& =0\\
			r		& =0
		\end{align*}

In characteristic $p\neq2$,
		\[
		\begin{split}
			k_p(s,t,r)	&=str-t^2(s^2-2)-2 = 0\\
			l_p(s,t)	&=-2 (t^2-1)^2 + s^2 ( t^4-2t^2+2) = 0\\
			m_p(t)	&=(t^2 -2) ( t^4-t^2+1)=0
		\end{split}
		\] 

Then the respective irreducible components of each of the variety of characters are given by the following.  

In characteristic 2,
		\begin{equation*}
		\begin{split}
			X_2 	&= \left\{(s,t,r)\in \F_2^3 \mid s=0, r=0, m_2(t)=0\right\}\cup \left\{(s,t,r)\in \F_2^3 \mid t=0, r=0\right\}\\
				&\cong \left\{t\in \F_2 \mid m_2(t)=0 \right\}\cup \left\{s\in \F_2 \right\}
		\end{split}
		\end{equation*}
The first component has two points, the second forms a line.

In characteristic $p\neq2$,
		\begin{equation*}
		\begin{split}
			X_p 	&= \left\{(s,t,r)\in \F_p^3 \mid k_p(s,t,r)=0, l_p(s,t)=0, m_p(t)=0\right\} \\
				&\cong \left\{(s,t)\in \F_p^2 \mid l_p(s,t)=0,m_p(t)=0 \right\}
		\end{split}
		\end{equation*}
This has eight points in characteristic $3$ and twelve points otherwise. The change in number of points in characteristic 3 is a result of the factorisation of $m_3(t)$. In particular, this example exhibits two ramification phenomena: we have a jump in dimension in characteristic 2; considering all Zariski components (or only the 0--dimensional Zariski components), we have a drop in their number in characteristics  2 and 3. 

%%%%%%%%%%%%%%%%%%%%%%%%%%%
%\newpage
%%%%%%%%%%%%%%%%%%%%%%%%%%%

\bibliography{references}
\bibliographystyle{plain}

%%%%%%%%%%%%%%%%%%%%%%%%%%%

\address{Grace S. Garden\\School of Mathematics and Statistics F07, The University of Sydney, NSW 2006 Australia\\{grace.garden@sydney.edu.au\\-----}}

\address{Stephan Tillmann\\School of Mathematics and Statistics F07, The University of Sydney, NSW 2006 Australia\\{stephan.tillmann@sydney.edu.au}}

%%%%%%%%%%%%%%%%%%%%%%%%%%%

\Addresses                                       
\end{document}